\documentclass[11pt,a4paper,leqno,notitlepage]{amsart}
\usepackage[english,frenchb]{babel}
\usepackage{enumerate}
\usepackage{amsmath,todonotes}
\usepackage{amsfonts}
\usepackage{amssymb}
\usepackage{amsthm}
\usepackage{graphicx}
\usepackage{mathrsfs}
\usepackage{datetime}
\usepackage{comment}
\usepackage{fourier-orns}
\usepackage{dsfont}
\usepackage[cyr]{aeguill}
\usepackage{fancyhdr}
\usepackage{mathabx}
\usepackage[latin1]{inputenc}
\usepackage[all,cmtip]{xy}

\usepackage{tikz-cd}
\usetikzlibrary{decorations.pathmorphing}

%
%

%
\usepackage{amsfonts,amsmath,graphicx}

\usepackage{geometry}
 \geometry{
 left=23mm,
right=23mm,
 }
\newcommand{\bc}{\begin{center}}
\newcommand{\ec}{\end{center}}
\newcommand{\bq}{\begin{quote}}
\newcommand{\eq}{\end{quote}}
\newcommand{\bqtn}{\begin{quotation}}
\newcommand{\eqtn}{\end{quotation}}
\newcommand{\beq}{\begin{equation}}
\newcommand{\eeq}{\end{equation}}
\newcommand{\bearr}{\begin{eqnarray}}
\newcommand{\eearr}{\end{eqnarray}}
\newcommand{\bearrn}{\begin{eqnarray*}}
\newcommand{\eearrn}{\end{eqnarray*}}
\newcommand{\bi}{\begin{itemize}}
\newcommand{\ei}{\end{itemize}}
\newcommand{\be}{\begin{enumerate}}
\newcommand{\ee}{\end{enumerate}}
\newcommand{\bthe}{\begin{theorem}}
\newcommand{\ethe}{\end{theorem}}
\newcommand{\blem}{\begin{lemme}}
\newcommand{\elem}{\end{lemme}}
\newcommand{\bsolu}{\begin{solution}}
\newcommand{\esolu}{\end{solution}}
\newcommand{\bexer}{\begin{exercise}}
\newcommand{\eexer}{\end{exercise}}

%
%









\newcommand{\ba}{\begin{array}}
\newcommand{\ea}{\end{array}}


%
\usepackage{amsfonts,amsmath}
%
%
%
%
%
%
%
%
%
%
\newtheorem{theoreme}{Theorem}[section]
\newtheorem{theorem}[theoreme]{Theorem}
\newtheorem{lemme}[theoreme]{Lemma}
\newtheorem{lemma}[theoreme]{Lemma}
\newtheorem{proposition}[theoreme]{Proposition}
\newtheorem{definition}[theoreme]{Definition}

\newtheorem{corollaire}[theoreme]{Corollary}

\newtheorem{solution}[theoreme]{Solution}
\newtheorem{exercise}[theoreme]{Exercise}
\newcommand{\bdefi}{\begin{definition}}
\newcommand{\edefi}{\end{definition}}
\newcommand{\brk}{\begin{remarque}}
\newcommand{\erk}{\end{remarque}}
\newcommand{\bpp}{\begin{proposition}}
\newcommand{\epp}{\end{proposition}}
\newcommand{\bpf}{\begin{proof}}
\newcommand{\epf}{\end{proof}}
\newcommand{\bcor}{\begin{corollaire}}
\newcommand{\ecor}{\end{corollaire}}
\newcommand{\bsol}{\begin{solution}}
\newcommand{\esol}{\end{solution}}
\theoremstyle{definition}

\newtheorem{remarque}[theoreme]{Remark}
\allowdisplaybreaks
\title{Cubic Fields: A Primer}
\author{Sophie Marques and Kenneth Ward}

\begin{document}
\large
\selectlanguage{english}
\maketitle
\begin{abstract}
We classify all cubic extensions of any field of arbitrary characteristic, up to isomorphism, via an explicit construction involving three fundamental types of cubic forms. We deduce a classification of any Galois cubic extension of a field. The splitting and ramification of places in a separable cubic extension of any global function field are completely determined, and precise Riemann-Hurwitz formulae are given. In doing so, we determine the decomposition of any cubic polynomial over a finite field. \end{abstract}

\noindent \quad {\footnotesize MSC Code (primary): 11T22}

\noindent \quad {\footnotesize MSC Codes (secondary):  11R32, 11R16, 11T55, 11R58}

\noindent \quad {\footnotesize Keywords: Cyclotomy, cubic, function field, finite field, Galois}	
\tableofcontents
\section*{Introduction}
In this paper, we give a complete classification of cubic field extensions up to isomorphism over an arbitrary field of any characteristic, which we had begun in  \cite{MWcubic}.  More precisely, in Corollary \ref{classification}, we prove that any cubic extension of an \emph{arbitrary} field admits a generator $y$, explicitly determined in terms of an arbitrary initial generating equation, such that
\begin{enumerate}
\item $y^3 =a$, with $a \in F$, or
\item
\begin{enumerate}
\item $y^3 -3y=a$, with $a \in F$, when $p\neq 3$, or
\item $y^3 +ay+a^2 = 0$, with $a \in F$, when $p=3$.
\end{enumerate}
\end{enumerate}
In \S $1.3$, we devise a procedure to compare, up to isomorphism, any two separable cubic extensions of a given field. When the characteristic is not equal to $3$, this employs the purely cubic closure which we determine in Theorem \ref{purelycubic}, whereas the Galois closure is needed in characteristic 3, eliminating the need for the brute force computations given in \cite{MWcubic}. As we demonstrate, the purely cubic closure is essential for the study of cubic extensions which are not pure in characteristic distinct from $3$. Furthermore, when the characteristic is equal to 3, the form (2)(a) we obtain can be viewed as a generalised Artin-Schreier form for separable extensions, where the techniques from Artin-Schreier theory can be adapted. We emphasize that all of our classifications (in any characteristic) are valid for any separable cubic extension. 

For Galois cubics extensions $L/F$, we prove in \S 1.4 that one of the following cases occurs, where each generator is again explicitly determined:
 \begin{enumerate}
 \item If $p=3$, then $L/F$ is an {\sf Artin-Schreier} extension; that is, there is a generator $y$ of $L/F$ such that $y^3 -y= a$ with $a \in F$. 
 \item If $p\neq 3$ and $F$ contains a primitive $3^{rd}$ root of unity, then $L/F$ is an {\sf Kummer} extension; that is, there is a generator $y$ of $L/F$ such that $y^3 =a$ with $a \in F$.
  \item If $p\neq 3$ and $F$ does not contain a primitive $3^{rd}$ root of unity, then $L/F$ is an extension with a generator $y$ of $L/F$ such that $y^3 -3y= \frac{2a^2+2a-1}{a^2+  a+1}$ with $a \in F$. 
 \end{enumerate}
Cases (1) and (2) are of course well known; we are able to deduce these again in a completely explicit and elementary way. Among Galois extensions, our primary focus is the third case, where Artin-Schreier and Kummer theory do not apply. We employ a generalised form of a Shank cubic polynomial in order to describe the Galois cubic extensions of the form (3) by way of an (explicit) one-to-one correspondence (Theorem \ref{shanksconversion}). We also describe the Galois action on generators of the form (3) (Corollary \ref{rootour}); this is a consequence of Theorem \ref{extensionsthesame1}, which determines when generators of the form (3) result in isomorphic extensions.

In \S 3, we describe the splitting and ramification for any place in a separable cubic extension of a global function field. To accomplish this, we use Kummer's theorem to determine splitting of cubic polynomials over a finite field, as all residue fields are finite. Thus, in \S 2, we characterise completely the decomposition of any cubic polynomial over a finite field, using earlier results of Dickson, Pommerening, and Williams \cite{Dickson,Pomm,Williams}. This study of decomposition permits us to obtain in \S 3.3 a Riemann-Hurwitz formula for any separable extension of a cubic global function field (Theorems \ref{RHPC}, \ref{RH}, \ref{char3RH}).

While the classification we give for ramification, splitting, and genera is presented here for global function fields, many of the methods employed remain valid over any global field. As an example, we present Proposition \ref{denom}, which characterises certain Galois cubic extensions and is proven separately over $\mathbb{Q}$ and $\mathbb{F}_q(x)$.  We intend to produce a second study which addresses number fields, as there are substantive differences between the two types of global fields.

\section{Classification of cubics over any field}

In this section, we wish to obtain a complete and concise description of generating equations for cubic fields. The classification we obtain yields a family of three types of generating equations, all of which require only one parameter and are depressed, i.e., possess no quadratic term ($\S 1.1$). This classification is valid over any field. We note that of the families we obtain when $p \neq 3$ have generating equation $X^3 - 3X - a$; in this case, when $-3$ is not a square in $F$, then the linear coefficient cannot be removed. We also provide a procedure permitting to determine whether any two cubic extensions are isomorphic (\S 1.3). When $p\neq 3$, in order to determine when two cubics are isomorphic, we make the use of the purely cubic closure (see Definition \ref{pureclosuredef}), which we determine precisely in $\S 1.2$. The purely cubic closure is important for studying impure cubic extensions (as we will see again in later sections, for instance, when we study ramification and splitting in \S 3). When $p=3$, the Galois closure is used in the same way to find the analogous requisite criteria for isomorphism of two cubic extensions. 

We then classify Galois cubics in any characteristic $p \neq 3$, which gives an analogue for all Galois cubic extensions which are not addressed in Artin-Schreier and Kummer theory. We show that the form we obtain in this case can be viewed as equivalent to a weaker form of a Shanks polynomial \cite{Shanks}. Curiously, we note that in the form $$X^3 - 3X - \frac{2a^2 +2a-1}{a^2 +a^2+1},$$ which we obtain when $p \neq 3$ and the extension is impurely cubic, the denominator $a^2+a+1$ of the constant coefficient is equal to an evaluation $X^2 + X + 1|_{X=a}$ of the cyclotomic polynomial for primitive $3^{rd}$ roots of unity.

Henceforth, we let $F$ denote a field and $p = \text{char}(F)$ the characteristic of this field, where we admit the possibility $p = 0$ unless stated otherwise. We let $\overline{F}$ denote the algebraic closure of $F$. 

\begin{definition} \label{pureclosuredef} \begin{itemize}
\item [$\bullet$] If $p \neq 3$, a generator $y$ of a cubic extension $L/F$ with minimal polynomial of the form $X^3 -a$ $(a \in F)$ is called a {\sf purely} cubic generator, and $L/F$ is called a {\sf purely} cubic extension. If $p = 3$, such an extension is simply called {\sf purely inseparable}.
\item [$\bullet$] If $p\neq 3$ and a cubic extension $L/F$ does not possess a generator with minimal polynomial of this form, then $L/F$ is called {\sf impurely} cubic. 
\item [$\bullet$] For any cubic extension $L/F$, we define the {\sf purely cubic closure} of $L/F$ to be the smallest extension $F'$ of $F$ such that $LF'/F'$ is purely cubic.	
\end{itemize}
\end{definition}

\subsection{Generating polynomials} 
In the next theorem, we prove that the irreducibility of any cubic polynomial over any field $F$ depends upon that of polynomials of the form
\begin{enumerate}
\item $X^3 -a$, with $a \in F$,
\item
\begin{enumerate}
\item $X^3 -3X-a$, with $a \in F$, if $p\neq 3$, or
\item $X^3 +aX+a^2$, with $a \in F$, if $p=3$.
\end{enumerate}
\end{enumerate}
The criteria we give therefore depend on the field characteristic, particularly whether or not the characteristic is 3.

\begin{theoreme}\label{irred} 
Let $T(X)= X^3 + e X^2 + f X +g$ be any cubic polynomial with coefficients in $F$. Then one of the following is satisfied:
\begin{enumerate} 
\item $g=0$ or $-27g^2-2f^3+9egf=0$ and $T(X)$ is reducible. 
\item $g \neq 0$, $ -27g^2-2f^3+9egf\neq 0$, $3eg = f^2$ and $T(X)$ is reducible if, and only if, $R(X) = X^3 -a $ is reducible, where $a =  \frac{27g^3}{-27g^2+f^3}$. 
\item $p\neq 3$, $g \neq 0$, $ -27g^2-2f^3+9egf\neq 0$, and $3eg \neq f^2$, and $T(X)$ is reducible if, and only if,  $S(X)= X^3 -3X-a$ is reducible where $a= -2-\frac{ ( 27 g^2 -9efg +2f^3)^2}{(3ge-f^2)^3}$.
\item $p=3$, $f\neq 0$, and $g \neq 0$, and $T(X)$ is reducible if, and only if, $-f^2e^2+ge^3+f^3=0$ or  $G(X)= X^3 + a X+ a^2 $ is reducible, where 
\begin{itemize} 
\item[$\cdot$] $a =  \frac{g^2}{f^3}$ when $e=0$, 
\item[$\cdot$] $a =  \frac{-f^2e^2+ge^3+f^3}{e^6}$ when $e\neq 0$ and $-f^2e^2+ge^3+f^3\neq 0$.
\end{itemize}
\end{enumerate}
\end{theoreme}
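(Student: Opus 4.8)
The plan is to reduce the statement to the elementary fact that a cubic over $F$ is reducible if and only if it has a root in $F$ (a degree-three polynomial factors exactly when it has a linear factor), and then to chase that hypothetical root through a short list of reducibility-preserving moves: the affine substitutions $X \mapsto \lambda X + \mu$ with $\lambda \in F^{\times}$, $\mu \in F$, and the reversal $T(X) \mapsto g^{-1}X^{3}T(1/X)$, which is legitimate exactly when $g \neq 0$ (so that $0$ is not a root and $r \mapsto 1/r$ is a bijection of the root sets). Each of these operations visibly preserves ``has a root in $F$'', so it suffices to maneuver $T$ into one of the three displayed forms, and the nested case split is only the bookkeeping of when one of these moves degenerates or is unavailable.

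First I would dispose of case (1). If $g = 0$, then $0$ is a root and $T$ is reducible. If $g \neq 0$ and $p \neq 3$, reverse $T$ to $\widetilde{T}(X) = X^{3} + \tfrac{f}{g}X^{2} + \tfrac{e}{g}X + \tfrac{1}{g}$ and depress it by $X \mapsto X - \tfrac{f}{3g}$, obtaining $X^{3} + uX + v$ with $27g^{3}v = 27g^{2} - 9efg + 2f^{3}$ and $3g^{2}u = 3eg - f^{2}$ by a direct computation. Then the hypothesis $-27g^{2} - 2f^{3} + 9egf = 0$ of case (1) says precisely $v = 0$, so $X^{3} + uX$ has the root $0$ and $T$ is reducible; in characteristic $3$ the same expression degenerates to $f^{3}$, and the leftover configuration $f = 0$, $g \neq 0$ I would absorb into the reversal/scaling argument of the last paragraph (it then takes the shape of case (4)). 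From here on $g \neq 0$ and $v \neq 0$.

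For $p \neq 3$ two subcases remain. If $u = 0$, i.e.\ $3eg = f^{2}$ (case (2)), then $X^{3} + uX + v$ is already of purely cubic shape $X^{3} - a$ with $a = -v$; substituting $e = f^{2}/(3g)$ gives $-v = (f^{3} - 27g^{2})/(27g^{3})$, and since $X^{3} - \alpha$ and $X^{3} - 1/\alpha$ are reverses of one another (hence of the same reducibility, both reducible iff the parameter is a cube in $F$) one may replace $-v$ by $a = 27g^{3}/(f^{3} - 27g^{2})$. If $u \neq 0$, i.e.\ $3eg \neq f^{2}$ (case (3)), I claim $X^{3} + uX + v$ has the same reducibility as $S(X) = X^{3} - 3X - a$ with $a = -2 - 27v^{2}/u^{3}$; this is the step I expect to be the real crux, and the cleanest route runs the chain from the other end. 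Translating $S$ by $X \mapsto X + 1$ gives $X^{3} + 3X^{2} + c$ with $c = -(a+2) = 27v^{2}/u^{3} \neq 0$, whose reverse is the depressed cubic $X^{3} + \tfrac{3}{c}X + \tfrac{1}{c}$; its scaling invariant $(1/c)^{2}/(3/c)^{3} = c/27 = v^{2}/u^{3}$ equals that of $X^{3} + uX + v$, and since $u, v \neq 0$ in case (3) any two depressed cubics with equal invariant $v^{2}/u^{3}$ are related by a single scaling $X \mapsto \tfrac{3v}{u}X$, hence they are reducibility-equivalent. I want to stress that this detour through $X^{3} + 3X^{2} + c$ is exactly what keeps the argument inside $F$: scaling $X^{3} + uX + v$ straight to linear coefficient $-3$ would require $-u/3$ to be a square in $F$, which in general it is not; this is the obstruction that forces the term $-3X$ to survive, and is why the purely cubic closure of Definition \ref{pureclosuredef} is needed for the later isomorphism criteria but not here.

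Finally, for $p = 3$ (case (4)) depression is unavailable, so I would instead use $(X + \mu)^{3} = X^{3} + \mu^{3}$: when $e \neq 0$, translating $T = X^{3} + eX^{2} + fX + g$ by $X \mapsto X + \tfrac{f}{e}$ kills the linear term and produces $X^{3} + eX^{2} + g'$ with $g' = (-f^{2}e^{2} + ge^{3} + f^{3})/e^{3}$. If $g' = 0$, then $X^{2}(X + e)$ exhibits reducibility (the alternative $-f^{2}e^{2} + ge^{3} + f^{3} = 0$ in the statement); otherwise reverse to the depressed cubic $X^{3} + \tfrac{e}{g'}X + \tfrac{1}{g'}$, and scale by $X \mapsto \tfrac{e^{2}}{g'}X$ to reach $X^{3} + aX + a^{2}$, the normal form being characterised by having its constant coefficient equal to the square of its linear one, with $a = g'/e^{3} = (-f^{2}e^{2} + ge^{3} + f^{3})/e^{6}$. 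When $e = 0$, $T = X^{3} + fX + g$ is already depressed and the same kind of scaling gives $a = g^{2}/f^{3}$. In all branches the genuine difficulty is not any single step but the accumulated bookkeeping: keeping the case tree ($g = 0$; $v = 0$; $u = 0$ or $u \neq 0$; $p = 3$ or $p \neq 3$; $e = 0$ or $e \neq 0$) synchronised with the explicit parameters, and carrying out the routine-but-lengthy substitution identities that turn $e, f, g$ into the displayed values of $a$.
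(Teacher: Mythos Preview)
Your proof is correct and takes a genuinely different route from the paper's. The paper produces, for each case, a single explicit M\"obius substitution $y=(\alpha x+\beta)/(\gamma x+\delta)$ and verifies directly that $T(x)=0$ if and only if the normal form vanishes at $y$. You instead factor the passage into a chain of elementary reducibility-preserving moves (translate, scale, reverse $X\mapsto 1/X$) and, for case~(3), hinge on the clean observation that the scaling invariant $v^{2}/u^{3}$ of a depressed cubic $X^{3}+uX+v$ with $u,v\neq 0$ determines it up to an $F$-rational rescaling $X\mapsto \tfrac{vu'}{v'u}X$; matching this invariant therefore suffices without ever extracting a square root. The paper's one-step M\"obius formulas are what feed directly into Corollary~\ref{classification}, where the explicit generators matter; your decomposition, by contrast, is more conceptual about \emph{why} the normal forms are reachable over $F$ itself and makes the obstruction you flag (that the naive scaling to linear coefficient $-3$ would require $-u/3$ to be a square) visible rather than hidden inside a verified identity.

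One small loose end, shared with the paper: in characteristic $3$ with $e=f=0$ and $g\neq 0$ (the purely inseparable $X^{3}+g$), the paper's proof of case~(1) divides by $f$, and your redirection ``to the shape of case~(4)'' handles $e\neq 0$ but not $e=0$. This is a gap in the theorem's case split rather than in your argument, and the paper treats that situation separately as purely inseparable anyway.
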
 
\begin{proof} 
Let $x \in \overline{F}$.
\begin{enumerate} 
\item Suppose that $g=0$. Then $T(X)= X^3 + e X^2 + f X= X(X^2 + e X + f)$, whence $T(X)$ is reducible.  Now, suppose that $g\neq 0$ and $-27g^2-2f^3+9egf=0$, then 
$$0 = \left(\frac{f^3}{g}\right)\left[\left(-\frac{3g}{f}\right)^3 + e\left(-\frac{3g}{f}\right)^2 + f\left(-\frac{3g}{f}\right) + g \right]= \left(\frac{f^3}{g}\right)T\left(-\frac{3g}{f}\right),$$ whence $T(X)$ is reducible with $-\frac{3g}{f}\in F$ as a root. 
\item Suppose $g \neq 0$, $ -27g^2-2f^3+9egf\neq 0$, and $3eg=f^2$. Then $T(x)=0$ if, and only if, $y^3 = \frac{27g^3}{-27g^2+f^3} \in F$, where $$y = \frac{3g x}{fx + 3g}.$$ We note that $ x = \frac{3g y}{3g-fy}$. We have $x \neq \frac{-3g}{f}$ as $-27g^2-2f^3+9egf\neq 0$. 
Moreover, $x\in F$ if, and only if, $y \in F$. It follows that $T(X)$ is irreducible if, and only if, $-27g^2+f^3$  is not a cube in $F$. 
\item Suppose $p \neq 3$, $g \neq 0$, $ -27g^2-2f^3+9egf\neq 0$, and $3eg -f^2\neq 0$,  $T(x)=0$ if, and only if, $y^3 -3y = a$ where $y= \frac{-(6efg-f^3-27g^2)x+3g(3eg-f^2)}{(3eg-f^2)(fx+3g)}$ and $a=-2-\frac{ ( 27 g^2 -9efg +2f^3)^2}{(3ge-f^2)^3}$. 

Note that 
\begin{itemize}
\item[$\cdot$] $x=\frac{-3g(y-1)(3eg-f^2)}{f(3eg-f^2)y+6egf-f^3-27g^2}$, 
\item[$\cdot$] $x \neq \frac{-3g}{f}$ since $-27g^2-2f^3+9egf\neq 0$ and 
\item[$\cdot$] $y \neq -\frac{6egf-f^3-27g^2}{f(3eg-f^2)}$, since $g \neq 0$ and $-27g^2-2f^3+9egf \neq 0$.
\end{itemize} 
Thus, $T(X)$ is irreducible if, and only if, $S(X)= X^3 -3X-a$ is irreducible where $a= -2-\frac{ ( 27 g^2 -9efg +2f^3)^2}{(3ge-f^2)^3}$.
\item Suppose $p=3$, $3eg\neq f^2$, that is $f\neq 0$ since $p=3$ and $g\neq 0$.
\begin{enumerate} 
\item[$\cdot$] If $e =0$, then $T(x)=0$ if, and only if, $y^3+a y + a^2=0$ where $y = \frac{g}{f^2} x$ and $ a= \frac{g^2}{f^3}$. Note that $ x = \frac{f^2}{g} y$. 
\item[$\cdot$] Suppose $e \neq 0$ and $T(x)=0$. Note that  when $x= \frac{f}{e}$, then $-f^2e^2+ge^3+f^3=0$ and $T(X)$ is reducible. Suppose that $-f^2e^2+ge^3+f^3\neq 0$ then $T(x) =0$ if, and only if, $y^3+a y + a^2=0$ where $y= \frac{-f^2e^2+ge^3+f^3}{e^4(ex-f)}$ and $a=\frac{-f^2e^2+ge^3+f^3}{e^6}$. Note that $x =\frac{fe^4y-f^2e^2+ge^3+f^3}{e^5y}$.
\end{enumerate} 
\end{enumerate} 
\end{proof}
From this, we can deduce the following corollary without difficulty, which reduces the study of any cubic extension into exactly three one-parameter forms.
\begin{corollaire} \label{classification} Let $L/F$ be any cubic extension with generator $y$ with minimal polynomial $X^3 + e X^2 + f X + g$, where $e, f, g \in F$. Then there exists a generator $z$ of $L/F$ with minimal polynomial of the form 
\begin{enumerate}
\item  $X^3 -a$ with $a =  \frac{27g^3}{-27g^2+f^3}$ and $z=  \frac{3g y}{fy + 3g}$, when $3eg= f^2$.
\item \begin{enumerate} 
\item $X^3 -3X -a$ with $a=-2-\frac{ ( 27 g^2 -9efg +2f^3)^2}{(3ge-f^2)^3}$ and $z= \frac{-(6efg-f^3-27g^2)y+3g(3eg-f^2)}{(3eg-f^2)(fy+3g)}$, when $3eg \neq f^2$ and $p\neq 3$.
\item $X^3 + a X + a^2$ with 
\begin{itemize} 
\item[$\cdot$] $a =  \frac{g^2}{f^3}$ and $z= \frac{g}{f^2} y$, when $e=0$,
\item[$\cdot$] $a =  \frac{-f^2e^2+ge^3+f^3}{e^6}$ and $z= \frac{-f^2e^2+ge^3+f^3}{e^4(ey -f)}$, when $e\neq 0$ and $-f^2e^2+ge^3+f^3\neq 0$,
\end{itemize}
and $p=3$.
\end{enumerate} 
\end{enumerate} 
\end{corollaire}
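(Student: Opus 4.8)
The plan is to read this off directly from Theorem~\ref{irred} and the explicit substitutions produced in its proof, the only extra ingredient being a short separate argument in one characteristic-$3$ boundary situation. Since $y$ generates the cubic extension $L/F$, its minimal polynomial $T(X)=X^3+eX^2+fX+g$ is irreducible over $F$; in particular $g\neq 0$ (else $X\mid T$), and from the identity $\bigl(\tfrac{f^3}{g}\bigr)\,T\bigl(-\tfrac{3g}{f}\bigr)=-27g^2-2f^3+9egf$ recorded in the proof of Theorem~\ref{irred}(1) we get $-27g^2-2f^3+9egf\neq 0$ when $f\neq 0$ (as $T$ has no root in $F$), and also when $f=0$ and $p\neq 3$ (then this quantity is $-27g^2\neq 0$). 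Hence, away from the case $p=3$ with $f=0$, which is disposed of below, exactly one of cases (2), (3), (4) of Theorem~\ref{irred} applies, the split being governed precisely by whether $p=3$ and whether $3eg=f^2$; moreover the degenerate possibility $-f^2e^2+ge^3+f^3=0$ occurring inside case~(4) is excluded, again because $T$ is irreducible. Case~(2) of the theorem is case~(1) of the corollary, case~(3) is case~(2a), and case~(4) is case~(2b).

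In each of those three cases I would take $z$ to be the rational expression in $y$ obtained by substituting $x=y$ into the formula that the proof of Theorem~\ref{irred} attaches to a root $x$ of $T$. The computations carried out there already show that this $z$ satisfies $z^3=a$, or $z^3-3z=a$, or $z^3+az+a^2=0$, with $a$ equal to the expression in $e,f,g$ stated in the corollary. The same proof also exhibits the inverse rational expression recovering $x$ from the new variable, whose denominator is, in the respective cases, $fy+3g$, or $f(3eg-f^2)y+6egf-f^3-27g^2$, or $f^2$ (when $e=0$) or $e^5y$ (when $e\neq 0$); each of these is nonzero by the hypotheses $g\neq 0$, $-27g^2-2f^3+9egf\neq 0$, and $-f^2e^2+ge^3+f^3\neq 0$, as explained inside the proof of Theorem~\ref{irred}. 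This gives $y\in F(z)$, so $F(z)=F(y)=L$ and $z$ is a generator of $L/F$; since $[L:F]=3$, the monic cubic satisfied by $z$ is forced to be its minimal polynomial, which is the asserted form.

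It remains to treat $p=3$ with $f=0$, which is not literally covered by the trichotomy of Theorem~\ref{irred}. If $e=0$ as well, then $T(X)=X^3+g=X^3-(-g)$ is already of form~(1) and one takes $z=y$. If $e\neq 0$, replace $y$ by $1/y$, whose minimal polynomial is $X^3+\tfrac{e}{g}X+\tfrac{1}{g}$ with nonzero linear term; applying the $e=0$ branch of case~(2b) to $1/y$ and unwinding the substitution returns $z=\tfrac{g}{e^2y}$ together with $a=\tfrac{g}{e^3}$, which is the value of $\tfrac{-f^2e^2+ge^3+f^3}{e^6}$ at $f=0$, exactly as in case~(2b). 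I expect the only real friction to lie in this characteristic-$3$ normalization and in confirming that the quoted substitution and its displayed inverse are genuinely mutually inverse $F$-rational maps (so that $z$ really generates $L/F$); once those are in place, the statement is a direct transcription of identities already established inside the proof of Theorem~\ref{irred}, with no further computation required.
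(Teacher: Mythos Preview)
Your proposal is correct and matches the paper's approach, which is simply to read off the explicit substitutions from the proof of Theorem~\ref{irred}. Your careful handling of the boundary case $p=3$, $f=0$ (which indeed slips through the trichotomy of Theorem~\ref{irred}) is in fact more thorough than the paper's one-line deduction, and your observation that the subcase $e\neq 0$ lands in case~(2b) rather than case~(1) sharpens the division of cases.
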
 
By the previous theorem, we know that when $3eg=f^2$, then $L/F$ is purely cubic when $p\neq 3$, and that $L/F$ is purely inseparable when $p=3$. 
\subsection{Purely cubic extensions} 
In order to obtain a complete classification of cubic extensions (up to isomorphism), we still need a criterion deciding when two cubic extensions are isomorphic. When $p\neq 3$, we have two possible types of generating equations: $y^3 - 3y -a$ and $y^3 -a$. So we first need to determine when the minimal equation $y^3 -3y -a=0$ represents a purely cubic extension. In this section, we obtain a criterion for the coefficients of the minimal polynomial of a cubic extension $L/F$ which determines whether or not the extension is purely cubic.
\begin{theoreme} \label{purelycubic} Suppose $p\neq 3$. Let $L/F$ be a cubic extension and $y$ a primitive element with minimal polynomial 
$$T(X)= X^3 -3X-a $$ where $a \in F$.
Then, $L/F$ is purely cubic if, and only if,  the polynomial $S(X)=X^2 +aX+1$ has a root in $F$. Let $c$ be a root of $S(X)$ in $\overline{F}$. In other words, $F(c)$ is the purely cubic closure for $L/F$. More precisely, $$u= \frac{cy-1}{y-c} $$ is a purely cubic generator for $L(c)/ F(c)$ such that $u^3 =  c$ if, and only if, $$y = \frac{cu-1}{u-c}$$ is a generator of $L/F$ with minimal polynomial $T(X)$.
\end{theoreme}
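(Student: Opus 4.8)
The whole argument revolves around the Möbius involution $\phi_c\colon t\mapsto\frac{ct-1}{t-c}$ attached to a root $c$ of $S(X)=X^2+aX+1$, and the plan is to split the stated equivalence into a purely formal direction (``$S$ has a root in $F$ $\Rightarrow$ $L/F$ is purely cubic'', together with the explicit formulas), coming from one algebraic identity plus degree bookkeeping, and a computational direction (the converse), handled by coordinates in a purely cubic basis. First I would record the facts forced by the irreducibility of $T(X)=X^3-3X-a$: necessarily $a\ne\pm2$, since $a=\pm2$ makes $\mp1$ a root of $T$, so $T$ is separable; any root $c$ of $S$ then satisfies $c\ne0$ (as $S(0)=1$) and $c^2\ne1$ (as $c=\pm1$ forces $a=\mp2$); and since the two roots of $S$ have product $1$, $S$ has a root in $F$ precisely when $c\in F$.

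Next comes the key identity: for $c$ a root of $S$ in any extension field of $F$ and any $v\ne c$ there, clearing denominators in $\phi_c(v)^3=c$ and using $c^2+ac+1=0$ gives, after factoring, $(c^2-1)\bigl(cv^3-3cv+(c^2+1)\bigr)=0$; since $c^2\ne1$, $c\ne0$ and $(c^2+1)/c=-a$, this yields $\phi_c(v)^3=c\iff v^3-3v-a=0$. A short computation shows $\phi_c\circ\phi_c=\mathrm{id}$ (again using $c^2\ne1$), and a routine degree count shows $[L(c):F(c)]=3$ whether or not $c\in F$, so the minimal polynomial of $y$ over $F(c)$ remains $T$.

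Granting this, the implication $(\Leftarrow)$ and the ``more precisely'' clause are immediate. If $c\in F$ is a root of $S$, set $u:=\phi_c(y)$, which is defined since $y\ne c$ for degree reasons; the identity gives $u^3=c\in F$, the involution gives $y=\phi_c(u)$, hence $F(u)=F(y)=L$, and the degree count forces $X^3-c$ to be the minimal polynomial of $u$ over $F$, so $u$ is a purely cubic generator and $L/F$ is purely cubic. Running the same reasoning with $F(c)$ in place of $F$ shows that $u=\frac{cy-1}{y-c}$ is a purely cubic generator of $L(c)/F(c)$ with $u^3=c$; since $\phi_c$ is an involution (so $y=\frac{cu-1}{u-c}$) and $y$ has minimal polynomial $T$ over $F$ by hypothesis, this is precisely the biconditional asserted.

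For the converse, suppose $L/F$ is purely cubic, say $L=F(w)$ with $w^3=b\in F$; then $\{1,w,w^2\}$ is an $F$-basis of $L$ with $\Tr_{L/F}(w)=\Tr_{L/F}(w^2)=0$. Writing $y=\alpha+\beta w+\gamma w^2$, the relation $\Tr_{L/F}(y)=0$ (since $T$ has no quadratic term) forces $\alpha=0$ because $p\ne3$, and then $\beta,\gamma\in F^{\times}$, for otherwise $y^3\in F$ would contradict the nonzero linear coefficient of $T$. Expanding $y^3-3y-a=0$ in the basis $\{1,w,w^2\}$, the coefficient of $w$ forces $\beta\gamma b=1$, and the constant term then reads $\beta^2/\gamma+\gamma/\beta^2=a$; hence $s:=\beta^2/\gamma\in F^{\times}$ satisfies $s^2-as+1=0$, so $-s$ is a root of $S$ lying in $F$, which proves $(\Rightarrow)$. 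Applying this argument verbatim over any field $F'$ with $LF'/F'$ purely cubic --- over which $y$ still has minimal polynomial $T$ --- shows $S$ has a root in $F'$, hence $c\in F'$; together with the fact that $L(c)/F(c)$ is purely cubic this identifies $F(c)$ as the smallest field over which $L$ becomes purely cubic, i.e.\ the purely cubic closure. The only genuinely non-formal step is this converse: normalizing via the trace and then recognizing that the system coming from $y^3-3y=a$ collapses exactly to $s+s^{-1}=a$ is where the precise shape $X^3-3X-a$ (the coefficient $3$, the absent quadratic term) is used; the identity for $\phi_c$, the involution, and the degree and closure bookkeeping are then essentially forced.
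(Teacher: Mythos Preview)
Your proof is correct, and it takes a genuinely different route from the paper's.

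For the converse direction (purely cubic $\Rightarrow$ $S$ has a root in $F$), the paper parametrises \emph{all} primitive elements of $L/F$ as $u=\frac{ey+f}{gy+h}$ (using that $\{1,y,u,uy\}$ is linearly dependent in a cubic extension), imposes $u^3=c\in F$, and solves the resulting $3\times 3$ system in $e,f,g,h$; the unique normalised solution forces $c^2+ac+1=0$ and simultaneously \emph{discovers} the formula $u=\frac{cy-1}{y-c}$. You instead fix a purely cubic generator $w$ with $w^3=b$, expand $y$ in the basis $\{1,w,w^2\}$, use $\Tr(y)=0$ to kill the constant term, and read off $s+s^{-1}=a$ from the coefficients of $y^3-3y-a$. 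The two arguments are in a sense dual: the paper expresses the unknown purely cubic generator in terms of $y$, while you express $y$ in terms of a given purely cubic generator. Your approach is shorter and more transparent once one has the M\"obius map $\phi_c$ in hand, and your verification of the key identity $\phi_c(v)^3=c\iff T(v)=0$ handles the forward direction and the explicit formulas in one stroke; the paper's approach has the advantage that it \emph{derives} $\phi_c$ rather than presupposing it. You also make the purely cubic closure claim explicit by rerunning the argument over an arbitrary $F'$, which the paper leaves implicit.
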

\begin{proof}
 Suppose that $p\neq 3$. Let $L/F$ be a cubic extension and $y$ a primitive element such that $y^3-3y-a=0$.
Any primitive element of $L/F$ is of the form $ u= \frac{ey + f}{gy+h}$ for $e,f, g, h  \in F$, since the elements of the set $\{y, uy, u ,1\}$ are linearly dependent over $F$. Moreover, $e, g$ are not both equal to $0$, since $u$ is a primitive element. We wish to determine which extensions admit a primitive element $u$ such that $u^3 =c$, for some $c \in F$. Note that $c$ cannot be a cube, since $L/F$ is a cubic extension. We have that $u^3 = c$ and $ u= \frac{ey + f}{gy+h}$ is equivalent to
$$(-e^3+cg^3)y^3+(-3fe^2+3cg^2h)y^2+(-3f^2e+3cgh^2)y+ch^3-f^3=0.$$
Equivalently, since $y^3=3y+a$, 
$$(-3fe^2+3cg^2h)y^2+(-3f^2e+3cgh^2-3e^3+3cg^3)y-ae^3+acg^3-f^3+ch^3=0.$$
As $\{1,y, y^2\}$ forms a basis of $L/F$, we then have that
$$\left\{ \begin{array}{llll} 
-3fe^2+3cg^2h &=& 0 & (1) \\ 
-3f^2e+3cgh^2-3e^3+3cg^3&=& 0 & (2)  \\ 
-ae^3+acg^3-f^3+ch^3 &=& 0 & (3) \\ 
\end{array} \right.$$ 
Note that $f$ and $g $ are nonzero. Indeed, if $f =0$, then by $(1)$ either $g=0$  or $h=0$ since $c \neq 0$. When $g=0$, by $(2)$, $e=0$, but $g$ and $e$ cannot be both zero, hence this case is impossible. When $h=0$, $(2)$ implies that $c$ is a cube, which is also impossible. Similarly, if $g=0$, since $e$ and $g$ are not both zero, $(1)$ implies that $f$ is zero, and by the previous argument this is impossible. 

Since $f$ and $g $ are nonzero, without loss of generality, one can suppose that $f=-1$ and $g=1$, as $u$ is a purely cubic generator, then $l u$ is also a purely cubic generator, for any nonzero $l \in F$. Replacing these values in the previous system yields the new system
$$\left\{ \begin{array}{llll} 
e^2+ch &=& 0 & (1) \\ 
-e+ch^2-e^3+c&=& 0 & (2)  \\ 
1+ch^3-ae^3+ac &=& 0 & (3) \\ 
\end{array} \right.$$ 
From $(1)$, $h = -\frac{e^2}{c}$ since $c \neq0$, and substitution in $(2)$ and $(3)$ lead to 
$$\left\{ \begin{array}{llll} 
(c-e)(-e^3+c)&=& 0 & (1) \\ 
(-e^3+c)(ac^2+c+e^3)&=& 0 & (2)  \\ 
\end{array} \right.$$ 
From $(1)$ and the fact that $c$ is a noncube,  we find $e=c$, and substituting in $(2)$, we obtain $c^2 + ac +1=0$. Moreover, $h=-\frac{e^2}{c}=-c$. Thus, in order for $L/F$ to be purely cubic, the polynomial $X^2 + a X+1$ needs to have a root $c$ in $F$, and if this is the case, then
$$u=\frac{cy-1}{y-c} $$  
is such that $$u^3 = c.$$
Thus the theorem.
\end{proof} 
We note that in the Galois case, this purely cubic closure is simply an extension by a primitive third root of unity. 
\begin{corollaire} \label{purelycubicclosurecorollary}
Suppose that $F$ does not contain a primitive $3^{rd}$ root of unity. Let $L/F$ be a Galois geometric extension and $y$ a primitive element such that $f(y) = y^3-3y-a=0$. Then $F(\xi)$ is purely cubic closure for $L/F$. 
\end{corollaire}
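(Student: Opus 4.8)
The plan is to reduce everything to Theorem~\ref{purelycubic} together with a short Galois-theoretic remark. By that theorem the purely cubic closure of $L/F$ is $F(c)$, where $c$ is a root of $S(X)=X^2+aX+1$; writing $\xi$ for a primitive third root of unity, it therefore suffices to prove that $F(c)=F(\xi)$.

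First I would show that $L/F$ is not purely cubic. If it were, $L$ would contain a generator $w$ with $w^3=b\in F$, and since $L/F$ is Galois it would contain all three roots $w,\xi w,\xi^2 w$ of $X^3-b$, hence $\xi\in L$; but then $F(\xi)\subseteq L$ with $[F(\xi):F]=2$, which is impossible since $[L:F]=3$ is odd. Consequently $S(X)$ has no root in $F$ by Theorem~\ref{purelycubic}, so $c\notin F$ and $[F(c):F]=2$; also $[F(\xi):F]=2$ by hypothesis. Thus both $F(c)$ and $F(\xi)$ are quadratic over $F$, and since $F(c)$ is the purely cubic closure of $L/F$ by Theorem~\ref{purelycubic}---that is, the smallest extension of $F$ over which $L$ becomes purely cubic---it suffices to check that $LF(\xi)/F(\xi)$ is purely cubic, for then $F(c)\subseteq F(\xi)$ and hence $F(c)=F(\xi)$.

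To verify that $LF(\xi)=L(\xi)$ is purely cubic over $F(\xi)$, note first that $\xi\notin L$ (again because $[L:F]$ is odd), so that $[L(\xi):F(\xi)]=3$; moreover $L(\xi)/F(\xi)$ is Galois, being the base change of the Galois extension $L/F$, and it is of prime degree, hence cyclic. Since $F(\xi)$ contains a primitive third root of unity and $\operatorname{char}(F)\neq 3$, Kummer theory applies and yields $L(\xi)=F(\xi)\bigl(\sqrt[3]{d}\bigr)$ for some $d\in F(\xi)^{\times}$; that is, $L(\xi)/F(\xi)$ is purely cubic, as needed.

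The rest is routine, but the characteristic deserves a word, and this is really the only delicate point. It is tempting to argue instead through the criterion ``$\operatorname{disc}(X^3-3X-a)$ is a square in $F$'': in characteristic $\neq 2,3$ this reads $(-3)(a^2-4)\in(F^{\times})^2$, and since $a^2-4=\operatorname{disc}(S)$ while $-3=\operatorname{disc}(X^2+X+1)$, it is precisely the statement $F(\sqrt{a^2-4})=F(\sqrt{-3})$, i.e.\ $F(c)=F(\xi)$. However, in characteristic $2$ one computes $\operatorname{disc}(X^3-3X-a)=a^2$, which is always a square, so that criterion is vacuous there; the Galois-closure/Kummer argument above avoids this and is uniform in $\operatorname{char}(F)\neq 3$, which is why I would adopt it.
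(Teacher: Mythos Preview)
Your proof is correct and takes a genuinely different route from the paper's.

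The paper proceeds by an explicit computation: using that $L/F$ is Galois, the root $r$ of the quadratic resolvent $R(X)=X^{2}+3aX+(-27+9a^{2})$ lies in $F$, and the authors then solve for the roots $c_{\pm}$ of $S(X)=X^{2}+aX+1$ in the form $ur+v$ with $u,v\in F(\xi)$, obtaining the closed formula
\[
c_{+}=\Bigl(\tfrac{2}{9}r+\tfrac{1}{3}a\Bigr)\xi+\tfrac{1}{9}r-\tfrac{1}{3}a,
\]
whence $F(c_{\pm})\subseteq F(\xi)$ and hence $F(c_{\pm})=F(\xi)$. Your argument, by contrast, is purely Galois-theoretic: you observe that $L(\xi)/F(\xi)$ is cyclic of degree $3$ over a field containing $\xi$, invoke classical Kummer theory to conclude it is purely cubic, and then use the minimality statement in Theorem~\ref{purelycubic} to deduce $F(c)\subseteq F(\xi)$. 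The step showing $L/F$ is \emph{not} purely cubic (so that $[F(c):F]=2$) is something the paper does not spell out, and it is needed to finish the equality; you handle it cleanly.

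What each buys: the paper's computation produces the explicit element $c_{+}\in F(\xi)$, and hence via Theorem~\ref{purelycubic} an explicit purely cubic generator $u=(c_{+}y-1)/(y-c_{+})$ of $L(\xi)/F(\xi)$, which is used downstream (e.g.\ in the proof of Theorem~\ref{ramification}). Your argument is shorter, conceptually transparent, and---as you note---uniform in characteristic $\neq 3$, whereas a discriminant comparison would stumble in characteristic $2$. Your appeal to Kummer theory is to the classical result and not to Theorem~\ref{purely} of the paper (which in the paper's logical order comes after this corollary), so there is no circularity.
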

\begin{proof} 
Suppose $F$ does not contain a primitive $3^{rd}$ root of unity. Let $L/F$ be a Galois geometric extension and $y$ a primitive element such that $f(y) = y^3-3y-a=0$. Let $c_\pm$ denote the two roots of the quadratic polynomial $S(X)=X^2+ aX+1$. Let $r$ be a root of the quadratic resolvent  $$R(X)= X^2 +3aX + ( -27 + 9 a^2)$$ of the cubic polynomial $X^3 -3X-a$. The element $r$ lies in $F$ by \cite[Theorem 2.3]{Con}, as $L/F$ is by supposition Galois. 

We will prove that one can find a root of the polynomial $S(X) $ which is of the form $ur  +v$ with $u$ and $v \in F(\xi)$. We note that
\begin{align*} S(u r +v)& =  (u r +v)^2 +a (u r +v)+1 \\
&= u^2 r^2 +2uvr + v^2 +aur + av +1 \\
&= u^2 (-3ar - ( -27 + 9 a^2)) +2uvr + v^2 +aur + av +1\\ 
&= ru(-3au+2v+a)+ u^2 ( 27 - 9 a^2)+v^2 +av +1 =0\end{align*}
We then determine whether there exists a solution to the system
\begin{align*}
-3au+2v+a &=0 \\ 
 u^2 ( 27 - 9 a^2)+v^2 +av +1&= 0
\end{align*}
Thus,
$$u= \frac{ 2v+a}{3a}$$
and 
\begin{align*}0&= \left( \frac{ 2v+a}{3a} \right)^2 9 ( 3 -  a^2)+v^2 +av +1\\ 
&= \frac{(4v^2 + 4va + a^2 )}{a^2}  ( 3 -  a^2)+v^2 +av +1\\ 
&= 3\left(\frac{4}{a^2}-1\right) v^2 + 3a \left(\frac{4}{a^2}-1\right)v+ a^2 \left(\frac{4}{a^2}-1\right).
\end{align*}
Therefore,
$$3v^2 + 3av+ a^2 =0$$
Note that if $\xi$ is a primitive $3^{rd}$ root of unity, then $v_+=\frac{a ( \xi -1) }{3}$ and  $v_-=\frac{a (- \xi -2) }{3}$ are the two roots of the previous equation, which yields
$u_+ = \frac{2 \xi +1 }{9}$ and $u_- = \frac{-2 \xi -1 }{9}$. Thus, the two roots of the polynomial $X^2+aX+1$ are given by 
$$c_+ = \left(\frac{2}{9}r+\frac{1}{3}a\right)\xi+\frac{1}{9}r-\frac{1}{3}a \quad \text{and} \quad c_-= \sigma (c_+) = \left(-\frac{2}{9}r-\frac{1}{3}a\right)\xi-\frac{1}{9}r-\frac{2}{3}a.$$
As a consequence,  $F(c_\pm )= F(\xi )$. 
\end{proof}

\subsection{Isomorphic cubics} 
Now, we consider two cubic extensions $L_1$ and $L_2$ of $F$, and we wish to give criteria which determine when they are isomorphic (we write $L_1\simeq L_2$ when this is so). 
\subsubsection{$p \neq 3$}
When $p\neq 3$, if $L_1\simeq L_2$,  then $L_1$ is purely cubic if, and only if, $L_2$ is purely cubic. In order to determine when $L_1$ or $L_2$ are purely cubic, we use Corollary \ref{classification} and Theorem \ref{purelycubic}. When both $L_1$ and $L_2$ are both purely cubic, we use the following result to determine if $L_1\simeq L_2$. Note that we do not assuming in the following lemma that any of the cubic extensions are Galois.
\begin{theoreme} \label{extensionsthesame}
Suppose that $L_1/F$ and $L_2/F$ are two cubic extensions, with $y_i$ a generator of $L_i/F$ and $y_i ^3 = a_i$ with $a_i\in F$, for each $i=1,2$. Then the following assertions are equivalent:
\begin{enumerate} 
\item $L_1\simeq L_2$,
\item $y_1 = cy_2^{j}$ where $j=1,2$ and $c\in F$,
\item $a_1 = c^3 a_2^j$ where $j=1,2$ and $c\in F$.
\end{enumerate}
\end{theoreme}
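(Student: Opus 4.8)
Throughout I use the standing hypothesis $p\neq 3$ of this subsection, so that for $a\in F^{\times}$ the polynomial $X^{3}-a$ is separable with roots $\theta,\omega\theta,\omega^{2}\theta$ in $\overline{F}$, where $\omega$ is a primitive cube root of unity. First record that $a_i\neq 0$ and that, $y_i$ being a root of $X^3-a_i$ and a generator of the degree-$3$ extension $L_i/F$, the polynomial $X^{3}-a_{i}$ is the minimal polynomial of $y_{i}$ over $F$, hence irreducible. The plan is the cycle $(2)\Rightarrow(3)\Rightarrow(1)\Rightarrow(2)$. The step $(2)\Rightarrow(3)$ is cubing: $y_{1}=cy_{2}^{\,j}$ gives $a_{1}=c^{3}a_{2}^{\,j}$. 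For $(3)\Rightarrow(1)$: if $a_{1}=c^{3}a_{2}^{\,j}$ then $cy_{2}^{\,j}\in L_{2}$ satisfies $(cy_{2}^{\,j})^{3}=a_{1}$, so the irreducible polynomial $X^{3}-a_{1}$ has a root in $L_{2}$; the $F$-algebra map $L_{1}\cong F[X]/(X^{3}-a_{1})\to L_{2}$, $y_{1}\mapsto cy_{2}^{\,j}$, is then an embedding of a field, and is an isomorphism by comparing dimensions. (This also realises $(3)\Rightarrow(2)$.)

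The substance is $(1)\Rightarrow(2)$. Let $\varphi\colon L_{1}\to L_{2}$ be an $F$-isomorphism and put $\beta=\varphi(y_{1})\in L_{2}$, so $\beta^{3}=\varphi(a_{1})=a_{1}\in F$. I claim it is enough to prove the following rigidity lemma: \emph{if $L=F(\theta)$ is a cubic extension with $\theta^{3}\in F$, then every $\beta\in L$ with $\beta^{3}\in F$ has the form $\beta=c\theta^{k}$ with $c\in F$ and $k\in\{0,1,2\}$.} Granting it, apply it to $L_{2}=F(y_{2})$ to get $\beta=cy_{2}^{\,k}$; here $k\neq 0$, since $\beta=\varphi(y_{1})$ generates $L_{2}/F$ while an element of $F$ cannot (equivalently $a_1=\beta^3=c^3$ would contradict irreducibility of $X^3-a_1$). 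So $k\in\{1,2\}$ and $\varphi(y_{1})=cy_{2}^{\,k}$, which is $(2)$, and $(3)$ follows by cubing.

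To prove the lemma, assume $\beta\neq 0$ and write $\beta=x+y\theta+z\theta^{2}$ with $x,y,z\in F$. Let $\sigma\colon L\to\overline{F}$ be the $F$-embedding with $\sigma(\theta)=\omega\theta$ (legitimate: $\theta$ and $\omega\theta$ share the minimal polynomial $X^{3}-\theta^{3}$ over $F$). As $\beta^{3}\in F$ is fixed by $\sigma$, we get $\sigma(\beta)^{3}=\sigma(\beta^{3})=\beta^{3}$, so $\sigma(\beta)=\zeta\beta$ for some cube root of unity $\zeta\in\{1,\omega,\omega^{2}\}$ (using $p\neq 3$). Now $\{1,\theta,\theta^{2}\}$ is an $F(\omega)$-basis of $F(\omega,\theta)$, because $[F(\omega):F]\in\{1,2\}$ is prime to $3$, whence $[F(\omega,\theta):F(\omega)]=3$; comparing the $F(\omega)$-coordinates of $\sigma(\beta)=x+y\omega\theta+z\omega^{2}\theta^{2}$ and of $\zeta\beta=\zeta x+\zeta y\theta+\zeta z\theta^{2}$ gives $x=\zeta x$, $\omega y=\zeta y$, $\omega^{2}z=\zeta z$. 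Inspecting $\zeta\in\{1,\omega,\omega^{2}\}$, in each case two of $x,y,z$ must vanish, yielding $\beta=x$, $\beta=y\theta$, or $\beta=z\theta^{2}$ respectively. This proves the lemma.

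The main obstacle is exactly this rigidity lemma: extracting, from the single relation $\beta^{3}\in F$, that $\beta$ is a scalar times a power of $\theta$. The two delicate points are that $\{1,\theta,\theta^{2}\}$ stays linearly independent over $F(\omega)$ — handled uniformly, Galois case $\omega\in F$ included, by $\gcd(3,[F(\omega):F])=1$ — and that $\overline{F}$ has no cube roots of unity outside $\{1,\omega,\omega^{2}\}$, automatic since $p\neq3$; the rest is bookkeeping. (An alternative for $(1)\Rightarrow(3)$ is to extend scalars to $F(\omega)$, use Kummer theory to obtain $a_{1}=\gamma^{3}a_{2}^{\,j}$ with $\gamma\in F(\omega)$, then correct $\gamma$ by a cube root of unity so that $\gamma^{3}$ descends to a cube in $F$, using the nontrivial automorphism of $F(\omega)/F$; the direct argument above avoids this detour.)
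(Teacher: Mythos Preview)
Your proof is correct and takes a genuinely different route from the paper. The paper argues $(1)\Rightarrow(2)$ by brute force: writing $y_1=ey_2^2+fy_2+g$ in the basis $\{1,y_2,y_2^2\}$, cubing, reducing via $y_2^3=a_2$, and extracting the linear system
\[
3eg^2+3f^2g+3fe^2a_2=0,\qquad 3g^2f+3e^2a_2g+3ea_2f^2=0
\]
from the vanishing of the $y_2$- and $y_2^2$-coefficients; a short case analysis on whether $g=0$, $f=0$, etc., together with the non-cube hypothesis on $a_2$, forces two of $e,f,g$ to vanish.

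Your argument replaces this computation with a one-line eigenvector observation: the embedding $\sigma\colon\theta\mapsto\omega\theta$ scales any $\beta$ with $\beta^3\in F$ by a cube root of unity, and since $1,\theta,\theta^2$ lie in distinct eigenspaces for this action (over $F(\omega)$), $\beta$ must be a monomial in $\theta$. This is cleaner and more conceptual; it also makes transparent why the result is really the degree-$3$ case of Kummer theory. The paper's computational approach, on the other hand, has the minor advantage of being reusable in characteristic $3$: the authors later remark that when $p=3$ the two equations above become vacuous, which immediately shows that any two purely inseparable cubics are isomorphic. Your eigenvector argument, relying on $\omega\neq 1$, does not directly yield that byproduct.
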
 
\begin{proof} 
$(2)$ and $(3)$ are clearly equivalent. We now prove that $(1)$ and $(2)$ are equivalent.
If $y_1 = cy_2^{j}$ where $j=1,2$ and $c\in F$, then clearly, $L_1\simeq L_2$. For the converse, suppose then that $L_1\simeq L_2$. Thus $y_1 \in L(y_2)$, so that there are $e, f , g\in F$ such that $y_1= e y_2^2+fy_2 + g$. Thus
\begin{align*} y_1^3 &= (e y_2^2+fy_2 + g)^3\\ 
&= e^3y_2^6+3fe^2y_2^5+3e(ge+f^2)y_2^4+f(f^2+6ge)y_2^3+3g(ge+f^2)y_2^2+3g^2fy_2+g^3\\
&= (3eg^2+3f^2g+3fe^2a)y_2^2+(3g^2f+3e^2ag+3eaf^2)y_2+f^3a+6fage+e^3a^2+g^3,
\end{align*}
 which lies in $F$. As $\{ y_2^2 , y_2 , 1 \}$ is a basis of $L_2/F$, we therefore have the following system:
\begin{align*}
 3eg^2+3f^2g+3fe^2a_2 &= 0 \;\qquad (1) \\ 
 3g^2f+3e^2a_2g+3ea_2f^2&= 0. \qquad (2)
\end{align*}
If $g= 0$, then $3fe^2a_2=0$ in which case either $e=0$ or $f=0$. In both cases, the theorem is proven.  If $g\neq 0$ and $f=0$, then $3e g^2=0$ implies $e=0$, which is impossible, as $y_1 $  is a generator of $L_1/K$. Thus, if $g\neq 0$, then $f\neq 0$. If, on the other hand, $g$ and $f$ are not $0$, then we may compute $f \cdot (1)- g \cdot (2)$, which yields
$$ -3e(-g^3+f^3a_2)=0.$$
As $y_2$ defines a cubic extension, $a_2$ is not a cube; thus $e=0$, and by $(1)$, $3f^2 g=0$, which implies that either $f=0$ or $g=0$. If $e=0$ and $f=0$, then $y_1$ does not generate $L_1$, a contradiction. We are thus left with the case  $e=0$ and $g=0$, whence $y_1$ satisfies the theorem once again. \end{proof}
We easily obtain from this the following corollary, which describes the Galois action on a purely cubic generator in the Galois closure of a purely cubic extension.
\begin{corollaire}\label{GApure}
Let $y_1$ and $y_2$ be two distinct roots of an irreducible polynomial $X^3-3X-a$ in $\overline{F}$. Then $y_1 = \xi y_2$ where $\xi$ is a primitive $3^{rd}$ root of unity.
\end{corollaire}
Now, we suppose that $L_1$ is impurely cubic, and that $L_1\simeq L_2$, whence $L_2$ is also impurely cubic. Moreover, since $p\neq 3$, $L_i$ has a generator $y_i$ such that $y_i^3 -3 y_i =a_i$ where $a_i \in F$ by Corollary \ref{classification}, and the polynomial $X^2+a_i X+1$ is irreducible over $F$ by Theorem \ref{purelycubic}. The following result describes when the two extensions $L_1$ and $L_2$ are isomorphic, which concludes the classification of cubics up to isomorphism when $p\neq 3$. 
\begin{theoreme} \label{extensionsthesame1}
Suppose that $L_1/F$ and $L_2/F$ are two cubic extensions, with $y_i$ a generator of $L_i/F$ and $y_i ^3-3y_i = a_i$ with $a_i\in F$ such that the polynomial $X^2+a_i X+1$ is irreducible over $F$, for each $i=1,2$. Then the following assertions are equivalent:
\begin{enumerate} 
\item $L_1\simeq L_2$,
\item $y_1 = \alpha y_2^2 + \beta y_2 -2 \alpha $, where $\alpha, \beta \in F$ such that $\alpha^2 + a_2 \alpha \beta + \beta^2 =1$  and
\item $ a_1=-3a_2\alpha^2\beta+a_2\beta^3+6\alpha+\alpha^3a_2^2-8\alpha^3$, where $\alpha, \beta \in F$ such that $\alpha^2 + a_2 \alpha \beta + \beta^2 =1$.
\end{enumerate}
\end{theoreme}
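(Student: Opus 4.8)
The plan is to follow the template of the proof of Theorem~\ref{extensionsthesame}, proving the cycle $(1)\Rightarrow(2)\Rightarrow(3)\Rightarrow(1)$. Only $(1)\Rightarrow(2)$ requires real work, and the cubic expansion it rests on also delivers the other two implications.

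For $(1)\Rightarrow(2)$: I would fix an $F$-isomorphism $L_1\simeq L_2$ and regard $y_1$ as an element of $L_2$ with $y_1^3-3y_1=a_1$ and $F(y_1)=L_2$. Writing $y_1=ey_2^2+fy_2+g$ in the $F$-basis $\{1,y_2,y_2^2\}$ of $L_2$ (with $(e,f)\neq(0,0)$ since $y_1\notin F$), I expand $(ey_2^2+fy_2+g)^3-3(ey_2^2+fy_2+g)$ and reduce it modulo $y_2^3-3y_2-a_2$, using $y_2^4=3y_2^2+a_2y_2$, $y_2^5=a_2y_2^2+9y_2+3a_2$, $y_2^6=9y_2^2+6a_2y_2+a_2^2$. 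The result is a polynomial of degree $\le2$ in $y_2$ which must lie in $F$, so the coefficients of $y_2^2$ and of $y_2$ vanish, giving two cubic relations $P_1=0$, $P_2=0$ in $e,f,g$. The key point is the identity $fP_1-eP_2=(2e+g)(f^3-3e^2f-a_2e^3)$: if the second factor vanished, then for $e\neq0$ the element $f/e\in F$ would be a root of $X^3-3X-a_2$, impossible since $y_2$ generates a cubic extension, and for $e=0$ the factor equals $f^3\neq0$. Hence $g=-2e$. Substituting this into $P_1=0$ and $P_2=0$ yields $e(e^2+a_2ef+f^2-1)=0$ and $f(e^2+a_2ef+f^2-1)=0$, so $e^2+a_2ef+f^2=1$ because $(e,f)\neq(0,0)$; I then set $\alpha=e$, $\beta=f$.

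For $(2)\Rightarrow(3)$: putting $e=\alpha$, $f=\beta$, $g=-2\alpha$ into the reduced expression, the $y_2^2$- and $y_2$-coefficients equal $\alpha(\alpha^2+a_2\alpha\beta+\beta^2-1)$ and $\beta(\alpha^2+a_2\alpha\beta+\beta^2-1)$, which vanish by the constraint, while the constant term simplifies to $\alpha^3a_2^2-3a_2\alpha^2\beta+a_2\beta^3-8\alpha^3+6\alpha$; since this equals $y_1^3-3y_1=a_1$, assertion $(3)$ follows. For $(3)\Rightarrow(1)$: given $\alpha,\beta\in F$ with $\alpha^2+a_2\alpha\beta+\beta^2=1$ and $a_1$ as in $(3)$, I set $z=\alpha y_2^2+\beta y_2-2\alpha\in L_2$; the same computation gives $z^3-3z=a_1$, so $z$ is a root of $X^3-3X-a_1$, which is irreducible over $F$ (being the minimal polynomial of the generator $y_1$ of the cubic extension $L_1$). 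Thus $z\notin F$ and $F(z)$ is a cubic subextension of $L_2$, forcing $F(z)=L_2$; and $F(z)\cong F[X]/(X^3-3X-a_1)\cong L_1$, so $L_1\simeq L_2$.

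The main obstacle is organisational rather than conceptual: carrying out the degree-six reduction without error and recognising the factorisation $fP_1-eP_2=(2e+g)(f^3-3e^2f-a_2e^3)$. This factorisation plays precisely the role that ``$a_2$ is not a cube'' played in Theorem~\ref{extensionsthesame}, the cube obstruction there being replaced here by the irreducibility of $X^3-3X-a_2$ over $F$.
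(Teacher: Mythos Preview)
Your proof is correct but takes a genuinely different route from the paper. The paper's strategy is to pass to the \emph{purely cubic closure}: by Theorem~\ref{purelycubic}, $L_i(c_i)/F(c_i)$ is purely cubic with generator $u_i=\frac{c_iy_i-1}{y_i-c_i}$ satisfying $u_i^3=c_i$, where $c_i$ is a root of $X^2+a_iX+1$. From $L_1\simeq L_2$ one gets $F(c_1)=F(c_2)$, and then the purely cubic criterion of Theorem~\ref{extensionsthesame} applies over $F(c_2)$, giving $u_1=d\,u_2^{\,j}$ for some $d\in F(c_2)$, $j\in\{1,2\}$. Converting back via $y_i=-\tfrac{1}{c_i}u_i^2-u_i$ produces $y_1=\alpha y_2^2+\beta y_2-2\alpha$ with $\alpha,\beta$ explicit in $c_2,d$, and the constraint $\alpha^2+a_2\alpha\beta+\beta^2=1$ drops out of the relation $c_2^2+a_2c_2+1=0$. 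Your approach is direct: expand, reduce modulo $y_2^3-3y_2-a_2$, and exploit the factorisation $fP_1-eP_2=(2e+g)(f^3-3e^2f-a_2e^3)$ to force $g=-2e$. The paper's method is conceptually cleaner and explains \emph{why} the norm-like condition appears (the introduction explicitly says this route was chosen to avoid the brute-force computations of~\cite{MWcubic}); your method is more elementary, and incidentally never invokes the irreducibility of $X^2+a_iX+1$---only that of $X^3-3X-a_2$---so it establishes the equivalence under a slightly weaker hypothesis.
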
 
\begin{proof} 
Suppose that $L_1/F$ and $L_2/F$ are two cubic extensions, with $y_i$ a generator of $L_i/F$ and $y_i ^3-3y_i = a_i$ with $a_i\in F$, for each $i=1,2$. We let $c_i$ be a root of the polynomial $X^2 + a_i X +1$, for each $i=1,2$.  

Suppose $L_1\simeq L_2$. We have that $L_1(c_1)/ F(c_1)$ is purely cubic by Theorem \ref{purelycubic} and since $L_1 \simeq L_2$, then $L_1(c_1) \simeq L_2(c_2)$; thus $L_2(c_2)/ F(c_1)$ is purely cubic and $F(c_1)= F(c_2)$ since $c_2 \in F(c_1)$. By Theorem \ref{purelycubic}, we then know that $$u_i = \frac{c_i y_i-1}{y_i-c_i}$$ is a pure cubic generator of $L_i (c_i)/ F(c_i)$ such that $u_i^3 = c_i$, $i=1,2$. 
By Lemma \ref{extensionsthesame}, we have that $u_1 = d u_2^j$ where $j=1,2$ and $d\in F(c_1)=F(c_2)$ and $c_1 = d^3 c_2^j$.
Note that $u_i$ can also be expressed via the basis $1, \ y_i , \ y_i^2$ as 
$$u_i = \frac{c_i}{c_ia_i +2} (y_i^2 +c_i y_i -2)$$ Similarly,
 $$y_i= \frac{c_i u_i-1}{u_i-c_i} =-\frac{1}{c_i}u_i^2-u_i.$$
We have \begin{align*} u_i^2 &= \frac{c_i^2}{(c_ia_i +2)^2} (y_i^2 +c_iy_i -2)^2\\&=\frac{c_i^2}{(c_ia_i +2)^2}((c_i^2-1)y_i^2+(a_i+2c_i)y_i+2c_ia_i+4)\\&=-\frac{c_i^2}{(c_ia_i +2)}(y_i^2+\frac{1}{c_i} y_i-2). \end{align*}
When $j=1$,
\begin{align*}y_1& = -\frac{1}{c_1}u_1^2-u_1=  -\frac{1}{dc_2}u_2^2-d u_2\\
&=\frac{c_2}{d(c_2a_2 +2)}(y_2^2+\frac{1}{c_2} y_2-2)- \frac{dc_2}{c_2a_2 +2} (y_2^2 +c_2y_2 -2) \qquad\qquad\qquad (*)\\
&=\frac{c_2(1-d^2)}{d(c_2a_2 +2)}y_2^2+\frac{(1-c_2^2d^2)}{d(c_2a_2 +2)}y_2-2\frac{c_2(1-d^2)}{d(c_2a_2 +2)} \\
 \end{align*}
We set $\alpha =  \frac{c_2(1-d^2)}{d(c_2a_2 +2)}$ and $\beta = \frac{(1-c_2^2d^2)}{d(c_2a_2 +2)}$. As $L_1\simeq L_2$ and $\{ y_2^2, \ y_2 ,\ 1\}$ form a basis, $\alpha $ and $\beta$ are in $F$. Moreover,
$$\alpha^2 + a_2 \alpha \beta + \beta^2 =1, $$ 
since $c_2^2 + a_2 c_2 +1=0$. 

When $j=2$, as $u_2^3=c_2$, we have
\begin{align*}y_1& = -\frac{1}{c_1}u_1^2-u_1=  -\frac{1}{dc_2^2}u_2^4-d u_2^2 =  -\frac{1}{d c_2}u_2-d u_2^2\\
&=-\frac{1}{d(c_2a_2 +2)}(y_2^2+c_2 y_2-2)+ \frac{dc_2^2}{c_2a_2 +2} (y_2^2 +\frac{1}{c_2}y_2 -2) \qquad \qquad \qquad (**)\\
&=\frac{(c_2^2d^2-1)}{d(c_2a_2 +2)}y_2^2+\frac{c_2(d^2-1)}{d(c_2a_2 +2)}y_2-2\frac{(c_2^2d^2-1)}{d(c_2a_2 +2)}\\
 \end{align*}
 As before, setting $\alpha = \frac{(c_2^2d^2-1)}{d(c_2a_2 +2)}$ and $\beta = \frac{c_2(d^2-1)}{d(c_2a_2 +2)}$, we have that $\alpha $ and $\beta$ are in $F$ and
 $$\alpha^2 + a \alpha \beta + \beta^2 =1.$$
 Conversely, if $y_1 = \alpha y_2^2 + \beta y_2 -2 \alpha $, where $\alpha, \beta \in F$ are such that $\alpha^2 + a_2 \alpha \beta + \beta^2 =1$, and $y_2$ is such that $y_2^3-3y_2-a_2=0$, then 
 \begin{align*}
 y_1^3 -3y_1&= 3\alpha (\alpha ^2+\alpha a_2\beta+\beta^2-1)y_2^2 \\& \qquad\qquad \qquad+3\beta(\alpha^2+a_2\alpha \beta+\beta^2-1)y_2-3a_2\alpha^2\beta+a_2\beta^3+6\alpha+\alpha^3a_2^2-8\alpha^3 
 \\ &=-3a_2\alpha^2\beta+a_2\beta^3+6\alpha+\alpha^3a_2^2-8\alpha^3.
 \end{align*} Clearly, $(2)$ and $(3)$ are also equivalent.
\end{proof}
The next corollary then describes the Galois action on the generator in the Galois closure. 
\begin{corollaire}\label{rootour}
Let $y_1$ and $y_2$ be two distinct roots of an irreducible polynomial $X^3-3X-a$ in $\overline{F}$. Then 
$$y_1 =-\frac{1+2f}{a} y_2^2 +f y_2 + \frac{2(1+2f)}{a}$$ where $f\in \overline{F}$ is a root of the polynomial
$$X^2 + X + \frac{a^2-1}{a^2-4} $$
\end{corollaire}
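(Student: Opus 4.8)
The plan is to derive this corollary as a direct specialisation of Theorem~\ref{extensionsthesame1} to the case $L_1 = L_2$ and $a_1 = a_2 = a$, followed by a normalisation that pins down the coefficient $\alpha$. Here $y_1$ and $y_2$ are two roots of the same irreducible polynomial $X^3 - 3X - a$, so both generate the \emph{same} (Galois) extension $L = F(y_1) = F(y_2)$ of $F$, and the hypotheses of Theorem~\ref{extensionsthesame1} are met with $a_1 = a_2 = a$. Therefore there exist $\alpha, \beta \in F$ (indeed in $\overline{F}$, which is all we need) with $\alpha^2 + a\alpha\beta + \beta^2 = 1$ such that $y_1 = \alpha y_2^2 + \beta y_2 - 2\alpha$, and moreover assertion~(3) of that theorem gives $a = -3a\alpha^2\beta + a\beta^3 + 6\alpha + \alpha^3 a^2 - 8\alpha^3$.

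First I would rename $\beta =: f$ so that the expression for $y_1$ takes the shape asserted, leaving only the identification $\alpha = -\frac{1+2f}{a}$ and the quadratic $f^2 + f + \frac{a^2-1}{a^2-4} = 0$ to be extracted from the two relations $\alpha^2 + a\alpha f + f^2 = 1$ and $a = -3a\alpha^2 f + a f^3 + 6\alpha + \alpha^3 a^2 - 8\alpha^3$. The second relation can be rewritten as $a(1 - f^3 + 3\alpha^2 f) = \alpha(6 + \alpha^2(a^2 - 8)) = \alpha\bigl(6 + \alpha^2(a^2-8)\bigr)$; using the norm relation to replace $\alpha^2 = 1 - a\alpha f - f^2$ where convenient, one reduces this to a \emph{linear} relation between $\alpha$ and $f$. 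The cleanest route: from $\alpha^2 + a\alpha f + f^2 = 1$, view it as a quadratic in $\alpha$ and solve, but it is more efficient to use the constant-coefficient identity directly — substitute $\alpha^2 = 1 - a\alpha f - f^2$ repeatedly into $a = -3a\alpha^2 f + af^3 + 6\alpha + a^2\alpha^3 - 8\alpha^3$ until all powers of $\alpha$ above the first are eliminated, which should yield $a(\text{something in } f) = \alpha \cdot(\text{something in } f)$, hence $\alpha = -\frac{1+2f}{a}$ after simplification.

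Once $\alpha = -\frac{1+2f}{a}$ is established, substituting back into the norm condition $\alpha^2 + a\alpha f + f^2 = 1$ gives
\[
\frac{(1+2f)^2}{a^2} - (1+2f)f + f^2 = 1,
\]
i.e. $(1+2f)^2 - a^2 f(1+2f) + a^2 f^2 = a^2$, which expands and collapses to $(4 - a^2)f^2 + (4-a^2)f + (1 - a^2) = 0$; dividing by $4 - a^2 = -(a^2-4)$ yields exactly $f^2 + f + \frac{a^2-1}{a^2-4} = 0$, the claimed quadratic. Finally, $-2\alpha = \frac{2(1+2f)}{a}$ matches the constant term in the asserted formula for $y_1$, completing the derivation.

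The main obstacle I anticipate is purely computational: reliably eliminating the higher powers of $\alpha$ from the constant-coefficient relation $a = -3a\alpha^2\beta + a\beta^3 + 6\alpha + \alpha^3 a^2 - 8\alpha^3$ using the constraint $\alpha^2 + a\alpha\beta + \beta^2 = 1$ to arrive at a genuinely linear relation $\alpha = -\frac{1+2f}{a}$ — the $\alpha^3$ terms, once reduced via $\alpha^3 = \alpha(1 - a\alpha f - f^2)$ and a second reduction of the resulting $\alpha^2$, produce several cancellations that must be tracked carefully. There is also a minor bookkeeping point: since $X^3 - 3X - a$ is irreducible of degree $3$ (not necessarily Galois a priori), one should note that distinctness of $y_1, y_2$ together with $y_1 \in F(y_2) = L$ is exactly what licenses the application of Theorem~\ref{extensionsthesame1} with $L_1 = L_2 = L$; irreducibility of $X^2 + aX + 1$ over $F$ — required as a hypothesis there — holds because a root of it would, by Theorem~\ref{purelycubic}, make $L/F$ purely cubic, i.e. $X^3 - 3X - a$ would be equivalent to $X^3 - c$, and in either the purely cubic or the impurely cubic case the conclusion of Corollary~\ref{GApure} or the present corollary is what is being asserted; one can simply assume here that we are in the impure case so that Theorem~\ref{extensionsthesame1} applies verbatim, the pure case being covered by Corollary~\ref{GApure}.
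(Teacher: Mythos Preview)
Your strategy differs from the paper's: rather than treating Theorem~\ref{extensionsthesame1} as a black box, the paper returns to the explicit formula~$(*)$ derived inside that proof. In the purely cubic closure one has $u_i^3=c$ and, by Corollary~\ref{GApure}, the two purely cubic generators satisfy $u_1=\xi u_2$; this forces $d=\xi$ (and $j=1$) in~$(*)$, giving $\alpha=\dfrac{c(1-\xi^2)}{\xi(ca+2)}$ and $\beta=\dfrac{1-\xi^2c^2}{\xi(ca+2)}$ explicitly in terms of $c$ and $\xi$. It is then a short direct calculation, using only $c^2+ac+1=0$, to check that $f:=\beta$ satisfies $f^2+f=\tfrac{a^2-1}{a^2-4}$ and that $\alpha=-\tfrac{1+2f}{a}$. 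This route never needs the irreducibility of $X^2+aX+1$ over $F$, so the pure and impure cases are handled simultaneously.

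Your elimination step, as written, does not go through. Reducing the cubic-in-$\alpha$ relation~(3) modulo the constraint $\alpha^2+a\alpha f+f^2=1$ does give a relation linear in $\alpha$, namely
\[
\alpha\,(a^2-2)\bigl[(a^2-4)f^2+1\bigr]\;=\;a\bigl[1+(a^2-5)f-(a^2-4)f^3\bigr],
\]
but this is \emph{not} equivalent to $a\alpha=-(1+2f)$; comparing $f^3$-coefficients shows the two agree only when $a^2=4$. The reason is that the system has \emph{three} solutions $(\alpha,f)$, one for each root $y_1$, including the trivial $(0,1)$ coming from $y_1=y_2$: the relation $a\alpha+1+2f=0$ is only obtained after combining with the quadratic constraint again and factoring out $(f-1)$. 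A clean repair of your argument is to reverse the logic: verify directly that $\alpha=-\tfrac{1+2f}{a}$ together with $f^2+f+\tfrac{a^2-1}{a^2-4}=0$ satisfies both the norm condition (your computation) and relation~(3), then observe that these two solutions plus the trivial $(0,1)$ exhaust the three possibilities by linear independence of $1,y_2,y_2^2$. Finally, your dismissal of the pure case is not quite right: Corollary~\ref{GApure} concerns the purely cubic generators $u_i$ with $u_i^3=c$, not the $y_i$ with $y_i^3-3y_i=a$, so it does not by itself yield the present formula; that is precisely why the paper's route through the proof of Theorem~\ref{extensionsthesame1} is needed.
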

\begin{proof}
From Corollary \ref{GApure} and following the proof of Theorem  \ref{extensionsthesame1}, as $u_1=\xi u_2$, we have that $(*)$ is satisfied with $d=\xi$. Thus,
$$y_1 = \frac{c(1-\xi^2)}{\xi(ca +2)}y_2^2+\frac{(1-\xi^2c^2)}{\xi(ca +2)}y_2-2\frac{c(1-\xi^2)}{\xi(ca+2)} \qquad\qquad(1) $$
In $(1)$ we set $f = \frac{1-\xi^2c^2}{\xi(ca +2)}=-\frac{\xi c a+1}{\xi(ca +2)}$, then 
$$f^2 +f=   \frac{(\xi c a-1)^2 +(\xi c a-1)(ca+2) }{(ca +2)^2 }= \frac{(a^2-1)(ca+1)}{c^2a^2+4 ca+4}= \frac{a^2-1}{a^4-4} $$
since $c^2 = -ca-1$. Moreover, $\frac{2f+1}{a}= \frac{c(1-\xi^2)}{\xi(ca +2)}$. Hence the result.
\end{proof}
\begin{remarque} \label{rootourre}
Note that $f$ in Corollary \ref{rootour} is such that $f=\frac{-6+ a r}{3(a^2-4)}$, where $r$ is a root of the quadratic resolvent 
$$R(X)= X^2 +3 a X + (-27+9a^2)$$ of the polynomial $f(X)$.
\end{remarque}

\subsubsection{$p=3$} 
When $p=3$, if $L_1\simeq L_2$, then $L_1$ is separable if, and only if, $L_2$ is separable. Note that if $L_1$ and $L_2$ is inseparable, then $L_1\simeq L_2$, by the proof of Lemma \ref{extensionsthesame}, since $(1)$ and $(2)$ in the proof are always satisfied. If $L_1$ and $L_2$ are separable, then the following Theorem accomplishes what we need, in accordance with Corollary \ref{classification}. 
\begin{theoreme} \label{char3extensionsthesame}
Suppose that $p= 3$, and let $L_i = F(y_i )/F$ $(i = 1, 2)$ be two separable extensions of degree $3$ of the form $y_i^3 + a_i y_i + a_i^2=0$, $a_i \in F$, $i = 1, 2$. Then the following statements are equivalent:
\begin{enumerate}
\item $L_1 \simeq L_2$.
\item $y_2 =-\beta (\frac{j}{a_1}y_1+ \frac{1}{a_1} w )$
  where $w\in F$, $j=1,2$ and $\beta = - j a_1 - (\frac{ w^3}{a_1} +w) \in F$;
\item  $$a_2  = \frac{(ja_1^2 + (w^3 + a_1 w) )^2}{a_1^3},$$ where $j=1,2$ and $w\in F$.
\end{enumerate}
\end{theoreme}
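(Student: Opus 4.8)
The plan is to reduce the statement to classical Artin--Schreier theory, carried out over the quadratic subextension of the Galois closure cut out by the discriminant. Throughout one uses that $\FF_3 \subseteq F$, that $a_i \neq 0$ (otherwise $y_i = 0$), that $X^3 + a_i X + a_i^2$ is separable (its derivative is the nonzero constant $a_i$) and irreducible over $F$ (it is the minimal polynomial of the generator $y_i$), and that for $j \in \{1,2\}$ one has $j^2 = 1$, $j^3 = j$ and $1/j = j$ in $\FF_3$.

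First I would settle the equivalence of $(2)$ and $(3)$ by a direct computation. Set $z := -\beta\bigl(\tfrac{j}{a_1}y_1 + \tfrac{1}{a_1}w\bigr) = \tfrac{jy_1 + w}{\mu}$ with $\mu := -a_1/\beta \in F$. Expanding $(jy_1 + w)^3 = j\,y_1^3 + w^3$ (the middle binomial terms vanish in characteristic $3$, and $j^3 = j$) and substituting $y_1^3 = -a_1 y_1 - a_1^2$ shows that $z$ is a root of a depressed cubic $X^3 + PX + Q \in F[X]$ with $P = a_1/\mu^2$; the prescribed formula for $\beta$ is equivalent to the identity $Q = P^2$, and then $P$ equals the value of $a_2$ appearing in $(3)$. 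The converse runs the same computation in reverse, solving for $w$ (and $\mu$) given $a_2$. Any sign discrepancies that surface are cosmetic, absorbed into the choices $j \in \{1,2\}$ and $w \in F$. With this in hand, $(2)\Rightarrow(1)$ is immediate: under the identification of $L_1$ with $L_2$ implicit in $(2)$, the element $z$ lies in $L_1$ and, by the above, is a root of the irreducible polynomial $X^3 + a_2 X + a_2^2$, so $L_1 = F(z) \cong F[X]/(X^3 + a_2 X + a_2^2) \cong L_2$.

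The substance is $(1)\Rightarrow(3)$. Assuming $L_1 \simeq L_2$, fix an $F$-isomorphism $\iota\colon L_1 \to L_2$ and put $z := \iota(y_1)$, so that $L_2 = F(z)$ and $z^3 + a_1 z + a_1^2 = 0$; this transports the whole problem inside $L_2$. The discriminant of $X^3 + aX + a^2$ in characteristic $3$ is $-a^3$, so the discriminant field of $L_2/F$ --- an invariant of the extension, unchanged up to squares under a change of generator --- equals both $F(\sqrt{-a_1})$ and $F(\sqrt{-a_2})$; call it $E$, noting $[E:F] \le 2$ with equality unless $L_2/F$ is Galois. Hence $a_1/a_2$ is a square in $F$: choose $\mu \in F^\times$ and $\delta_i = \sqrt{-a_i} \in E$ with $a_1 = \mu^2 a_2$ and $\delta_1 = \mu\delta_2$. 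Over $E$ the extension $L_2E/E$ is cyclic of degree $3$, and the substitutions $z = \delta_1 u$, $y_2 = \delta_2 u'$ turn the two generating equations into Artin--Schreier equations $u^3 - u = -\delta_1$ and $u'^3 - u' = -\delta_2$, with $E(u) = E(u') = L_2E$. Artin--Schreier theory then gives $u = j u' + c$ for some $j \in \{1,2\}$ and $c \in E$, hence $z = j\mu y_2 + c\delta_1$. Now $c\delta_1 = z - j\mu y_2$ lies in $L_2 \cap E$, which is $F$ since the degrees are coprime; writing $w_0 := c\delta_1 \in F$ and using $1/j = j$, we get $y_2 = \tfrac{jz + w}{\mu}$ with $w := -jw_0 \in F$. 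Finally, feeding $y_2 = \tfrac{jz + w}{\mu}$ into the computation of the first paragraph (with $z$ in the role of $y_1$) and using that $y_2$ is already known to satisfy $y_2^3 + a_2 y_2 + a_2^2 = 0$ forces the asserted relation between $\mu$, $j$, $w$ --- equivalently $a_2 = a_1/\mu^2$ together with the explicit denominator $ja_1^2 + (w^3 + a_1 w)$ --- which is $(3)$; transporting back through $\iota^{-1}$ gives $(2)$.

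I expect the main obstacle to be the Galois-theoretic bookkeeping in this last step: checking that the discriminant field is genuinely canonical (so that $\sqrt{-a_1}$ and $\sqrt{-a_2}$ really generate the same $E$ once $L_1$ and $L_2$ are identified), that $L_2E/E$ is cyclic of degree $3$ uniformly whether or not $L_i/F$ is Galois (i.e.\ whether $[E:F]$ is $1$ or $2$), and that $c\delta_1$ descends to $F$; once these are secured, everything else is routine characteristic-$3$ algebra, modulo tracking the $\FF_3$-scalar $j$ and the signs so that the output matches $(2)$ and $(3)$ exactly.
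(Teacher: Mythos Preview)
Your proposal is correct and follows essentially the same strategy as the paper: pass to the discriminant field $E=F(\sqrt{-a_i})$, recognise $L_iE/E$ as Artin--Schreier via the substitution $y_i=\delta_i u_i$, apply the standard Artin--Schreier comparison $u=ju'+c$, and descend back to $F$. The paper carries this out with $b_i=\delta_i$ and $\beta=b_1b_2$, arriving at the same formulae.

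The only notable difference is in the descent step. The paper argues that since $y_2\in F(y_1)$ and $\{1,y_1,y_1^2\}$ is an $F$-basis, the coefficients $\tfrac{jb_2}{b_1}$ and $b_2c$ must lie in $F$, whence $w:=b_1c\in F$. You instead observe that $c\delta_1=z-j\mu y_2$ lies in $L_2\cap E$, which equals $F$ because $[L_2:F]=3$ and $[E:F]\le 2$ are coprime. Both arguments are valid; yours is arguably cleaner and handles the Galois case $E=F$ more transparently. Your remark that sign discrepancies are absorbed by $w\mapsto -w$ is also apt: the paper's derivation of $\beta$ in fact contains a sign slip (the Artin--Schreier relation for $X^3-X+b_i$ gives $-b_2=-jb_1+(c^3-c)$, not $b_2=jb_1+(c^3-c)$), but as you note this is cosmetic since the statement quantifies existentially over $w\in F$.
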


\begin{proof}
Via evaluation of the discriminant of the polynomial $X^3 +a_iX +a_i^2$, the Galois closure of $L_i$ is seen to be $L_i ( b_i)$ where $b_i^2 =-a_i$. Moreover, $L_i(b_i)/ F_i (b_i)$ is an Artin-Schreier extension with Artin-Schreier generator $\frac{y_i}{b_i}$ possessing minimal polynomial $X^3 - X +b_i$. Suppose that $L_1\simeq L_2$. Then $L_1(b_1) = L_2(b_2)$ is the common Galois closure of $L_1/F$ and $L_2/F$, and $\frac{y_1}{b_1}$ and $\frac{y_2}{b_2}$ are two Artin-Schreier generators of the same Artin-Schreier extension. Thus, by \cite[Proposition 5.8.6]{Vil}, we know that $ \frac{y_2}{b_2} =j \frac{y_1}{b_1}+ c$ with $1 \leq j \leq 2$ and $c \in F$, and that 
 $$ b_2 = jb_1 + (c^3 - c). \qquad\qquad\qquad \ (*)$$ We have
 \begin{align*} y_2 &= b_2 \left(j \frac{y_1}{b_1}+ c\right)= \frac{jb_2}{b_1}y_1+ b_2 c=-\frac{jb_2b_1}{a_1}y_1+ b_2 c\\& =-\frac{jb_2b_1}{a_1}y_1 - \frac{b_2b_1}{a_1} b_1c =-b_1b_2\left(\frac{j}{a_1}y_1+ \frac{1}{a_1} b_1c \right).\end{align*}
As $y_2\in F(y_1)$, we have that $b_2 c\in F$, $\frac{b_2}{b_1}\in F$, whence $w:=b_1c\in F$. Multiplication of $(*)$ by $b_1$ yields
 \begin{align*}  \beta:= b_1b_2 &= -ja_1 + (b_1c^3 - b_1c)\\
 &=- ja_1- \left(\frac{w^3}{a_1} + w\right). \\
 \end{align*}
 Thus,
 $$a_2 = \frac{(-ja_1 - (\frac{w^3}{a_1} + w) )^2}{a_1} = \frac{(ja_1^2 + (w^3 + a_1 w) )^2}{a_1^3} $$
 and
\begin{equation} y_2 = - \beta \left(\frac{j}{a} y_1 + \frac{1}{a_1}w\right) .\label{thisthing} \end{equation}
Conversely, suppose that \eqref{thisthing} holds, where $w\in F$ and $\beta =-  j a_1 - (\frac{ w^3}{a_1} +w)$. Then, since $y_1^3= -a_1y_1 -a_1^2$, we have
 \begin{align*} y_2^3&= -\beta^3 ( \frac{j}{a_1}y_1+ \frac{1}{a_1} w )^3 = -\beta^3  \frac{j}{a_1^3}y_1^3- \frac{\beta^3}{a_1^3} w^3  = \beta^3  \frac{j}{a_1^2}y_1 + \beta^3  \frac{j}{a_1}- \frac{\beta^3}{a_1^3} w^3 \\ 
 &=- \frac{ \beta^2}{a_1} \left( \beta \left( \frac{j}{a_1} y_1 + \frac{1}{a_1}w \right) \right) +\frac{\beta^3 }{a_1^2}\left(  w +ja_1 - \frac{w^3}{a_1}\right)  \\ 
&=-\frac{ \beta^2}{a_1}  y_2 -\frac{ \beta^4}{a_1^2}   =-a_2 y_2 - a_2^2. \end{align*}
where $a_2= \frac{ \beta^2}{a_1} $.
Finally, one may easily compute that conditions $(2)$ and $(3)$ are equivalent. 
 \end{proof}
 In this case, we also deduce from Lemma \ref{char3extensionsthesame} the Galois action on the generator in the Galois closure via the following corollary. 
 \begin{corollaire} Let $p = 3$, and let $y_1$ and $y_2$ be two distinct roots in $\overline{F}$ of an irreducible polynomial $X^3+aX+a^2$. Then $y_1 =  y_2+ lb$, where $b^2 = -a$ and $l=1, 2$. 
 \end{corollaire}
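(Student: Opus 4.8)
The plan is to reduce, exactly as in the proof of Theorem~\ref{char3extensionsthesame}, to the Artin--Schreier situation, and then to invoke the elementary fact that the roots of an Artin--Schreier polynomial in characteristic $3$ form a single $\mathbb{F}_3$-coset.

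First I would fix a square root $b \in \overline{F}$ of $-a$. Note $a \neq 0$, since otherwise $X^3 + aX + a^2 = X^3$ is not irreducible; hence $b \neq 0$. Since $y_1$ and $y_2$ are both roots of the \emph{same} polynomial $X^3 + aX + a^2$, the normalisation carried out in the proof of Theorem~\ref{char3extensionsthesame} shows that $z_i := y_i/b$ satisfies $z_i^3 - z_i + b = 0$ for each $i = 1,2$, with the \emph{same} $b$ on the right-hand side precisely because the two minimal polynomials share the coefficient $a$. Thus $z_1$ and $z_2$ are two roots of the single Artin--Schreier polynomial $X^3 - X + b$.

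Next I would recall that in characteristic $3$ this polynomial has derivative $-1$, hence is separable, and that if $z$ is one of its roots then so are $z+1$ and $z+2$; consequently any two of its roots differ by an element of $\mathbb{F}_3$. Applying this to $z_1$ and $z_2$, and using that $y_1 \neq y_2$ forces $z_1 \neq z_2$ (as $b \neq 0$), we obtain $z_1 = z_2 + l$ with $l \in \{1,2\}$, that is, $y_1 = y_2 + lb$ with $b^2 = -a$, which is the assertion.

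There is essentially no serious obstacle here; the only point requiring a little care is to commit to a single fixed choice of $b$ with $b^2 = -a$, so that $y_1/b$ and $y_2/b$ genuinely satisfy the \emph{same} Artin--Schreier equation -- once that is in place, the $\mathbb{F}_3$-torsor structure of the root set does all the work. Equivalently, one may feed $L_1 = F(y_1)$, $L_2 = F(y_2)$ and $a_1 = a_2 = a$ into Theorem~\ref{char3extensionsthesame}, observe that the parameter $j$ must then equal $1$ and that the resulting $c$ lies in $\mathbb{F}_3 \setminus \{0\}$, and read off the same conclusion.
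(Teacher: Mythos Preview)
Your proof is correct and follows essentially the same route as the paper: both reduce via $z_i = y_i/b$ to the Artin--Schreier polynomial $X^3 - X + b$ and then use that its roots form an $\mathbb{F}_3$-coset, so that distinct roots differ by $l \in \{1,2\}$. Your write-up is in fact a little more careful than the paper's (you explicitly note $a \neq 0$ and check separability), but the argument is the same.
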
 
 \begin{proof} Following the notation in the proof of Theorem \ref{char3extensionsthesame}, we have that $\frac{y_1}{b} = \frac{y_2}{b} +l$  where $l=1,2$ and $b^2 = -a$ since $\frac{y_i}{b}$, $i=1,2$ are roots of an Artin-Schreier polynomial. Thus $y_1 =  y_2+ l b$. 
 \end{proof}
 \subsection{Galois cubics}
 We conclude this section with the classification of Galois cubics. In this section, we will prove that if $L/F$ is a Galois cubic extension, then
 \begin{enumerate} 
 \item If $p=3$, then $L/F$ is an {\sf Artin-Schreier} extension; that is, there is a generator $y$ of $L/F$ such that $y^3 -y-a$ with $a \in F$. 
 \item If $p\neq 3$ and $F$ contains a primitive $3^{rd}$ root of unity, then $L/F$ is an {\sf Kummer} extension; that is, there is a generator $y$ of $L/F$ such that $y^3 -a$ with $a \in F$.
  \item If $p\neq 3$ and $F$ does not contain a primitive $3^{rd}$ root of unity, then $L/F$ is an extension with a generator $y$ of $L/F$ such that $y^3 -3y- \frac{2a^2+2a-1}{a^2+  a+1}$ with $a \in F$. 
 \end{enumerate}
Parts $(1)$ and $(2)$ are well known; we include a proof to illustrate how these forms can be deduced directly from Corollary \ref{classification} to give a explicit Artin-Schreier (resp., Kummer) generator.
 \begin{theoreme} \label{AS}
Let $p=3$, let $L/F$ be a Galois extension of degree $3$. Then there is a primitive element $z$ such that its minimal polynomial is of the form $R(z)= z^3 -z -a$. Furthermore, this primitive element is explicitly determined.
\end{theoreme}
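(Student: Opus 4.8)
The plan is to start from the form guaranteed by Corollary \ref{classification}. Since $p=3$ and $L/F$ is separable (being Galois of degree $3$, hence separable), Corollary \ref{classification} gives a generator $y$ of $L/F$ whose minimal polynomial is of the type $X^3 + ay + a^2$, that is, $y^3 + ay + a^2 = 0$ for some $a \in F$ with $a \neq 0$ (if $a = 0$ the polynomial would be reducible). The goal is to produce from this a genuine Artin--Schreier generator $z$ with $z^3 - z - b = 0$ for some $b \in F$, and to do so explicitly.

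First I would pass to the Galois closure to see what the obstruction is, then observe it vanishes because $L/F$ is already Galois. Following the computation already appearing in the proof of Theorem \ref{char3extensionsthesame}, the discriminant of $X^3 + aX + a^2$ (up to squares) is $-a$, so the quadratic subextension of the Galois closure of $F(y)$ is $F(b)$ where $b^2 = -a$; and over $F(b)$ the element $y/b$ satisfies $(y/b)^3 - (y/b) + b = 0$, an Artin--Schreier equation. Since $L/F$ is Galois of degree $3$, its Galois closure is $L$ itself, so the quadratic piece $F(b)$ must already coincide with $F$; equivalently, $-a$ is a square in $F$, say $-a = b^2$ with $b \in F$. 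Now $z := y/b \in L$ is a generator of $L/F$ (it differs from $y$ by the nonzero scalar $1/b \in F$), and directly $z^3 - z = (y^3 - b^2 y)/b^3 = (y^3 + ay)/b^3 = -a^2/b^3 = -a^2/b^3 \in F$. Setting $\tilde a := -a^2/b^3$ gives $z^3 - z - \tilde a = 0$, so $R(z) = z^3 - z - \tilde a$ is the minimal polynomial of $z$ (it is irreducible since $z$ generates the degree-$3$ extension $L/F$), and every quantity involved is written explicitly in terms of $a$ and a square root $b$ of $-a$.

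The one genuine point requiring care — and the place I expect to spend the most effort — is justifying that $-a$ is a square in $F$, i.e.\ that $b \in F$ rather than merely $b \in \overline{F}$. The cleanest argument is the Galois-theoretic one above: the splitting field of $X^3 + aX + a^2$ over $F$ contains $b$ (as the difference of two roots, up to a unit, or via the discriminant), the extension $F(y)/F$ is normal by hypothesis hence equals its own splitting field, and $[F(y):F] = 3$ is odd while $F(b)/F$ has degree dividing $2$, forcing $F(b) = F$. One should double-check the discriminant computation in characteristic $3$: for $X^3 + pX + q$ the discriminant is $-4p^3 - 27q^2$, which in characteristic $3$ reduces to $-4p^3 = -p^3$ (as $27 = 0$), and with $p = a$, $q = a^2$ this is $-a^3 = (-a) \cdot a^2$, a square times $-a$; so indeed the Galois closure is obtained by adjoining $\sqrt{-a}$, consistent with the $y/b$ computation. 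Once this is in place the rest is the short explicit substitution above, and the theorem follows.
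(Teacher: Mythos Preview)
Your proposal is correct and follows essentially the same route as the paper: start from the form $y^3+ay+a^2=0$ of Corollary~\ref{classification}, compute the discriminant (which equals $-4a^3$, a square times $-a$), use that $L/F$ is Galois to conclude $-a=b^2$ for some $b\in F$, and then check that $z=y/b$ is an Artin--Schreier generator. The paper's argument is terser (it simply invokes ``the discriminant is a square'' rather than your Galois-closure degree argument), but the substance is identical.
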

\begin{proof} 
By Corollary \ref{classification}, we know that there is a primitive element $y'$ such that its minimal polynomial is of the form $$S(X)=X^3 +b X + b^2.$$ The discriminant of such a polynomial is equal to $-4b^3$. As $L/F$ is Galois, the discriminant is a square, thus $-b$ is a square, say $-b= a^2$. With $y= y'/a$, it follows that $y^3 -y =a$. 
\end{proof} 
In the case that $p \neq 3$, a cubic extension $L/K$ being Galois, $F$ containing a primitive $3^{rd}$ root of unity and $L/K$ being purely cubic are closely related as explained in the following theorem.
\begin{theoreme} \label{purely}
Let $p \neq 3$. Let $L/F$ be a cubic extension.
\begin{enumerate} 
\item If $F$ contains a primitive $3^{rd}$ root of unity, then $L/F$ is Galois if, and only if, it is purely cubic.
\item If $L/F$ is purely cubic, then $L/F$ is Galois if, and only if, $F$ contains a primitive $3^{rd}$ root of unity.
\end{enumerate}
\end{theoreme}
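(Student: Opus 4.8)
The plan is to reduce both equivalences to the pure form $X^{3}-a$, leaning on the classification (Corollary \ref{classification}), the purely cubic criterion (Theorem \ref{purelycubic}), and elementary splitting-field considerations. First I would dispatch the two ``if'' implications, which in fact coincide: if $L/F$ is purely cubic \emph{and} $F$ contains a primitive $3^{rd}$ root of unity $\xi$, then $L/F$ is Galois. Indeed, writing $L=F(y)$ with $y^{3}=a\in F$ (so $a\neq 0$, since $a$ is not a cube in $F$), the roots of $X^{3}-a$ in $\overline{F}$ are $y$, $\xi y$, $\xi^{2}y$, all of which lie in $L$ because $\xi\in F$; hence $L$ is a splitting field of $X^{3}-a$ over $F$, so $L/F$ is normal, and it is separable because $p\neq 3$ forces $X^{3}-a$ to have three distinct roots. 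This settles the ``if'' direction of (1) and the ``if'' direction of (2) simultaneously.

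For the ``only if'' direction of (2), suppose $L=F(y)$ is purely cubic (so that $X^{3}-a$, with $a=y^{3}$, is the minimal polynomial of $y$ and hence irreducible over $F$) and $L/F$ is Galois. By normality $X^{3}-a$ splits in $L$, and, being separable, it has a root $y'\neq y$ in $L$; then $\xi:=y'/y\in L$ satisfies $\xi^{3}=1$ and $\xi\neq 1$, so $\xi$ is a primitive $3^{rd}$ root of unity (again because $X^{2}+X+1$ is separable when $p\neq 3$). Since $F\subseteq F(\xi)\subseteq L$ with $[L:F]=3$ prime, while $\xi$ is a root of the degree-$2$ polynomial $X^{2}+X+1\in F[X]$, we are forced to have $[F(\xi):F]=1$; that is, $\xi\in F$.

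For the ``only if'' direction of (1), suppose $F$ contains $\xi$ and $L/F$ is Galois; I must show $L/F$ is purely cubic. By Corollary \ref{classification}, $L$ admits a generator whose minimal polynomial is either $X^{3}-a$ --- in which case $L/F$ is purely cubic by definition --- or $X^{3}-3X-a$ for some $a\in F$. In the latter case, $L/F$ being Galois forces the quadratic resolvent $R(X)=X^{2}+3aX+(9a^{2}-27)$ to have a root $r\in F$, by \cite[Theorem 2.3]{Con} (exactly as in the proof of Corollary \ref{purelycubicclosurecorollary}). The computation carried out in that proof is a purely formal identity, valid over any field of characteristic $\neq 3$; it exhibits $c_{+}=\bigl(\tfrac{2}{9}r+\tfrac{1}{3}a\bigr)\xi+\tfrac{1}{9}r-\tfrac{1}{3}a$ as a root of $X^{2}+aX+1$, and since $r,a,\xi\in F$ this root lies in $F$. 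By Theorem \ref{purelycubic}, $L/F$ is then purely cubic.

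I expect the only delicate point to be this last step: one must check that the resolvent construction from the proof of Corollary \ref{purelycubicclosurecorollary} may legitimately be reused here, even though that corollary is stated under the opposite hypothesis $\xi\notin F$ --- but the content actually used is just the algebraic identity $S(ur+v)=0$ for the explicit $u,v$ appearing there, which holds unconditionally when $p\neq 3$ (note that irreducibility of $X^{3}-3X-a$ already gives $a\neq 0,\pm 2$, so no denominator there vanishes). A fully self-contained alternative for this step is a Lagrange-resolvent argument: writing $\Gal(L/F)=\langle\sigma\rangle$ and taking any $\alpha\in L\setminus F$, at least one of the resolvents $\theta_{\zeta}=\alpha+\zeta\sigma(\alpha)+\zeta^{2}\sigma^{2}(\alpha)$, as $\zeta$ ranges over the primitive $3^{rd}$ roots of unity in $F$, is nonzero (since $\tfrac{1}{3}$ times the sum of $\theta_{\zeta}$ over all three cube roots of unity equals $\alpha$, using $p\neq 3$), and such a $\theta_{\zeta}$ satisfies $\sigma(\theta_{\zeta})=\zeta^{-1}\theta_{\zeta}$, whence $\theta_{\zeta}\notin F$, $F(\theta_{\zeta})=L$, and $\theta_{\zeta}^{3}\in F$, so $L/F$ is purely cubic.
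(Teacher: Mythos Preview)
Your proof is correct. The ``if'' directions and your main argument for the ``only if'' of (1) match the paper's approach: the paper also reduces via Corollary~\ref{classification} to the form $X^{3}-3X-a$ and then invokes Corollary~\ref{purelycubicclosurecorollary} to conclude that $F(\xi)$ is the purely cubic closure, hence $L/F$ is purely cubic when $\xi\in F$. You are right to flag that Corollary~\ref{purelycubicclosurecorollary} is stated under the hypothesis $\xi\notin F$; the paper silently relies on the fact that the underlying identity $F(c_{\pm})=F(\xi)$ is a formal computation valid whenever $p\neq 3$, and your explicit verification of the non-vanishing of the denominators (via $a\neq 0,\pm 2$) is a genuine improvement in rigor.

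Where you diverge from the paper is in the ``only if'' of (2). The paper argues via \cite[Theorem 2.3]{Con}, splitting into cases $p\neq 2$ (discriminant $-27a^{2}$ a square forces $-3$ a square, hence $\xi\in F$) and $p=2$ (reducibility of the resolvent $X^{2}+aX+a^{2}$ forces reducibility of $X^{2}+X+1$). Your argument --- pass to a second root $y'$ of $X^{3}-a$ in $L$, set $\xi=y'/y$, and trap $[F(\xi):F]$ between a divisor of $3$ and a number at most $2$ --- is more elementary, uniform in the characteristic, and avoids the external reference entirely. Your Lagrange-resolvent alternative for (1) is likewise more self-contained than the paper's route through the purely cubic closure; it is the classical Kummer-theory proof and sidesteps any worry about reusing Corollary~\ref{purelycubicclosurecorollary} outside its stated hypotheses. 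The paper's approach, on the other hand, has the virtue of producing an explicit purely cubic generator in terms of the original data, consistent with the paper's emphasis on explicit formulas throughout.
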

\begin{proof} 
Let $p \neq 3$. Let $L/F$ be a cubic extension.
\begin{enumerate}
\item Suppose that $F$ contains a primitive $3^{rd}$ root of unity $\xi$. 

Suppose that $L/F$ is purely cubic. Then we may find a primitive element $y$ such that its minimal polynomial is $y^3 = a$, for some $a \in F$. In the usual way, the three elements $y$, $\xi y$, and $\xi^2 y$ are the roots of the minimal polynomial of $y$, and they are all contained in $L$. Thus, $L/F$ is Galois, and $\text{\emph{Gal}}(L/F) = \mathbb{Z}/3 \mathbb{Z}$.

Suppose now that $L/F$ is Galois, and let $y$ be a primitive element of $L/F$ with minimal equation $y^3 + e y^2 + fy +g=0$. By Corollary \ref{classification}, if $3eg= f^2$, then $L/F$ is purely cubic. Suppose then that $3eg\neq f^2$. By Corollary \ref{classification}, there exists a primitive element $z$ (which is explicitly determined) with minimal polynomial $T(X) = X^3 - 3 X -a$. 
By Corollary \ref{purelycubicclosurecorollary}, we have that $L/K$ is purely cubic, and Theorem \ref{purelycubicclosurecorollary} gives an explicit purely cubic genererator.
\item Suppose $L/F$ be a purely cubic extension, so that $y^3 = a$ for some $a \in F$. As in \cite[Theorem 2.3]{Con}, $L/K$ is Galois if, and only if,
\begin{enumerate}
\item[$\bullet$] $p \neq 2$ and the discriminant $\Delta = -27 a^2$ is a square in $F$, which in turn is equivalent to $F$ containing a primitive $3^{rd}$ root of unity, or
\item[$\bullet$] $p=2$ and the resolvent polynomial $R(X)= X^2 + a X + a^2$ is reducible, which is true if, and only if, $X^2 +X +1$ is reducible, i.e., $F$ contains a primitive $3^{rd}$ root of unity.
\end{enumerate} 
\end{enumerate} 
\end{proof} 
 Theorem \ref{purely} proves that when $F$ contains a primitive $3^{rd}$ root of unity, a cubic extension $L/F$ is Galois if, and only if, the extension $L/F$ is purely cubic. Particularly, when $F$ does not contain a primitive $3^{rd}$ root of unity and $L/F$ is Galois, then by Corollary \ref{classification}, $L/F$ admits a primitive element with minimal polynomial of the form $X^3 -3X-a$, where $a \in F$. It follows that when $p\neq 3$, it remains to identify the Galois extensions with a generator with minimal polynomial of the form $X^3 -3X-a$, where $a \in F$.

Shanks studied Galois cubic extensions of $\mathbb{Q}$ with generation $$y^3 +a y^2 -(a+3)y+1,$$ where $a \in \mathbb{Z}$ \cite{Shanks}. This led also to the definition of a \emph{Shanks} cubic function field \cite{RoWe} as a Galois cubic extension $L/ \mathbb{F}_q(x)$ with generating equation $y^3 +a y^2 -(a+3)y+1$, where $a \in  \mathbb{F}_q[x]$.  We note that over $\mathbb{Q}$ or $\mathbb{F}_q(x)$, the Shanks cubics so defined do not include all Galois cubic extensions. One may, however, show that any Galois cubic extension $L/F$ such that $F$ does not contain a primitive $3^{rd}$ root of unity admits a generator $y$ with minimal equation $X^3 +a X^2 -(a+3)X+1$, where $a\in F$. We give the proof of this, which A. Brumer shared with us in a helpful discussion; we will use this form to identify the missing Galois extensions.

 \begin{lemma}\label{shanks}
Let $p \neq 3$. Suppose that $F$ does not contain a primitive $3^{rd}$ root of unity. A Galois cubic extension $L/F$ has a primitive element $y$ with minimal polynomial $X^3 +a X^2 -(a+3)X+1$, where $a \in F$. Moreover, $ \sigma(y) = -1/(y-1)$, where $\sigma$ is a generator of $\text{\emph{Gal}}(L/F)$.
 \end{lemma}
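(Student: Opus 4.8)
The plan is to write down the generator $y$ by hand and then read off its minimal polynomial. The idea is that $\tau\colon t\mapsto \frac{1}{1-t}=-\frac{1}{t-1}$ is a Möbius transformation of order $3$ (one checks $\tau^{2}(t)=\frac{t-1}{t}$, $\tau^{3}(t)=t$) having no fixed point in $F$: its fixed points satisfy $t^{2}-t+1=0$, i.e.\ $-t$ is a primitive cube root of unity, and, as $p\neq 3$ forces $-t\neq 1$ and $F$ contains no such root by hypothesis, no fixed point is $F$-rational. Consequently it suffices to exhibit $y\in L\setminus F$ with $\sigma(y)=\tau(y)$, where $\sigma$ generates $\Gal(L/F)\cong\ZZ/3\ZZ$. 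Such a $y$ is automatically a primitive element ($[F(y):F]$ divides $3$ and is not $1$), and since $\tau$ has coefficients in $F$ it commutes with $\sigma$, so the three $F$-conjugates of $y$ are $y,\ \tau(y),\ \tau^{2}(y)$. A short symmetric-function computation then finishes the proof: $y\,\tau(y)\,\tau^{2}(y)=-1$, and putting the first two elementary symmetric functions $e_{1},e_{2}$ of $y,\tau(y),\tau^{2}(y)$ over the common denominator $y(1-y)$ yields $e_{1}-e_{2}=3$, so the minimal polynomial of $y$ is $X^{3}-e_{1}X^{2}+(e_{1}-3)X+1=X^{3}+aX^{2}-(a+3)X+1$ with $a:=-e_{1}\in F$.

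To produce $y$, I would use a trace-zero element. Since $p\neq 3$ the $F$-linear map $\Tr_{L/F}\colon L\to F$ is nonzero ($\Tr_{L/F}(1)=3$), so its kernel is a two-dimensional $F$-subspace; pick $t\neq 0$ in it, so $\sigma^{2}(t)=-t-\sigma(t)$. First one checks $t+\sigma(t)\neq 0$ (otherwise $\sigma(t)=-t$, hence $\sigma^{2}(t)=0$, hence $t=\sigma^{3}(t)=0$), and then sets
$$y:=\frac{\sigma(t)}{t+\sigma(t)}.$$
A direct substitution gives $1-y=\frac{t}{t+\sigma(t)}$, and applying $\sigma$,
$$\sigma(y)=\frac{\sigma^{2}(t)}{\sigma(t)+\sigma^{2}(t)}=\frac{-t-\sigma(t)}{-t}=\frac{t+\sigma(t)}{t}=\frac{1}{1-y}=-\frac{1}{y-1},$$
which is both the asserted Galois action (the ``moreover'' clause) and the identity $\sigma(y)=\tau(y)$ needed above. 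Finally $y\notin F$: if $\sigma(y)=y$ then $y=\frac{1}{1-y}$, i.e.\ $y^{2}-y+1=0$, which was excluded; so the symmetric-function computation applies and completes the argument.

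The hard part is really just the existence of $y$ with $\sigma(y)=\tau(y)$ — everything else is bookkeeping. Conceptually this is a Hilbert~90 / twisted-$\mathbb{P}^{1}$ phenomenon: $\tau$ lifts to $N=\bigl(\begin{smallmatrix}0&-1\\ 1&-1\end{smallmatrix}\bigr)\in\GL_{2}(F)$ with $N^{3}=I$, and one is looking for a semilinear eigenvector of $\sigma$ acting on $L^{2}$; the consistency condition on the eigenvalue $\lambda$ is $N_{L/F}(\lambda)=-1$, and this is satisfied for free by $\lambda=-1\in F$ (indeed $N_{L/F}(-1)=(-1)^{3}=-1$), which is precisely what makes the naive trace-zero recipe succeed — the crux being the rescaling of the order-$6$ matrix $\bigl(\begin{smallmatrix}0&1\\ -1&1\end{smallmatrix}\bigr)$ to the order-$3$ matrix $N$. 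I would present only this elementary version, but note that the no-cube-root hypothesis (together with $p\neq 3$, also used to make $\Tr_{L/F}$ nonzero) enters only to guarantee $y\notin F$; were $F$ to contain such a root, the same $y$ could land in $F$ for special $t$, consistent with the extension then being Kummer.
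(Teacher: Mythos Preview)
Your proof is correct and complete; the verification of $\sigma(y)=\tau(y)$ from the trace-zero element is clean, and the symmetric-function computation is accurate.

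The paper, however, argues differently. It starts from an arbitrary generator $z$ of $L/F$ and observes (as in the proof of Theorem~\ref{purelycubic}) that the four elements $1,z,\sigma(z),z\sigma(z)$ are linearly dependent over $F$, so $\sigma(z)$ is already a M\"obius transformation of $z$ given by some $\Sigma\in\GL_2(F)$. Since $\sigma$ has order $3$, the image of $\Sigma$ in $\mathrm{PGL}_2(F)$ has order $3$; the hypothesis that $X^2+X+1$ is irreducible over $F$ together with $p\neq 3$ forces the minimal polynomial of (a scalar normalisation of) $\Sigma$ to be $X^2+X+1$, so $\Sigma$ is $\GL_2(F)$-conjugate to the companion matrix $\bigl(\begin{smallmatrix}0&-1\\1&-1\end{smallmatrix}\bigr)$. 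Applying the conjugating matrix $S^{-1}$ to $z$ yields the desired $y$. In other words, the paper classifies the order-$3$ conjugacy class in $\mathrm{PGL}_2(F)$ and then transports; you instead write down an explicit $y$ using a trace-zero element and verify the relation by hand. Your route is more elementary and self-contained (no rational canonical form, no appeal to the M\"obius description of all primitive elements), while the paper's route is more structural and explains \emph{why} such a $y$ must exist: any generator already sits in the right $\mathrm{PGL}_2$-orbit. Your Hilbert~90 remark is a nice bridge between the two viewpoints.
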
 
\begin{proof} 
Let $L = F(z)$, and let $\Sigma$ be a generator of the Galois group $\text{\emph{Gal}}(F(z)/F)$. Then $\sigma (z) = \Sigma (z) $, where $$\Sigma= \left( \begin{array}{cc} a & b \\ c & d \end{array} \right) \in GL_2(F),$$ where we denote $\Sigma(z)= (a z+ b)/(cz+d)$. As $\sigma$ is of order $3$, we have $\Sigma^3-I=0$, where $I$ denotes the $2 \times 2$ identity matrix, and the minimal polynomial of $\Sigma$ is a polynomial of degree $1$ or $2$ dividing $X^3 -1= (X-1) (X^2 + X+1)$. The polyomial $X^2 +X +1$ is irreducible, as $F$ does not contain a primitive $3^{rd}$ root of unity by assumption. Thus, the minimal polynomial is either $X-1$ or $X^2+X+1$. If the minimal polynomial is $X-1$, then $1$ is the only eigenvalue for $\Sigma$, and thus $\Sigma$ is either similar to $I$ or $$\left( \begin{array}{cc} 1 & 1 \\ 0 & 1 \end{array} \right),$$ but neither of these has order $3$, as $p \neq 3$. It follows that the minimal polynomial of $\Sigma$ is equal to $X^2 +X+1$ and 
therefore similar to the matrix $$\left( \begin{array}{cc} 0 & -1 \\ 1 & -1 \end{array} \right).$$ Thus, there is a matrix $S \in GL_2 (F)$ such that 
$$S^{-1} \Sigma S = \left( \begin{array}{cc} 0 & -1 \\ 1 & -1 \end{array} \right).$$ We let $y= S^{-1}(z)$. We obtain
$$\sigma (y)= S^{-1}\sigma(z)=S^{-1}\Sigma(z) = S^{-1} \Sigma S (y) = \left( \begin{array}{cc} 0 & -1 \\ 1 & -1 \end{array} \right) (y) = \frac{-1}{y-1}.$$
Thus, 
$$\sigma^2 (y) =  \frac{-1}{\sigma(y)-1}= \frac{y-1}{y}.$$
Moreover, $y$ is a root of the polynomial 
$$ X^3 - ( y + \sigma (y)+ \sigma^2 (y)) X^2 + (y \sigma (y) + y \sigma^2 (y) + \sigma (y) \sigma^2 (y)) X- y \sigma (y) \sigma^2 (y).$$ 
We note that
$$y + \sigma (y)+ \sigma^2 (y)= \frac{y^3-3y+1}{y(y-1)};$$ 
$$z \sigma (y) + y \sigma^2 (y) + \sigma (y) \sigma^2 (y)= \frac{y^3-3y^2+1}{y(y-1)}$$
and 
$$y \sigma (y) \sigma^2 (y)= -1.$$ 
Defining $$a = - \frac{y^3-3y+1}{y(y-1)},$$ we then have $$a +3 = - \frac{y^3-3y^2+1}{y(y-1)},$$ and the minimal polynomial of $y$ is $X^3 +a X^2 -(a+3)X+1$, with $a \in F$. 
\end{proof}
In next result, we prove that when $F$ does not contain a primitive $3^{rd}$ root of unity, any Galois cubic extension has a generator $y$ with minimal polynomial $$X^3 - 3X - \frac{2b^2+2b-1}{b^2+ b+1}$$ announced at the beginning of \S 1. In doing so, we give an explicit one-to-one correspondence between $y$ and the generator $z$ with minimal polynomial of the form $X^3 +a X^2 -(a+3)X+1$. 
\begin{theoreme} \label{shanksconversion}
Let $p \neq 3$. Suppose that $F$ does not contain a primitive $3^{rd}$ root of unity.  Let $L/F$ be a Galois cubic extension. 
Then there is a generator $w$ of $L/F$ whose minimal polynomial equal to $$X^3 -3 X -  \frac{2b^2+2b-1}{b^2+  b+1},$$ where $b\in F$. More precisely,  $y$ is a generator with minimal polynomial $X^3 +a X^2 -(a+3)X+1$, where $a \in F$ if, and only if,  $w= \frac{3 + a y}{3-(a+3)y}$ is a generator with minimal polynomial equal to $X^3 -3 X -  \frac{2a^2+6a-9}{a^2+ 3 a+9},$ where $a= 3b \in F$. Furthermore, $\sigma (y)= -1/(y-1)$ and $ \sigma (w)= - \frac{ (b^2+b+1) w +b(b-2)}{ (b^2-1) w + (b^2 +b+1)},$ where $\sigma$ is a fixed choice of generator of $\text{\emph{Gal}}(L/F)$.
\end{theoreme}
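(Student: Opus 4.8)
The plan is to deduce everything from Lemma~\ref{shanks}. First I would apply that lemma to obtain a generator $y$ of $L/F$ whose minimal polynomial is $X^3 + aX^2 - (a+3)X + 1$ for some $a \in F$, and with $\sigma(y) = -1/(y-1)$ for a generator $\sigma$ of $\mathrm{Gal}(L/F)$. Put $b = a/3 \in F$ (legitimate since $p \neq 3$), and note that $b^2+b+1 \neq 0$, i.e.\ $a^2+3a+9 \neq 0$, since otherwise $F$ would contain a primitive cube root of unity, contrary to hypothesis; so the form $X^3 - 3X - \frac{2b^2+2b-1}{b^2+b+1}$ is well defined.

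Next I would introduce $w = \dfrac{3+ay}{3-(a+3)y}$ and check it is a well-defined generator of $L/F$. As a fractional linear function of $y$ it has matrix $M = \left(\begin{smallmatrix} a & 3 \\ -(a+3) & 3\end{smallmatrix}\right)$, with $\det M = 6a+9 = 3(2a+3)$. One has $2a+3 \neq 0$, since $a = -3/2$ would make $-1$ a root of $X^3 + aX^2 - (a+3)X + 1$, contradicting irreducibility; hence $M$ is invertible over $F$, the denominator $3-(a+3)y$ does not vanish, $y = \dfrac{3(w-1)}{(a+3)w+a}$, and $F(w) = F(y) = L$.

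The core of the proof is the minimal polynomial of $w$. I would substitute $y = \dfrac{3(w-1)}{(a+3)w+a}$ into $y^3 + ay^2 - (a+3)y + 1$ and clear denominators by $((a+3)w+a)^3$; the claim is that the resulting cubic in $w$ is the nonzero $F$-multiple $-(a^2+3a+9)(2a+3)$ of
\[
X^3 - 3X - \frac{2a^2+6a-9}{a^2+3a+9} = X^3 - 3X - \frac{2b^2+2b-1}{b^2+b+1}.
\]
Because $y \mapsto w$ is an invertible fractional linear substitution with coefficients in $F$, this single polynomial identity yields both directions of the equivalence at once: whichever of $y$, $w$ lies in $L$ and satisfies the relevant monic cubic has that cubic as its minimal polynomial, since $[L:F]=3$. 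For the vanishing of the quadratic coefficient one may alternatively argue via symmetric functions: using $\sigma(y)=-1/(y-1)$ and $\sigma^2(y)=(y-1)/y$ and reducing modulo $y^3 = -ay^2+(a+3)y-1$, one finds $w+\sigma(w)+\sigma^2(w)=0$, $w\sigma(w)+w\sigma^2(w)+\sigma(w)\sigma^2(w)=-3$, and $w\,\sigma(w)\,\sigma^2(w)=\frac{2b^2+2b-1}{b^2+b+1}$.

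Finally, for the Galois action on $w$: writing $N = \left(\begin{smallmatrix} 0 & -1 \\ 1 & -1\end{smallmatrix}\right)$ for the matrix of $y \mapsto \sigma(y)$ and using that $M$ and $N$ have entries in $F$ (so $\sigma$ commutes with both transformations), one gets $\sigma(w) = (MNM^{-1})(w)$; carrying out the product and cancelling the common factor $2a+3 = 3(2b+1)$ from numerator and denominator produces the stated rational expression for $\sigma(w)$ in $w$ and $b$. The only real obstacle is computational bookkeeping — verifying the degree-$3$ polynomial identity precisely (so that the linear coefficient is exactly $-3$ and the constant exactly $\frac{2b^2+2b-1}{b^2+b+1}$) and simplifying the conjugated $2\times 2$ matrix. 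There is no conceptual difficulty once Lemma~\ref{shanks} is available; the substance is the choice of the substitution $w = \frac{3+ay}{3-(a+3)y}$, to which one is led by requiring $w+\sigma(w)+\sigma^2(w)=0$ together with $w\,\sigma(w)\,\sigma^2(w)\in F$.
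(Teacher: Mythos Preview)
Your proposal is correct and follows essentially the same route as the paper: start from Lemma~\ref{shanks}, apply the M\"obius substitution $w=\frac{3+ay}{3-(a+3)y}$, identify the minimal polynomial of $w$, and compute $\sigma(w)$ via $MNM^{-1}$. The paper obtains this particular substitution by specialising the general change-of-variable formula in Corollary~\ref{classification} (case 2(a)) to $e=a$, $f=-(a+3)$, $g=1$, which yields exactly your $w$; you instead write down $w$ directly and verify the minimal polynomial either by clearing denominators or by the elementary-symmetric-function computation. Your added checks that $a^2+3a+9\neq 0$ (so the target form is defined) and $2a+3\neq 0$ (so $M$ is invertible and $w$ generates $L$) are details the paper leaves implicit, and they are handled correctly.
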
 
\begin{proof} 
By Lemma \ref{shanks}, we have that there is a generator $y$ with minimal polynomial $X^3 +a X^2 -(a+3)X+1$, where $a \in F$. Via the change of basis $$w= \frac{3 + a y}{3-(a+3)y}:= M(y)$$ (see Theorem \ref{classification}), where $M$ is the matrix defined as $$M=\left(  \begin{array}{cc} 3& a \\ 3 & -(a+3)\end{array} \right),$$ we find via Corollary \ref{classification} that $w$ satisfies the cubic equation 
$$w^3 -3 w = \frac{2a^2+6a-9}{a^2+ 3 a+9},$$
Letting $b=\frac{a}{3}$, we then obtain
$$w^3 -3 w = \frac{2b^2+2b-1}{b^2+ b+1}.$$
Moreover, 
\begin{align*} \sigma (w) &= M \cdot \sigma (y)= M \left( \begin{array}{cc} 0 & -1 \\ 1 & -1 \end{array} \right) M^{-1}w \\
&=  \left( \begin{array}{cc} -(a^2+3a+9) & -a(a+6) \\ a^2-9 & a^2+3a+9 \end{array} \right) w\\ 
&= \frac{ -(a^2+3a+9) w -a(a-6)}{ (a^2-9) w + a^2 +3a+9}= \frac{ -(b^2+b+1) w -b(b-2)}{ (b^2-1) w + (b^2 +b+1)}.
\end{align*}
Finally, the reverse transformation 
$$y = \frac{3(w-1)}{a+w(a+3)},$$ 
converts this generator $w$ into a Shank generator.
\end{proof} 
\begin{remarque} 
We note that the quantity $a^2 + a+1= (a + \xi) (a+ \xi^2)$ in Theorem \ref{shanksconversion} is a norm of $F(\xi)$ over $F$.
\end{remarque}
Suppose that $F = K$ is a global field, i.e., a function field over a finite field, or a number field. If the base field $F=K$ is a function field with field of constants $\mathbb{F}_q$, let $\mathcal{O}_{K,x}$ denote the ring of integers of $K$ over $\mathbb{F}_q[x]$, where $x \in K \backslash \mathbb{F}_q$. Then one can write the element $b$ in the Theorem \ref{shanksconversion} as $ b= \frac{A}{B}$ with $A, B \in \mathcal{O}_{K,x}$. If the base field $K$ is a number field, then the same can be done in $O_K$, the ring of integers over $\mathbb{Z}$ in $K$. We obtain the following corollary in terms of this decomposition. 
\begin{corollaire} \label{galoisglobal}
Let $K$ be a global field which does not contain a primitive $3^{rd}$ root of unity, and let $L/K$ be a Galois cubic extension. 
Then there is a generator $w$ of $L/K$ with minimal polynomial equal to $$X^3 -3 X -  \frac{2A^2+2AB-B^2}{A^2+  AB+B^2},$$ where $A, B$ lie in $\mathcal{O}_{K,x}$ if $K/\mathbb{F}_q$ is a function field and $\mathcal{O}_K$ if $K$ is a number field.
\end{corollaire}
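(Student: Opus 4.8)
The plan is to deduce Corollary \ref{galoisglobal} directly from Theorem \ref{shanksconversion} by a clearing-of-denominators argument. By Theorem \ref{shanksconversion}, since $K$ is a global field not containing a primitive $3^{rd}$ root of unity and $L/K$ is a Galois cubic extension, there is a generator $w_0$ of $L/K$ with minimal polynomial
$$X^3 - 3X - \frac{2b^2 + 2b - 1}{b^2 + b + 1}$$
for some $b \in K$. First I would write $b = A/B$ with $A, B$ in the relevant ring of integers ($\mathcal{O}_{K,x}$ in the function field case, $\mathcal{O}_K$ in the number field case); this is possible because $K$ is the fraction field of that ring, and one may choose a common denominator $B$ for the numerator and denominator of $b$. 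Note $B \neq 0$ and, since $X^2 + X + 1$ is irreducible over $K$, the quantity $b^2 + b + 1$ is nonzero, so $A^2 + AB + B^2 = B^2(b^2 + b + 1) \neq 0$.

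Next I would substitute $b = A/B$ into the constant coefficient: multiplying numerator and denominator of $\frac{2b^2 + 2b - 1}{b^2 + b + 1}$ by $B^2$ gives
$$\frac{2b^2 + 2b - 1}{b^2 + b + 1} = \frac{2A^2 + 2AB - B^2}{A^2 + AB + B^2},$$
so that $w_0$ already has minimal polynomial $X^3 - 3X - \frac{2A^2 + 2AB - B^2}{A^2 + AB + B^2}$. Taking $w = w_0$ completes the proof; no change of generator is even needed, only the rewriting of the coefficient in terms of integral elements. One small point to check is that the minimal polynomial is genuinely unchanged (not merely that the extensions are isomorphic): since we are literally rewriting the same scalar $\frac{2b^2+2b-1}{b^2+b+1} \in K$ as a different-looking fraction, the polynomial $X^3 - 3X - (\text{that scalar})$ is the same polynomial, so $w_0$ is still a root and still a generator.

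There is essentially no obstacle here — the corollary is a cosmetic restatement of Theorem \ref{shanksconversion} adapted to the arithmetic of global fields, and the only thing requiring a sentence of justification is that every element of a global field $K$ can be written as a ratio of two elements of the chosen ring of integers. In the function field case this uses that $K$ is a finite extension of $\mathbb{F}_q(x)$ and $\mathcal{O}_{K,x}$ is the integral closure of $\mathbb{F}_q[x]$ in $K$, which has fraction field $K$; in the number field case it is the familiar fact that $\mathcal{O}_K$ has fraction field $K$. I would state the proof in two or three sentences accordingly.

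\begin{proof}
By Theorem \ref{shanksconversion}, since $K$ does not contain a primitive $3^{rd}$ root of unity and $L/K$ is Galois, there is a generator $w$ of $L/K$ with minimal polynomial
$$X^3 - 3X - \frac{2b^2 + 2b - 1}{b^2 + b + 1},$$
where $b \in K$. Write $b = A/B$ with $A, B$ in $\mathcal{O}_{K,x}$ if $K/\mathbb{F}_q$ is a function field, and in $\mathcal{O}_K$ if $K$ is a number field; this is possible since in either case $K$ is the fraction field of the indicated ring. We have $B \neq 0$, and $b^2 + b + 1 \neq 0$ because $X^2 + X + 1$ is irreducible over $K$, so $A^2 + AB + B^2 = B^2(b^2 + b + 1) \neq 0$. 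Multiplying the numerator and denominator of the constant coefficient by $B^2$ gives
$$\frac{2b^2 + 2b - 1}{b^2 + b + 1} = \frac{2A^2 + 2AB - B^2}{A^2 + AB + B^2},$$
so the minimal polynomial of $w$ is equal to $X^3 - 3X - \frac{2A^2 + 2AB - B^2}{A^2 + AB + B^2}$, as claimed.
\end{proof}
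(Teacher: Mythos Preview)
Your proof is correct and follows the same approach as the paper: the corollary is stated immediately after the authors explain that one writes the element $b$ from Theorem \ref{shanksconversion} as $b = A/B$ with $A, B$ in the relevant ring of integers, and the corollary is then simply the restatement of Theorem \ref{shanksconversion} after this substitution. Your additional remark that $A^2 + AB + B^2 \neq 0$ (via irreducibility of $X^2 + X + 1$ over $K$) is a nice point of care that the paper leaves implicit.
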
 
When $K=\mathbb{Q}$, or $K=\mathbb{F}_q(x)$ with $q \equiv -1 \mod 3$, this yields an additional corollary on the irreducible factors appearing in the denominator of the parameter $b$ in the form $X^3 -3 X - b$.
\begin{proposition} \label{denom}
Let $K=\mathbb{Q}$, or $K=\mathbb{F}_{q}(x)$ with $q \equiv -1 \mod 3$, and let $L/K$ a Galois cubic extension. By Theorem  \ref{shanksconversion}, $L/K$ has a primitive element with minimal polynomial of the form $$T(X) = X^3 -3 X - b$$ with $b = \frac{P}{Q}$, where $P,Q \in \mathbb{Z}\ or \ \mathbb{F}_q[x]$ such that $(P,Q) = 1$. Then we have
$$Q=w \prod_i Q_i ^{e_i} \qquad (e_i \in \mathbb{N})$$
\begin{enumerate}
\item  if $K=\mathbb{Q}$, then each $Q_i$ is a prime number such that $Q_i\equiv 1 \mod 3 $ and $w = \pm 1$; and
\item  if $K=\mathbb{F}_{q}(x)$ with $q \equiv -1 \mod 3$, then each $Q_i \in \mathbb{F}_q[x]$ is unitary and irreducible of even degree and $w \in \mathbb{F}_q^*$.
\end{enumerate}
\end{proposition}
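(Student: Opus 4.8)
The plan is to push the parameter $b$ back through Theorem \ref{shanksconversion} until it is expressed by the norm form $A^2+AB+B^2$, and then to analyse which primes of the ring of integers can divide that form. By Theorem \ref{shanksconversion}, the $b$ appearing in $T(X)=X^3-3X-b$ has the shape $b=\frac{2\beta^2+2\beta-1}{\beta^2+\beta+1}$ for some $\beta\in K$ (the Shanks parameter from Lemma \ref{shanks}). Since $\mathcal{O}:=\mathbb{Z}$, resp. $\mathbb{F}_q[x]$, is a PID, I would write $\beta=A/B$ with $A,B\in\mathcal{O}$ coprime, obtaining $b=\frac{2A^2+2AB-B^2}{A^2+AB+B^2}$. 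As $b=P/Q$ with $(P,Q)=1$, cross-multiplying gives $P(A^2+AB+B^2)=Q(2A^2+2AB-B^2)$, so $Q\mid A^2+AB+B^2=(A-\xi B)(A-\xi^2B)=N_{K(\xi)/K}(A-\xi B)$. Hence every irreducible factor $Q_i$ of $Q$ divides this norm form, and it suffices to control the prime divisors of $A^2+AB+B^2$ (note that one only gets a divisibility, not an equality, which must be kept in mind).

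The next step is the elementary observation that a prime $Q_i\mid A^2+AB+B^2$ divides neither $A$ nor $B$: if $Q_i\mid A$, then $Q_i\mid B^2$, hence $Q_i\mid B$, contradicting $(A,B)=1$. Therefore, reducing modulo $Q_i$, the image $\bar B$ is a unit in the residue field $\kappa(Q_i)$ and $t:=\bar A\bar B^{-1}$ satisfies $t^2+t+1=0$ there. So $\kappa(Q_i)$ contains a root of $X^2+X+1$; provided $\operatorname{char}\kappa(Q_i)\neq 3$ this root is a \emph{primitive} cube root of unity (it cannot be $1$, as that would force $3=0$), whence $3\mid \#\kappa(Q_i)-1$.

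For $K=\mathbb{F}_q(x)$ with $q\equiv -1\pmod 3$ the characteristic is automatically $\neq 3$, and $\kappa(Q_i)=\mathbb{F}_{q^{\deg Q_i}}$; combining $3\mid q^{\deg Q_i}-1$ with $q^{\deg Q_i}\equiv(-1)^{\deg Q_i}\pmod 3$ forces $\deg Q_i$ to be even, and taking $w\in\mathbb{F}_q^*$ to be the leading coefficient of $Q$ finishes this case. For $K=\mathbb{Q}$ I would dispose of the small primes separately: if $Q_i=2$ then $X^2+X+1$ has no root in $\mathbb{F}_2$, contradicting the previous paragraph, so $2\nmid Q$; if $Q_i=3$, then $A^2+AB+B^2\equiv(A-B)^2\pmod 3$ forces $A\equiv B\pmod 3$, and writing $A=B+3k$ one checks $3\,\|\,A^2+AB+B^2$ whereas $2A^2+2AB-B^2=2(A^2+AB+B^2)-3B^2$ is also divisible by $3$, so the factor $3$ cancels in $P/Q$ and $3\nmid Q$; finally, for a prime $Q_i=p\geq 5$ dividing $Q$, the residue field $\mathbb{F}_p$ contains a primitive cube root of unity, which is exactly the classical criterion $p\equiv 1\pmod 3$. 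Thus $Q=\pm\prod_i Q_i^{e_i}$ with every $Q_i\equiv 1\pmod 3$, i.e. $w=\pm 1$.

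The conceptual content is minimal — it is the classical fact that $A^2+AB+B^2$, being a norm from $K(\xi)$, is a product of primes whose residue field contains $\mu_3$ — so the main obstacle is purely bookkeeping: normalising $\beta=A/B$ to a coprime fraction, remembering that one only controls $Q\mid A^2+AB+B^2$, and correctly handling the exceptional primes $2$ and $3$ over $\mathbb{Q}$ (in particular pinning down the exact $3$-adic valuation so that the factor $3$ is seen to cancel).
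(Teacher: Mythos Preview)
Your proposal is correct and actually cleaner in places than the paper's own argument. Both proofs begin identically, writing $b=\dfrac{2A^2+2AB-B^2}{A^2+AB+B^2}$ with $A,B$ coprime (this is Corollary~\ref{galoisglobal}) and observing that $Q\mid A^2+AB+B^2$. From there the paper takes a more structural route: it invokes the splitting law for rational primes in $\mathbb{Q}(\xi)$ \cite[Corollary 10.4]{Neu}, the fact that $\mathbb{Z}[\xi]$ is a UFD \cite[Theorem 11.1]{Wash}, and for $\mathbb{F}_q(x)$ the factorisation pattern of irreducibles under the constant extension $\mathbb{F}_q\hookrightarrow\mathbb{F}_{q^2}$ \cite[Theorem 3.46]{LiNi}, concluding that the primes dividing the norm form are exactly those that split (equivalently, are norms) in $K(\xi)$. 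Your approach bypasses all of this: reducing $A^2+AB+B^2$ modulo $Q_i$ and noting that $\bar A/\bar B$ is then a root of $X^2+X+1$ in $\kappa(Q_i)$ gives $3\mid|\kappa(Q_i)|-1$ directly, with no appeal to unique factorisation in $\mathcal{O}_{K(\xi)}$ or to external splitting criteria. This is strictly more elementary, and your explicit treatment of the prime $3$ over $\mathbb{Q}$ (showing $v_3(A^2+AB+B^2)=v_3(2A^2+2AB-B^2)=1$ via $2A^2+2AB-B^2=2(A^2+AB+B^2)-3B^2$, so the factor cancels) is a point the paper's proof leaves implicit. The trade-off is that the paper's formulation makes the converse in the subsequent Remark---that every such $Q$ arises---transparent, since it has already identified the admissible $Q_i$ as norms.
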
 
\begin{proof} 
Let $\xi$ be a primitive $3^{rd}$ root of unity. 
\begin{enumerate} 
\item By \cite[Corollary 10.4]{Neu}, a prime number $p$ splits in $\mathbb{Q}(\xi)$ if, and only if, $p \equiv 1 \mod 3 $. Moreover, by \cite[Proposition 10.2]{Neu} , the ring of integers of $\mathbb{Q}(\xi)$ is equal to $\mathbb{Z}[\xi]$, and by \cite[Theorem 11.1]{Wash}, the ring $\mathbb{Z}[\xi]$ is a unique factorisation domain. Thus, $Q_i$ is a prime number with $Q_i \equiv 1 \mod 3 $, if and only if $$Q_i =(A_i + \xi B_i)(A_i + \xi^2 B_i)=A_i^2+A_iB_i + B_i^2$$ where $A_i,B_i \in \mathbb{Z}$ are coprime.
\item  By \cite[Theorem 3.46]{LiNi}, an irreducible polynomial over $\mathbb{F}_q$ of degree $n$ factors over $\mathbb{F}_{q^k}[x]$ into $\gcd(k,n)$ irreducible polynomials of degree $n/\gcd(k,n)$. It follows that an irreducible polynomial over $\mathbb{F}_{q}$ factors in $ \mathbb{F}_{q^2}$ into polynomials of smaller degree if, and only if, $\gcd(2,n) > 1$, i.e., $2 | n$. As a consequence, the irreducible polynomials in $\mathbb{F}_{q^2}[x]$ are those irreducible polynomials of odd degree in $\mathbb{F}_q[x]$ or those occurring as factors of irreducible polynomials of even degree in $\mathbb{F}_q[x]$. Thus, only even degree irreducible polynomials $Q_i \in \mathbb{F}_q[x]$ can be written as a norm 
$$Q_i =(A_i + \xi B_i)(A_i + \xi^2 B_i)=A_i^2+A_iB_i + B_i^2$$ with $A_i, B_i \in \mathbb{F}_q[x]$ coprime. 
\end{enumerate} 
Since in both cases, the norm map sending $\alpha + \xi \beta$ to $\alpha^2 + \alpha \beta + \beta^2$ is multiplicative and $K(\xi)$ is a unique factorisation domain, we obtain the result. 
\end{proof}
\begin{remarque}
Conversely, when $Q = w \prod_i Q_i $ is of the form specified in Proposition \ref{denom}, then one may find a Galois extension with primitive element possessing minimal polynomial of the form $$T(X) = X^3 -3 X - b$$ with $b = \frac{P}{Q}$, where $P \in \mathbb{Z}$ or $\mathbb{F}_q[x]$. As $K(\xi)$ is a unique factorisation domain, it follows that any $Q_i$ can be uniquely written as a product $Q_i = (A_i + \xi B_i)(A_i + \xi^2 B_i)$. Therefore, writing each irreducible factor $Q_i\;|\; Q$ in this way, and using the norm map, we may then write $Q= A^2 +AB+B^2$ for some $A,  B \in \mathbb{Z}$ or $\mathbb{F}_q[x]$. These coprime $A$ and $B$ are precisely all of the possible such elements of $\mathbb{Z}$ or $\mathbb{F}_q[x]$ which result in a Galois cubic extension with generation $y^3 -3y - \frac{P}{Q}$, and for such a Galois extension, $P = 2A^2+2AB-B^2$. This remark together with Proposition \ref{denom} thus describes how to obtain all Galois cubic extensions of $\mathbb{Q}$, and of $\mathbb{F}_q(x)$ when $q \equiv -1 \mod \ 3$.
\end{remarque} 

\section{Decomposition of cubic polynomials over a finite field} 
Let $\mathbb{F}_{s}$ denote a finite field with $ s=p^m$ elements, where $p$ is a prime integer. In this section, we use the forms of Corollary \ref{classification} to obtain a simple characterisation of the irreducibility and splitting of cubic polynomials over any finite field. This will be needed for our detailed study of ramification and splitting for cubic function fields. 

As it will be useful in a few results which follow, particularly in characteristics $2$ and $3$, we state the usual definition of the trace map from the finite field $\mathbb{F}_s= \mathbb{F}_{p^m}$ to the prime field $\mathbb{F}_p$:
$$ \text{Tr} : \mathbb{F}_{p^m} \rightarrow \mathbb{F}_{p},\qquad \text{Tr} (\alpha)= \alpha + \alpha^p + \cdots + \alpha^{p^{m-1}}.$$
If $f(X)= X^3 + eX^2 + f X+ g$ is any cubic polynomial where $e,f, g \in \mathbb{F}_s$, then by Theorem \ref{irred}, $f(X)$ is reducible in the following cases:
\begin{itemize} 
\item[$\bullet$] $g=0$
\item[$\bullet$] $-27g^2-2f^3+9egf=0$ $\qquad \qquad \qquad \qquad \qquad \qquad (*)$
\item[$\bullet$] $p=3$, $-f^2e^2+ge^3+f^3=0$
\end{itemize}
Furthermore, in the proof of Theorem \ref{irred}, we give an explicit root of $f(X)$ in each of these cases. It follows that when any of these occur, we may write $f(X)= (X-r)Q(X)$, where $r \in \mathbb{F}_s$ is this explicit root and $Q(X) \in \mathbb{F}_s[X]$ is quadratic, so that it remains only to study the splitting of a quadratic polynomial over $\mathbb{F}_s$. We have: \begin{itemize} \item[$\bullet$] If $p \neq 2$, then as is true for a quadratic polynomial over a field of characteristic zero, $Q(X)$ is reducible over $\mathbb{F}_s$ if, and only if, its discriminant is a square in $\mathbb{F}_s$, and $Q(X)$ has distinct roots when this discriminant is nonzero. \item[$\bullet$] If $p=2$, the discriminant is not the requisite tool; instead, the trace map $\text{Tr}(\cdot)$ defined above may be used \cite[Proposition 1]{Pomm}. \end{itemize}  On the other hand, when the cases listed in $(*)$ above do not occur, then also as in Theorem \ref{irred}, there is an explicit change of variable which reduces the study of the decomposition of $f(X)$ over $\mathbb{F}_s$ to one of the following three forms of polynomial:
\begin{enumerate} 
\item $X^3 -a$, $a\in \mathbb{F}_s$;
\item $X^3 -3X-a$, $a\in \mathbb{F}_s$, when $p\neq 3$; 
\item $X^3 +aX+a^2$, $a\in \mathbb{F}_s$, when $p=3$. 
\end{enumerate}
In this section, we thus proceed to study the decomposition of these forms over $\mathbb{F}_s$.
\subsection{$X^3 -a$, $a\in \mathbb{F}_s$}
In characteristic $3$, the study is immediate, as $X^3-a$ is irreducible if, and only if, $a$ is not a cube, and as soon as $a$ is a cube, say $a= b^3$, $b\in \mathbb{F}_{s}$, then $X^3 -a=(X-b)^3$. Thus, we are left to study the case $p\neq 3$. We let $\xi$ denote a primitive $3^{rd}$ root of unity.
\begin{lemma}\label{splittingpolypurelycubic}
Let $p \neq 3$ and $f(X) = X^3 -a$, where $a\in\mathbb{F}_{s}$. Then all possible decompositions of $f(X)$ over $\mathbb{F}_{s}$ are as follows:
\begin{enumerate} 
\item $f(X) $ is irreducible if, and only if, $s \equiv 1 \mod 3$ and $a^{(s-1)/3} = \xi^i$, for some $i=1,2$.
\item $f(X) = (X-\alpha ) Q(X)$ where $\alpha \in \mathbb{F}_{s}$ and $Q(X)$ is an irreducible quadratic polynomial if, and only if, $s \equiv - 1 \mod 3$. 
\item The case $f(X) = (X-\alpha ) (X-\beta)^2 $ with $\alpha , \beta \in \mathbb{F}_{s}$ and $\alpha \neq \beta$ cannot occur.
\item $f(X) $ has a unique root with multiplicity $3$ if, and only if, $a = 0$. 
\item $f(X)  = (X-\alpha ) (X-\beta) (X- \gamma)$ with $\alpha , \beta, \gamma \in \mathbb{F}_{s}$ distinct if, and only if, $s \equiv 1 \mod 3$ and $a^{(s-1)/3} = 1$.
\end{enumerate}  
\end{lemma}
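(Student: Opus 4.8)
The plan is to reduce everything to a short decision tree on two invariants: whether $a = 0$, and the class of $s$ modulo $3$. Since $p \neq 3$ and $s = p^m$, we have $s \equiv 1$ or $s \equiv -1 \pmod 3$, and $s \equiv 1 \pmod 3$ is equivalent to $3 \mid s - 1$, hence to $\xi \in \mathbb{F}_s$; I would record this at the start and use throughout that $\mathbb{F}_s^{*}$ is cyclic of order $s - 1$.

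First I would settle separability. As $p \neq 3$ we have $f'(X) = 3X^2$, so $\gcd(f, f') = \gcd(X^3 - a, X^2)$, which is $1$ when $a \neq 0$ and $X^2$ when $a = 0$. Thus $f$ is separable unless $a = 0$, in which case $f = X^3$. This proves (4) and also disposes of (3): a factorisation $(X - \alpha)(X - \beta)^2$ with $\alpha \neq \beta$ would force $f$ to be non-separable while not a perfect cube, which cannot happen.

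Next, assuming $a \neq 0$, I would fix a root $b$ of $f$ in $\overline{\mathbb{F}_s}$ and note that the roots of $f$ are $b$, $\xi b$, $\xi^{2} b$, all nonzero and pairwise distinct. If $s \equiv 1 \pmod 3$ then $\xi \in \mathbb{F}_s$, so either all three roots lie in $\mathbb{F}_s$ or none do, according as $b \in \mathbb{F}_s$ or not; and $b \in \mathbb{F}_s \iff b^{s-1} = 1 \iff a^{(s-1)/3} = (b^{3})^{(s-1)/3} = b^{s-1} = 1$. Since $a^{(s-1)/3}$ always lies in $\{1, \xi, \xi^{2}\}$, this yields case (5) when $a^{(s-1)/3} = 1$ (three distinct roots in $\mathbb{F}_s$) and case (1) when $a^{(s-1)/3} \in \{\xi, \xi^{2}\}$ (no root in $\mathbb{F}_s$, hence the cubic is irreducible). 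If instead $s \equiv -1 \pmod 3$, then $\gcd(3, s-1) = 1$, so cubing is a bijection of $\mathbb{F}_s^{*}$; thus $a = b^{3}$ with $b \in \mathbb{F}_s^{*}$ unique, and $f(X) = (X - b)(X^{2} + bX + b^{2})$, whose quadratic factor has roots $\xi b$ and $\xi^{2} b$ lying outside $\mathbb{F}_s$ (as $\xi \notin \mathbb{F}_s$) and is therefore irreducible — case (2).

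Collecting the leaves of the decision tree — $a = 0$; $a \neq 0$ with $s \equiv -1 \pmod 3$; $a \neq 0$, $s \equiv 1 \pmod 3$, $a$ a cube; $a \neq 0$, $s \equiv 1 \pmod 3$, $a$ not a cube — gives a complete, mutually exclusive list whose entries realise exactly shapes (4), (2), (5), (1) respectively, and all five stated equivalences follow. I expect the only genuine point to verify is the irreducibility over $\mathbb{F}_s$ of $X^{2} + bX + b^{2}$ when $s \equiv -1 \pmod 3$; the uniform way is via its roots $\xi b, \xi^{2} b \notin \mathbb{F}_s$, and (for $p \neq 2$) one may alternatively observe that its discriminant $-3b^{2}$ is a non-square, since $-3 = (2\xi + 1)^{2}$ is a square in $\mathbb{F}_s$ precisely when $\xi \in \mathbb{F}_s$. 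Everything else is routine bookkeeping in the cyclic group $\mathbb{F}_s^{*}$.
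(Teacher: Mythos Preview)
Your proof is correct and follows essentially the same route as the paper: both rest on the fact that the roots of $X^3-a$ are $b,\xi b,\xi^2 b$, so the shape of the factorisation is governed by whether $\xi\in\mathbb{F}_s$ (i.e.\ $s\equiv 1\pmod 3$) and whether $a$ is a cube (i.e.\ $a^{(s-1)/3}=1$). Your organisation is slightly tidier in two places: you dispose of (3) and (4) at once via the separability computation $\gcd(f,f')$, whereas the paper argues each by inspecting the roots or comparing coefficients; and for $s\equiv -1\pmod 3$ you invoke $\gcd(3,s-1)=1$ to conclude cubing is a bijection, while the paper writes out $b=(b^{2l-1})^3$ explicitly. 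These are presentational rather than substantive differences.
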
 
\begin{proof} 

\begin{enumerate}
\item 
A cubic polynomial over any field is irreducible if, and only if, it has no root in that field. Thus, $f(X) $ is irreducible if, and only if, $a$ is not a cube in $\mathbb{F}_{s}$. If $s \equiv -1 \mod 3 $, then $a$ is always a cube. Indeed, any element $b \in \mathbb{F}_{s}$ is a cube: $\mathbb{F}_{s}^*$ is a group under multiplication of order $s-1$, and by Lagrange's theorem, $b^{s -1}= 1$ in $\mathbb{F}_{s}$. Thus, $b^{2s-1} = b$. If $s \equiv -1 \mod 3$, then there is $l\in \mathbb{Z}$ such that $s = 3l-1$ and $b^{2s -1} = b^{6l -3}= (b^{2l-1})^3=b$, whence $b$ is a cube. 

As a consequence, if $a$ is not a cube, then $s \equiv 1 \mod 3$. Furthermore, if $s \equiv 1 \mod 3$, so that 
$3 \;|\; s-1$, we let $y$ in $\overline{\mathbb{F}_{s}} $ be a root of $X^3-a$. We may write $$a^{\frac{s -1}{3}} =y^{s -1}$$ in $\overline{\mathbb{F}_{s}} $. Also, $\text{\emph{Gal}}( \mathbb{F}_{s^{3}} / \mathbb{F}_{s}) = \langle \phi \rangle $, where $\phi$ is the Frobenius automorphism of $\mathbb{F}_{s^{3}} $ over $\mathbb{F}_s$ sending $\alpha \rightarrow \alpha^{s}$. The polynomial $f(X) $ is irreducible over $\mathbb{F}_s$ if, and only if, $\mathbb{F}_{s} (y)= \mathbb{F}_{s^{3}}$. This is equivalent to $y^{s-1}= \xi^i$, with $i=1,2$. Indeed, when $y^{s} = y$, then $\text{\emph{Gal}}( \mathbb{F}_{s} (y) / \mathbb{F}_{s} ) = \{ Id\}$, whence $\mathbb{F}_{s} (y) = \mathbb{F}_{s}$, so that $X^3  -a$ would not be irreducible and when $y^{s} \neq y$, then as $\xi y$ and $\xi^2 y$ are the other two roots of $X^3-a$ and $\phi \in \text{\emph{Gal}}( \mathbb{F}_{s} (y) / \mathbb{F}_{s} ) $, then $\phi $ sends a root of $X^3 -a$ to another such root, so that $y^s= \xi^i y$ for some $i=1,2$. In this case, $|\text{\emph{Gal}}( \mathbb{F}_{s} (y) / \mathbb{F}_{s} )|=3$ and $X^3 -a$ is irreducible. 
\item $f(X) = (X-\alpha ) Q(X)$ where $\alpha \in \mathbb{F}_{s}$ and $Q(X)$ is an irreducible quadratic polynomial, if and only if $f(X)$ has a root $\alpha$ in $\mathbb{F}_{s}$ and the two other roots $\xi \alpha$ and $\xi^2 \alpha$ are not in $\mathbb{F}_{s}$. Equivalently, $a$ is a cube and $\xi \notin \mathbb{F}_{s}$ .Thus, using what we proved in $(1)$, we obtain the result.
\item If $f(X)$ has a root $\alpha$, then since the two other roots are $\xi \alpha $ and $\xi^2 \alpha $, it follows immediately that these three roots are all distinct unless $\alpha=0$, in which case they are all equal. Thus, the case $f(X) = (X-\alpha ) (X-\beta)^2 $ with $\alpha , \beta \in \mathbb{F}_{s}$ and $\alpha \neq \beta$ cannot occur.
\item If $f(X) = (X - \alpha)^3$, then the equality $X^3-a =  (X - \alpha)^3= X^3-3\alpha X^2 +3 \alpha^2 X - \alpha^3$ implies that $-\alpha^3 = 0$. It again follows that $\alpha = 0$, and hence that $X^3-a = X^3$, which holds if, and only if, $a = 0$. 
\item This is the complement of all other cases.
\end{enumerate}
\end{proof}
\subsection{$X^3 -3X-a$, $a\in \mathbb{F}_s$, $p\neq 3$} This is the only case remaining for the study of the splitting of a polynomial over a finite field when $p\neq 3$. 
\begin{lemma} \label{splittingpolyqneq1mod3}
Let $p \neq 3$ and $f(X) = X^3 - 3 X -a$, where $a \in \mathbb{F}_{s}$. We denote by $\Delta: = -27(a^2-4)$ the discriminant of $f(X)$ and $\delta \in \overline{\mathbb{F}_s}$ such that $\delta^2 =a^2-4$. Then all possible decompositions of $f(X)$ over $\mathbb{F}_{s}$ are as follows:
\begin{enumerate} 
\item $f(X)$ is irreducible if, and only if, 
\begin{enumerate}
\item $p\neq 2$, \begin{enumerate} \item $s \equiv 1 \mod 3$, $\Delta $ is a non-zero square  and $\frac{1}{2} (a + \delta)$ is not a cube in $\mathbb{F}_{s}$, or \item $s=5$ and $a =\pm 1$; or \end{enumerate}
\item $p=2$ $(s=2^m)$, $a \neq 0 $, $\text{\emph{Tr}}(1/a^2) = \text{\emph{Tr}}(1)$, and the roots of $T^2 +aT +1$ are non-cubes in $\mathbb{F}_{s}$, when $m$ is even (resp. in $\mathbb{F}_{s^{2}}$, when $m$ is odd).
\end{enumerate} 
\item $f(X)= (X-\alpha ) Q(X)$ where $\alpha \in \mathbb{F}_{s}$ and $Q(X)$ is an irreducible quadratic polynomial if, and only if, 
\begin{enumerate}
\item $p\neq 2$ and $\Delta $ is a non-square in $\mathbb{F}_{s}$; or
\item $p=2$, $a \neq 0$, and $\text{\emph{Tr}}(1/a^2) \neq \text{\emph{Tr}}(1)$.
\end{enumerate} 
\item $f(X) = (X-\alpha ) (X-\beta)^2 $ with $\alpha , \beta \in \mathbb{F}_{s}$ and $\alpha \neq \beta$ if, and only if, $a = \pm 2$. 
\item $f(X)$ may never have a unique root with multiplicity $3$.
\item $f(X) = (X-\alpha ) (X-\beta) (X- \gamma)$ with $\alpha , \beta, \gamma \in \mathbb{F}_{s}$ distinct if, and only if, $a\neq \pm 2$ and 
\begin{enumerate}
\item $p\neq 2$, $\Delta$ is a square, and either \begin{enumerate}\item[$\bullet$] $s \equiv 1 \mod 3$, and $\frac{1}{2} (a + \delta)$ is a cube in $\mathbb{F}_{s}$, or \item[$\bullet$] $s\neq 5$, or \item[$\bullet$] $s=5$ and $a\neq \pm 1 $; \end{enumerate} or
\item $p=2$ $(s=2^m)$, $\text{\emph{Tr}}(1/a^2) = \text{\emph{Tr}}(1)$ and the roots of $T^2 +aT +1$ are cubes in $\mathbb{F}_{s}$, when $m$ is even (resp. in $\mathbb{F}_{s^{2}}$, when $m$ is odd). 
\end{enumerate} 
\end{enumerate}  
\end{lemma}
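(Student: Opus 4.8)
The plan is to convert every assertion into the detection of a root of $f$ in $\mathbb{F}_s$ — using the elementary fact, already invoked in the proof of Lemma~\ref{splittingpolypurelycubic}, that a cubic over a field is reducible if and only if it has a root there — by means of the substitution $y=t+t^{-1}$, under which $y^3-3y=t^3+t^{-3}$. First I would dispose of the degenerate cases: the discriminant $\Delta=-27(a^2-4)=-27(a-2)(a+2)$ vanishes exactly for $a=\pm2$, and since the linear coefficient $-3$ of $f$ is nonzero (here $p\neq3$) the polynomial $f$ is never a perfect cube, so a repeated root must be a double root; the direct factorisations $f=(X+1)^2(X-2)$ for $a=2$ and $f=(X-1)^2(X+2)$ for $a=-2$ (with the displayed roots distinct) then settle (3) and (4). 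In characteristic $2$ this is the case $a=0$, where $f=X(X+1)^2$, still of the shape in (3). From now on assume $\Delta\neq0$.

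Now apply the substitution. The roots of $f$ in $\overline{\mathbb{F}_s}$ are precisely the elements $t+t^{-1}$ with $t$ a root of $t^6-at^3+1$, so, writing $w:=t^3$, the element $w$ is a root of $Q_a(T)=T^2-aT+1$ (which is $T^2+aT+1$ in characteristic $2$, the quadratic named in the statement), and the two roots $w,w^{-1}$ of $Q_a$ are mutually inverse, distinct, and different from $\pm1$ because $\Delta\neq0$. A short Frobenius computation gives $t+t^{-1}\in\mathbb{F}_s$ if and only if $t^s\in\{t,t^{-1}\}$, i.e.\ if and only if $t\in\mathbb{F}_s$ or $t$ lies in the group $\mu_{s+1}$ of $(s+1)$-st roots of unity in $\overline{\mathbb{F}_s}^{*}$. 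The argument then bifurcates on whether $Q_a$ is reducible over $\mathbb{F}_s$, i.e.\ whether $w\in\mathbb{F}_s$: for $p\neq2$ this is equivalent to $a^2-4$ being a square, for $p=2$ to the Artin--Schreier trace condition for $T^2+aT+1$ (cf.\ \cite[Proposition 1]{Pomm}); and since $-3$ is a square in $\mathbb{F}_s$ exactly when $s\equiv1\bmod3$, the quantity $\Delta=-27(a^2-4)$ is a nonzero square iff $a^2-4$ is a square (when $s\equiv1\bmod3$) resp.\ a nonsquare (when $s\equiv2\bmod3$), which ties together the two dichotomies that occur in the statement.

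In the branch $w\in\mathbb{F}_s$: if $3\nmid s-1$, cubing is bijective on $\mathbb{F}_s^{*}$, so $w$ is a cube, $f$ acquires a root, and inspecting the cube roots $t,t\xi,t\xi^{2}$ of $w$ (where $\xi$ is a primitive cube root of unity, $\xi\notin\mathbb{F}_s$) shows the other two roots are conjugate over $\mathbb{F}_s$, which gives case (2); if $3\mid s-1$ (so $\xi\in\mathbb{F}_s$), then $w=\tfrac12(a+\delta)\in\mathbb{F}_s$, and $f$ splits completely precisely when $w$ is a cube in $\mathbb{F}_s$, being irreducible otherwise (there $t$ generates $\mathbb{F}_{s^3}$ while $t+t^{-1}\notin\mathbb{F}_s$, forcing $[\mathbb{F}_s(t+t^{-1}):\mathbb{F}_s]=3$). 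In the branch $w\notin\mathbb{F}_s$: now $w^{s}=w^{-1}$, so $w$ has norm $1$ over $\mathbb{F}_s$, i.e.\ $w\in\mu_{s+1}$, and one checks $w$ is a cube in $\mathbb{F}_{s^2}^{*}$ if and only if it is a cube in $\mu_{s+1}$; if $3\nmid s+1$ (i.e.\ $s\equiv1\bmod3$) this is automatic and one again obtains a single root in $\mathbb{F}_s$ with an irreducible residual quadratic (case (2)), while if $3\mid s+1$ (i.e.\ $s\equiv2\bmod3$) then $f$ splits completely when $w$ is a cube in $\mu_{s+1}$ — the remaining cube roots $u\xi,u\xi^{2}$ then also lie in $\mu_{s+1}$ — and is irreducible when $w$ is a noncube in $\mu_{s+1}$, since then a cube root $t$ of $w$ generates $\mathbb{F}_{s^6}$ and $t+t^{-1}\notin\mathbb{F}_{s^2}$. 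Assembling these branches via the dictionary linking $w$, $\tfrac12(a\pm\delta)$, $\Delta$ and $\text{Tr}(1/a^2)$, and observing that over $\mathbb{F}_5$ the only cubes in $\mu_6$ are $\pm1$ (this is what isolates $s=5$, $a=\pm1$), produces all five statements in every characteristic; the characteristic-$2$ cases follow the identical template, with $\text{Tr}(1/a^2)$ replacing the square/nonsquare dichotomy (again via \cite{Pomm}) and $\mathbb{F}_{s^2}$ entering exactly when $s=2^m$ with $m$ odd.

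I expect the principal obstacle to be the bookkeeping within the tower $\mathbb{F}_s\subset\mathbb{F}_{s^2},\mathbb{F}_{s^3}\subset\mathbb{F}_{s^6}$ — tracking in which of these fields a cube root of $w$ lies, and establishing the equivalence ``cube in $\mathbb{F}_{s^2}$'' $\Leftrightarrow$ ``cube in $\mu_{s+1}$'' — together with the genuinely separate handling of characteristic $2$, where $\Delta$ is always a square and hence useless, so that the trace criteria of Pommerening must be brought in both for the auxiliary quadratic $T^2+aT+1$ and for the residual quadratic factor of $f$.
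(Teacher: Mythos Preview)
Your substitution $y=t+t^{-1}$ (so that $y^3-3y=t^3+t^{-3}$ and $w:=t^3$ solves $T^2-aT+1=0$) is a genuinely different route from the paper's. The paper does not argue from first principles: for (1)(a) it quotes Dickson's parametrisation of irreducible cubics, for (1)(b) it quotes Williams, and for (2) it locates the other two roots of $f$ via the quadratic resolvent (Corollary~\ref{rootour} and Remark~\ref{rootourre}) and then tests irreducibility of $R(X)$. Your method is self-contained and unifies all cases through one question: does $w$ admit a cube root in $\mathbb{F}_s^{\times}$ or in $\mu_{s+1}$? That is cleaner and buys a uniform treatment of $p=2$ and $p\neq 2$.

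There is, however, a real gap in your final assembly, and it exposes an error in the lemma as stated. You write that ``over $\mathbb{F}_5$ the only cubes in $\mu_6$ are $\pm 1$ (this is what isolates $s=5$, $a=\pm1$)''. But your own argument shows that for \emph{every} $s\equiv 2\pmod 3$ with $p\neq 2$, whenever $w\in\mu_{s+1}$ is a non-cube there, $f$ is irreducible; and such non-cubes exist for all these $s$ (there are $2(s+1)/3$ of them), not only for $s=5$. Concretely, $X^3-3X-1$ is irreducible over $\mathbb{F}_{11}$: here $\Delta=81$ is a square, $w$ is a primitive $6$th root of unity in $\mu_{12}$, and the cubes in $\mu_{12}$ are exactly $\mu_4$, so $w$ is a non-cube. (A direct check that $f$ has no root in $\mathbb{F}_{11}$ confirms this.) Thus clause (1)(a)(ii) is too narrow and the complementary bullets in (5)(a) are too broad. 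The paper's proof contains the same slip: from $\nu^{k}=1$ and $\nu^{s^2-1}=1$ it concludes $k\mid s^2-1$, which does not follow. Your method is correct; it simply proves a corrected statement in which, for $p\neq 2$ and $s\equiv 2\pmod 3$, irreducibility is equivalent to $\Delta$ being a nonzero square together with $\tfrac12(a+\delta)\in\mu_{s+1}$ being a non-cube in $\mathbb{F}_{s^2}$. You should flag this discrepancy rather than force your argument to match the stated clause.
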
 

\begin{proof} 
\begin{enumerate}
\item 
\begin{enumerate} 
\item Suppose $p\neq 2$. We denote by $t$ a root of the polynomial $X^2 + 3$ in $\overline{\mathbb{F}_s}$.
\begin{enumerate} 
\item Suppose also that $s \equiv 1 \mod 3$. Thus, $-3$ is a square in $\mathbb{F}_{s}$ and $\mathbb{F}_{s}(t)=\mathbb{F}_{s}$. 
As $-3$ is a square in $\mathbb{F}_s$, it follows that $a^2-4$ is a square in $\mathbb{F}_s$ if, and only if, $\Delta$ is. When $\Delta$ is a square in $\mathbb{F}_{s}$, there is $\delta \in \mathbb{F}_{s}$ such that $a^2 -4=\delta^2$. By \cite[Theorem 3]{Dickson}, the polynomial $f(X)$ is irreducible over $\mathbb{F}_{s}$ if, and only if, its discriminant $\Delta$ is a nonzero square and the element $\frac{1}{2} (a + \delta) $ is a non-cube in $\mathbb{F}_{s}$.

\item  If $s \equiv -1 \mod 3$, then again by \cite[Theorem 3]{Dickson}, the irreducible cubic polynomials in $\mathbb{F}_{s}[X]$ are given by \begin{equation*}X^3 - 3X \nu^{-\frac{1}{3}(s-2)(s +1)} - \nu - \nu^{s} = 0,\qquad\qquad (*)\end{equation*} where $\nu \in \mathbb{F}_{s}(t)$ is a non-cube in $\mathbb{F}_{s}(t)$. Note that $\mathbb{F}_{s}(t)= \mathbb{F}_{s^{2}}$ since $s \equiv -1 \mod 3$. Thus, if $X^3 - 3X - a$ is an irreducible polynomial in $\mathbb{F}_{s}[X]$,  then $$\nu^{-\frac{1}{3}(s-2)(s +1)}  = 1$$ for a non-cube $\nu\in \mathbb{F}_{s}(t)$. We also have $$\frac{1}{3}(s-2)(s +1)  < s^2-1=(s-1)(s+1).$$ As $\nu\in \mathbb{F}_{s}(t)$, it must also be true that $\nu$ is a $(s^2-1)^{st}$ root of unity. Thus $$\frac{1}{3}(s-2)(s +1)  \;\Big|\;s^2-1 = (s-1)(s +1) ,$$ so that $\frac{1}{3}(s-2)$ divides $(s-1)$, i.e., $$(s-2) \;\Big|\; 3(s-1).$$ As $(s-2,s-1)=1$, we obtain $(s-2 )\;|\; 3$. As $s \geq p > 3$, it follows that $s = p = 5$. Hence $$1 =\nu^{-\frac{1}{3}(s-2)(s +1)}  = \nu^{-6},$$ so that by $(*)$, $a = \nu^5 + \nu$ for the non-cube $6^{th}$ root of unity $\nu$. Let $\zeta \in  \mathbb{F}_{25}$ be a primitive $6^{th}$ root of unity. As $\nu$ is a non-cube in $\mathbb{F}_{25}$, it follows that $\nu = \zeta^i$ for some $i=1,2,4,5$, and thus that $\nu$ is either a primitive $3^{rd}$ or $6^{th}$ root of unity. 
 
 In the case that $\nu$ is a primitive $3^{rd}$ root of unity, we have $$a = \nu + \nu^5= \nu + \nu^2= -1,$$ whereas when $\nu$ is a primitive $6^{th}$ root of unity, as $\nu^2 = \nu-1$, we have $$\begin{array}{ccl} \nu + \nu^5&=& \nu + \nu (\nu -1)^2= \nu^3-2\nu^2 +2\nu\\ 
 &=& \nu (\nu -1) -2(\nu -1) +2\nu = \nu^2-\nu +2=  1.\end{array}$$ It follows that in this case, $X^3 - 3X - a$ is irreducible over $\mathbb{F}_{s}$ if, and only if, $s=5$ and $a= \pm 1$.
 \end{enumerate}
 \item When $p = 2$, then by \cite[Theorem 1]{Williams}, the polynomial $$X^3 - 3X - a = X^3 + X + a \in \mathbb{F}_{s}[X]$$ is irreducible over $\mathbb{F}_s$ if, and only if, $\text{Tr}(1/a^2)= \text{Tr}(1)$ and the roots of $$ T^2  + aT + 1 \in \mathbb{F}_{s}[T]$$ are non-cubes in $\mathbb{F}_{s}$, when $m$ is even (resp. in $\mathbb{F}_{s^{2}}$, when $m$ is odd). 
\end{enumerate}
\item  $f(X)= (X-\alpha ) Q(X)$, where $\alpha \in \mathbb{F}_{s}$ and $Q(X)$ is an irreducible quadratic polynomial if and only if $f(X)$ has a root $\alpha$ in $\mathbb{F}_{s}$ and the two other roots $\alpha_1$ and $\alpha_2$ are not in $\mathbb{F}_{s}$.
As noted in Corollary \ref{rootour} and Remark \ref{rootourre}, the other two roots of $f(X)$ in $\overline{\mathbb{F}_{s}}$ are given by
$$\alpha_1 = -\frac{1+2f}{a} z^2 +f z + \frac{2(1+2f)}{a}$$ 
and 
$$\alpha_2= \frac{1+2f}{a} z^2 +(-1-f) z - \frac{2(1+2f)}{a},$$ 
where $$f= \frac{-6+ a r}{3(a^2-4)}$$ and $r$ is a root of the quadratic resolvent $$R(X)= X^2 +3 a X + (-27+9a^2)$$ of $f(X)$. As a consequence, $f(X)= (X-\alpha ) Q(X)$ where $\alpha \in \mathbb{F}_{s}$ and $Q(X)$ is irreducible quadratic if, and only if, $f(X)$ is not irreducible and $R(X)$ is irreducible in $\mathbb{F}_{s}$. The discriminant of $R(X)$ is equal to $\Delta = -27 (a^2-4)$, whence irreducibility of $R(X)$ over $\mathbb{F}_{s}$ is equivalent to $\Delta$ a non-square in $\mathbb{F}_{s}$ when $p \neq 2$. When $p=2$, we can rewrite $R(X)$ as $R(X)= X^2 + a X + (1+a^2)$. Taking $Y= X/a$, we obtain $$\frac{R(X)}{a^2} = Y^2 + Y + 1 +\frac{1}{a^2}.$$ This polynomial is irreducible if, and only if, $\text{Tr}( 1+\frac{1}{a^2}) \neq 0$, that is, $\text{Tr}(\frac{1}{a^2})\neq \text{Tr}(1)$ (see, \cite[Proposition 1]{Pomm}). Hence the result.

\item The equality $f(X) = (X - \alpha)  (X - \beta)^2$ gives us $$X^3 - 3X - a = (X - \alpha)(X - \beta)^2 = X^3 - (2\beta + \alpha) X^2 + (\beta^2 + 2\alpha \beta) X - \alpha \beta^2.$$ Thus $\alpha = - 2\beta$.
We therefore have $- 3 = \beta^2 - 4\beta^2 = - 3 \beta^2$ and $a = - 2\beta^3$. The first of these implies that $$3(\beta^2 - 1) = 3 \beta^2 - 3 = 0.$$ Thus $\beta = \pm 1$ and $a = \mp 2$. Conversely, when $a = \mp 2$, then 
$$X^3-3 X \mp 2= ( X \pm 2)(X\mp 1 )^2.$$ Hence the result. 
\item	 If $f(X) = (X - \alpha)^3$, then the equality $X^3-3X-a =  (X - \alpha)^3 = X^3 - 3\alpha X^2+ 3 \alpha^2 X - \alpha^3$ implies that $-3\alpha = 0$ and $3 \alpha^2 = -3$. As $p \neq 3$, this is impossible. 
\item This is the complement of all other cases.
\end{enumerate}
\end{proof}

\subsection{$X^3 +aX+a^2$, $a\in \mathbb{F}_s$, $p=3$} This is the final case we must consider.
\begin{lemma}\label{splittingpolychar3partdeux}
Let $p = 3$ and $f(X) = X^3 + aX + a^2$, where $a \in \mathbb{F}_{s}$. Then all possible decompositions of $f(X)$ over $\mathbb{F}_{s}$ are as follows:
\begin{enumerate}
\item $f(X)$ is irreducible if, and only if, $-a$ is a non-zero square in $\mathbb{F}_{s}$, say $-a = b^2$, and $\text{Tr}(b) \neq 0$.
\item $f(X) = (X-\alpha)Q(X)$ where $\alpha \in \mathbb{F}_{s}$ and $Q(X)$ is an irreducible quadratic polynomial if, and only if, $a$ a non-square in $ \in \mathbb{F}_{s}$. 
\item The case $f(X) = (X-\alpha)(X-\beta)^2$ with $\alpha, \beta \in \mathbb{F}_{s}$ and $\alpha \neq \beta$ may never occur.
\item $f(X)$ has a unique root with multiplicity 3 if, and only if, $a=0$.
\item $f(X) = (X-\alpha)(X - \beta)(X - \gamma)$ with $\alpha , \beta, \gamma \in \mathbb{F}_{s}$ distinct  if, and only if, $-a$ is a non-zero square in $\mathbb{F}_{s}$ say $-a = b^2$, and $\text{\emph{Tr}}(b ) =0$.
\end{enumerate}
\end{lemma}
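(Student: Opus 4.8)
The plan is to dispose of the inseparable case first, then use the discriminant to peel off case (2), and finally reduce the two remaining possibilities to Artin--Schreier polynomials, where the trace criterion applies. Since $p = 3$, we have $f'(X) = 3X^2 + a = a$, a constant; hence $\gcd(f,f') = 1$ whenever $a \neq 0$, so $f$ is separable for $a \neq 0$, whereas for $a = 0$ one simply has $f(X) = X^3 = (X-0)^3$. This settles part (4) in both directions. A polynomial of the shape $(X-\alpha)(X-\beta)^2$ with $\alpha \neq \beta$ has a repeated root, hence is inseparable, hence would force $a = 0$; but then $\alpha = \beta = 0$, a contradiction, so part (3) never occurs. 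From here on I assume $a \neq 0$, so that $f$ is separable.

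Next I would bring in the discriminant. A direct computation shows it equals $-a^3$ (reducing mod $3$, the term $-27a^4$ vanishes and $-4a^3 = -a^3$); this is also visible in the proof of Theorem~\ref{char3extensionsthesame}, where the Galois closure is produced by adjoining an element $b$ with $b^2 = -a$. In particular the discriminant lies in the same square class as $-a$. For a separable cubic over the finite field $\mathbb{F}_s$ (recall $p = 3 \neq 2$), the Galois group of the splitting field over $\mathbb{F}_s$ is cyclic of order $1$, $3$, or $2$, according to whether $f$ splits completely, is irreducible, or factors as $(X-\alpha)Q(X)$ with $Q$ an irreducible quadratic; and this group is contained in $A_3$---equivalently, the discriminant is a square in $\mathbb{F}_s$---precisely in the first two cases. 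Hence $f$ has an irreducible quadratic factor if and only if $-a$ is a non-square in $\mathbb{F}_s$, which is part (2); the complementary situation is $a \neq 0$ with $-a$ a nonzero square, which is what enters parts (1) and (5).

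It remains to distinguish, under $-a = b^2$ with $b \in \mathbb{F}_s$, $b \neq 0$, the irreducible case from the totally split one. The change of variable $z = y/b$---the substitution used in the proof of Theorem~\ref{char3extensionsthesame}, which is now defined over $\mathbb{F}_s$ because $b \in \mathbb{F}_s$---converts $f(y) = 0$ into $z^3 - z + b = 0$; since this bijection between roots commutes with the Frobenius of $\overline{\mathbb{F}_s}/\mathbb{F}_s$, the polynomials $f$ and $X^3 - X + b = X^3 - X - (-b)$ have the same factorisation type over $\mathbb{F}_s$. By the classical trace criterion for degree-$p$ Artin--Schreier polynomials, $X^3 - X - (-b)$ is irreducible over $\mathbb{F}_s$ when $\text{Tr}(-b) = -\text{Tr}(b) \neq 0$ and splits into linear factors when $\text{Tr}(b) = 0$, and no other factorisation type can occur. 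This yields parts (1) and (5) and, with the earlier paragraphs, exhausts all five cases.

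The bulk of the work is routine bookkeeping; the steps that want care are the discriminant computation in characteristic $3$ (pinning down the square class of $-a$, hence the precise hypothesis in part (2)), the check that the substitution $z = y/b$ is genuinely $\mathbb{F}_s$-rational---which is exactly why ``$-a$ a square'' is the right dichotomy---and invoking the degree-$3$ Artin--Schreier trace criterion in the correct direction.
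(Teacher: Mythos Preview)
Your argument is correct and in fact more self-contained than the paper's. The paper disposes of (3) and (4) by a direct coefficient comparison (expanding $(X-\alpha)(X-\beta)^2$ and forcing $a=-3\beta^2=0$), and for the trichotomy (1)/(2)/(5) simply invokes \cite[Theorem 2]{Williams}. You instead (i) use separability of $f$ to kill (3) and (4) in one stroke, (ii) identify the square class of the discriminant $-a^3$ with that of $-a$ to isolate the $(X-\alpha)Q(X)$ case via the $A_3$/$S_3$ criterion (valid here since $p\neq 2$), and (iii) reduce the remaining two cases to the Artin--Schreier polynomial $z^3-z+b$ over $\mathbb{F}_s$ and apply the trace criterion. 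This is exactly the mechanism hiding behind the Williams citation, and your version makes the role of the hypothesis ``$-a$ a square'' transparent: it is precisely what makes the substitution $z=y/b$ $\mathbb{F}_s$-rational.

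One point worth flagging: your argument yields that case (2) occurs if and only if $-a$ is a non-square in $\mathbb{F}_s$, whereas the statement as printed reads ``$a$ a non-square''. These are not the same when $s\equiv 3\pmod 4$ (e.g.\ $s=3$, $a=1$: here $f(X)=X^3+X+1=(X-1)(X^2+X+2)$ with the quadratic factor irreducible, yet $a=1$ is a square while $-a=2$ is not). Your condition is the correct one and is consistent with the paper's own use of $b^2=-a$ elsewhere (Theorem~\ref{char3extensionsthesame}); the discrepancy is a typo in the statement rather than a flaw in your proof.
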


\begin{proof}
This is a direct consequence of \cite[Theorem 2]{Williams}, that the case $$f(X) = (X-\alpha)(X-\beta)^2$$ with $\alpha, \beta \in \mathbb{F}_{s}$ and $\alpha \neq \beta$ never occurs, and that $f(X)$ has a unique root with multiplicity $3$ if, and only if, $a=0$. 
To see this, suppose that $f(X) = (X - \alpha)(X - \beta)^2$ where $\alpha \neq \beta$. Then $$(X - \alpha)(X - \beta)^2 =  X^3 - (2\beta + \alpha) X^2 + (\beta^2 + 2\alpha \beta) X - \alpha \beta^2.$$ It follows that $\alpha = - 2 \beta$, $\beta^2 + 2\alpha \beta = a$, and $\alpha \beta^2 = -a^2$. Thus, in this case, $a=-3 \beta^2 =0$, from which it follows that $f(X)$ has a root of multiplicity $3$. Finally, suppose that $$f(X) = (X - \alpha)^3 = X^3 - \alpha^3.$$ In particular, it follows that $a = 0$, which in turn implies that $a^2 = 0$, so that $f(X) = X^3$, whence $\alpha = 0$. 
\end{proof}
In \S 3, we will use these criteria to specialise to the case where $F=K$ is a function field over $\mathbb{F}_q$. In this context, we will be able to determine ramification and splitting data of our cubic forms by reducing to residue fields, which are finite, so that all of the results of \S 2 will apply.
\begin{remarque} If $F$ is an arbitrary field of characteristic different from $2$ or $3$, then the decomposition of a cubic polynomial over $F$ depends upon the irreducibility of one of the cubic forms
\begin{enumerate} 
\item $X^3 -a$, $a\in F$
\item $X^3 -3X-a$, $a\in F$
\end{enumerate}
In case $(1)$, the decomposition of $X^3 -a$ depends upon where $a$ is a cube in $F$. In case (2), by Theorem \ref{purelycubic}, given a root $c \in \overline{F}$ of $X^2 + a X + 1$, the variable $Y=\frac{cX-1}{X-c}$ satisfies $Y^3=c$. Then, by Theorem \ref{purelycubic}, given a cube root $\gamma \in \overline{F}$ of $c$, we have for each $i=0,1,2$ that
$$\alpha_i = \frac{c \xi^i \gamma -1}{\xi^i \gamma-c}= \frac{ \gamma^4 - \xi^{2i}}{\xi^{2i} \gamma (\xi^i -\gamma^2)}=\frac{ \gamma^2 + \xi^{i}}{\xi^{2i} \gamma } $$ are the roots in $\overline{F}$ of the cubic form $X^3-3X-a$. The decomposition of $X^3-3X-a$ is equivalent to determining which $\alpha_i$ lie in $F$ and are distinct. 
\end{remarque}

\section{Ramification, splitting and Riemann-Hurwitz}

In this section, we let $F=K$ denote a function field with field of constants $\mathbb{F}_q$, where $q = p^n$ and $p > 0$ is a prime integer. We denote by $\mathfrak{p}$ a place of $K$. The \emph{degree} $d_K(\mathfrak{p})$ of $\mathfrak{p}$ is defined as the degree of its residue field, which we denote by $k(\mathfrak{p})$, over the constant field $\mathbb{F}_q$. The cardinality of the residue field $k(\mathfrak{p})$ may be written as $|k (\mathfrak{p})|  = q^{d_K(\mathfrak{p})}$. In an extension $L/K$ and a place $\mathfrak{P}$ of $L$ over $\mathfrak{p}$, we let $f(\mathfrak{P}|\mathfrak{p}) = [k(\mathfrak{P}):k (\mathfrak{p})]$ denote the inertia degree of $\mathfrak{P}|\mathfrak{p}$. We let $e(\mathfrak{P}|\mathfrak{p})$ be the ramification index of $\mathfrak{P}|\mathfrak{p}$, i.e., the unique positive integer such that $v_\mathfrak{P}(z) = e(\mathfrak{P}|\mathfrak{p})v_\mathfrak{p}(z)$, for all $z \in K$. If $v_\mathfrak{p}(a) \geq 0$, then we let $$\overline{a} := a \mod \mathfrak{p}$$ denote the image of $a$ in $k(\mathfrak{p})$.

Here, we study the ramification and splitting of places in cubic function fields over $K$, and we give an explicit Riemann-Hurwitz formula for separable cubic function fields over $K$. We obtain these results for any prime characteristic $p$, using the classification given in Corollary \ref{classification}, which establishes that any separable cubic extensions $L/K$ has a generator with minimal polynomial equal to one of the following:
\begin{enumerate}
\item $X^3-a$, $a\in K$, when $p\neq 3$
\item $X^3-3X-a$, $a\in K$, when $p\neq 3$
\item $X^3+aX+a^2$, $a\in K$, when $p= 3$.
\end{enumerate}
We would like to point out that by Corollary \ref{classification}, if $p \neq 3$, then every impure cubic extension $L/K$ has a primitive element $y$ with minimal polynomial of the form $$f(X) = X^3 - 3X - a.$$ We mention this here, as we will use it whenever this case occurs in \S 3. When this is used, the element $y$ will denote any such choice of primitive element.
\subsection{Constant extensions} 
In this subsection, we wish to determine when a cubic function field over $K$ is a constant extension of $K$. We do this before a study of ramification and splitting, as constant extensions are unramified and their splitting behaviour is well understood \cite[Chapter 6]{Vil}. In the subsequent subsections, we will thus assume that our cubic extension $L/K$ is not constant, which, as 3 is prime, is equivalent to assuming that the extension is geometric.

\subsubsection{$X^3-a$, $a\in K$, when $p\neq 3$} 
\begin{lemme}\label{constantextension}
Let $p\neq 3$, and let $L/K$ be purely cubic, i.e. there exists a primitive element $y\in L$ such that $y^3 = a$, $a \in K$. Then $L/K$ is constant if, and only if, $a=ub^3$, where $b\in K$ and $u \in \mathbb{F}_q^*$ is a non-cube. In other words, there is a purely cubic generator $z$ of $L/K$ such that $z^3 = u$, where $u \in \mathbb{F}_q^*$.
\end{lemme}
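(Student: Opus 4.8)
The plan is to reduce the statement to the classification of purely cubic extensions up to isomorphism (Theorem~\ref{extensionsthesame}), using that, $3$ being prime, $L/K$ is a constant extension precisely when $L = K\mathbb{F}_{q^3}$ (standard for constant extensions of prime degree, cf.\ \cite[Chapter~6]{Vil}). The first task is therefore to decide when the constant extension $K\mathbb{F}_{q^3}/K$ is itself purely cubic and, if so, to equip it with a distinguished purely cubic generator. I claim $\mathbb{F}_{q^3}/\mathbb{F}_q$ admits a generator $\beta_0$ with $\beta_0^3 \in \mathbb{F}_q^*$ if and only if $q \equiv 1 \pmod 3$: when $q \equiv 1 \pmod 3$, any non-cube $t_0 \in \mathbb{F}_q^*$ satisfies $t_0^{(q^3-1)/3} = \bigl(t_0^{q-1}\bigr)^{(q^2+q+1)/3} = 1$, so $t_0$ is a cube in $\mathbb{F}_{q^3}^*$, say $t_0 = \beta_0^3$, and $\beta_0 \notin \mathbb{F}_q$ because $t_0$ is a non-cube in $\mathbb{F}_q$; when $q \equiv 2 \pmod 3$, $3 \nmid q^3-1$, so $\mu_3(\mathbb{F}_{q^3}) = \{1\}$ and (cubing being a bijection of $\mathbb{F}_q^*$) any cube root of an element of $\mathbb{F}_q$ lying in $\mathbb{F}_{q^3}$ already lies in $\mathbb{F}_q$, so $K\mathbb{F}_{q^3}/K$ is not purely cubic. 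In this last case the hypothesis that $L/K$ is purely cubic forces $L \neq K\mathbb{F}_{q^3}$, hence $L/K$ is not constant, while also $\mathbb{F}_q^*$ contains no non-cube; so both sides of the equivalence are false and there is nothing to prove. I may thus assume $q \equiv 1 \pmod 3$ and $K\mathbb{F}_{q^3} = K(\beta_0)$ with $\beta_0^3 = t_0 \in \mathbb{F}_q^*$ a fixed non-cube.

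With this in hand, both implications are short. If $a = u b^3$ with $u \in \mathbb{F}_q^*$ a non-cube and $b \in K$ (here $b \neq 0$, since $a \neq 0$ as $X^3 - a$ is irreducible over $K$), then $z := y/b$ generates $L/K$ and $z^3 = u \in \mathbb{F}_q^*$; as $u$ is a non-cube in $\mathbb{F}_q$, the polynomial $X^3 - u$ is irreducible over $\mathbb{F}_q$, so $\mathbb{F}_q(z) = \mathbb{F}_{q^3}$ and $L = K(z) = K\mathbb{F}_{q^3}$ is a constant extension. Conversely, if $L/K$ is constant then $L = K\mathbb{F}_{q^3} = K(\beta_0)$, so $y$ and $\beta_0$ are two purely cubic generators of the same extension, and Theorem~\ref{extensionsthesame} (with $a_1 = a$, $a_2 = t_0$) gives $a = c^3\, t_0^{\,j}$ for some $c \in K$ and $j \in \{1,2\}$. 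Since $\mathbb{F}_q^*/(\mathbb{F}_q^*)^3$ is cyclic of order $3$ and $t_0$ is a non-cube, $t_0^{\,j}$ is again a non-cube in $\mathbb{F}_q^*$; so $u := t_0^{\,j}$, $b := c$ witness $a = u b^3$, and $z := y/c$ is the asserted purely cubic generator with $z^3 = u \in \mathbb{F}_q^*$.

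The main obstacle is the preliminary step of the first paragraph: showing that $\mathbb{F}_{q^3}/\mathbb{F}_q$ is purely cubic exactly when $q \equiv 1 \pmod 3$, producing the constant $t_0$, and tracking the exponent $j$ carefully so that it is $t_0^{\,j}$ — not merely $t_0$ — which one certifies as a non-cube. Should one wish to bypass Theorem~\ref{extensionsthesame}, the converse can be done directly: with $L = K\mathbb{F}_{q^3}$, let $\phi$ generate $\mathrm{Gal}(L/K)$ (the lift of the Frobenius of $\mathbb{F}_{q^3}/\mathbb{F}_q$, so $L^{\phi} = K$); from $\phi(y)^3 = a = y^3$ the ratio $\phi(y)/y$ lies in $\mu_3(L) = \mu_3(\mathbb{F}_{q^3})$ and differs from $1$ (else $y \in K$), forcing $q \equiv 1 \pmod 3$ and $\phi(y)/y = \xi^j$ with $j \in \{1,2\}$; then $wK^* \mapsto \phi(w)/w$ embeds $H/K^*$ into $\mu_3(L) \cong \mathbb{Z}/3$ for $H = \{w \in L^* : w^3 \in K^*\}$, and since both $yK^*$ and $\beta_0 K^*$ are nontrivial there (using $K \cap \mathbb{F}_{q^3} = \mathbb{F}_q$) one gets $y = c\,\beta_0^{\,j}$ for some $c \in K^*$, hence $a = t_0^{\,j} c^3$ as before.
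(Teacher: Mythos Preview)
Your proof is correct and terminates at the same point as the paper's (invoking Theorem~\ref{extensionsthesame} to compare $y$ with a constant-field purely cubic generator), but you reach that generator by a different route. The paper does not split on $q \bmod 3$: it takes an arbitrary generator $\lambda$ of the constant subfield $l=\mathbb{F}_{q^3}$ with minimal polynomial $X^3+eX^2+fX+g\in\mathbb{F}_q[X]$, runs it through Corollary~\ref{classification} and Theorem~\ref{purelycubic} (using that $L/K$ is purely cubic, and that a root in $K$ of the relevant quadratic over $\mathbb{F}_q$ must already lie in $\mathbb{F}_q$ since $\mathbb{F}_q$ is algebraically closed in $K$), and thereby produces $\lambda'\in l$ with $\lambda'^3=\beta\in\mathbb{F}_q$. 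Your approach replaces this appeal to the purely-cubic-closure machinery with an explicit finite-field construction of $\beta_0$ when $q\equiv 1\pmod 3$ and a vacuity argument when $q\equiv 2\pmod 3$. That is more elementary (no Theorem~\ref{purelycubic}) at the cost of a case split; the paper's route is more uniform and reuses tools already developed.

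One expository point worth tightening: in the $q\equiv 2\pmod 3$ case, your main line concludes ``$K\mathbb{F}_{q^3}/K$ is not purely cubic'' from the fact that cube roots of elements of $\mathbb{F}_q$ inside $\mathbb{F}_{q^3}$ already lie in $\mathbb{F}_q$. Strictly, that only shows $\mathbb{F}_{q^3}/\mathbb{F}_q$ has no purely cubic generator; a purely cubic generator of $K\mathbb{F}_{q^3}/K$ need not a priori lie in $\mathbb{F}_{q^3}$. The missing step is exactly what your $\mu_3$ computation already gives: the constant extension is Galois, so if it were purely cubic then by Corollary~\ref{GApure} (or Theorem~\ref{purely}(2)) a primitive $3^{\mathrm{rd}}$ root of unity would live in $L$, hence in its constant field $\mathbb{F}_{q^3}$, contradicting $\mu_3(\mathbb{F}_{q^3})=\{1\}$. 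Your alternative Frobenius argument in the final paragraph supplies this as well, so the proof is complete; you may simply want to promote the $\mu_3(L)=\mu_3(\mathbb{F}_{q^3})=\{1\}$ remark into the main line so the first paragraph stands on its own.
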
 
\begin{proof}
Suppose that $a=ub^3$, where $b\in K$ and $u$ is a non-cube in $\mathbb{F}_q^*$. Then $z=\frac{y}{b}\in L$ is a generator of $L/K$ such that $z^3 = u$. The polynomial $X^3 -u$ has coefficients in $\mathbb{F}_q$, and as a consequence, $L/K$ is constant. 

Suppose then that $L/K$ is constant. We denote by $l$ the algebraic closure of $\mathbb{F}_q$ in $L$, so that $L=Kl$. Let $l= \mathbb{F}_q (\lambda )$, where $\lambda$ satisfies a cubic polynomial $X^3 + e X^2 + f X + g $ with $e, f , g \in \mathbb{F}_q$. Hence, $L= K(\lambda )$. We denote $$\alpha = -2-\frac{ ( 27 g^2 -9efg +2f^3)^2}{27(3ge-f^2)^3} \in \mathbb{F}_q.$$
As $L/F$ is purely cubic, it then follows by Theorem \ref{purelycubic} that either $3eg = f^2$ or the quadratic polynomial $X^2+ \alpha X +1$ has a root in $K$. In both cases, there is a generator $\lambda ' \in L$ such that $$\lambda'^3 = \beta \in \mathbb{F}_q.$$
Hence $\lambda' \in l$. The elements $\lambda'$ and $y$ are two purely cubic generators of $L/K$, whence by Lemma  \ref{extensionsthesame}, it follows that $y = c \lambda'^j$ where $j=1$, or $2 $ and $c\in K$.  Thus, $a = c^3 \beta^j$, where $\beta\in \mathbb{F}_q$. The result follows.
\end{proof}
\subsubsection{$X^3-3X-a$, $a\in K$, when $p\neq 3$}
Via Corollary \ref{classification} and Lemma \ref{extensionsthesame1} , a proof similar to that of Lemma \ref{constantextension} yields the following result. 
\begin{lemme}\label{constantextension2}
Let $p\neq 3$ and  $L/K$ be an impurely cubic extension, so that there is a primitive element $y\in L$ such that $y^3-3y = a$ (see Corollary \ref{classification}). Then $L/K$ is constant if, and only if, $$u= -3a\alpha^2\beta+a\beta^3+6\alpha+\alpha^3a^2-8\alpha^3 \in \mathbb{F}_q^*,$$ for some $\alpha, \beta \in K$ such that $\alpha^2 + a_2 \alpha \beta + \beta^2 =1$. In other words, there is a generator $z$ of $L/K$ such that $z^3 -3z=u$, where $u \in \mathbb{F}_q^*$.
\end{lemme}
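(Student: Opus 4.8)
The plan is to follow the template of Lemma~\ref{constantextension}, substituting Theorem~\ref{extensionsthesame1} for Theorem~\ref{extensionsthesame} at the point where two generators of the same extension are compared.

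First I would handle the elementary implication. If $L/K$ admits a generator $z$ with $z^3-3z=u$ for some $u\in\mathbb{F}_q^{*}$, then $X^3-3X-u$, which has coefficients in $\mathbb{F}_q$, is the minimal polynomial of $z$ over $K$ (it has degree $3=[L:K]$ and $z$ generates $L/K$), hence is irreducible over $K$ and a fortiori over $\mathbb{F}_q$; thus $\mathbb{F}_q(z)/\mathbb{F}_q$ is a cubic extension and $L=K(z)=K\cdot\mathbb{F}_q(z)$ is constant. (Here $u\neq 0$ is automatic, since $X^3-3X=X(X^2-3)$ is reducible.) For the converse, assume $L/K$ is constant and let $\ell$ be the exact field of constants of $L$, so that $L=K\ell$ and $[\ell:\mathbb{F}_q]=[L:K]=3$, the last equality being the standard fact that a constant extension has the same degree as its underlying constant-field extension (using that $\mathbb{F}_q$ is algebraically closed in $K$ and that $\ell/\mathbb{F}_q$ is separable). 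Write $\ell=\mathbb{F}_q(\lambda)$, where $\lambda$ is a root of some cubic $X^3+eX^2+fX+g$ with $e,f,g\in\mathbb{F}_q$; then $L=K(\lambda)$, so $\lambda$ is a generator of $L/K$ contained in $\ell$.

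Next I would run $\lambda$ through Corollary~\ref{classification}. Since $L/K$ is impurely cubic, the alternative $3eg=f^2$ cannot occur, as it would make $L/K$ purely cubic by Corollary~\ref{classification}; hence we are in case~(2)(a), which supplies a generator $z$ of $L/K$ with minimal polynomial $X^3-3X-u$, where $u\in\mathbb{F}_q$ is the indicated rational function of $e,f,g$ and $z$ is obtained from $\lambda$ by an $\mathbb{F}_q$-rational fractional-linear substitution, so $z\in\mathbb{F}_q(\lambda)=\ell$. As $[L:K]=3$, the polynomial $X^3-3X-u$ is irreducible over $K$, which forces $u\neq 0$, i.e.\ $u\in\mathbb{F}_q^{*}$. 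Finally, $y$ and $z$ are two generators of the same impure cubic extension, so by Theorem~\ref{purelycubic} the quadratics $X^2+aX+1$ and $X^2+uX+1$ are irreducible over $K$, and Theorem~\ref{extensionsthesame1} applies with $y_1=z$ (parameter $u$) and $y_2=y$ (parameter $a$): it produces $\alpha,\beta\in K$ with $\alpha^2+a\alpha\beta+\beta^2=1$ and
$$u=-3a\alpha^2\beta+a\beta^3+6\alpha+\alpha^3a^2-8\alpha^3,$$
which is precisely the asserted identity, with $z$ witnessing the ``in other words'' clause.

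I do not expect a genuine obstacle: the argument is a routine transcription of Lemma~\ref{constantextension}. The two points meriting care are the bookkeeping in Theorem~\ref{extensionsthesame1}---one must place the given generator $y$ (parameter $a$) in the role of $y_2$, so that the resulting formula for $u$ is expressed through $a$ rather than through $u$ itself---and the verification that the substitution of Corollary~\ref{classification}(2)(a) both keeps the new generator inside the constant field $\ell$ and keeps it a primitive element of $L/K$; both are immediate from the cited statements.
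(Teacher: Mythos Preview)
Your proposal is correct and follows essentially the same route the paper indicates: mimic the proof of Lemma~\ref{constantextension}, replacing the purely cubic comparison (Theorem~\ref{extensionsthesame}) by Theorem~\ref{extensionsthesame1}, and using Corollary~\ref{classification}(2)(a) to produce from a constant generator $\lambda$ a generator $z\in\ell$ with minimal polynomial $X^3-3X-u$, $u\in\mathbb{F}_q^{*}$. Your bookkeeping (placing $y$ in the role of $y_2$, checking that $3eg\neq f^2$ via the impurity hypothesis, and verifying the irreducibility of $X^2+aX+1$ and $X^2+uX+1$ before invoking Theorem~\ref{extensionsthesame1}) is exactly what is needed.
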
 
\subsubsection{$X^3+aX+a^2$, $a\in K$, when $p= 3$} In this case, one may prove the following result, similarly to the proof of Lemma \ref{constantextension}, via Corollary \ref{classification} and Lemma \ref{char3extensionsthesame}.
\begin{lemme}\label{constantextension2}
Let $p= 3$ and $L/K$ be a separable cubic extension, so that there is a primitive element $y\in L$ such that $y^3+a y+ a^2=0$ (see Corollary \ref{classification}). Then $L/K$ is constant if, and only if, $$u= \frac{(ja^2 + (w^3 + a w) )^2}{a^3} \in \mathbb{F}_q^*,$$ for some $w\in K$ and $j=1,2$. In other words, there is a generator $z$ of $L/K$ such that $z^3 +uz+u^2=0$, where $u \in \mathbb{F}_q^*$.
\end{lemme}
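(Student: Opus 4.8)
The plan is to mimic the proof of Lemma~\ref{constantextension}, using two ingredients: the classification of Corollary~\ref{classification} is valid over \emph{every} field, in particular over the finite field $\mathbb{F}_q$; and Theorem~\ref{char3extensionsthesame} characterises, over an arbitrary base field of characteristic $3$, when two separable cubic generators of the ``standard'' form $X^3+cX+c^2$ produce isomorphic extensions. First observe that the two formulations in the statement are equivalent: if $z$ is a generator of $L/K$ with $z^3+uz+u^2=0$ and $u\in\mathbb{F}_q^*$, then applying Theorem~\ref{char3extensionsthesame} to the pair of generators $y$ (giving $a_1=a$) and $z$ (giving $a_2=u$) of $L\simeq L$ shows, by its condition~(3), that $u=(ja^2+(w^3+aw))^2/a^3$ for some $w\in K$ and $j\in\{1,2\}$; conversely, an element $u$ of this shape forces, by the same theorem, the existence of a generator $z$ of $L/K$ with $z^3+uz+u^2=0$. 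Hence it suffices to prove: $L/K$ is constant if, and only if, there is a generator $z$ of $L/K$ with $z^3+uz+u^2=0$ for some $u\in\mathbb{F}_q^*$.

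For the direction ($\Leftarrow$), suppose such a $z$ exists with $u\in\mathbb{F}_q^*$. Then $X^3+uX+u^2\in\mathbb{F}_q[X]$, and since $z$ generates the degree-$3$ extension $L/K$ this is the minimal polynomial of $z$ over $K$, hence irreducible over $K$ and therefore also over $\mathbb{F}_q$. Thus $\mathbb{F}_q(z)$ is the degree-$3$ constant extension $\mathbb{F}_{q^3}$ of $\mathbb{F}_q$, and $L=K(z)=K\cdot\mathbb{F}_{q^3}$ is a constant extension of $K$.

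For the direction ($\Rightarrow$), suppose $L/K$ is constant. Let $l$ be the algebraic closure of $\mathbb{F}_q$ in $L$; then $[l:\mathbb{F}_q]=3$, $L=Kl$, and $l/\mathbb{F}_q$ is separable. Write $l=\mathbb{F}_q(\lambda)$. Applying Corollary~\ref{classification} with base field $\mathbb{F}_q$ to the separable cubic $l/\mathbb{F}_q$ produces a generator $z\in\mathbb{F}_q(\lambda)=l$ of $l/\mathbb{F}_q$ with $z^3+bz+b^2=0$ for some $b\in\mathbb{F}_q$; moreover $b\neq 0$, since otherwise $z=0$ would not generate a degree-$3$ extension. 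As $L=Kl=K(z)$ with $[L:K]=3$, the element $z$ is also a generator of $L/K$. Finally, apply Theorem~\ref{char3extensionsthesame} to the generators $y$ (giving $a$) and $z$ (giving $b$) of the isomorphic extensions $L\simeq L$: its condition~(3) yields $w\in K$ and $j\in\{1,2\}$ with $b=(ja^2+(w^3+aw))^2/a^3$. Taking $u=b\in\mathbb{F}_q^*$ completes the proof, with $z$ the desired generator.

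The only step carrying real content is ($\Rightarrow$), and within it two things must be checked: that the standard separable generator supplied by Corollary~\ref{classification} over $\mathbb{F}_q$ genuinely lies in the constant field $l$ (it does, being an explicit $\mathbb{F}_q$-rational function of $\lambda$, where $l=\mathbb{F}_q(\lambda)$) and remains a generator of $L/K$; and that Theorem~\ref{char3extensionsthesame}, being a statement over an arbitrary base field, may be invoked with base field $K$ for the pair $y,z$. Neither requires computation beyond citing the stated results, so the argument is entirely parallel to that of Lemma~\ref{constantextension}.
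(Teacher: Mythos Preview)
Your proof is correct and follows precisely the approach the paper indicates: the paper's own proof is simply the one-line remark that the argument is ``similarly to the proof of Lemma~\ref{constantextension}, via Corollary~\ref{classification} and Lemma~\ref{char3extensionsthesame}'', and you have written out exactly that argument in full, including the explicit verification that the two formulations of the statement are equivalent.
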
 
\subsection{Splitting and ramification} 
In this section, we describe the splitting and ramifcation of any place of $K$ in a cubic extension $L/K$. As usual, we divide the analysis into the three fundamental cubic forms derived in Corollary \ref{classification}.

\begin{remarque}\label{rkp} 
Given any place $\mathfrak{p}$ of $K$, one may always choose (via weak approximation) an element $x \in K\backslash \mathbb{F}_q$ such that $\mathfrak{p}$ is not above the pole divisor $\mathfrak{p}_\infty$ of $x$ in $\mathbb{F}_q(x)$. As $[L:\mathbb{F}_q(x)] < \infty$, the integral closure $\mathcal{O}_{L,x}$ of the ring $\mathbb{F}_q[x]$ in $L$ is a Dedekind domain \cite[Theorem 5.7.7]{Vil} and a \emph{holomorphy ring} \cite[Corollary 3.2.8]{Sti}: A holomorphy ring is defined in any global field $L$ as the intersection $$\mathcal{O}_S = \bigcap_{\mathfrak{P} \in S} \mathcal{O}_\mathfrak{P}$$ for a nonempty, proper subset $S$ of the collection of all places of $L$. By \cite[Theorem 3.2.6]{Sti}, we have $\mathcal{O}_{L,x}=\mathcal{O}_S$, where $S$ is the collection of places of $L$ which lie above the places of $\mathbb{F}_q(x)$ associated to irreducible polynomials in $\mathbb{F}_q[x]$. By construction, the place $\mathfrak{p}$ of $K$ also lies above a place of $\mathbb{F}_q(x)$ associated to an irreducible polynomial in $\mathbb{F}_q[x]$, so that all places of $L$ which lie above $\mathfrak{p}$ belong to $S$. 
As $\mathcal{O}_{L,x}$ is Dedekind, it follows that $\mathfrak{p}$ factors in $\mathcal{O}_{L,x}$ as \begin{equation} \label{factorisation}\mathfrak{p} \mathcal{O}_{L,x} = \mathfrak{P}_1^{e(\mathfrak{P}_1|\mathfrak{p})} \cdots \mathfrak{P}_r^{e(\mathfrak{P}_r|\mathfrak{p})},\end{equation} where for each $i=1,\ldots,r$, the quantity $e(\mathfrak{P}_i|\mathfrak{p})$ is equal to the ramification index of $\mathfrak{P}_i|\mathfrak{p}$.  By \cite[Proposition 3.2.9]{Sti}, the collection $S$ is in one-to-one correspondence with the maximal ideals of $\mathcal{O}_S = \mathcal{O}_{L,x}$, so that the places $\mathfrak{P}_1,\ldots,\mathfrak{P}_r$ are independent of the choice of such an $x$. 
Also by \cite[Proposition 3.2.9]{Sti}, we have for each place $\mathfrak{P} \in S$ an isomorphism $$\mathcal{O}_{L,x}/\mathfrak{m}_\mathfrak{P} \cong \mathcal{O}_\mathfrak{P} / \mathfrak{P},$$ where $\mathfrak{m}_\mathfrak{P}$ is the maximal ideal of $\mathcal{O}_{L,x}$ associated to $\mathfrak{P}$,  $\mathcal{O}_\mathfrak{P}$ is the valuation ring at $\mathfrak{P}$, and
$\mathcal{O}_\mathfrak{P} / \mathfrak{P}$ is the residue field at the place $\mathfrak{P}$. It follows that for each $i=1,\ldots,r$, the inertia degree $f(\mathfrak{P}_i|\mathfrak{p})$, too, is independent of such a choice of $x$. Finally, the ramification indices $e(\mathfrak{P}_i|\mathfrak{p})$ are also independent of this choice of $x$: The localisation $(\mathcal{O}_{L,x})_{\mathfrak{P}_i}$ has maximal ideal $\mathfrak{m}_ {\mathfrak{P}_i}$, and by the factorisation \eqref{factorisation}, we have $$\mathfrak{p}(\mathcal{O}_{L,x})_{\mathfrak{P}_i} = \mathfrak{m}_ {\mathfrak{P}_i}^{e(\mathfrak{P}_i|\mathfrak{p})}.$$ One may easily show that $(\mathcal{O}_{L,x})_{\mathfrak{P}_i} = \mathcal{O}_{\mathfrak{P}_i}$, so that by \cite[Corollary 11.2]{Neu}, $$\mathcal{O}_{L,x} / \mathfrak{P}_i^{e(\mathfrak{P}_i|\mathfrak{p})} \mathcal{O}_{L,x} \cong  (\mathcal{O}_{L,x})_{\mathfrak{P}_i}/\mathfrak{p}(\mathcal{O}_{L,x})_{\mathfrak{P}_i} = \mathcal{O}_{\mathfrak{P}_i}/\mathfrak{p}(\mathcal{O}_{\mathfrak{P}_i})_{\mathfrak{P}_i},$$ where $\mathcal{O}_{\mathfrak{P}_i}/\mathfrak{p}(\mathcal{O}_{\mathfrak{P}_i})_{\mathfrak{P}_i}$, and hence $e(\mathfrak{P}_i|\mathfrak{p})$, is independent of $x$. The $2r$-tuple $$(e(\mathfrak{P}_1|\mathfrak{p}),f(\mathfrak{P}_1|\mathfrak{p});\ldots; e(\mathfrak{P}_r|\mathfrak{p}),f(\mathfrak{P}_r|\mathfrak{p}))$$ is called the \emph{signature} of $\mathfrak{p}$ in $L$. 
\end{remarque}

\subsubsection{$X^3 -a$, $a\in K$, when $p\neq 3$} 
If the extension $L/K$ is purely cubic, one may find a purely cubic generator of a form which is well-suited to a determination of splitting and ramification, as in the following lemma.
\begin{lemma}\label{purelocalstandardform}
Let $L/K$ be a purely cubic extension. Given a place $\mathfrak{p}$ of $K$, one may select a primitive element $y$ with minimal polynomial of the form $X^3 -a$ such that either 
\begin{enumerate} 
\item $v_\mathfrak{p}(a)=1,2$, or
\item $v_\mathfrak{p} (a)=0$. 
\end{enumerate}
Such a generator $y$ is said to be in {\sf local standard form} at $\mathfrak{p}$.
\end{lemma}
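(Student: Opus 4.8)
The plan is to begin with an arbitrary purely cubic generator of $L/K$ and rescale it by a carefully chosen element of $K^{*}$ in order to force the $\mathfrak{p}$-adic valuation of the constant term into the set $\{0,1,2\}$.

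First, since $L/K$ is purely cubic, Definition \ref{pureclosuredef} furnishes a generator $y_0$ of $L/K$ with minimal polynomial $X^3 - a_0$ for some $a_0 \in K$; here $a_0$ is necessarily a noncube in $K$ because $[L:K]=3$. The key elementary observation is that for any nonzero $c \in K$ the element $cy_0$ is again a generator of $L/K$, and $(cy_0)^3 = c^3 a_0$, so its minimal polynomial is $X^3 - c^3 a_0$ (this is one direction of Theorem \ref{extensionsthesame}, and the other generator-changing moves in that theorem will not be needed). Thus it suffices to choose $c \in K^{*}$ so that $v_\mathfrak{p}(c^3 a_0) = 3\,v_\mathfrak{p}(c) + v_\mathfrak{p}(a_0)$ lies in $\{0,1,2\}$.

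Next, I would write $v_\mathfrak{p}(a_0) = 3k + r$ with $k \in \mathbb{Z}$ and $r \in \{0,1,2\}$, and then produce $c$ of valuation $-k$ at $\mathfrak{p}$. For this one only needs that the valuation map $v_\mathfrak{p} \colon K^{*} \to \mathbb{Z}$ is surjective, which holds since the value group of a place of a function field is $\mathbb{Z}$: pick $t \in K^{*}$ with $v_\mathfrak{p}(t)=1$ and set $c = t^{-k}$. Then $y := c y_0$ is a generator of $L/K$ with minimal polynomial $X^3 - a$, where $a := c^3 a_0$ satisfies $v_\mathfrak{p}(a) = r$. If $r \in \{1,2\}$ we land in case (1); if $r = 0$ we land in case (2), which completes the argument.

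There is no real obstacle here: the argument is just the bookkeeping of valuations under rescaling a purely cubic generator. The only two points that deserve an explicit word are that $K$ itself --- and not merely its completion at $\mathfrak{p}$ --- contains an element of valuation $1$ at $\mathfrak{p}$, and the trivial fact that rescaling $y_0$ by $c$ rescales the constant term by $c^3$. It is worth remarking that when $3 \mid v_\mathfrak{p}(a_0)$ one can always achieve $v_\mathfrak{p}(a) = 0$, while otherwise only $v_\mathfrak{p}(a) \in \{1,2\}$ is attainable, which accounts precisely for the two alternatives in the statement.
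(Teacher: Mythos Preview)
Your proof is correct and follows essentially the same approach as the paper's: start from an arbitrary purely cubic generator, write $v_\mathfrak{p}(a_0)=3k+r$, and rescale by an element of $K^*$ of valuation $-k$ to bring the constant term's valuation down to $r\in\{0,1,2\}$. The only cosmetic difference is that the paper invokes weak approximation to produce an element of prescribed valuation while you use a uniformizer directly; both yield the same construction.
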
 
\begin{proof} 
Let $y$ be a generator of $L$ such that $y^3=a \in K$. Given a place $\mathfrak{p}$ of $K$, we write $v_\mathfrak{p} (a)= 3 j + r$ with $r=0,1,2$. Via weak approximation, one may find an element $c \in K$ such that $v_\mathfrak{p} (c)= j$. Then $\frac{y}{c}$ is a generator of $L$ such that 
$$ \left(\frac{y}{c}\right)^3 =\frac{y^3}{c^3}= \frac{a}{c^3}$$
and $v_\mathfrak{p} \left( \frac{a}{c^3}\right) = r$. Hence the result.
\end{proof}
When a purely cubic extension $L/K$ is separable, one may also easily determine the fully ramified places in $L/K$.
\begin{theoreme}\label{RPC}
Let $p\neq 3$, and let $L/K$ be a purely cubic extension. Given a purely cubic generator $y$ with minimal polynomial $X^3 -a$, a place $\mathfrak{p}$ of $K$ is fully ramified if, and only if, $(v_\mathfrak{p} (a), 3)=1$. 
\end{theoreme}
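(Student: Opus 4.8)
The plan is to work directly with valuations. Fix a place $\mathfrak P$ of $L$ above $\mathfrak p$. From $y^3=a$ we get the basic relation
$$3\,v_{\mathfrak P}(y)=v_{\mathfrak P}(a)=e(\mathfrak P\mid\mathfrak p)\,v_{\mathfrak p}(a),$$
where $a\neq 0$ and $v_{\mathfrak p}(a)<\infty$ because $y$ generates a cubic extension. Everything follows from this together with the fundamental identity $\sum_i e(\mathfrak P_i\mid\mathfrak p)f(\mathfrak P_i\mid\mathfrak p)=[L:K]=3$, which is valid since $L/K$ is separable ($p\neq 3$, $a\neq0$).

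First, suppose $(v_{\mathfrak p}(a),3)=1$. Then $3$ divides $e(\mathfrak P\mid\mathfrak p)\,v_{\mathfrak p}(a)$, and since $3$ is prime and does not divide $v_{\mathfrak p}(a)$, it divides $e(\mathfrak P\mid\mathfrak p)$. But $e(\mathfrak P\mid\mathfrak p)\le 3$ by the fundamental identity, so $e(\mathfrak P\mid\mathfrak p)=3$; the identity then forces $f(\mathfrak P\mid\mathfrak p)=1$ and $\mathfrak P$ to be the unique place of $L$ over $\mathfrak p$. Hence $\mathfrak p$ is fully ramified. This half needs no change of generator.

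For the converse I would prove the contrapositive: if $3\mid v_{\mathfrak p}(a)$, then $\mathfrak p$ is not fully ramified. By Lemma \ref{purelocalstandardform} (the case $v_{\mathfrak p}(a)\equiv 0$) I may replace $y$ by a purely cubic generator $z$ with minimal polynomial $X^3-a'$ and $v_{\mathfrak p}(a')=0$; explicitly $z=y/c$ and $a'=a/c^3$ for a suitable $c\in K$. Now $a'$ is a unit in the valuation ring $\mathcal O_{\mathfrak p}$, so $z$ is integral over $\mathcal O_{\mathfrak p}$, and the discriminant $-27(a')^2$ of $X^3-a'$ is itself a unit at $\mathfrak p$ because $p\neq 3$. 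Consequently $\mathfrak p$ does not divide the relative discriminant of $L/K$ (equivalently, by Dedekind's/Kummer's theorem the splitting of $\mathfrak p$ in $L$ mirrors the factorization of the separable, hence squarefree, polynomial $X^3-\overline{a'}$ over $k(\mathfrak p)$), so $\mathfrak p$ is unramified in $L/K$; in particular it is not fully ramified, since $[L:K]=3>1$.

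The only genuinely delicate point is this converse step: even after normalizing $v_{\mathfrak p}(a)=0$ one must exclude a partially ramified place of ramification index $2$, and this is exactly where the hypothesis $p\neq 3$ enters, via the discriminant of $X^3-a'$ being a $\mathfrak p$-unit. For completeness I would also record that the criterion is independent of the chosen purely cubic generator: by Theorem \ref{extensionsthesame} any two of them satisfy $a_1=c^3a_2^{\,j}$ with $j\in\{1,2\}$, hence $v_{\mathfrak p}(a_1)\equiv j\,v_{\mathfrak p}(a_2)\pmod 3$ and $(v_{\mathfrak p}(a_1),3)=(v_{\mathfrak p}(a_2),3)$, which legitimizes the change of generator used above.
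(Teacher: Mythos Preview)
Your proof is correct and follows essentially the same line as the paper's: the forward direction is identical, and for the converse both you and the paper invoke Lemma~\ref{purelocalstandardform} to normalize to $v_{\mathfrak p}(a')=0$ and then apply Kummer's theorem. The only difference is that the paper cites Lemma~\ref{splittingpolypurelycubic} to enumerate the three possible factorization types of $X^3-\overline{a'}$ over $k(\mathfrak p)$ and observes none is a cube, whereas you argue more directly that the discriminant $-27(a')^2$ is a $\mathfrak p$-unit, so the reduction is separable and Kummer gives $e(\mathfrak P\mid\mathfrak p)=1$ outright; this is a mild streamlining that in fact yields the stronger conclusion that $\mathfrak p$ is \emph{unramified}, not merely ``not fully ramified''. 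Your closing remark on generator-independence via Theorem~\ref{extensionsthesame} is a nice addition that the paper leaves implicit.
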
 
\begin{proof} 
Let $\mathfrak{p}$ be a place of $K$ and $\mathfrak{P}$ be a place of $L$ above $\mathfrak{p}$. Suppose that $(v_\mathfrak{p} (a), 3)=1$. Then
$$3v_\mathfrak{P} (y) = v_\mathfrak{P} (y^3)= v_\mathfrak{P} (a)= e(\mathfrak{P}|\mathfrak{p})  v_\mathfrak{p} (a).$$
Since $ (v_\mathfrak{p} (a), 3)=1$, we obtain $ 3 | e(\mathfrak{P}|\mathfrak{p}) $, and as $e(\mathfrak{P}|\mathfrak{p}) \leq 3$, it follows that $e(\mathfrak{P}|\mathfrak{p}) =3$, so that $\mathfrak{p}$ is fully ramified in $L$. 

Conversely, suppose that $(v_\mathfrak{p} (a), 3)\neq 1$. By Lemma \ref{purelocalstandardform}, we know that there exists a generator $z$ of $L$ such that $z^3 -c=0$ and $v_\mathfrak{p} (c)=0$. By Lemma \ref{splittingpolypurelycubic}, we know that the polynomial $X^3 -a$ is either 
\begin{enumerate} 
\item irreducible modulo $\mathfrak{p}$,
\item $X^3 -a = (X-\alpha ) Q(X) \mod \mathfrak{p}$, where $\alpha \in k(\mathfrak{p})$ and $Q(X)$ is an irreducible quadratic polynomial over modulo $\mathfrak{p}$, or
\item $f(X)  = (X-\alpha ) (X-\beta) (X- \gamma)$ modulo $\mathfrak{p}$ with $\alpha , \beta, \gamma \in k(\mathfrak{p})$ all distinct.
\end{enumerate}
In any of these cases, by Kummer's theorem \cite[Theorem 3.3.7]{Sti}, $\mathfrak{p}$ is either inert or there exist $2$ or $3$ place above it in $L$. Thus, $\mathfrak{p}$ cannot be fully ramified in any case.
\end{proof}
By Kummer's theorem \cite[Theorem 3.3.7]{Sti} and Lemma \ref{splittingpolypurelycubic}, we may also obtain the following theorem for places which do not fully ramify.
\begin{theoreme}\label{whatisthisidontknow}
Let $p \neq 3$, and let $L/K$ be a purely cubic extension. Given a place $\mathfrak{p}$ of $K$ that is not fully ramified, one may select a primitive element $y$ with minimal polynomial of the form $X^3 -a$ where $v_\mathfrak{p} (a)=0$. Moreover one can find $x\in K$ such that $\mathfrak{p}$ is a finite place over $\mathbb{F}_q(x)$ and the integral closure of $\mathbb{F}_q[x]$ in $L$ is a Dedekind Domain (see Remark \ref{rkp}). For this place $\mathfrak{p}$ and choice of $a$ and $x$, we have:
\begin{enumerate}
\item $\mathfrak{p}$ is completely split if, and only if, $|k (\mathfrak{p})| \equiv 1 \mod 3$ and $a^{\frac{|k (\mathfrak{p})| -1  }{3}} \equiv 1 \mod \mathfrak{p}$. Moreover, $f(\mathfrak{P}|\mathfrak{p})=1$ for any place $\mathfrak{P}$ of $L$ above $\mathfrak{p}$. The signature of $\mathfrak{p}$ is $(1,1; 1,1; 1,1)$. 
\item $\mathfrak{p}$ is inert  if, and only if, $|k (\mathfrak{p})| \equiv 1 \mod 3$ and $a^{\frac{|k (\mathfrak{p})| -1  }{3}} \notequiv 1 \mod \mathfrak{p}$. Moreover, $f(\mathfrak{P}|\mathfrak{p})=3$ for $\mathfrak{P}$ is a place of $L$ above $\mathfrak{p}$. The signature of $\mathfrak{p}$ is $(1,3)$. 
\item $\mathfrak{p}\mathcal{O}_{L,x}= \mathfrak{P}_1 \mathfrak{P}_2$ where $\mathfrak{P}_1$ and $\mathfrak{P}_2$ are two distinct places of $L$ if, and only if, $|k (\mathfrak{p})|  \equiv -1 \mod 3$. In this case, up to relabelling, the place $\mathfrak{P}_1$ satisfies $f(\mathfrak{P}_1| \mathfrak{p})=1$ and is inert in $L(r)$, whereas $f(\mathfrak{P}_2 | \mathfrak{p})=2$ and $\mathfrak{P_2}$ splits in $L(r)$. The signature of $\mathfrak{p}$ is $(1,1; 1,2)$. 
\end{enumerate} 
\end{theoreme}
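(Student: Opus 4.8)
The plan is to reduce the statement to Kummer's theorem together with the finite-field factorisation recorded in Lemma~\ref{splittingpolypurelycubic}. First I would dispose of the preliminary assertion. By Lemma~\ref{purelocalstandardform} there is a purely cubic generator $y$ of $L/K$ in local standard form at $\mathfrak{p}$, so that its minimal polynomial is $X^3-a$ with $v_{\mathfrak{p}}(a)\in\{0,1,2\}$. If $v_{\mathfrak{p}}(a)\in\{1,2\}$ then $(v_{\mathfrak{p}}(a),3)=1$, and Theorem~\ref{RPC} would force $\mathfrak{p}$ to be fully ramified in $L$, contrary to hypothesis; hence $v_{\mathfrak{p}}(a)=0$. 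Choosing $x\in K$ as in Remark~\ref{rkp}, the place $\mathfrak{p}$ factors in the Dedekind domain $\mathcal{O}_{L,x}$, whose maximal ideals correspond bijectively to the places of $L$ above $\mathfrak{p}$; and since $v_{\mathfrak{p}}(a)=0$, the image $\overline{a}$ of $a$ in $k(\mathfrak{p})$ is nonzero.

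Next, set $s=|k(\mathfrak{p})|$, so $s\not\equiv 0 \mod 3$ because $p\neq 3$. Since $\overline{a}\neq 0$ and $p\neq 3$, the reduced polynomial $X^3-\overline{a}$ has nonzero discriminant $-27\overline{a}^2$ and is therefore separable over $k(\mathfrak{p})$, so Kummer's theorem \cite[Theorem~3.3.7]{Sti} applies: the decomposition of $\mathfrak{p}$ in $\mathcal{O}_{L,x}$ is read off from the factorisation of $X^3-\overline{a}$ over $k(\mathfrak{p})$, and all ramification indices equal $1$. I would then invoke Lemma~\ref{splittingpolypurelycubic} with $\mathbb{F}_s=k(\mathfrak{p})$. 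Since $\overline{a}\neq 0$, the triple-root case and the $(X-\alpha)(X-\beta)^2$ case do not occur, leaving exactly three mutually exclusive and exhaustive possibilities: $X^3-\overline{a}$ splits into three distinct linear factors if and only if $s\equiv 1 \mod 3$ and $\overline{a}^{(s-1)/3}=1$; it is irreducible if and only if $s\equiv 1 \mod 3$ and $\overline{a}^{(s-1)/3}\neq 1$; and it factors as a linear polynomial times an irreducible quadratic if and only if $s\equiv -1 \mod 3$. Through Kummer's theorem these three cases give respectively: three places with $e=f=1$ and signature $(1,1;1,1;1,1)$; one place with $e=1$ and $f=3$ and signature $(1,3)$; and two places $\mathfrak{P}_1,\mathfrak{P}_2$ with $e=1$, $f(\mathfrak{P}_1|\mathfrak{p})=1$, $f(\mathfrak{P}_2|\mathfrak{p})=2$ and signature $(1,1;1,2)$. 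This yields the three cases of the statement together with the stated equivalences.

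It remains to analyse, in case (3), the behaviour of $\mathfrak{P}_1$ and $\mathfrak{P}_2$ in $L(r)$, the Galois closure of $L/K$; since the roots of $X^3-a$ are $y,\xi y,\xi^2 y$, this closure is $L(\xi)$. From $s\equiv -1 \mod 3$ we get $\xi\notin k(\mathfrak{p})$; hence $\xi\notin K$ (otherwise every residue field of $K$ would contain $\xi$), and since $[K(\xi):K]=2$ does not divide $[L:K]=3$ we also have $\xi\notin L$, so that $L(\xi)/L$ is an unramified constant extension of degree $2$. The residue field of $\mathfrak{P}_1$ is $k(\mathfrak{p})$, over which $X^2+X+1$ is irreducible, so by Hensel's lemma it stays irreducible over the completion, and $\mathfrak{P}_1$ is inert in $L(\xi)$. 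The residue field of $\mathfrak{P}_2$ has order $s^2\equiv 1 \mod 3$ and hence contains $\xi$, so $X^2+X+1$ splits there and $\mathfrak{P}_2$ splits completely in $L(\xi)$.

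The only real work is the case-by-case translation among the factorisation type of $X^3-\overline{a}$, the Kummer data, and the signature, together with checking that the power-residue criteria in Lemma~\ref{splittingpolypurelycubic} coincide with the conditions stated here; the applicability of Kummer's theorem (separability of $X^3-\overline{a}$ and integrality of $y$ at $\mathfrak{p}$) is immediate from $v_{\mathfrak{p}}(a)=0$ and $p\neq 3$. I do not anticipate a genuine obstacle: the content is carried entirely by Lemma~\ref{splittingpolypurelycubic} and Kummer's theorem.
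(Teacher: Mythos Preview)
Your proposal is correct and follows exactly the approach the paper indicates: the theorem is stated as a direct consequence of Kummer's theorem \cite[Theorem 3.3.7]{Sti} together with Lemma~\ref{splittingpolypurelycubic}, and you have filled in precisely those details. Your additional justification of the behaviour of $\mathfrak{P}_1,\mathfrak{P}_2$ in $L(r)$ in case~(3), via the identification $L(r)=L(\xi)$ and the constant-extension splitting criterion, is correct and goes beyond what the paper writes out explicitly.
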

\subsubsection{$X^3-3X-a$, $a\in K$,  $p\neq 3$} 
In order to determine the fully ramified places in extensions of this type, we begin with an elementary, but useful, lemma. These criteria and notation will be employed throughout what follows.
\begin{lemme}\label{purelycubicclosure}
Consider the polynomial $X^2 +a X +1$ where $a \in K$, and let $c_-,c_+$ denote the roots of this polynomial. Then $ c_+ \times c_- =1$, $c_+ + c_-= -a$, and $ \sigma( c_\pm ) = c_\mp$, where $\text{\emph{Gal}}(K(c_\pm ) /K)= \{ Id , \sigma \}$ when $c_\pm \notin K$. Let $\mathfrak{p}$ be a place of $K$ and $\mathfrak{p}_{c_\pm}$ be a place of $K(c_\pm )$ above $\mathfrak{p}$. Furthermore, we have:
\begin{enumerate} 
\item For any place $\mathfrak{p}_{c_\pm}$ of $K({c_\pm})$,
$$v_{\mathfrak{p}_{c_\pm}}(c_\pm )= -v_{\mathfrak{p}_{c_\pm}}(c_\mp ).$$
\item For any place $\mathfrak{p}_{c_\pm}$ of $K({c_\pm})$ above a place $\mathfrak{p}$ of $K$ such that $v_{\mathfrak{p}}(a) <0$,
$$ v_{\mathfrak{p}}(a)=-|v_{\mathfrak{p}_{c_\pm}}(c_\pm )|,$$ 
and otherwise, $v_{\mathfrak{p}_{c_\pm}}(c_\pm )=0$.
\end{enumerate}
\end{lemme}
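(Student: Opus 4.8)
The identities displayed before the enumerated list are just Vieta's formulas: expanding $X^{2}+aX+1=(X-c_{+})(X-c_{-})$ yields $c_{+}c_{-}=1$ and $c_{+}+c_{-}=-a$. For the Galois statement I would note that when $c_{\pm}\notin K$ the polynomial $X^{2}+aX+1$ is separable — its only repeated-root cases, $a=\pm 2$ in characteristic $\neq 2$ and $a=0$ in characteristic $2$, force a root into $K$ — so $K(c_{\pm})/K$ is Galois of degree $2$, and its nontrivial automorphism $\sigma$ must permute the two roots nontrivially (else it would fix $c_{+}$, hence all of $K(c_{+})=K(c_{\pm})$), giving $\sigma(c_{\pm})=c_{\mp}$.

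For $(1)$ I would simply apply $v_{\mathfrak{p}_{c_{\pm}}}$ to $c_{+}c_{-}=1$ and use additivity of the valuation: $v_{\mathfrak{p}_{c_{\pm}}}(c_{+})+v_{\mathfrak{p}_{c_{\pm}}}(c_{-})=0$.

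For $(2)$, when $v_{\mathfrak{p}}(a)\geq 0$ the polynomial $X^{2}+aX+1$ is monic with $\mathfrak{p}$-integral coefficients, so each root $c_{\pm}$ is integral at every place $\mathfrak{p}_{c_{\pm}}$ above $\mathfrak{p}$; together with $(1)$ this forces $v_{\mathfrak{p}_{c_{\pm}}}(c_{\pm})=0$, which is the ``otherwise'' clause. When $v_{\mathfrak{p}}(a)<0$, the key step — and the one I expect to be the only real obstacle — is to show $\mathfrak{p}$ is \emph{unramified} in $K(c_{\pm})$, i.e.\ $e:=e(\mathfrak{p}_{c_{\pm}}|\mathfrak{p})=1$. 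I would argue as follows: this is trivial if $c_{\pm}\in K$; otherwise $[K(c_{\pm}):K]=2$, and were $\mathfrak{p}$ ramified we would have $e=2$ with $\mathfrak{p}_{c_{\pm}}$ the unique place over $\mathfrak{p}$, hence $\sigma$-stable, so $v_{\mathfrak{p}_{c_{\pm}}}\circ\sigma=v_{\mathfrak{p}_{c_{\pm}}}$; combining $\sigma(c_{+})=c_{-}$ with $(1)$ would then give $v_{\mathfrak{p}_{c_{\pm}}}(c_{+})=v_{\mathfrak{p}_{c_{\pm}}}(c_{-})=0$, which contradicts $v_{\mathfrak{p}_{c_{\pm}}}(c_{+}+c_{-})=v_{\mathfrak{p}_{c_{\pm}}}(-a)=e\,v_{\mathfrak{p}}(a)<0$, an identity forcing one of $v_{\mathfrak{p}_{c_{\pm}}}(c_{\pm})$ to be strictly negative. (Alternatively, one can observe that $d:=c_{+}/a$ satisfies the $\mathfrak{p}$-integral equation $Y^{2}+Y+a^{-2}=0$, whose discriminant $1-4a^{-2}$ is a $\mathfrak{p}$-unit because $v_{\mathfrak{p}}(a^{-2})>0$, and $K(c_{\pm})=K(d)$.)

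With $e=1$ established, $v_{\mathfrak{p}_{c_{\pm}}}$ restricts to $v_{\mathfrak{p}}$ on $K$, and I would conclude $(2)$ thus: by $(1)$ the integers $v_{\mathfrak{p}_{c_{\pm}}}(c_{+})$ and $v_{\mathfrak{p}_{c_{\pm}}}(c_{-})$ are opposite, hence distinct (they cannot both be $0$, since $v_{\mathfrak{p}_{c_{\pm}}}(c_{+}+c_{-})=v_{\mathfrak{p}}(a)<0$); being distinct, the ultrametric inequality is sharp, so $\min\{v_{\mathfrak{p}_{c_{\pm}}}(c_{+}),v_{\mathfrak{p}_{c_{\pm}}}(c_{-})\}=v_{\mathfrak{p}_{c_{\pm}}}(c_{+}+c_{-})=v_{\mathfrak{p}}(a)$. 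Hence $\{v_{\mathfrak{p}_{c_{\pm}}}(c_{+}),v_{\mathfrak{p}_{c_{\pm}}}(c_{-})\}=\{v_{\mathfrak{p}}(a),-v_{\mathfrak{p}}(a)\}$, so $|v_{\mathfrak{p}_{c_{\pm}}}(c_{\pm})|=|v_{\mathfrak{p}}(a)|=-v_{\mathfrak{p}}(a)$, i.e.\ $v_{\mathfrak{p}}(a)=-|v_{\mathfrak{p}_{c_{\pm}}}(c_{\pm})|$, as claimed.
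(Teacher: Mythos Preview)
Your proof is correct and follows the same overall plan as the paper---prove (1) from $c_+c_-=1$, then handle (2) by first establishing unramification at $\mathfrak{p}$ when $v_\mathfrak{p}(a)<0$ and reading off the valuations from $c_++c_-=-a$---but your individual steps are cleaner. For $v_\mathfrak{p}(a)\geq 0$ the paper passes to $c'_\pm=c_\pm/a$ and does a triangle-inequality case analysis, whereas your integrality argument is a one-liner. For unramification, the paper splits into $p\neq 2$ (Kummer theory on $w^2=-27(a^2-4)$, noting the even valuation) and $p=2$ (Artin-Schreier theory on $w^2-w=1/a^2$), while your Galois contradiction (if $\mathfrak{p}$ were totally ramified the unique place is $\sigma$-stable, forcing $v(c_+)=v(c_-)=0$ against $v(c_++c_-)<0$) is characteristic-free and avoids naming explicit generators. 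Your parenthetical alternative via $d=c_+/a$ is exactly the paper's substitution, though your discriminant remark only covers $p\neq 2$; the main argument does not rely on it. Finally, where the paper extracts the valuations through the $c'_\pm$ computation, your use of the sharp ultrametric inequality on $c_++c_-=-a$ with the opposite-and-nonzero valuations is more direct. Both routes yield the same conclusion; yours trades the explicit Kummer/Artin-Schreier input for a short Galois symmetry argument.
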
 
\begin{proof} 
\begin{enumerate}
\item At any place $\mathfrak{p}_{c_\pm}$ of $K({c_\pm})$, we have 
$$v_{\mathfrak{p}_{c_\pm}}(c_+\times c_-) = v_{\mathfrak{p}_{c_\pm}} (c_+)+ v_{\mathfrak{p}_{c_\pm}}( c_-)=v_{\mathfrak{p}_{c_\pm}}( 1)=0,$$ whence
$$v_{\mathfrak{p}_{c_\pm}}(c_+)=-v_{\mathfrak{p}_{c_\pm}}( c_-).$$
\item As $c_\pm^2 +a  c_\pm +1 =0,$ the elements $c_\pm' =\frac{ c_\pm }{a}$ satisfy
$$c_\pm'^2 + c_\pm' + \frac{1}{a^2} =0.$$
Thus, for any place $\mathfrak{p}_{c_\pm}$ of $K({c_\pm})$ above a place $\mathfrak{p}$ of $K$ such that $v_{\mathfrak{p}}(a) <0$, we obtain
$$v_{\mathfrak{p}_{c_\pm}}(c_\pm'^2 + c_\pm' ) = - 2 v_{\mathfrak{p}_{c_\pm}}(a)  > 0.$$
By the non-Archimedean triangle inequality, this is possible if, and only if, $v_{\mathfrak{p}_{c_\pm}}(c_\pm' ) >0$ or  $v_{\mathfrak{p}_{c_\pm}}(c_\pm' ) =0$. If $v_{\mathfrak{p}_{c_\pm}}(c_\pm' ) >0$, then
$$v_{\mathfrak{p}_{c_\pm}}(c_\pm' )=  - 2 v_{\mathfrak{p}_{c_\pm}}(a) \quad \text{ and } \quad v_{\mathfrak{p}_{c_\pm}}(c_\pm )=  -  v_{\mathfrak{p}_{c_\pm}}(a).$$ 
If on the other hand $v_{\mathfrak{p}_{c_\pm}}(c_\pm' ) =0$, we obtain $v_{\mathfrak{p}_{c_\pm}}(c_\pm)=   v_{\mathfrak{p}_{c_\pm}}(a)$. 
Thus, the latter together with part $(1)$ of this lemma implies that either 
$$v_{\mathfrak{p}_{c_\pm}}(c_+) =  v_{\mathfrak{p}_{c_\pm}}(a) \quad \text{and} \quad v_{\mathfrak{p}_{c_\pm}}(c_-)=-v_{\mathfrak{p}_{c_\pm}}(a)$$ or vice versa (with the roles of $c_-$ and $c_+$ interchanged). 
Moreover, note that $\mathfrak{p}_{c_\pm}$ is unramified in $K(c_\pm)/ K$ so that $ v_{\mathfrak{p}_{c_\pm}}(a)=  v_{\mathfrak{p}}(a)$. For if, when $p\neq 2$, then $K(c_\pm)/K$ has a generator $w$ such that $w^2 = -27 (a^2-4)$ and $ 2| v_{\mathfrak{p}}( -27 (a^2-4))$, thus by Kummer theory, $\mathfrak{p}$ is unramified and when $p =2$, $K(c_\pm)/K$ has a generator $w$ such that $w^2 -w= \frac{1}{a^2}$ and $  v_{\mathfrak{p}}( \frac{1}{a^2})\geq 0$, thus by Artin-Schreier theory, we have that $\mathfrak{p}$ is unramified in $K(c_\pm)$, thus the first part of $(2)$. 
For any place $\mathfrak{p}_{c_\pm}$ of $K(b)$ above a place $\mathfrak{p}$ of $K$ such that $v_{\mathfrak{p}}(a) > 0$. As
$$v_{\mathfrak{p}_{c_\pm}}(c_\pm'^2 + c_\pm' ) = - 2 v_{\mathfrak{p}_{c_\pm}}(a)  < 0,$$
again using the non-Archimedean triangle inequality, we can only have $v_{\mathfrak{p}_{c_\pm}}(c_\pm'^2) <0$, whence $v_{\mathfrak{p}_{c_\pm}}(c_\pm'^2 ) =  - 2 v_{\mathfrak{p}_{c_\pm}}(a).$ 
This implies that $v_{\mathfrak{p}_{c_\pm}}(c_\pm ) =0$. Finally, via the triangle inequality once more, for any $\mathfrak{p}_{c_\pm}$ such that $v_{\mathfrak{p}_{c_\pm}}(a) = 0$, we must have
$v_{\mathfrak{p}_{c_\pm}}(c_\pm ) =0.$
\end{enumerate}
\end{proof}

\begin{theorem} \label{ramification}
 Let $p \neq 3$, and let $L/K$ be an impurely cubic extension and $y$ primitive element with minimal polynomial $f(X) = X^3-3X-a$. Then the fully ramified places of $K$ in $L$ are precisely those $\mathfrak{p}$ such that $v_{\mathfrak{p}}(a)<0$ and $(v_{\mathfrak{p}}(a), 3)=1$. 
\end{theorem}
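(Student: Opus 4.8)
The plan is to reduce the ramification in $L/K$ to ramification in the purely cubic closure, where Theorem \ref{RPC} is available, and then to transport the valuation of $a$ to the valuation of the purely cubic generator of that closure by means of Lemma \ref{purelycubicclosure}.

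Concretely, I would first invoke Theorem \ref{purelycubic}: since $L/K$ is impurely cubic, the polynomial $S(X) = X^2 + aX + 1$ is irreducible over $K$, so a root $c \in \overline{K}$ gives a quadratic extension $K(c)/K$, and $L(c)/K(c)$ is purely cubic with purely cubic generator $u = \frac{cy-1}{y-c}$ satisfying $u^3 = c$. Since $\gcd([L:K],[K(c):K]) = \gcd(3,2) = 1$, the compositum $L(c)$ has degree $3$ over $K(c)$ and degree $2$ over $L$. The first key step is then purely formal: for a place $\mathfrak{p}$ of $K$ with a chosen place $\mathfrak{P}$ of $L$ above it, a place $\mathfrak{P}_c$ of $L(c)$ above $\mathfrak{P}$, and the place $\mathfrak{p}_c$ of $K(c)$ below $\mathfrak{P}_c$, multiplicativity of ramification indices in the two towers $K \subset L \subset L(c)$ and $K \subset K(c) \subset L(c)$ gives $e(\mathfrak{P}_c|\mathfrak{P})\,e(\mathfrak{P}|\mathfrak{p}) = e(\mathfrak{p}_c|\mathfrak{p})\,e(\mathfrak{P}_c|\mathfrak{p}_c)$. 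Since $L/K$ and $L(c)/K(c)$ are cubic we have $e(\mathfrak{P}|\mathfrak{p}), e(\mathfrak{P}_c|\mathfrak{p}_c) \in \{1,3\}$, and since $K(c)/K$ and $L(c)/L$ are quadratic we have $e(\mathfrak{p}_c|\mathfrak{p}), e(\mathfrak{P}_c|\mathfrak{P}) \in \{1,2\}$; these facts alone force $e(\mathfrak{P}|\mathfrak{p}) = 3$ if and only if $e(\mathfrak{P}_c|\mathfrak{p}_c) = 3$. In other words, $\mathfrak{p}$ is fully ramified in $L/K$ precisely when $\mathfrak{p}_c$ is fully ramified in $L(c)/K(c)$.

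I would then apply Theorem \ref{RPC} to the purely cubic extension $L(c)/K(c)$ with generator $u$, $u^3 = c$ (which is separable, as $p \neq 3$): $\mathfrak{p}_c$ is fully ramified there if and only if $\gcd(v_{\mathfrak{p}_c}(c),3) = 1$. Lemma \ref{purelycubicclosure} finishes the translation: if $v_\mathfrak{p}(a) \ge 0$ then $v_{\mathfrak{p}_c}(c) = 0$, so $\mathfrak{p}_c$ is not fully ramified; while if $v_\mathfrak{p}(a) < 0$ then $|v_{\mathfrak{p}_c}(c)| = |v_\mathfrak{p}(a)|$, so $\gcd(v_{\mathfrak{p}_c}(c),3) = \gcd(v_\mathfrak{p}(a),3)$. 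Chaining these equivalences yields exactly the stated criterion.

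I expect the only genuine subtlety to be the tower bookkeeping in the first step: one must be careful \emph{not} to assume that $\mathfrak{p}$ is unramified in $K(c)/K$ (Lemma \ref{purelycubicclosure} guarantees this only once one knows $v_\mathfrak{p}(a) < 0$, whereas in the implication ``$\mathfrak{p}$ fully ramified $\Rightarrow v_\mathfrak{p}(a) < 0$'' the sign of $v_\mathfrak{p}(a)$ is not yet known), and instead to deduce the equivalence $e(\mathfrak{P}|\mathfrak{p}) = 3 \iff e(\mathfrak{P}_c|\mathfrak{p}_c) = 3$ purely from $\gcd(2,3) = 1$ and the divisibility constraints. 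Everything else is a direct citation of Theorem \ref{purelycubic}, Theorem \ref{RPC}, and Lemma \ref{purelycubicclosure}, with no substantive computation.
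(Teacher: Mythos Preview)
Your proof is correct and takes a more direct route than the paper's. The paper first passes through the Galois closure: it adjoins a root $r$ of the quadratic resolvent so that $L(r)/K(r)$ is Galois, then invokes Corollary \ref{purelycubicclosurecorollary} to conclude that $L(\xi,r)/K(\xi,r)$ is Kummer, and applies Kummer theory together with Lemma \ref{purelycubicclosure} inside the tower $K(\xi,r)/K$ of degree at most $4$. You bypass this by going straight to the purely cubic closure $K(c)$, a single quadratic extension, via Theorem \ref{purelycubic}, and then citing the paper's own Theorem \ref{RPC} in place of external Kummer theory. Your route is shorter and stays entirely within results already proved in the paper; the paper's route has the advantage of making the Galois structure explicit, which is reused in later corollaries, but for this statement alone your argument is cleaner.

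One small correction: in a cubic extension the ramification index can also equal $2$ (partial ramification), so the assertion $e(\mathfrak{P}|\mathfrak{p}),\, e(\mathfrak{P}_c|\mathfrak{p}_c) \in \{1,3\}$ is not quite right. This does not affect your conclusion: from $e(\mathfrak{P}_c|\mathfrak{P})\,e(\mathfrak{P}|\mathfrak{p}) = e(\mathfrak{p}_c|\mathfrak{p})\,e(\mathfrak{P}_c|\mathfrak{p}_c)$ with the quadratic factors in $\{1,2\}$ and the cubic factors at most $3$, divisibility by $3$ on either side still forces the corresponding cubic factor to equal $3$, giving the equivalence you need.
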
 

\begin{proof} 
As usual, we let $\xi$ be a primitive $3^{rd}$ root of unity. We also let $r$ be a root of the quadratic resolvent $R(X)= X^2 +3aX + ( -27 + 9 a^2)$ of the cubic polynomial $X^3-3X-a$ in $\bar{K}$. As in \cite[Theorem 2.3]{Con}, we know that $L(r)/K(r)$ is Galois, and by Corollary \ref{purelycubicclosurecorollary}, we have that $L(\xi, r) / K(\xi , r)$ is purely cubic. We denote by $\mathfrak{p}$ a place in $K$, $\mathfrak{P}_{\xi,r}$ a place of $L(\xi,r)$ above $\mathfrak{p}$, $\mathfrak{P}=\mathfrak{P}_{\xi,r}\cap L$, and $\mathfrak{p}_{\xi,r}=\mathfrak{P}_{\xi,r}\cap K(\xi , r)$. By Corollary \ref{purelycubicclosurecorollary}, we know that that $L(\xi, r)/K(\xi, r)$ is Kummer; more precisely, there exists $v \in K(\xi , r)$ such that $v^3 = c$ where $c$ is a root of the polynomial $X^2+aX+1$. 

We thus obtain a tower $L(\xi, r) / K(\xi, r) /K(\xi) / K$ with $L(\xi, r )/K(\xi,r )$ Kummer of degree $3$, and where $K(\xi ,r)/ K(\xi)$ and $K(\xi)/K$ are both Kummer extensions of degree $2$. As the index of ramification is multiplicative in towers and the degree of $L(\xi , r) / K(\xi ,r)$ and $K(\xi ,r)/K$ are coprime, the places of $K$ that fully ramify in $L$ are those places of $K$ which lie below those of $K(\xi , r )$ which fully ramify in $L(\xi, r)/K(\xi, r)$. As $L(\xi,r)/K(\xi,r)$ is Kummer, the places of $K(\xi ,r)$ that ramify in $L(\xi,r)$ are described precisely by Kummer theory (see for example \cite[Example 5.8.9]{Vil} as those $\mathfrak{p}_{\xi,r}$ in $K(\xi ,r)$ such that $$(v_{\mathfrak{p}_{\xi,r}} (c_\pm ) , 3)=1.$$ 
Lemma \ref{purelycubicclosure} states that if $v_{\mathfrak{p}}(a) <0$, then $v_{\mathfrak{p}_{\xi, r}}(c_\pm )= \pm v_{\mathfrak{p}_{\xi,r}}(a)$ and that otherwise, $v_{\mathfrak{p}_{\xi,r}}(c_\pm )=0$. Thus, the ramified places of $L/F$ are those places $\mathfrak{p}$ below a place $\mathfrak{p}_{\xi , r}$ of $K(\xi ,r)$ such that $(v_{\mathfrak{p}_{\xi,r}}(a), 3)=1$. Also, $$v_{\mathfrak{p}_{\xi, r}}(a) = e(\mathfrak{p}_{\xi, r} | \mathfrak{p}) v_{\mathfrak{p}}(a),$$ where $e(\mathfrak{p}_{\xi, r} | \mathfrak{p})$ is the ramification index of a place $\mathfrak{p}$ of $K$ in $K(\xi, r)$, equal to $1$, $2$, or $4$, and in any case, coprime with $3$. Thus, $(v_{\mathfrak{p}_{\xi,r}}(a),3)=1$ if, and only if, $(v_{\mathfrak{p}}(a),3)=1$. As a consequence of the above argument, it therefore follows that a place $\mathfrak{p}$ of $K$ is fully ramified in $L$ if, and only if, $v_{\mathfrak{p}}(a)<0$.
\end{proof}

This theorem yields the following corollaries, the first being immediate.

\begin{corollaire} \label{ramificationgalois}
Suppose that $q \equiv - 1 \mod 3$. Let $L/K$ be a Galois cubic extension, so that there exists a primitive element $y$ of $L$ with minimal polynomial $f(X) = X^3-3X-a$. Then the (fully) ramified places of $K$ in $L$ are precisely those places $\mathfrak{p}$ of $K$ such that $v_{\mathfrak{p}}(a)<0$ and $(v_{\mathfrak{p}}(a), 3)=1$. 
\end{corollaire}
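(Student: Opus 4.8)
The plan is to deduce this directly from Theorem \ref{ramification}, so the only work is to verify that its hypotheses are satisfied under the assumption $q \equiv -1 \pmod 3$. First I would record that $q \equiv -1 \pmod 3$ forces $p \neq 3$: if $p = 3$ then $q = 3^n \equiv 0 \pmod 3$, contradicting the hypothesis. Next, since $3 \nmid q-1$, the group $\mathbb{F}_q^{\ast}$ has no element of order $3$, so $\mathbb{F}_q$ contains no primitive $3^{rd}$ root of unity; as such a root is algebraic over the prime field and $\mathbb{F}_q$ is the exact field of constants of $K$, it follows that $K$ itself contains no primitive $3^{rd}$ root of unity.

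Now $L/K$ is Galois of degree $3$ and $K$ has no primitive $3^{rd}$ root of unity, so by the contrapositive of Theorem \ref{purely}(2) the extension $L/K$ cannot be purely cubic; since $p \neq 3$, this means $L/K$ is impurely cubic. By Corollary \ref{classification} (impurity forcing $3eg \neq f^2$ in the notation there), $L/K$ therefore admits a generator $y$ with minimal polynomial $X^3 - 3X - a$ for some $a \in K$, which establishes the existence claim in the statement and places us in exactly the setting of Theorem \ref{ramification}. Applying that theorem yields that the fully ramified places of $K$ in $L$ are precisely those $\mathfrak{p}$ with $v_{\mathfrak{p}}(a) < 0$ and $(v_{\mathfrak{p}}(a), 3) = 1$.

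Finally, the parenthetical identification of ``ramified'' with ``fully ramified'' is automatic: since $[L:K] = 3$ is prime, for any place $\mathfrak{P}$ of $L$ over $\mathfrak{p}$ the index $e(\mathfrak{P}|\mathfrak{p})$ divides $3$, so $\mathfrak{p}$ is either unramified or totally ramified. I do not anticipate a genuine obstacle here — the corollary is a repackaging of Theorem \ref{ramification} once one observes that the Galois hypothesis together with $q \equiv -1 \pmod 3$ prevents $L/K$ from being purely cubic. The only point meriting a word of care is the passage from ``no primitive $3^{rd}$ root of unity in $\mathbb{F}_q$'' to ``none in $K$'', which rests on $\mathbb{F}_q$ being algebraically closed in $K$.
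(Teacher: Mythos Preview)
Your proposal is correct and follows exactly the paper's approach: the paper simply declares the corollary ``immediate'' from Theorem \ref{ramification}, and you have spelled out precisely why the hypotheses of that theorem are met. One small wording point: your claim that $e(\mathfrak{P}|\mathfrak{p})$ divides $3$ ``since $[L:K]=3$ is prime'' really uses that $L/K$ is Galois (so $efr=3$ with common $e,f$); in a non-Galois cubic one can have $e=2$, but since you are in the Galois case the conclusion stands.
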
 
\begin{corollaire} \label{odddegree}
Suppose that $q \equiv - 1 \mod 3$. Let $L/K$ be a Galois cubic extension, so that there exists a primitive element $y$  of $L$ with minimal polynomial $f(X) = X^3-3X-a$. Then, only those places of $K$ of even degree can (fully) ramify in $L$. More precisely, any place $\mathfrak{p}$ of $K$ such that $v_\mathfrak{p}(a) <0$ is of even degree. 
\end{corollaire}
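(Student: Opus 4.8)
The plan is to reduce everything, via Corollary~\ref{ramificationgalois} (which already tells us the ramified places of $K$ in $L$ are precisely those $\mathfrak{p}$ with $v_{\mathfrak{p}}(a)<0$ and $(v_{\mathfrak{p}}(a),3)=1$), to proving the sharper ``more precisely'' assertion: \emph{every} place $\mathfrak{p}$ of $K$ with $v_{\mathfrak{p}}(a)<0$ has even degree. The latter I would prove by exploiting the fact that, because $q\equiv -1\mod 3$, the purely cubic closure involved here is a \emph{constant} field extension of degree $2$, so that ``even degree'' is the same as ``splits'' in it.

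First I would identify the purely cubic closure. Since $q\equiv -1\mod 3$, the field $\mathbb{F}_q$ contains no primitive third root of unity $\xi$ and $\mathbb{F}_q(\xi)=\mathbb{F}_{q^2}$. By Corollary~\ref{purelycubicclosurecorollary} the purely cubic closure of $L/K$ is $K(\xi)=K\mathbb{F}_{q^2}$, a constant field extension of $K$ of degree $2$; by Theorem~\ref{purelycubic} this same closure is $K(c_{\pm})$, where $c_{\pm}$ are the roots of $X^2+aX+1$. Hence $K(c_{\pm})=K\mathbb{F}_{q^2}$, and in particular $c_{\pm}\notin K$. I would then recall the standard description of the splitting of a place in a constant field extension \cite[Chapter 6]{Vil}: a place $\mathfrak{p}$ of $K$ of degree $d$ has exactly $\gcd(d,2)$ places of $K\mathbb{F}_{q^2}$ above it; in particular $\mathfrak{p}$ is inert in $K\mathbb{F}_{q^2}/K$ precisely when $d$ is odd.

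The crux is a short valuation argument at a place $\mathfrak{p}$ with $v_{\mathfrak{p}}(a)<0$. Suppose, for contradiction, that such a $\mathfrak{p}$ has odd degree; then $\mathfrak{p}$ is inert in $K(c_{\pm})/K$, so it has a unique place $\mathfrak{p}_{c_{\pm}}$ above it, which is therefore fixed by the nontrivial automorphism $\sigma\in\mathrm{Gal}(K(c_{\pm})/K)$. Since $\sigma$ preserves the valuation $v_{\mathfrak{p}_{c_{\pm}}}$ and $\sigma(c_{\pm})=c_{\mp}$ by Lemma~\ref{purelycubicclosure}, we obtain $v_{\mathfrak{p}_{c_{\pm}}}(c_{+})=v_{\mathfrak{p}_{c_{\pm}}}(c_{-})$; combined with the relation $v_{\mathfrak{p}_{c_{\pm}}}(c_{+})=-v_{\mathfrak{p}_{c_{\pm}}}(c_{-})$ of Lemma~\ref{purelycubicclosure}(1), this forces $v_{\mathfrak{p}_{c_{\pm}}}(c_{\pm})=0$. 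But Lemma~\ref{purelycubicclosure}(2) asserts that when $v_{\mathfrak{p}}(a)<0$ one has $v_{\mathfrak{p}}(a)=-|v_{\mathfrak{p}_{c_{\pm}}}(c_{\pm})|$, so $v_{\mathfrak{p}_{c_{\pm}}}(c_{\pm})\neq 0$ --- a contradiction. Thus $d_K(\mathfrak{p})$ is even; feeding this back through Corollary~\ref{ramificationgalois} shows that only even-degree places of $K$ ramify in $L$.

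I expect the only real obstacle to be bookkeeping rather than a genuine difficulty: one must check that the purely cubic closure appearing here is the \emph{full} degree-$2$ constant extension $K\mathbb{F}_{q^2}$ (so that inertia in it is equivalent to odd residue degree), and that the Galois action on the pair $c_{\pm}$ is exactly the transposition $\sigma$ --- both of which are supplied by Corollary~\ref{purelycubicclosurecorollary} and Lemma~\ref{purelycubicclosure}. Once these are in place, the valuation-theoretic contradiction closing the argument is immediate.
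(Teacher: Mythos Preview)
Your proposal is correct and follows essentially the same route as the paper's proof: identify $K(c_\pm)=K(\xi)=K\mathbb{F}_{q^2}$ via Corollary~\ref{purelycubicclosurecorollary} and Theorem~\ref{purelycubic}, use the Galois swap $\sigma(c_\pm)=c_\mp$ together with Lemma~\ref{purelycubicclosure} to force $\sigma(\mathfrak{p}_{c_\pm})\neq \mathfrak{p}_{c_\pm}$ whenever $v_\mathfrak{p}(a)<0$, and then read off the parity of $d_K(\mathfrak{p})$ from the splitting behaviour of places in a degree-$2$ constant extension. The only cosmetic difference is that you phrase the last step as a contradiction (odd degree $\Rightarrow$ inert $\Rightarrow$ $\sigma$ fixes the place $\Rightarrow$ $v(c_\pm)=0$), whereas the paper argues directly that $\sigma(\mathfrak{p}_\xi)\neq \mathfrak{p}_\xi$ and then invokes \cite[Theorem 6.2.1]{Vil}.
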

\begin{proof} 
In Lemma \ref{purelycubicclosure}, it was noted that $ \sigma( c_\pm ) = c_\mp$ where $\text{\emph{Gal}}(K(c_\pm ) /K)= \{ Id , \sigma \}$, when $c_\pm \notin K$. Let $\xi$ again be a primitive $3^{rd}$ root of unity. We denote by $\mathfrak{p}$ a place of $K$ and $\mathfrak{p}_\xi$ a place of $K(\xi)$ above $\mathfrak{p}$. We find that
$$v_{\mathfrak{p}_\xi}(c_\pm) = v_{\sigma(\mathfrak{p}_\xi)}(\sigma ( c_\pm ))=v_{\sigma(\mathfrak{p}_\xi)}( c_\mp ).$$
Note that if $\sigma (  \mathfrak{p}_\xi )= \mathfrak{p}_\xi$, it follows that  $v_{\mathfrak{p}_\xi}(c_\pm) =v_{\mathfrak{p}_\xi}( c_\mp )$. However, by Lemma \ref{purelycubicclosure}, we have that, for any place $\mathfrak{p}_\xi$ of $K(\xi)$ above a place $\mathfrak{p}$ of $K$ such that $v_{\mathfrak{p}}(a) <0$, it holds that $v_{\mathfrak{p}_\xi}(c_\pm )= \pm v_{\mathfrak{p}_\xi}(a)$, 
and that $v_{\mathfrak{p}_\xi }(c_\pm )= -v_{\mathfrak{p}_\xi }(c_\mp )$. Thus, for any place $\mathfrak{p}_\xi$ of $K(\xi )$ above a place $\mathfrak{p}$ of $K$ such that $v_{\mathfrak{p}}(a) <0$, we find that $v_{\mathfrak{p}_\xi}(c_\pm )\neq v_{\mathfrak{p}_\xi}(c_\mp )$ and thus $\sigma (  \mathfrak{p}_\xi )\neq \mathfrak{p}_\xi$.
Therefore, by \cite[Theorem 6.2.1]{Vil}, we obtain that $ \mathfrak{p} $ is of even degree, for any place $\mathfrak{p}$ of $K$ such that $v_{\mathfrak{p}}(a)<0$.
\end{proof}
\begin{corollaire} \label{infty}
Suppose that $q \equiv - 1 \mod 3$. Let $L/K$ be a Galois cubic extension, so that there exists a primitive element $y$ of $L$ with minimal polynomial $f(X)=X^3-3 X -a$. Then one can choose a single place $\mathfrak{P}_\infty$ at infinity in $K$ such that $v_{\mathfrak{P}_\infty}(a) \geq 0$. 
\end{corollaire}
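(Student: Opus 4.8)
The plan is to produce the desired place $\mathfrak{P}_\infty$ as a place of $K$ of \emph{odd} degree, to note via Corollary~\ref{odddegree} that $a$ then has no pole there, and finally to realise this place as the unique place at infinity by choosing the rational subfield $\mathbb{F}_q(x)$ of $K$ suitably. The hypothesis $q\equiv -1 \mod 3$, together with the Galois assumption, is used only through the appeal to Corollary~\ref{odddegree}.

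First I would observe that $a\neq 0$: otherwise $X^3-3X=X(X^2-3)$ is reducible and $L/K$ is not a field extension of degree~$3$. Hence $a$ has only finitely many poles in $K$, and by Corollary~\ref{odddegree} each of these is a place of $K$ of even degree. On the other hand, by F.~K.~Schmidt's theorem \cite{Sti}, $K$ admits a divisor of degree~$1$; since every divisor of $K$ is a $\mathbb{Z}$-linear combination of places, it follows that $K$ has at least one place of odd degree. Fix such a place and call it $\mathfrak{P}_\infty$. Being of odd degree, $\mathfrak{P}_\infty$ is not among the poles of $a$, so $v_{\mathfrak{P}_\infty}(a)\geq 0$.

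It remains to realise $\mathfrak{P}_\infty$ as a place at infinity of $K$. Since $\deg(m\mathfrak{P}_\infty)\to\infty$ as $m\to\infty$, the Riemann--Roch theorem \cite{Sti} gives $\ell(m\mathfrak{P}_\infty)\geq 2$ for all sufficiently large $m$; choose such an $m$ and a non-constant element $x$ of the Riemann--Roch space $\mathcal{L}(m\mathfrak{P}_\infty)$. Then the pole divisor $(x)_\infty$ of $x$ is nonzero and satisfies $(x)_\infty\leq m\mathfrak{P}_\infty$, so that $\operatorname{supp}\big((x)_\infty\big)=\{\mathfrak{P}_\infty\}$. The extension $K/\mathbb{F}_q(x)$ is finite, and the places of $K$ lying over the infinite place of $\mathbb{F}_q(x)$ (i.e.\ over the pole of $x$) are exactly those in $\operatorname{supp}\big((x)_\infty\big)$; hence $\mathfrak{P}_\infty$ is the unique place of $K$ at infinity relative to this choice of $x$. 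As we have already shown $v_{\mathfrak{P}_\infty}(a)\geq 0$, this proves the corollary. The only point that is not routine is the existence of a place of odd degree, which genuinely requires F.~K.~Schmidt's theorem (a priori $K$ need not possess a rational place); if one is content to avoid the degree bookkeeping, it suffices instead to take $\mathfrak{P}_\infty$ to be any place lying outside the finite pole set of $a$.
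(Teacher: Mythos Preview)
Your proof is correct and follows essentially the same approach as the paper: both arguments invoke the existence of a divisor of degree~$1$ (F.~K.~Schmidt) to find a place of odd degree, use Corollary~\ref{odddegree} to see that $a$ has no pole there, and then apply Riemann--Roch to realise this place as the unique place of $K$ above the infinite place of a suitable rational subfield $\mathbb{F}_q(x)$. The only cosmetic difference is that the paper fixes $m>2g_K-1$ so that the pole divisor of $x$ is exactly $\mathfrak{P}_\infty^m$, whereas you only need that its support is $\{\mathfrak{P}_\infty\}$; either suffices.
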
 
\begin{proof} 

One can choose $x \in K \backslash \mathbb{F}_q$ such that the place $\mathfrak{p}_\infty$ at infinity for $x$ has the property that all of the places in $K$ above it are of odd degree. In order to accomplish this, we appeal to a method similar to the proof of \cite[Proposition 7.2.6]{Vil}; because there exists a divisor of degree 1 \cite[Theorem 6.3.8]{Vil}, there exists a prime divisor $\mathfrak{P}_\infty$ of $K$ of odd degree; for if all prime divisors of $K$ were of even degree, then the image of the degree function of $K$ would lie in $2\mathbb{Z}$, which contradicts \cite[Theorem 6.3.8]{Vil}. Let $d$ be this degree. Let $m \in \mathbb{N}$ be such that $m > 2 g_K - 1$. Then, by the Riemann-Roch theorem \cite[Corollary 3.5.8]{Vil}, it follows that there exists $x \in K$ such that the pole divisor of $x$ in $K$ is equal to $\mathfrak{P}_\infty^m$. By definition, the pole divisor of $x$ in $k(x)$ is equal to $\mathfrak{p}_\infty$. It follows that $$(\mathfrak{p}_\infty)_K = \mathfrak{P}_\infty^m,$$ from which it follows that $\mathfrak{P}_\infty$ is the unique place of $K$ above $\mathfrak{p}_\infty$, and by supposition that $\mathfrak{P}_\infty$ is of odd degree. From this argument, we obtain that, with this choice of infinity, all places above infinity in $k(x)$ are of odd degree. (We also note that we may very well choose $m$ relatively prime to $p$, whence $K/k(x)$ is also separable; in general, $K/k(x)$ as chosen will not be Galois.) 

As $q \equiv -1 \mod 3$, $L/K$ is a Galois extension, and $y$ is a primitive element with minimal polynomial of the form $X^3 -3 X -a$ where $a \in K$, we know that all of the places $\mathfrak{p}$ of $K$ such that $v_\mathfrak{p} (a)<0$, and in particular, all the ramified places, are of even degree (see Corollary \ref{odddegree}). It follows that the process described in this proof gives the desired construction, and the result follows.
\end{proof}
\begin{remarque}
We note that when $K$ is a rational function field, one may use Corollary \ref{infty} to show that the parameter $a$ has nonnegative valuation at $\mathfrak{p}_\infty$ for a choice of $x$ such that $K = \mathbb{F}_q(x)$, and thus such $\mathfrak{p}_\infty$ is unramified.
\end{remarque}
We now turn to a determination of splitting in impurely cubic extensions. We remind the reader that in what follows, as in Remark \ref{rkp}, while the integral closure $\mathcal{O}_{L,x}$ of $\mathbb{F}_q[x]$ in $L$ depends upon the choice of $x$, the residue field $k(\mathfrak{p})$ of a place $\mathfrak{p}$ of $K$ does not.
\begin{theorem}\label{splittingextqneq1mod3}
Let $p \neq 3$, let $L/K$ be an impurely cubic extension, so that there  exists a primitive element $y$ with minimal polynomial $f(X)=X^3-3 X -a$. 
We denote by $r \in \bar{K}$ a root of the quadratic resolvent $R(X)= X^2 +3aX + ( -27 + 9 a^2)$ of $f(X)$. Let $\mathfrak{p}$ be a place of $K$. One can find $x\in K$ such that $\mathfrak{p}$ is a finite place over $\mathbb{F}_q(x)$ and the integral closure of $\mathbb{F}_q[x]$ in $L$ is a Dedekind Domain (see Remark \ref{rkp}). For this place $\mathfrak{p}$ and choice of $x$, we have:

\begin{enumerate} 
\item When $v_{\mathfrak{p}}(a)\geq 0$, then $\mathfrak{p}$ splits as follows:

\begin{enumerate} 
\item $\mathfrak{p}$ is completely split in $L$ if, and only if, 
\begin{enumerate} 
\item $f(X) = (X-\alpha ) (X-\beta) (X- \gamma) \mod \mathfrak{p}$ with $\alpha , \beta, \gamma \in k (\mathfrak{p})$ all distinct or 
\item $f(X) = (X-\alpha ) (X-\beta)^2 \mod \mathfrak{p}$ with $\alpha , \beta \in k (\mathfrak{p})$ and either $r \in K$ or, $r\notin K$ and $\mathfrak{p}$ is totally split in $K(r)$. 
\end{enumerate} 
The signature of $\mathfrak{p}$ is $(1,1; 1,1; 1,1)$. 
\item $\mathfrak{p}$ is inert in $L$ if, and only if, $f(X)$ is irreducible modulo $\mathfrak{p}$. 
The signature of $\mathfrak{p}$ is $(1,3)$. 
\item $\mathfrak{p}\mathcal{O}_{L,x} = \mathfrak{P}_1 \mathfrak{P}_2$ where $\mathfrak{P}_i$, $i=1,2$ place of $L$ above $\mathfrak{p}$ if, and only if, $r \notin K$, $f(X) = (X-\alpha ) (X-\beta)^2 \mod \mathfrak{p}$ with $\alpha , \beta \in k (\mathfrak{p})$ and $\mathfrak{p}$ is inert in $K(r)$. In this case, up to relabelling, the place $\mathfrak{P}_1$ satisfies $f(\mathfrak{P}_1| \mathfrak{p})=1$ and is inert in $L(r)$, whereas $f(\mathfrak{P}_2 | \mathfrak{p})=2$ and $\mathfrak{P_2}$ splits in $L(r)$. The signature of $\mathfrak{p}$ is $(1,1; 1,2)$. 
\item $\mathfrak{p}\mathcal{O}_{L,x} = \mathfrak{P}_1^2 \mathfrak{P}_2$ where $\mathfrak{P}_i$, $i=1,2$ place of $L$ above $\mathfrak{p}$ if, and only if, $r \notin K$, $f(X) = (X-\alpha ) (X-\beta)^2 \mod \mathfrak{p}$ with $\alpha , \beta \in k (\mathfrak{p})$ and $\mathfrak{p}$ is ramified in $K(r)$. Moreover $f(\mathfrak{P}_i |\mathfrak{p})=1$, for $i=1,2$, $\mathfrak{P}_1$ is splits in $L(r)$ and $\mathfrak{P}_2$ is ramified in $L(r)$. 
The signature of $\mathfrak{p}$ is $(2,1; 1,1)$.  \end{enumerate} 
\item  When $v_{\mathfrak{p}}(a)< 0$, we denote by $\pi_\mathfrak{p}$ a prime element for $\mathfrak{p}$ (i.e., $v_\mathfrak{p}(\pi_\mathfrak{p}) = 1$). The place $\mathfrak{p}$ splits as follows:
\begin{enumerate} 
\item $\mathfrak{p}\mathcal{O}_{L,x} = \mathfrak{P}^3$, where $\mathfrak{P}$ is a place of $L$ above $\mathfrak{p}$ if, and only if, $( v_{\mathfrak{p}}(a),3)=1$. That is, $\mathfrak{p}$ is fully ramified. 
The signature of $\mathfrak{p}$ is $(3,1)$. 
\item $\mathfrak{p}$ is completely split in $L$ if, and only if, $3 \;|\; v_{\mathfrak{p}}(a)$, $|k (\mathfrak{p})|  \equiv 1 \mod \mathfrak{p}$ and $\left( \pi_\mathfrak{p}^{-v_{\mathfrak{p}}(a)}a\right)^{(|k (\mathfrak{p})| -1)/3} \equiv 1 \mod \mathfrak{p}$. 
The signature of $\mathfrak{p}$ is $(1,1; 1,1; 1,1)$. 
\item $\mathfrak{p}$ is inert if, and only if, $3 \;|\; v_{\mathfrak{p}}(a)$, $|k (\mathfrak{p})|  \equiv 1 \mod \mathfrak{p}$ and $\left( \pi_\mathfrak{p}^{-v_{\mathfrak{p}}(a)}a\right)^{(|k (\mathfrak{p})| -1)/3} \equiv \xi^i \mod \mathfrak{p}$, where $i=1,2$.  
The signature of $\mathfrak{p}$ is $(1,3)$. 
\item $\mathfrak{p}\mathcal{O}_{L,x}= \mathfrak{P}_1 \mathfrak{P}_2$ where $\mathfrak{P}_1$ and $\mathfrak{P}_2$ are two distinct places of $L$ if, and only if, $3 \;|\; v_{\mathfrak{p}}(a)$ and $|k (\mathfrak{p})|  \equiv -1 \mod \mathfrak{p}$. In this case, up to relabelling, the place $\mathfrak{P}_1$ satisfies $f(\mathfrak{P}_1| \mathfrak{p})=1$ and is inert in $L(r)$, whereas $f(\mathfrak{P}_2 | \mathfrak{p})=2$ and $\mathfrak{P_2}$ splits in $L(r)$. The signature of $\mathfrak{p}$ is $(1,1; 1,2)$. 
\end{enumerate}
\end{enumerate}  
\end{theorem}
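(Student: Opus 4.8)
The plan is, in each of the two principal cases $v_\mathfrak{p}(a)\ge 0$ and $v_\mathfrak{p}(a)<0$, to reduce the problem either to a direct application of Kummer's theorem \cite[Theorem 3.3.7]{Sti} together with the finite-field factorisations of \S 2, or — when $\bar f$ degenerates — to the Kummer extension obtained by passing to the purely cubic closure, using the valuation bookkeeping of Lemma \ref{purelycubicclosure}. Throughout, fix $x$ as in Remark \ref{rkp}, so that $\mathfrak{p}$ is a finite place of $\mathbb{F}_q(x)$, $\mathcal{O}_{L,x}$ is Dedekind, and the signature of $\mathfrak{p}$ is independent of this choice. Write $\Delta=-27(a^2-4)$ for the discriminant of $f$; since the quadratic resolvent $R(X)=X^2+3aX+(9a^2-27)$ has discriminant $\Delta$, one has $K(r)=K(\sqrt{\Delta})$ when $p\neq 2$ (and in characteristic $2$ one uses instead the Artin--Schreier description $w^2-w=1/a^2$ of $K(r)/K$ from the proof of Lemma \ref{purelycubicclosure}). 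As $f$ is irreducible over $K$, the Galois closure $L(r)/K$ has group $S_3$ when $r\notin K$ and $\mathbb{Z}/3\mathbb{Z}$ when $r\in K$. Finally, $\bar f=X^3-3X-\bar a\in k(\mathfrak{p})[X]$ is separable precisely when it shares no root with $\bar{f}'=3(X-1)(X+1)$, that is, precisely when $\bar a\neq\pm 2$.

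\emph{The case $v_\mathfrak{p}(a)\ge 0$.} Here $y$ is $\mathfrak{p}$-integral: from $y^3=3y+a$ and $v_\mathfrak{P}(3)=0$, a negative $v_\mathfrak{P}(y)$ would force $3v_\mathfrak{P}(y)=v_\mathfrak{P}(3y+a)\ge v_\mathfrak{P}(y)$, impossible. If $\bar a\neq\pm 2$, Kummer's theorem applies to $y$ over $\mathcal{O}_\mathfrak{p}$, so the places of $L$ above $\mathfrak{p}$ correspond to the monic irreducible factors of $\bar f$ over $k(\mathfrak{p})$, each unramified with inertia degree equal to the degree of the factor; by Lemma \ref{splittingpolyqneq1mod3} the only possibilities (with $\bar a\neq\pm 2$) are $\bar f$ irreducible (so $\mathfrak{p}$ inert, signature $(1,3)$), $\bar f$ a linear times an irreducible quadratic (so $\mathfrak{p}\mathcal{O}_{L,x}=\mathfrak{P}_1\mathfrak{P}_2$, $f(\mathfrak{P}_1|\mathfrak{p})=1$, $f(\mathfrak{P}_2|\mathfrak{p})=2$, signature $(1,1;1,2)$), or $\bar f$ a product of three distinct linear factors (so $\mathfrak{p}$ completely split, signature $(1,1;1,1;1,1)$); this yields the first subcase of (1)(a) and subcases (1)(b), (1)(c). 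The supplementary assertions about $L(r)$ follow from the Frobenius--transposition dictionary: since $\bar a\neq\pm 2$ we have $v_\mathfrak{p}(\Delta)=0$, so $\mathfrak{p}$ is unramified in $K(r)=K(\sqrt\Delta)$, and inert there exactly when Frobenius acts on the roots of $f$ as an odd permutation, i.e. exactly in the factorisation type of (1)(c); then $k(\mathfrak{P}_1)=k(\mathfrak{p})$ forces $\mathfrak{P}_1$ inert in $L(r)$ and $[k(\mathfrak{P}_2):k(\mathfrak{p})]=2$ forces $\mathfrak{P}_2$ split in $L(r)$. If instead $\bar a=\pm 2$, then $\bar f=(X-\alpha)(X-\beta)^2$ with $\alpha\neq\beta$ in $k(\mathfrak{p})$ by Lemma \ref{splittingpolyqneq1mod3}, Kummer's theorem does not apply, and $\mathfrak{p}$ cannot be inert. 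Here I would pass to the tower $L(\xi,r)/K(\xi,r)/K(\xi)/K$ of the proof of Theorem \ref{ramification}: the top step is Kummer, $v^3=c$ with $c$ a root of $X^2+aX+1$, and $[K(\xi,r):K]$ divides $4$, hence is prime to $3$. By Lemma \ref{purelycubicclosure} the hypothesis $v_\mathfrak{p}(a)\ge 0$ gives valuation $0$ for $c$ at every place above $\mathfrak{p}$, so the top cubic is unramified there; moreover $X^2+\bar a X+1=(X\mp 1)^2$ forces $\bar c=\mp 1$, which is a cube, and $\bar\xi$ lies in the residue field, so $X^3-c$ splits into three distinct linear factors there and the top cubic is in fact completely split. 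Consequently the decomposition of $\mathfrak{p}$ in $L(\xi,r)$, hence in $L$, is governed entirely by its behaviour in $K(\xi,r)$, equivalently in $K(r)=K(\sqrt\Delta)$; descending through the $S_3$-- (or $\mathbb{Z}/3\mathbb{Z}$--)closure one obtains: if $r\in K$ or $\mathfrak{p}$ is completely split in $K(r)$, then $\mathfrak{p}$ is completely split in $L$ (the second subcase of (1)(a)); if $\mathfrak{p}$ is inert in $K(r)$, then Frobenius is a transposition and $\mathfrak{p}\mathcal{O}_{L,x}=\mathfrak{P}_1\mathfrak{P}_2$ with $f(\mathfrak{P}_1|\mathfrak{p})=1$, $\mathfrak{P}_1$ inert in $L(r)$, $f(\mathfrak{P}_2|\mathfrak{p})=2$, $\mathfrak{P}_2$ split in $L(r)$, signature $(1,1;1,2)$ (subcase (1)(c)); and if $\mathfrak{p}$ is ramified in $K(r)$, then $v_\mathfrak{p}(\Delta)$ is odd and the index-$2$ ramification descends to a single place of $L$, so $\mathfrak{p}\mathcal{O}_{L,x}=\mathfrak{P}_1^2\mathfrak{P}_2$ with $f(\mathfrak{P}_i|\mathfrak{p})=1$, $\mathfrak{P}_1$ split and $\mathfrak{P}_2$ ramified in $L(r)$, signature $(2,1;1,1)$ (subcase (1)(d)). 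That $\bar f=(X-\alpha)(X-\beta)^2\bmod\mathfrak{p}$ appears as a hypothesis in these three subcases is automatic here and merely records that $\mathfrak{p}$ is not inert.

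\emph{The case $v_\mathfrak{p}(a)<0$.} If $(v_\mathfrak{p}(a),3)=1$, then $\mathfrak{p}$ is fully ramified by Theorem \ref{ramification}, giving subcase (2)(a), signature $(3,1)$. If $3\mid v_\mathfrak{p}(a)$, put $m=-v_\mathfrak{p}(a)/3>0$ and $z=\pi_\mathfrak{p}^{m}y$; then $z^3-3\pi_\mathfrak{p}^{2m}z-u=0$ with $u=\pi_\mathfrak{p}^{-v_\mathfrak{p}(a)}a$ a $\mathfrak{p}$-unit, $z$ is $\mathfrak{p}$-integral by the same non-Archimedean argument, and its reduction satisfies $X^3-\bar u$, which is separable since $\bar u\neq 0$. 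Kummer's theorem together with the analysis of $X^3-\bar u$ over $k(\mathfrak{p})$ in Lemma \ref{splittingpolypurelycubic} then yields: $\mathfrak{p}$ completely split when $|k(\mathfrak{p})|\equiv 1\bmod 3$ and $\bar u^{(|k(\mathfrak{p})|-1)/3}=1$ (subcase (2)(b), signature $(1,1;1,1;1,1)$); $\mathfrak{p}$ inert when $|k(\mathfrak{p})|\equiv 1\bmod 3$ and $\bar u^{(|k(\mathfrak{p})|-1)/3}=\bar\xi^{\,i}$ with $i\in\{1,2\}$ (subcase (2)(c), signature $(1,3)$); and $\mathfrak{p}\mathcal{O}_{L,x}=\mathfrak{P}_1\mathfrak{P}_2$ with $f(\mathfrak{P}_1|\mathfrak{p})=1$, $f(\mathfrak{P}_2|\mathfrak{p})=2$ when $|k(\mathfrak{p})|\equiv -1\bmod 3$ (subcase (2)(d), signature $(1,1;1,2)$). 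For the last, the assertion about $L(r)$ follows since $v_\mathfrak{p}(\Delta)=2v_\mathfrak{p}(a)$ is even and $\pi_\mathfrak{p}^{-2v_\mathfrak{p}(a)}\Delta\equiv-27u^2\pmod{\mathfrak{p}}$, so locally at $\mathfrak{p}$ one has $K(r)=K(\sqrt\Delta)=K(\sqrt{-3})=K(\xi)$; thus $\mathfrak{P}_1$, whose residue field $k(\mathfrak{p})$ does not contain $\xi$, is inert in $L(r)$, while $\mathfrak{P}_2$, whose residue field has degree $2$ over $k(\mathfrak{p})$ and hence contains $\xi$, splits.

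The main obstacle is the subcase $v_\mathfrak{p}(a)\ge 0$ with $\bar a=\pm 2$: here $\bar f$ is inseparable, Kummer's theorem is unavailable, and $\mathfrak{p}$ can genuinely (partially) ramify in $L$, so one must pass to the purely cubic closure and carefully propagate both ramification indices and residue degrees down the tower $L(\xi,r)/K(\xi,r)/K$ — in particular, when $\mathfrak{p}$ ramifies in $K(r)$, determining via the $S_3$-closure exactly how the index-$2$ inertia and the residue extension distribute over the places of $L$ (equivalently, analysing the non-Hensel quadratic factor of $f$, whose discriminant has odd valuation at $\mathfrak{p}$) is the delicate point. All remaining cases reduce, after a bounded normalisation of the generating equation, to Kummer's theorem and the finite-field factorisations of \S 2.
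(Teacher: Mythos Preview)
Your argument is correct and follows the paper's overall architecture: Kummer's theorem plus Lemma~\ref{splittingpolyqneq1mod3} for the separable reductions, Theorem~\ref{ramification} for full ramification, and the rescaling $z=\pi_\mathfrak{p}^{m}y$ reducing to Lemma~\ref{splittingpolypurelycubic} when $3\mid v_\mathfrak{p}(a)<0$. The one place where you take a genuinely different route is the degenerate case $\bar a=\pm 2$.

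There you pass to the full purely cubic closure $L(\xi,r)/K(\xi,r)$ and argue that the Kummer generator satisfies $\bar c=\mp 1$, so $X^3-\bar c$ splits into three distinct linear factors over the residue field; the top cubic is therefore completely split, and you then descend through a tower of degree dividing~$4$. The paper instead stays in the smaller tower $L(r)/K(r)/K$: since $\bar f=(X-\alpha)(X-\beta)^2$ with $\alpha\neq\beta$, the linear factor $(X-\alpha)$ is coprime to the rest, so Kummer's theorem already produces one place of $L(r)$ over $\mathfrak{p}_r$ with $e=f=1$; because $L(r)/K(r)$ is Galois of degree~$3$, this forces $\mathfrak{p}_r$ to be completely split in $L(r)$. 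From that point the case split on $\mathfrak{p}$ being split/inert/ramified in $K(r)$ is identical to yours. The paper's route is shorter (it never needs $\xi$, Lemma~\ref{purelycubicclosure}, or the computation of $\bar c$), while yours has the virtue of making the connection to the purely cubic picture explicit and of giving the $L(r)$ supplementary assertions in (2)(d) a cleaner justification via $\sqrt{\Delta}\sim\sqrt{-3}$ locally --- something the paper's proof leaves to the reader. Your descent ``through the $S_3$-closure'' is sketched rather than spelled out, but it is easily completed exactly as the paper does: once $\mathfrak{p}_r$ is completely split in $L(r)$, the three subcases for $\mathfrak{p}$ in $K(r)$ determine the factorisation of $\mathfrak{p}\mathcal{O}_{L(r)}$ and hence, by counting places and ramification indices in the non-Galois layer $L(r)/L$, the signature in $L$.
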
 

\begin{proof} 
\begin{enumerate}
\item  Let $\mathfrak{p}$ be a place of $K$. Suppose that $v_{\mathfrak{p}}(a)\geq 0$.
By Lemma \ref{splittingpolyqneq1mod3}, $f(X)$ can only be one of the following forms modulo $\mathfrak{p}$:
\begin{enumerate}[(I)]
\item $f(X)$ is irreducible;
\item $f(X)= (X-\alpha ) Q(X)$ where $\alpha \in  k (\mathfrak{p})$ and $Q(X)$ irreducible quadratic polynomial;
\item $f(X) = (X-\alpha ) (X-\beta)^2 $ with $\alpha , \beta \in  k (\mathfrak{p})$ and $\alpha \neq \beta$ ;
\item $f(X) = (X-\alpha ) (X-\beta) (X- \gamma)$ with $\alpha , \beta, \gamma \in  k (\mathfrak{p})$ all distinct.
\end{enumerate}
In case (I), by Kummer's theorem \cite[Theorem 3.3.7]{Sti}, $\mathfrak{p}$ is inert. In case (II), by Kummer's theorem, $\mathfrak{p}\mathcal{O}_{L,x} = \mathfrak{P}_1\mathfrak{P}_2$, where $\mathfrak{P}_i$ with $i=1,2$ are the places above $\mathfrak{p}$. In case (IV), by Kummer's theorem, $\mathfrak{p}$ splits completely. 

Case (III) is thus the only case where Kummer's theorem is not enough to decide. Suppose that $f(X) = (X-\alpha ) (X-\beta)^2 $ with $\alpha , \beta \in  k (\mathfrak{p})$ and $\alpha \neq \beta$, by Kummer's theorem, we know that there is at least two place above $\mathfrak{p}$ in $L$ thus either
\begin{enumerate} 
\item $\mathfrak{p}\mathcal{O}_{L,x} = \mathfrak{P}_1 \mathfrak{P}_2$ where $\mathfrak{P}_i$, $i=1,2$ place of $L$ above $\mathfrak{p}$, or 
\item  $\mathfrak{p}\mathcal{O}_{L,x} = \mathfrak{P}_1^2 \mathfrak{P}_2$ where $\mathfrak{P}_i$, $i=1,2$ place of $L$ above $\mathfrak{p}$, or
\item $\mathfrak{p}\mathcal{O}_{L,x} = \mathfrak{P}_1 \mathfrak{P}_2 \mathfrak{P}_3$ where $\mathfrak{P}_i$, $i=1,2,3$ place of $L$ above $\mathfrak{p}$.
\end{enumerate}
We now work on the tower $L(r)/K(r)/K$, where $r$ is a root the quadratic resolvent of $X^3-3X-a$. 

Since $X^3 - 3X-a$ has a root modulo $\mathfrak{p}$ then it has also a root modulo $\mathfrak{p}_r$ a place in $K(r)$ above $\mathfrak{p}$, and since $L(r)/K(r)$ is Galois, it follows by \cite[Theorem 2.3]{Con} that $\mathfrak{p}_r$ is completely split in $L(r)$. 

In particular, if $r\in K$, i.e., $L/K$ is Galois, then $\mathfrak{p}$ is completely split in $L$. 

Suppose now that $r \notin K$. By \cite[p. 55]{Neu}, we know that $\mathfrak{p}$ is completely split in $L$ if, and only if, (1) $\mathfrak{p}$ is completely split in $K(r)$ and (2) $\mathfrak{p}_r$ completely split in $L(r)$. If $\mathfrak{p}$ ramifies in $K(r)$, then $$\mathfrak{p} \mathcal{O}_{L(r)} = (\mathfrak{P}_{1,r}\mathfrak{P}_{2,r} \mathfrak{P}_{3,r})^2$$ where $\mathfrak{P}_{i,r}$, $i=1,2,3$ are places above $\mathfrak{p}$ in $L(r)$. Again via \cite[p. 55]{Neu}, $\mathfrak{p}$ cannot be completely split in $L$. Thus, either $\mathfrak{p}\mathcal{O}_{L,x} = \mathfrak{P}_1 \mathfrak{P}_2$ or $\mathfrak{p}\mathcal{O}_{L,x} = \mathfrak{P}_1^2 \mathfrak{P}_2$ where each $\mathfrak{P}_i$ ($i=1,2$) is a place of $L$ above $\mathfrak{p}$. Note that $2\;|\; e (  \mathfrak{P}_{r}| \mathfrak{p})$  for any places $\mathfrak{P}_r$ in $L(r)$ above $\mathfrak{p}$.  
If $\mathfrak{p}\mathcal{O}_{L,x} = \mathfrak{P}_1 \mathfrak{P}_2$, then as $e( \mathfrak{P}_i| \mathfrak{p} )=1$, we have that $2\;|\;e( \mathfrak{P}_{i,r}| \mathfrak{P}_i)$
and $\mathfrak{p}\mathcal{O}_{L,x} = \mathfrak{P}_{1,r}^2 \mathfrak{P}_{2,r}^2$, where $\mathfrak{P}_{i,r}$, $i=1,2$ are places above $\mathfrak{p}$ in $L(r)$, which is impossible, as $\mathfrak{p} \mathcal{O}_{L(r)} = (\mathfrak{P}_{1,r}\mathfrak{P}_{2,r} \mathfrak{P}_{3,r})^2$. Thus, in this case, we must have $\mathfrak{p}\mathcal{O}_{L,x} = \mathfrak{P}_1^2 \mathfrak{P}_2$ and $\mathfrak{P}_1$ is split in $K(r)$ and $\mathfrak{P}_2$ ramifies in $K(r)$.

 If $\mathfrak{p}$ is inert in $K(r)$, then by \cite[p. 55]{Neu}, $\mathfrak{p}$ is not completely split in $L$, and $$\mathfrak{p} \mathcal{O}_{L(r)} = \mathfrak{P}_{1,r}\mathfrak{P}_{2,r} \mathfrak{P}_{3,r}$$ $\mathfrak{P}_{i,r}$, $i=1,2,3$ are places above $\mathfrak{p}$ in $L(r)$.
 Thus, the only possibility is $\mathfrak{p}\mathcal{O}_{L,x} = \mathfrak{P}_1 \mathfrak{P}_2$ where each $\mathfrak{P}_i$ ($i=1,2$) is a place of $L$ above $\mathfrak{p}$. Moreover, up to relabelling, $\mathfrak{P}_1$ is inert in $L(r)$ and $\mathfrak{P}_2$ is split in $L(r)$. Hence the result.
\item 
Suppose $v_{\mathfrak{p}}(a)< 0$. When $(v_{\mathfrak{p}}(a),3)=1$, it is a consequence of Theorem \ref{ramification}. When $3| v_{\mathfrak{p}}(a)$, we let $t \in \mathbb{N}$ be such that $v_{\mathfrak{p}}(a)= -3t$. Then $z=\pi_\mathfrak{p}^t y$ is a root of the polynomial $X^3 -3 \pi_\mathfrak{p}^{2t} X -\pi_\mathfrak{p}^{-v_{\mathfrak{p}}(a)} a$, and $v_{\mathfrak{p}} (\pi_\mathfrak{p}^{-v_{\mathfrak{p}}(a)} a)=0$. In particular, we have $$X^3 -3\pi_\mathfrak{p}^{2t} X -\pi_\mathfrak{p}^{-v_{\mathfrak{p}}(a)} a \equiv X^3  -\pi_\mathfrak{p}^{-v_{\mathfrak{p}}(a)} a \mod \mathfrak{p}.$$ 
Thus the theorem is a consequence of Lemma \ref{splittingpolypurelycubic}.
\end{enumerate}
The inertia degrees are a direct consequence of the fundamental equality \cite[Proposition]{Sti}. 	
\end{proof}

We now determine the splitting only in terms of $a$. For the purpose of clarity, we split the the argument into cases $p=2$ and $p\neq 2$, as the proofs are different in each case.

\begin{corollaire}\label{splittingextqneq1mod31}
Suppose that $p\neq 2, 3$. Let $L/K$ be an impurely cubic extension, so that there exists a primitive element $y$ with minimal polynomial $f(X)=X^3-3 X -a$. 
We denote $\Delta = -27(a^2-4)$ the discriminant of $X^3-3X-a$, $\delta \in \overline{K}$ such that $\delta^2 = a^2 - 4$, and $r \in \overline{K}$ a root of the quadratic resolvent $R(X)= X^2 +3aX + ( -27 + 9 a^2)$ of $f(X)$. Let $\mathfrak{p}$ be a place of $K$. One can find $x\in K$ such that $\mathfrak{p}$ is a finite place over $\mathbb{F}_q(x)$ and the integral closure of $\mathbb{F}_q[x]$ in $L$ is a Dedekind Domain (see Remark \ref{rkp}). For this place $\mathfrak{p}$ and choice of $a$ and $x$, we have:
 \begin{enumerate} 
\item When $v_{\mathfrak{p}}(a)\geq 0$, then $\mathfrak{p}$ splits as follows:

\begin{enumerate} 
\item $\mathfrak{p}$ is completely split in $L$ if, and only if, 
\begin{enumerate}
\item $\Delta$ is not a square in $K$, $a\equiv \pm 2 \mod \mathfrak{p}$, and there is a $w\in K$ such that both $v_\mathfrak{p}(\Delta w^2)=0 $ and $\Delta w^2$ a square modulo $\mathfrak{p}$, or
\item  $\Delta $ is a square in $K$ and \begin{itemize} \item[$\bullet$] $|k (\mathfrak{p})|  \equiv 1 \mod 3$  and $\frac{1}{2} (a \pm \delta)$ is a cube modulo $\mathfrak{p}$, or \item[$\bullet$] $|k (\mathfrak{p})| =5$ and $a\notequiv \pm 1 \mod \mathfrak{p}$. \end{itemize}
\end{enumerate}
The signature of $\mathfrak{p}$ is $(1,1; 1,1; 1,1)$. 
\item $\mathfrak{p}$ is inert in $L$ if, and only if, 
 \begin{itemize}
 \item[$\bullet$]  $|k (\mathfrak{p})| \equiv 1 \mod 3$, $\Delta$ is a nonzero square modulo $\mathfrak{p}$  and $\frac{1}{2} (\overline{a} \pm \delta)$ is not a cube in $k (\mathfrak{p})$, or \item[$\bullet$]  $|k (\mathfrak{p})| =5$ and $a\equiv \pm 1 \mod \mathfrak{p}$.  \end{itemize}
The signature of $\mathfrak{p}$ is $(1,3)$. 
\item $\mathfrak{p}\mathcal{O}_{L,x} = \mathfrak{P}_1 \mathfrak{P}_2$ where $\mathfrak{P}_i$, $i=1,2$ place of $L$ above $\mathfrak{p}$ if, and only if, 
\begin{enumerate}
\item  $\Delta$ is not a square in $K$, $a\equiv \pm 2 \mod \mathfrak{p}$, and  there is no $w\in K$ such that both $v_\mathfrak{p}(\Delta w^2)=0 $ and $\Delta w^2$ is a square modulo $\mathfrak{p}$; 
\item $\Delta$ is a not square modulo $\mathfrak{p}$.
\end{enumerate}
In this case, up to relabelling, the place $\mathfrak{P}_1$ satisfies $f(\mathfrak{P}_1| \mathfrak{p})=1$ and is inert in $L(r)$, whereas $f(\mathfrak{P}_2 | \mathfrak{p})=2$ and $\mathfrak{P_2}$ splits in $L(r)$. The signature of $\mathfrak{p}$ is $(1,1; 1,2)$. 
\item $\mathfrak{p}\mathcal{O}_{L,x} = \mathfrak{P}_1^2 \mathfrak{P}_2$ where $\mathfrak{P}_i$, $i=1,2$ place of $L$ above $\mathfrak{p}$ if, and only if, $\Delta$ is not a square in $K$, $a \equiv \pm 2 \mod \mathfrak{p}$, and  $(v_{\mathfrak{p}}(\Delta ), 2)=1$. Moreover, $f(\mathfrak{P}_i |\mathfrak{p})=1$, for $i=1,2$, $\mathfrak{P}_1$ is split in $L(r)$, and $\mathfrak{P}_2$ is ramified in $L(r)$. The signature of $\mathfrak{p}$ is $(2,1; 1,1)$. 
\end{enumerate} 
\item  When $v_{\mathfrak{p}}(a)< 0$, we denote by $\pi_\mathfrak{p}$ a prime element for $\mathfrak{p}$ (i.e., $v_\mathfrak{p}(\pi_\mathfrak{p}) = 1$). The place $\mathfrak{p}$ splits as follows:
\begin{enumerate} 
\item $\mathfrak{p}\mathcal{O}_{L,x} = \mathfrak{P}^3$, where $\mathfrak{P}$ is a place of $L$ above $\mathfrak{p}$ if, and only if, $( v_{\mathfrak{p}}(a),3)=1$. That is, $\mathfrak{p}$ is fully ramified. 
The signature of $\mathfrak{p}$ is $(3,1)$. 
\item $\mathfrak{p}$ is completely split in $L$ if, and only if, $3 \;|\; v_{\mathfrak{p}}(a)$, $|k (\mathfrak{p})|  \equiv 1 \mod \mathfrak{p}$ and $\left( \pi_\mathfrak{p}^{-v_{\mathfrak{p}}(a)}a\right)^{(|k (\mathfrak{p})| -1)/3} \equiv 1 \mod \mathfrak{p}$. 
The signature of $\mathfrak{p}$ is $(1,1; 1,1; 1,1)$. 
\item $\mathfrak{p}$ is inert if, and only if, $3 \;|\; v_{\mathfrak{p}}(a)$, $|k (\mathfrak{p})|  \equiv 1 \mod \mathfrak{p}$ and $\left( \pi_\mathfrak{p}^{-v_{\mathfrak{p}}(a)}a\right)^{(|k (\mathfrak{p})| -1)/3} \equiv \xi^i \mod \mathfrak{p}$, where $i=1,2$.  
The signature of $\mathfrak{p}$ is $(1,3)$. 
\item $\mathfrak{p}\mathcal{O}_{L,x}= \mathfrak{P}_1 \mathfrak{P}_2$ where $\mathfrak{P}_1$ and $\mathfrak{P}_2$ are two distinct places of $L$ if, and only if, $3 \;|\; v_{\mathfrak{p}}(a)$ and $|k (\mathfrak{p})|  \equiv -1 \mod \mathfrak{p}$. In this case, up to relabelling, the place $\mathfrak{P}_1$ satisfies $f(\mathfrak{P}_1| \mathfrak{p})=1$ and is inert in $L(r)$, whereas $f(\mathfrak{P}_2 | \mathfrak{p})=2$ and $\mathfrak{P_2}$ splits in $L(r)$. The signature of $\mathfrak{p}$ is $(1,1; 1,2)$. 
\end{enumerate}
\end{enumerate}  
\end{corollaire}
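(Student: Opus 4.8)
The plan is to derive this corollary directly from Theorem \ref{splittingextqneq1mod3}: the work left is purely translational, converting the two kinds of input appearing there — the factorisation type of $\overline{f}(X)=X^3-3X-\overline a$ in $k(\mathfrak p)[X]$, and the splitting of the resolvent field $K(r)$ — into statements bearing only on $a$, on $\Delta=-27(a^2-4)$ and on $\delta$. The case $v_\mathfrak p(a)<0$ requires nothing new: parts 2(a)--(d) of the corollary are verbatim copies of Theorem \ref{splittingextqneq1mod3}(2), supplemented by Theorem \ref{ramification} for the fully ramified case, so I would simply cite those. Likewise the inertia degrees and the residue-degree half of each signature are inherited unchanged from Theorem \ref{splittingextqneq1mod3}, with the fundamental equality $\sum_i e(\mathfrak P_i|\mathfrak p)f(\mathfrak P_i|\mathfrak p)=3$ pinning down the rest.

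So assume $v_\mathfrak p(a)\geq 0$, so that $\overline f(X)=X^3-3X-\overline a$ lies in $k(\mathfrak p)[X]$. Since $p\neq 2,3$ I would apply Lemma \ref{splittingpolyqneq1mod3} with $s=|k(\mathfrak p)|=q^{d_K(\mathfrak p)}$, using part (a) of each of its five items. This turns ``$\overline f$ irreducible'', ``$\overline f=(X-\alpha)Q(X)$ with $Q$ an irreducible quadratic'', ``$\overline f=(X-\alpha)(X-\beta)(X-\gamma)$ with distinct roots'', and ``$\overline f=(X-\alpha)(X-\beta)^2$ with $\alpha\neq\beta$'' into, respectively: the condition of (1)(b) on $|k(\mathfrak p)|$, on $\Delta$ being a nonzero square modulo $\mathfrak p$ and $\tfrac12(\overline a\pm\delta)$ a non-cube (or the exceptional $|k(\mathfrak p)|=5$, $a\equiv\pm1$); $\Delta$ a non-square modulo $\mathfrak p$; the cube/$|k(\mathfrak p)|$ conditions of (1)(a)(ii); and the single congruence $a\equiv\pm2\bmod\mathfrak p$. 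By Kummer's theorem \cite[Theorem 3.3.7]{Sti}, as used in the proof of Theorem \ref{splittingextqneq1mod3}, the first three factorisation types already decide the splitting, yielding cases (1)(b), (1)(c)(ii), and (1)(a)(ii); here I would only note that ``$\overline f$ irreducible'' forces $\overline\Delta$ a nonzero square and ``$\overline f$ has a repeated root'' forces $\overline\Delta=0$, keeping the four lines disjoint.

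The substantive point is the degenerate case $a\equiv\pm2\bmod\mathfrak p$ (equivalently $\overline f=(X\pm2)(X\mp1)^2$), where Kummer's theorem is inconclusive and Theorem \ref{splittingextqneq1mod3}(1) passes to the tower $L(r)/K(r)/K$ for $r$ a root of the resolvent $R(X)=X^2+3aX+(-27+9a^2)$. Since $R$ has discriminant exactly $\Delta$ and $p\neq 2$, we have $K(r)=K(\sqrt\Delta)$; hence $r\in K \iff \Delta\in(K^\times)^2$, in which case $L/K$ is Galois and Theorem \ref{splittingextqneq1mod3} gives complete splitting — this is absorbed into case (1)(a)(ii), since then $\delta\equiv 0$, $\tfrac12\overline a\equiv\pm1$ is a cube and $a\not\equiv\pm1\bmod\mathfrak p$. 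When $\Delta\notin(K^\times)^2$, the description of a separable quadratic function-field extension gives: $\mathfrak p$ ramifies in $K(\sqrt\Delta)$ iff $v_\mathfrak p(\Delta)$ is odd — this is (1)(d); $\mathfrak p$ splits completely iff $v_\mathfrak p(\Delta)$ is even and, choosing $w\in K$ with $v_\mathfrak p(\Delta w^2)=0$, $\overline{\Delta w^2}$ is a square in $k(\mathfrak p)$ — this is (1)(a)(i); and $\mathfrak p$ is inert otherwise — the remaining alternative in (1)(c)(i). Feeding these three alternatives into Theorem \ref{splittingextqneq1mod3}(1), where (split / inert / ramified in $K(r)$) correspond to signatures $(1,1;1,1;1,1)$ / $(1,1;1,2)$ / $(2,1;1,1)$, produces the four lines of case (1), with the internal residue-degree assertions carried over from that theorem. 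The main obstacle I anticipate is exactly this degenerate case: keeping the ``$\Delta$ a square in $K$'' versus ``$\overline\Delta$ a square in $k(\mathfrak p)$ but $\Delta\notin(K^\times)^2$'' distinction straight (only the former makes $r\in K$), carrying out the valuation-parity and residue-square bookkeeping for $K(\sqrt\Delta)$, and checking that the exceptional finite-field case $|k(\mathfrak p)|=5$, $a\equiv\pm1$ slots consistently into the non-degenerate analysis; the rest is routine once Lemma \ref{splittingpolyqneq1mod3} and Theorem \ref{splittingextqneq1mod3} are available.
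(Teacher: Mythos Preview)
Your proposal is correct and follows essentially the same approach as the paper: the paper's proof simply cites Theorem \ref{splittingextqneq1mod3} and Lemma \ref{splittingpolyqneq1mod3}, then observes that for $p\neq 2$ the resolvent field $K(r)$ is the Kummer extension $K(\sqrt{\Delta})$, whose local behaviour at $\mathfrak p$ is determined by the parity of $v_\mathfrak p(\Delta)$ and the residue-square status of $\Delta w^2$ for suitable $w$. Your write-up supplies more of the case-matching detail (particularly the absorption of the Galois, $a\equiv\pm 2$ case into (1)(a)(ii)), but the ingredients and logical flow are identical.
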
 
\begin{proof} 
This is a consequence of Theorem \ref{splittingextqneq1mod3}, Lemma  \ref{splittingpolyqneq1mod3}, and the following remark: 
Note that when $p\neq 2$, $K(r)$ is a quadratic Kummer extension of $K$ with a generator $y$ such that $y^2 = \Delta$. Moreover, by a study of how a polynomial of the form $X^2 -d$ modulo $\mathfrak{p}$ decomposes over a finite field of characteristic not $2$, using again Kummer's threorem \cite[Theorem 3.3.7]{Sti} and general Kummer Theory \cite[Proposition 3.7.3]{Sti}, one may find that:
\begin{enumerate}
\item[$\bullet$] $\mathfrak{p}$ is ramified in $K(r)$ if, and only if, $(v_{\mathfrak{p}}(\Delta), 2)=1$;
\item[$\bullet$] $\mathfrak{p}$ is completely split in $K(r)$ if, and only if, there is a $w\in K$ such that $v_\mathfrak{p}(\Delta w^2)=0 $ and $\Delta w^2$ is a square modulo $\mathfrak{p}$;
\item[$\bullet$] $\mathfrak{p}$ is inert in $K(r)$ if, and only if, there is no $w\in K$ such that $v_\mathfrak{p}(\Delta w^2)=0 $ and $\Delta w^2$ is a square modulo $\mathfrak{p}$.
\end{enumerate}	
\end{proof}

We now turn to the case $p = 2$.

\begin{corollaire}\label{splittingextqneq1mod32}
Suppose that $p=2$. Let $L/K$ be an impurely cubic extension, so that there exists a primitive element $y$ with minimal polynomial $f(X)=X^3-3 X -a$. We denote by $|k (\mathfrak{p})| = 2^m $ the cardinality of the residue field at $\mathfrak{p}$ and $r \in \overline{K}$ a root of the quadratic resolvent $R(X)= X^2 +3aX + ( -27 + 9 a^2)$ of the cubic polynomial $X^3-3X-a$. Let $\mathfrak{p}$ be a place of $K$ and $\emph{Tr}$ the trace map from $k(\mathfrak{p})$ into the prime field $\mathbb{F}_p$ (\S 2). One can find $x\in K$ such that $\mathfrak{p}$ is a finite place over $\mathbb{F}_q(x)$ and the integral closure of $\mathbb{F}_q[x]$ in $L$ is a Dedekind Domain (see Remark \ref{rkp}). For this place $\mathfrak{p}$ and choice of $x$, we have:

\begin{enumerate} 
\item When $v_{\mathfrak{p}}(a)\geq 0$, then $\mathfrak{p}$ splits as follows:

\begin{enumerate} 
\item $\mathfrak{p}$ is completely split in $L$ if, and only if, 

\begin{enumerate}
\item $a\equiv 0 \mod \mathfrak{p}$  and  $r\in K$, or 
\item $a\equiv 0 \mod \mathfrak{p}$, $r \notin K $ and there is $w \in K$ such that $v_{\mathfrak{p}}( \frac{1}{a^2}+1 -w^2+w )\geq 0$ and $\text{\emph{Tr}}(\frac{1}{a^2}+1 -w^2+w)=0 \mod \mathfrak{p}$.
\item  $a \notequiv 0 \mod \mathfrak{p}$, $\text{\emph{Tr}}(1/a^2) \equiv \text{\emph{Tr}}(1) \mod \mathfrak{p}$ and the roots of $T^2 +aT +1 \mod \mathfrak{p}$ are cubes in $\mathbb{F}_{2^m}$, when $m$ is even (resp. in $\mathbb{F}_{2^{2m}}$, when $m$ is odd). 
\end{enumerate}
The signature of $\mathfrak{p}$ is $(1,1; 1,1; 1,1)$. 
\item $\mathfrak{p}$ is inert in $L$ if, and only if, $a \notequiv 0 \mod \ \mathfrak{p}$, $\text{\emph{Tr}}(1/a^2) \equiv \text{\emph{Tr}}(1) \mod \mathfrak{p}$ and the roots of $T^2 +aT +1 \mod \mathfrak{p}$ are non-cubes in $\mathbb{F}_{2^m}$, when $m$ is even (resp. in $\mathbb{F}_{2^{2m}}$, when $m$ is odd).
The signature of $\mathfrak{p}$ is $(1,3)$. 
\item $\mathfrak{p}\mathcal{O}_{L,x} = \mathfrak{P}_1 \mathfrak{P}_2$ where $\mathfrak{P}_i$, $i=1,2$ place of $L$ above $\mathfrak{p}$ if, and only if, $r \notin K$ and either \begin{itemize} \item[$\bullet$]  $a\equiv 0 \mod \mathfrak{p}$, there is $w \in K$ such that $v_{\mathfrak{p}}( \frac{1}{a^2}+1 -w^2+w )\geq 0$ and $\text{\emph{Tr}}(\frac{1}{a^2}+1 -w^2+w)\not\equiv 0 \mod \mathfrak{p}$, or \item[$\bullet$]  $a\notequiv 0 \mod \mathfrak{p}$ and
 $\text{\emph{Tr}}(1/a^2) \not\equiv \text{\emph{Tr}}(1)\mod \mathfrak{p}$. \end{itemize} In this case, up to relabelling, the place $\mathfrak{P}_1$ satisfies $f(\mathfrak{P}_1| \mathfrak{p})=1$ and is inert in $L(r)$, whereas $f(\mathfrak{P}_2 | \mathfrak{p})=2$ and $\mathfrak{P_2}$ splits in $L(r)$. The signature of $\mathfrak{p}$ is $(1,1; 1,2)$. 
\item $\mathfrak{p}\mathcal{O}_{L,x} = \mathfrak{P}_1^2 \mathfrak{P}_2$ where $\mathfrak{P}_i$, $i=1,2$ place of $L$ above $\mathfrak{p}$ if, and only if, $r\notin K$, $a \equiv 0 \mod \mathfrak{p}$ and there is $w \in K$ such that $(v_{\mathfrak{p}}( \frac{1}{a^2}+1 -w^2+w ) , 2)=1$ and $v_{\mathfrak{p}}( \frac{1}{a^2} +1 -w^2+w )<0$. Moreover $f(\mathfrak{P}_i |\mathfrak{p})=1$, for $i=1,2$, $\mathfrak{P}_1$ is split in $L(r)$, and $\mathfrak{P}_2$ is ramified in $L(r)$.  The signature of $\mathfrak{p}$ is $(2,1; 1,1)$. 
\end{enumerate} 
\item  When $v_{\mathfrak{p}}(a)< 0$, we denote by $\pi_\mathfrak{p}$ a prime element for $\mathfrak{p}$ (i.e., $v_\mathfrak{p}(\pi_\mathfrak{p}) = 1$). The place $\mathfrak{p}$ splits as follows:
\begin{enumerate} 
\item $\mathfrak{p}\mathcal{O}_{L,x} = \mathfrak{P}^3$, where $\mathfrak{P}$ is a place of $L$ above $\mathfrak{p}$ if, and only if, $( v_{\mathfrak{p}}(a),3)=1$. That is, $\mathfrak{p}$ is fully ramified. 
The signature of $\mathfrak{p}$ is $(3,1)$. 
\item $\mathfrak{p}$ is completely split in $L$ if, and only if, $3 \;|\; v_{\mathfrak{p}}(a)$, $|k (\mathfrak{p})|  \equiv 1 \mod \mathfrak{p}$ and $\left( \pi_\mathfrak{p}^{-v_{\mathfrak{p}}(a)}a\right)^{(|k (\mathfrak{p})| -1)/3} \equiv 1 \mod \mathfrak{p}$. 
The signature of $\mathfrak{p}$ is $(1,1; 1,1; 1,1)$. 
\item $\mathfrak{p}$ is inert if, and only if, $3 \;|\; v_{\mathfrak{p}}(a)$, $|k (\mathfrak{p})|  \equiv 1 \mod \mathfrak{p}$ and $\left( \pi_\mathfrak{p}^{-v_{\mathfrak{p}}(a)}a\right)^{(|k (\mathfrak{p})| -1)/3} \equiv \xi^i \mod \mathfrak{p}$, where $i=1,2$. 
The signature of $\mathfrak{p}$ is $(1,3)$. 
\item $\mathfrak{p}\mathcal{O}_{L,x}= \mathfrak{P}_1 \mathfrak{P}_2$ where $\mathfrak{P}_1$ and $\mathfrak{P}_2$ are two distinct places of $L$ if, and only if, $3 \;|\; v_{\mathfrak{p}}(a)$ and $|k (\mathfrak{p})|  \equiv -1 \mod \mathfrak{p}$. In this case, up to relabelling, the place $\mathfrak{P}_1$ satisfies $f(\mathfrak{P}_1| \mathfrak{p})=1$ and is inert in $L(r)$, whereas $f(\mathfrak{P}_2 | \mathfrak{p})=2$ and $\mathfrak{P_2}$ splits in $L(r)$. The signature of $\mathfrak{p}$ is $(1,1; 1,2)$. 
\end{enumerate}
\end{enumerate}  
\end{corollaire}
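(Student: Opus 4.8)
\emph{Plan.} The plan is to deduce this corollary from Theorem \ref{splittingextqneq1mod3} and Lemma \ref{splittingpolyqneq1mod3}, exactly in the way Corollary \ref{splittingextqneq1mod31} was obtained when $p\neq 2,3$; the only genuinely new ingredient is a description, purely in terms of $a$, of how a place $\mathfrak{p}$ of $K$ behaves in $K(r)$ when $p=2$. Since $p=2$ we have $-3=1$, so $f(X)=X^3-3X-a=X^3+X+a$ and the quadratic resolvent is $R(X)=X^2+aX+(1+a^2)$. As $f$ is the minimal polynomial of the generator $y$ of the degree--$3$ extension $L/K$, it is irreducible over $K$, hence $a\neq 0$; dividing $R$ by $a^2$ and setting $Y=X/a$ gives $Y^2+Y=1/a^2+1$, so $r/a$ is an Artin--Schreier generator of $K(r)/K$ with $\wp(r/a)=1/a^2+1$, where $\wp(Z)=Z^2-Z=Z^2+Z$. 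Thus $K(r)$ is the Artin--Schreier extension of $K$ attached to the class of $1/a^2+1$ modulo $\wp(K)$.

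Next I would record the standard local description of this Artin--Schreier extension at a place $\mathfrak{p}$ of $K$ (see \cite[Proposition 3.7.8]{Sti}; cf.\ also \cite[Proposition 5.8.6]{Vil}). Replacing $1/a^2+1$ by $1/a^2+1+\wp(w)=1/a^2+1-w^2+w$ for $w\in K$ does not change $K(r)$ (note $-w^2=w^2$ here), and since $k(\mathfrak{p})$ is finite of characteristic $2$, hence perfect, any representative with \emph{even} negative valuation at $\mathfrak{p}$ can be improved by a suitable such $w$ to one of strictly larger valuation. Consequently $\mathfrak{p}$ is unramified in $K(r)$ if and only if there is $w\in K$ with $v_{\mathfrak{p}}(1/a^2+1-w^2+w)\geq 0$, in which case $\mathfrak{p}$ splits completely in $K(r)$ precisely when $\Tr_{k(\mathfrak{p})/\mathbb{F}_2}\bigl(\overline{1/a^2+1-w^2+w}\bigr)=0$ and is inert otherwise; and $\mathfrak{p}$ ramifies in $K(r)$ if and only if there is $w\in K$ with $v_{\mathfrak{p}}(1/a^2+1-w^2+w)<0$ odd (equivalently, no choice of $w$ makes this valuation nonnegative). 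When $v_{\mathfrak{p}}(a)=0$ one takes $w=0$ and reads off that $\mathfrak{p}$ is unramified in $K(r)$, split iff $\Tr(1/a^2)=\Tr(1)$ and inert iff $\Tr(1/a^2)\neq\Tr(1)$.

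With this in hand I would split the case $v_{\mathfrak{p}}(a)\geq 0$ of part (1) according to whether $\overline{a}\neq 0$ or $\overline{a}=0$. If $\overline{a}\neq 0$, then $f(X)\equiv X^3+X+\overline{a}\bmod\mathfrak{p}$ has no repeated root, so by Lemma \ref{splittingpolyqneq1mod3} (with $p=2$) it is irreducible, or a linear factor times an irreducible quadratic, or a product of three distinct linear factors, according to whether $\Tr(1/a^2)=\Tr(1)$ and whether the roots of $T^2+aT+1$ are cubes; these are precisely cases (I), (II), (IV) of Theorem \ref{splittingextqneq1mod3}(1), and combining with the behaviour of $\mathfrak{p}$ in $K(r)$ computed above (with $w=0$) yields the signatures claimed in (1)(b), (1)(c) (second bullet), and (1)(a)(iii) respectively; case (III) cannot occur here. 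If $\overline{a}=0$, i.e.\ $v_{\mathfrak{p}}(a)>0$, then $f(X)\equiv X^3+X=X(X+1)^2\bmod\mathfrak{p}$, which is exactly case (III) of Theorem \ref{splittingextqneq1mod3}(1); the trichotomy there (completely split, $\mathfrak{P}_1\mathfrak{P}_2$, $\mathfrak{P}_1^2\mathfrak{P}_2$) is governed by whether $\mathfrak{p}$ splits, is inert, or ramifies in $K(r)$, which the Artin--Schreier description translates into the stated conditions on the existence of $w$ with $v_{\mathfrak{p}}(1/a^2+1-w^2+w)$ nonnegative, nonnegative with nonzero trace, or negative and odd; this gives (1)(a)(i)--(ii), (1)(c) (first bullet), and (1)(d). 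Part (2), the case $v_{\mathfrak{p}}(a)<0$, is identical to Theorem \ref{splittingextqneq1mod3}(2), whose statement is already expressed in terms of $a$ alone, so nothing further is needed. In every case the inertia degrees follow from the fundamental equality $\sum_i e(\mathfrak{P}_i|\mathfrak{p})f(\mathfrak{P}_i|\mathfrak{p})=3$.

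\emph{Main obstacle.} The delicate point is the Artin--Schreier local analysis in the subcase $\overline{a}=0$: one must verify that ``there exists $w$ with $v_{\mathfrak{p}}(1/a^2+1-w^2+w)$ negative and odd'' is genuinely \emph{equivalent} to ramification of $\mathfrak{p}$ in $K(r)$, not merely sufficient — this rests on the fact that over a perfect residue field of characteristic $2$ every even pole order can be reduced, so an unramified $\mathfrak{p}$ admits no representative of odd negative valuation. One must also check that the representative achieving nonnegative valuation, when it exists, has a well-defined residue whose trace correctly detects splitting. Everything else is a direct transcription of Theorem \ref{splittingextqneq1mod3} and Lemma \ref{splittingpolyqneq1mod3}.
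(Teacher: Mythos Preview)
Your proposal is correct and follows essentially the same approach as the paper: deduce the result from Theorem \ref{splittingextqneq1mod3} and Lemma \ref{splittingpolyqneq1mod3}, the new ingredient being the identification of $K(r)/K$ as the Artin--Schreier extension attached to $1/a^2+1$ and the standard local Artin--Schreier criteria for ramified/split/inert behaviour of $\mathfrak{p}$ in $K(r)$. Your write-up is in fact more detailed than the paper's, which simply records the three Artin--Schreier bullet points and says ``hence the result''; your explicit case split on $\overline{a}=0$ versus $\overline{a}\neq 0$ and your remark on why odd negative valuation is genuinely equivalent to ramification are welcome elaborations of points the paper leaves implicit.
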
 
\begin{proof} 
This is a consequence of Theorem \ref{splittingextqneq1mod3}, Lemma  \ref{splittingpolyqneq1mod3}, and the following remark: Note that when $p=  2$, $K(r)$ is a quadratic Artin-Schreier extension of $K$ with a generator $y$ such that $y^2-y = 1+ \frac{1}{a^2}$. Moreover, studying the decomposition of a polynomial of the form $X^2-X-d$, $d\in K$ decomposes over a finite field of characteristic $2$ (see, for example, \cite[Proposition 1]{Pomm}), using Kummer's theorem \cite[Theorem 3.3.7]{Sti} and Artin-Schreier theory \cite[Proposition 3.7.8]{Sti}, we find that: 
\begin{itemize}
\item[$\bullet$] $\mathfrak{p}$ is ramified in $K(r)$ if, and only if, there is $w \in K$ such that $(v_{\mathfrak{p}}( \frac{1}{a^2}+1 -w^2+w ) , 2)=1$ and $v_{\mathfrak{p}}( \frac{1}{a^2} +1 -w^2+w )<0$,
\item[$\bullet$] $\mathfrak{p}$ is completely split in $K(r)$ if, and only if, there is $w \in K$ such that $v_{\mathfrak{p}}( \frac{1}{a^2}+1 -w^2+w )\geq 0$ and $\text{Tr}(\frac{1}{a^2}+1 -w^2+w)\equiv  0 \mod \mathfrak{p}$, and
\item[$\bullet$] $\mathfrak{p}$ is inert in $K(r)$ if, and only if, there is $w \in K$ such that $v_{\mathfrak{p}}( \frac{1}{a^2}+1 -w^2+w )\geq 0$ and $\text{Tr}(\frac{1}{a^2}+1 -w^2+w) \not\equiv  0 \mod \mathfrak{p}$.
\end{itemize}
Hence the result.
\end{proof}

We may then conclude the splitting for Galois extensions using the form found in Corollary \ref{galoisglobal}.
\begin{corollaire}\label{splittingextqneq1mod33}
Suppose that $q \equiv -1 \mod 3$. Let $L/K$ be a Galois cubic extension, so that there is a primitive element $y$ with minimal polynomial $f(X)=X^3-3 X -a$. As a consequence of Theorem \ref{galoisglobal}, we may write the minimal polynomial $$T(X)=X^3-3X-a,$$ where $$a = \frac{2A^2 -2 AB- B^2}{A^2 - A B+ B^2},$$ for some $A,B\in \mathcal{O}_{K}$. Let $\mathfrak{p}$ be a place of $K$. One can find $x\in K$ such that $\mathfrak{p}$ is a finite place over $\mathbb{F}_q(x)$ and the integral closure of $\mathbb{F}_q[x]$ in $L$ is a Dedekind Domain (see Remark \ref{rkp}). For this place $\mathfrak{p}$ and choice of $x$, we have:
\begin{enumerate} 
\item When $v_{\mathfrak{p}}(a)> 0$, then $\mathfrak{p}$ completely split. 
The signature of $\mathfrak{p}$ is $(1,1; 1,1; 1,1)$. 
\item When $v_{\mathfrak{p}}(a)=0$, then $\mathfrak{p}$ splits as follows:

\begin{enumerate} 
\item $\mathfrak{p}$ is inert in $L$ if, and only if, 
\begin{enumerate}
\item $|k (\mathfrak{p})| \equiv 1 \mod 3$ and $\frac{A +\xi^2 B}{A + \xi B}$ is not a cube modulo $\mathfrak{p}$; 
\item $|k (\mathfrak{p})|  = 2^m$, $m$ odd, and $\frac{A +\xi^2 B}{A + \xi B}$ is not a cube in $\mathbb{F}_{2^{2m}}$; or
\item $|k (\mathfrak{p})| =5$, $A\equiv \pm B \mod \mathfrak{p}$, $A\equiv  2 B \mod \mathfrak{p}$, or $A\equiv  0 \mod \mathfrak{p}$.
\end{enumerate}
The signature of $\mathfrak{p}$ is $(1,3)$. 
\item $\mathfrak{p}$ is completely split in $L$, otherwise. 
The signature of $\mathfrak{p}$ is $(1,1; 1,1; 1,1)$. 
\end{enumerate} 
\item  When $v_{\mathfrak{p}}(a)< 0$, we denote by $\pi_\mathfrak{p}$ a prime element for $\mathfrak{p}$ (i.e., $v_\mathfrak{p}(\pi_\mathfrak{p}) = 1$). The place $\mathfrak{p}$ splits as follows:
\begin{enumerate} 
\item $\mathfrak{p}\mathcal{O}_{L,x} = \mathfrak{P}^3$, where $\mathfrak{P}$ is a place of $L$ above $\mathfrak{p}$ if, and only if, $( v_{\mathfrak{p}}(a),3)=1$. That is, $\mathfrak{p}$ is fully ramified. 
The signature of $\mathfrak{p}$ is $(3,1)$. 
\item $\mathfrak{p}$ is completely split in $L$ if, and only if, $3 \;|\; v_{\mathfrak{p}}(a)$, $|k (\mathfrak{p})|  \equiv 1 \mod \mathfrak{p}$ and $\left( \pi_\mathfrak{p}^{-v_{\mathfrak{p}}(a)}a\right)^{(|k (\mathfrak{p})| -1)/3} \equiv 1 \mod \mathfrak{p}$. 
The signature of $\mathfrak{p}$ is $(1,1; 1,1; 1,1)$. 
\item $\mathfrak{p}$ is inert if, and only if, $3 \;|\; v_{\mathfrak{p}}(a)$, $|k (\mathfrak{p})|  \equiv 1 \mod \mathfrak{p}$ and $\left( \pi_\mathfrak{p}^{-v_{\mathfrak{p}}(a)}a\right)^{(|k (\mathfrak{p})| -1)/3} \equiv \xi^i \mod \mathfrak{p}$, where $i=1,2$.  
The signature of $\mathfrak{p}$ is $(1,3)$. 
\item $\mathfrak{p}\mathcal{O}_{L,x}= \mathfrak{P}_1 \mathfrak{P}_2$ where $\mathfrak{P}_1$ and $\mathfrak{P}_2$ are two distinct places of $L$ if, and only if, $3 \;|\; v_{\mathfrak{p}}(a)$, $|k (\mathfrak{p})|  \equiv -1 \mod \mathfrak{p}$. In this case, up to relabelling, the place $\mathfrak{P}_1$ satisfies $f(\mathfrak{P}_1| \mathfrak{p})=1$ and is inert in $L(r)$, whereas $f(\mathfrak{P}_2 | \mathfrak{p})=2$ and $\mathfrak{P_2}$ splits in $L(r)$. The signature of $\mathfrak{p}$ is $(1,1; 1,2)$. 
\end{enumerate}
\end{enumerate}  
\end{corollaire}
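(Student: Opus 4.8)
The plan is to derive everything from Theorem \ref{splittingextqneq1mod3}, Lemma \ref{splittingpolyqneq1mod3}, Corollary \ref{galoisglobal} and Corollary \ref{odddegree}, after one explicit computation expressing the invariants of $X^3-3X-a$ in terms of $A$ and $B$. First I would record the setup: since $q\equiv-1\bmod 3$, $K$ contains no primitive $3^{rd}$ root of unity, so by Theorem \ref{purely} and Corollary \ref{classification} a Galois cubic $L/K$ has a generator with minimal polynomial $X^3-3X-a$, and by Corollary \ref{galoisglobal} (after the substitution $B\mapsto-B$) we may take $a=\dfrac{2A^2-2AB-B^2}{A^2-AB+B^2}$ with $A,B\in\mathcal O_{K,x}$; the conditions on $A,B$ are unchanged under $(A,B)\mapsto(\lambda A,\lambda B)$, so I may and do assume $\gcd(A,B)=1$. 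Because $L/K$ is Galois, the root $r$ of the resolvent $R(X)=X^2+3aX+(-27+9a^2)$ lies in $K$ by \cite[Theorem 2.3]{Con}; this collapses all of the ``$r\notin K$'' alternatives in Theorem \ref{splittingextqneq1mod3}, and a Galois cubic never has a place of decomposition type $(1,2)$, which will kill further cases of Lemma \ref{splittingpolyqneq1mod3}. Finally $|k(\mathfrak p)|=q^{d_K(\mathfrak p)}\equiv(-1)^{d_K(\mathfrak p)}\bmod 3$, so ``$|k(\mathfrak p)|\equiv1\bmod 3$'' means ``$d_K(\mathfrak p)$ even'' and ``$|k(\mathfrak p)|\equiv-1\bmod 3$'' means ``$d_K(\mathfrak p)$ odd''; $k(\mathfrak p)$ is independent of the auxiliary $x$ by Remark \ref{rkp}.

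Next I would do the key computation. Expanding, $a^2-4=\dfrac{-3B^2(2A-B)^2}{(A^2-AB+B^2)^2}$, whence $\Delta=-27(a^2-4)=\Bigl(\dfrac{9B(2A-B)}{A^2-AB+B^2}\Bigr)^2$ is a square in $K$ (reconfirming that $L/K$ is Galois), and one may take $\delta=\dfrac{B(2A-B)(2\xi+1)}{A^2-AB+B^2}$ since $(2\xi+1)^2=-3$. Using $1+\xi+\xi^2=0$ one checks that $c_\pm:=-\dfrac{A+\xi^{\pm1}B}{A+\xi^{\mp1}B}$ satisfy $c_+c_-=1$ and $c_++c_-=-a$, i.e. they are the two roots of $X^2+aX+1$; hence $\tfrac12(a+\delta)$ equals $\dfrac{A+\xi^2B}{A+\xi B}$ up to inversion and up to the choice of $\xi$ and of the sign of $\delta$, none of which affects whether it is a cube. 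By coprimality of $A,B$ and Lemma \ref{purelycubicclosure}, when $v_\mathfrak p(a)\ge0$ the elements $A+\xi B$ and $A+\xi^2B$ reduce to nonzero elements at the places over $\mathfrak p$, so ``$\dfrac{A+\xi^2B}{A+\xi B}$ is a cube modulo $\mathfrak p$'' is well posed in $k(\mathfrak p)$ when $\xi\in k(\mathfrak p)$ and in its quadratic extension otherwise.

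Then comes the case analysis on $v_\mathfrak p(a)$, a finite integer since $a\ne0$. If $v_\mathfrak p(a)>0$ then $\overline a=0$: for $p\ne2,3$ we get $f\equiv X(X^2-3)\bmod\mathfrak p$ with $\overline\Delta=108$ a nonzero square (by the formula for $\Delta$), so $X^2-3$ splits into distinct factors and $\mathfrak p$ is completely split by Kummer's theorem; for $p=2$ we get $f\equiv X(X+1)^2\bmod\mathfrak p$, case (III) of Theorem \ref{splittingextqneq1mod3}, and since $r\in K$ the place $\mathfrak p$ is completely split. If $v_\mathfrak p(a)=0$, Theorem \ref{splittingextqneq1mod3}(1) together with $r\in K$ and the impossibility of a $(1,2)$-split leaves only ``completely split'' or ``inert'', with $\mathfrak p$ inert exactly when $f\equiv X^3-3X-\overline a$ is irreducible over $k(\mathfrak p)$; I read this off from Lemma \ref{splittingpolyqneq1mod3}(1) and translate: for $p\ne2$, irreducibility forces $d_K(\mathfrak p)$ even with $\tfrac12(\overline a+\delta)=\dfrac{A+\xi^2B}{A+\xi B}$ a non-cube, or $|k(\mathfrak p)|=5$ with $\overline a=\pm1$, the latter being (by factoring $A^2-AB-2B^2=(A-2B)(A+B)$ and $3A(A-B)$ in $\mathbb F_5$) equivalent to $A\equiv\pm B$, $A\equiv2B$, or $A\equiv0\bmod\mathfrak p$; for $p=2$ the trace condition $\mathrm{Tr}(1/\overline a^2)=\mathrm{Tr}(1)$ is automatic (otherwise $\mathfrak p$ would be $(1,2)$-split), the roots of $T^2+\overline aT+1$ are $\overline c_\pm$, and $\mathfrak p$ is inert iff these are non-cubes, i.e. iff $\dfrac{A+\xi^2B}{A+\xi B}$ is a non-cube in $k(\mathfrak p)$ when $d_K(\mathfrak p)$ is even, resp. in $\mathbb F_{2^{2m}}$ when $d_K(\mathfrak p)$ is odd and $|k(\mathfrak p)|=2^m$; all remaining possibilities give $\mathfrak p$ completely split. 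If $v_\mathfrak p(a)<0$ the statement is exactly Theorem \ref{splittingextqneq1mod3}(2); by Corollary \ref{odddegree} every such $\mathfrak p$ has even degree, so $|k(\mathfrak p)|\equiv1\bmod 3$ and the last alternative $\mathfrak p\mathcal O_{L,x}=\mathfrak P_1\mathfrak P_2$ cannot occur. The inertia degrees of each signature follow from the fundamental equality \cite[Proposition]{Sti}.

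I expect the main obstacle to be the bookkeeping in the $v_\mathfrak p(a)=0$ step: keeping straight which residue field the cube conditions live in (as $\xi\notin k(\mathfrak p)$ when $d_K(\mathfrak p)$ is odd), matching the $\mathbb F_5$ exceptional subcase of Lemma \ref{splittingpolyqneq1mod3} to the divisibility conditions on $A$ and $B$, and verifying that the coprimality reduction together with Lemma \ref{purelycubicclosure} genuinely makes every stated condition well defined. The algebraic identity for $a^2-4$ and the verification that $-\dfrac{A+\xi B}{A+\xi^2B}$ is a root of $X^2+aX+1$ are routine but must be carried out carefully to fix the normalization used in the final statement.
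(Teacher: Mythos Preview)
Your approach is essentially the same as the paper's: both proofs reduce to Theorem \ref{splittingextqneq1mod3} (the paper via its specializations Corollaries \ref{splittingextqneq1mod31} and \ref{splittingextqneq1mod32}), identify the roots of $X^2+aX+1$ as $-\dfrac{A+\xi B}{A+\xi^2 B}$ and its inverse, and handle the $|k(\mathfrak p)|=5$ exception by factoring $A^2-AB-2B^2=(A-2B)(A+B)$ and $3A(A-B)$. Your explicit formula $a^2-4=\dfrac{-3B^2(2A-B)^2}{(A^2-AB+B^2)^2}$ and identification of $c_\pm$ are exactly the computations underlying the paper's one-line claim about the roots of $S(X)$.

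You are in fact more careful than the paper on two points. First, when $v_\mathfrak p(a)>0$ and $p=2$ the reduction is $X(X+1)^2$, not a product of three distinct linear factors; you correctly invoke $r\in K$ and Theorem \ref{splittingextqneq1mod3}(1)(a)(ii) to conclude complete splitting, whereas the paper's appeal to ``the splitting of $X(X^2-3)$'' glosses over this. Second, your observation via Corollary \ref{odddegree} that every $\mathfrak p$ with $v_\mathfrak p(a)<0$ has even degree, hence $|k(\mathfrak p)|\equiv 1\bmod 3$, shows that case (3)(d) is vacuous: in a Galois cubic the signature $(1,1;1,2)$ is impossible anyway, since all $(e_i,f_i)$ above a given $\mathfrak p$ coincide. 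The paper simply copies this case from Theorem \ref{splittingextqneq1mod3}(2) without comment. Your gcd reduction on $(A,B)$ and the use of Lemma \ref{purelycubicclosure} to justify that $A+\xi B$, $A+\xi^2 B$ reduce to units are likewise welcome clarifications that the paper omits.
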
 
\begin{proof} 
As a consequence of Theorem \ref{galoisglobal}, we may write the minimal polynomial $$T(X)=X^3-3X-a,$$ where $a = \frac{2A^2 -2 AB- B^2}{ A^2 - A B+ B^2}$, for some $A,B \in \mathcal{O}_{K}$. Let $\mathfrak{p}$ be a place of $K$. Since $L/K$ is Galois, by \cite[Theorem 2.3]{Con}, any root $r$ of the quadratic resolvent $R(X)= X^2 +3aX + ( -27 + 9 a^2)$ of the cubic polynomial $X^3-3X-a$ in $\overline{K}$ lies in $K$. In particular, the discriminant of $T(X)$, $\Delta= -27(a^2-4)$ (when $p\neq 2$) is a square in $K$. 
\begin{enumerate} 
\item When $v_{\mathfrak{p}}(a)>0 $, then by Kummer's theorem \cite[Theorem 3.3.7]{Sti}, and the splitting of the polynomial $X(X^2 -3 ) \mod \mathfrak{p}$, $\mathfrak{p}$ completely splits. 
\item When $v_{\mathfrak{p}}(a)= 0$, 

\begin{enumerate} 
\item[(a), (b)] if $|k (\mathfrak{p})|  \equiv 1 \mod 3$, then by Corollary \ref{splittingextqneq1mod31} and \ref{splittingextqneq1mod32}, $\mathfrak{p}$ is inert in $L$ if, and only if, the root of the polynomial $S(X)= X^2 + aX +1$ are non cubes modulo $\mathfrak{p}$. When $p=2$, $|k (\mathfrak{p})| =2^m$, and $m$ odd, $\mathfrak{p}$ is inert in $L$ if, and only if, the root of the polynomial $S(X)= X^2 + aX +1$ are non cubes in $\mathbb{F}_{2^{2m}}$. Note that the roots of the polynomial $S(X)$ are $-\frac{ A+ \xi^2 B}{A+ \xi B}$ and $-\frac{ A+ \xi B}{A+ \xi^2 B}$, since $a =\frac{ 2A^2 -2 AB- B^2 }{A^2 - A B+ B^2}$. Moreover, $-\frac{ A+ \xi^2 B}{A+ \xi B}$ is a cube modulo $\mathfrak{p}$ if, and only if, $-\frac{ A+ \xi B}{A+ \xi^2 B}$ is a cube modulo $\mathfrak{p}$. We thus obtain $(a)$ and $(b)$
\item[(c)]  when $|k (\mathfrak{p})|   \equiv -1 \mod 3 $ and $p \neq 2$, by Corollary  \ref{splittingextqneq1mod31}, $\mathfrak{p}$ is inert if, and only if, $|k (\mathfrak{p})|   =5$ and $a\equiv \pm 1\mod \mathfrak{p}$.
Moreover, $a =\frac{ 2A^2 -2 AB- B^2 }{A^2 - A B+ B^2} \equiv 1 \mod \mathfrak p$ if, and only if, $ 2A^2 -2 AB- B^2 = A^2 - A B+ B^2 \mod \mathfrak p$. Equivalently, $A^2 -AB-2 B^2 = (A -2B)(A+B)\equiv 0 \mod \mathfrak{p}$. That is, $A\equiv 2 B \mod \mathfrak{p}$ or $A \equiv -B \mod \mathfrak{p}$. \\ 
Also, $a =\frac{ 2A^2 -2 AB- B^2 }{A^2 - A B+ B^2} \equiv -1 \mod \mathfrak p$ if, and only if, $ 2A^2 -2 AB- B^2 =- A^2 + A B- B^2 \equiv 0 \mod \mathfrak p$. Equivalently, $3A^2 -3AB = 3A(A -B)\equiv 0 \mod \mathfrak{p}$. That is, $A\equiv B \mod \mathfrak{p}$ and $A\equiv 0 \mod \mathfrak{p}$. 
\end{enumerate} 
\item  When $v_{\mathfrak{p}}(a)< 0$, the result is immediate from Theorem \ref{splittingextqneq1mod3}.
\end{enumerate}  
\end{proof} 

\subsubsection{$X^3+aX+a^2$, $a\in K$, $p = 3$} 
As for purely cubic extensions, there exist a local standard form which is useful for a study of splitting and ramification.
\begin{lemma}\label{char3localstandardform1}
Let $p=3$, and let $L/K$ be a cubic separable extension. Let $\mathfrak{p}$ be a place of $K$. Then there is a generator $y$ such that $y^3 +a y +a ^2 =0$ such that $v_\mathfrak{p}(a) \geq 0$, or 
$v_\mathfrak{p} (a) <0 $ and $( v_\mathfrak{p} (a), 3)=1$. Such a $y$ is said to be {\sf in local standard form} at $\mathfrak{p}$.
\end{lemma}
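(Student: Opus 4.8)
The plan is to imitate the local standard form argument for purely cubic extensions (Lemma~\ref{purelocalstandardform}): starting from a generator $y$ of $L/K$ with $y^3 + ay + a^2 = 0$ (which exists by Corollary~\ref{classification}), I use Theorem~\ref{char3extensionsthesame} to replace $y$ by a generator whose constant coefficient has strictly larger $\mathfrak{p}$-valuation, and iterate until that valuation is nonnegative or coprime to $3$. If $v_\mathfrak{p}(a) \geq 0$, or $v_\mathfrak{p}(a) < 0$ with $(v_\mathfrak{p}(a),3)=1$, there is nothing to prove, so assume $v_\mathfrak{p}(a) = -3t$ with $t \geq 1$.

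The key simplification is that the transformation $a \mapsto a' = (ja^2 + w^3 + aw)^2/a^3$ of Theorem~\ref{char3extensionsthesame}, with the choice $j = 1$ and $w = as$ for $s \in K$, collapses to $a' = a(1 + s + as^3)^2$, with corresponding generator $y' = (1+s+as^3)(y + as)$ of minimal polynomial $X^3 + a'X + a'^2$, provided $1 + s + as^3 \neq 0$ (so that $a' \neq 0$). I choose $s$ so that the two valuation-zero terms $1$ and $as^3$ nearly cancel: fixing a prime element $\pi$ of $\mathfrak{p}$ and writing $a = a_0 \pi^{-3t}$ with $a_0$ a unit at $\mathfrak{p}$, it suffices to take $s = s_0 \pi^{t}$ with $\overline{s_0}^3 = -\overline{a_0}^{-1}$ in $k(\mathfrak{p})$. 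Since $K$ is a function field over $\mathbb{F}_q$, the residue field $k(\mathfrak{p})$ is finite, hence perfect of characteristic $3$, so such a cube root exists and is unique; weak approximation produces $s \in K$ with this residue, and a further perturbation of $s$ of valuation $> t$ within the same residue class (which in characteristic $3$ alters $as^3$ only by a term of valuation $\geq 3$, by additivity of Frobenius) can be used to guarantee $1 + s + as^3 \neq 0$.

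With this $s$ one has $v_\mathfrak{p}(1 + as^3) \geq 1$ and $v_\mathfrak{p}(s) = t \geq 1$, hence $v_\mathfrak{p}(1 + s + as^3) \geq 1$ and therefore $v_\mathfrak{p}(a') = v_\mathfrak{p}(a) + 2\,v_\mathfrak{p}(1 + s + as^3) \geq -3t + 2$. So either $a'$ is already in local standard form at $\mathfrak{p}$, or $v_\mathfrak{p}(a') = -3t'$ with $1 \leq t' \leq t - 1$, in which case I repeat the construction with $a'$ in place of $a$; as $t$ is a strictly decreasing positive integer, the procedure halts after finitely many steps. The point needing the most care is to check at each step that $y'$ really generates $L$, so that $X^3 + a'X + a'^2$ is genuinely its minimal polynomial: since $y' = (1+s+as^3)y + (1+s+as^3)as$ with $1+s+as^3 \neq 0$, we have $y' \notin K$, and because $[L:K]=3$ is prime there is no intermediate field, so $K(y') = L$ and in particular $X^3 + a'X + a'^2$ is irreducible over $K$. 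The remaining verifications — the collapse of the transformation formula and the valuation bookkeeping — are routine given the results already established.
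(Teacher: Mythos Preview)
Your proof is correct and follows essentially the same strategy as the paper's: both start from a generator with $v_\mathfrak{p}(a)=-3t$, use perfectness of the residue field to choose the free parameter in Theorem~\ref{char3extensionsthesame} so that the leading terms cancel modulo $\mathfrak{p}$, obtain $v_\mathfrak{p}(a')>v_\mathfrak{p}(a)$, and iterate. Your substitution $w=as$, which collapses the transformation to $a'=a(1+s+as^3)^2$, is a tidy reparametrization of the paper's choice $w_2=\alpha w_1$ with $v_\mathfrak{p}(\alpha)=2v_\mathfrak{p}(a)/3$ (both give $v_\mathfrak{p}(w)=-2t$), and your explicit check that $y'$ still generates $L$ is a point the paper leaves implicit.
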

\begin{proof} 
Let $\mathfrak{p}$ be a place of $K$. Let $y_1$ be a generator of $L/K$ such that $y_1^3 +a_1 y_1 + a_1^2=0$ (this was shown to exist in \cite{MWcubic}). By Lemma \ref{char3extensionsthesame}, any other generator $y_2$ with a minimal equation of the same form $y_2^3 +a_2 y_2 +a_2^2=0$ is such that $y_2 =-\beta (\frac{j}{a_1}y_1+ \frac{1}{a_1} w )$, and we have $$a_2  = \frac{(ja_1^2 + (w^3 + a_1 w) )^2}{a_1^3}.$$ Suppose that $v_\mathfrak{p}(a_1)  < 0$, and that $3 \;|\; v_\mathfrak{p}(a_1)$. Using the weak approximation theorem, we choose $\alpha \in K$ such that $v_\mathfrak{p}(\alpha) = 2v_\mathfrak{p}(a_1)/3$, which exists as $3 \;|\; v_\mathfrak{p}(a_1)$. Then $$v_\mathfrak{p}(\alpha^{-3} ja_1^2) = 0.$$ Let $w_0 \in K$ be chosen so that $w_0 \neq - \alpha^{-3} ja_1^2$ and $$v_\mathfrak{p}(\alpha^{-3} ja_1^2 + w_0) > 0.$$ 
This may be done via the following simple argument: As $v_\mathfrak{p}(\alpha^{-3} ja_1^2) = 0$, then $\overline{\alpha^{-3} ja_1^2} \neq 0$ in $k(\mathfrak{p})$. 

We then choose some $w_0 \neq - \alpha^{-3} ja_1^2 \in K$ such that $\overline{ w_0} = -\overline{\alpha^{-3} ja_1^2}$ in $k(\mathfrak{p})$. Note that $v_{\mathfrak{p}}(w_0)=0$. Thus, $\overline{\alpha^{-3} ja_1^2 + w_0}=0$ in $k(\mathfrak{p})$ and $v_\mathfrak{p}(\alpha^{-3} ja_1^2 + w_0) > 0$. As $p = 3$, it follows that the map $X \rightarrow X^3$ is an isomorphism of $k(\mathfrak{p})$, so we may find an element $w_1 \in K$ such that $w_1^3 = w_0 \mod \mathfrak{p}$. Hence $$v_\mathfrak{p}(\alpha^{-3} ja_1^2 + w_1^3) > 0.$$ We then let $w_2 = \alpha w_1$, so that $$v_\mathfrak{p}(j a_1^2 + w_2^3) = v_\mathfrak{p}(j a_1^2 + \alpha^3 w_1^3) > v_\mathfrak{p}(j a_1^2) .$$ Thus, as $v_\mathfrak{p}(a_1) < 0$, we obtain \begin{align*} v_\mathfrak{p}(ja_1^2 + (w_2^3 + a_1 w_2)) &\geq \min\{v_\mathfrak{p}(ja_1^2 + w_2^3  ) ,v_\mathfrak{p}(a_1 w_2) \} \\&> \min\{v_\mathfrak{p}(j a_1^2),v_\mathfrak{p}(a_1 w_2)\}\\& = \min\{v_\mathfrak{p}(j a_1^2),v_\mathfrak{p}(a_1) + 2v_\mathfrak{p}(a_1)/3\} \\& =  \min\{2v_\mathfrak{p}(a_1),5v_\mathfrak{p}(a_1)/3\} \\& = 2v_\mathfrak{p}(a_1). \end{align*} Hence $$v_\mathfrak{p}(a_2)  = v_\mathfrak{p}\left(\frac{(ja_1^2 + (w^3 + a_1 w) )^2}{a_1^3}\right) > 4v_\mathfrak{p}(a_1) - v_\mathfrak{p}(a_1^3) = v_\mathfrak{p}(a_1).$$ We can thus ensure (after possibly repeating this process if needed) that we terminate at an element $a_2 \in K$ for which $v_\mathfrak{p}(a_2) \geq 0$ or for which $v_\mathfrak{p}(a_2) < 0$ and $(v_\mathfrak{p}(a_2),3) = 1$. 
\end{proof} 

\begin{theorem} \label{char3localstandardform}
Suppose that $p = 3$. Let $L/K$ be a separable cubic extension and $y$ a primitive element with minimal polynomial $X^3 + aX + a^2$. Let $\mathfrak{p}$ be a place of $K$ and $\mathfrak{P}$ a place of $L$ above $\mathfrak{p}$. Then $\mathfrak{p}$ is fully ramified if, and only if, there is $w \in K$, $v_\mathfrak{p}(\alpha ) < 0$ and $(v_\mathfrak{p}(\alpha ),3) = 1$ with $$\alpha = \frac{(ja^2 + (w^3 + a w) )^2}{a^3}.$$ Equivalently, there is a generator $z$ of $L$ whose minimal polynomial is of the form $X^3 + \alpha X + \alpha^2$, where $v_\mathfrak{p}(\alpha ) < 0$ and $(v_\mathfrak{p}(\alpha ),3) = 1$. 
\end{theorem}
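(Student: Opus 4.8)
The plan is to mimic the proofs of Theorems \ref{RPC} and \ref{whatisthisidontknow}, replacing Kummer theory by the characteristic-$3$ tools already at hand: Lemma \ref{char3localstandardform1} (local standard form), Theorem \ref{char3extensionsthesame} (which generators of $L/K$ have a minimal polynomial of the prescribed shape, and with which parameter), Lemma \ref{splittingpolychar3partdeux} (decomposition of $X^3+\alpha X+\alpha^2$ over a finite field), and Kummer's theorem \cite[Theorem 3.3.7]{Sti}. First I would settle the equivalence of the two formulations in the statement: by Theorem \ref{char3extensionsthesame}, applied to $K(y)\simeq K(z)$, the elements $\alpha=\frac{(ja^2+(w^3+aw))^2}{a^3}$ with $j\in\{1,2\}$, $w\in K$, are exactly the parameters of those generators $z$ of $L/K$ whose minimal polynomial has the form $X^3+\alpha X+\alpha^2$. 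Hence ``there is $w$ (and $j$) with $v_\mathfrak{p}(\alpha)<0$ and $(v_\mathfrak{p}(\alpha),3)=1$'' and ``there is a generator $z$ with minimal polynomial $X^3+\alpha X+\alpha^2$, $v_\mathfrak{p}(\alpha)<0$ and $(v_\mathfrak{p}(\alpha),3)=1$'' are the same assertion, and it suffices to prove that full ramification is equivalent to the latter.

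For the direction ``$\Leftarrow$'', suppose $z$ is a generator with minimal polynomial $X^3+\alpha X+\alpha^2$, where $m:=v_\mathfrak{p}(\alpha)<0$ and $3\nmid m$. Fix a place $\mathfrak{P}$ of $L$ above $\mathfrak{p}$, put $e:=e(\mathfrak{P}\mid\mathfrak{p})$ and $t:=v_\mathfrak{P}(z)$. From $z^3=-\alpha z-\alpha^2$ one compares $v_\mathfrak{P}(z^3)=3t$, $v_\mathfrak{P}(\alpha z)=em+t$, $v_\mathfrak{P}(\alpha^2)=2em$; a short case analysis on which of the last two terms is smaller (using $em<0$) excludes $t\le em$ and forces $t>em$, whence $2em<em+t$ and therefore $3t=2em$, i.e. $t=\tfrac{2em}{3}$. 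Since $t\in\mathbb{Z}$ and $3\nmid 2m$, this requires $3\mid e$, so $e=3$ and $\mathfrak{p}$ is fully ramified.

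For the direction ``$\Rightarrow$'', assume $\mathfrak{p}$ is fully ramified in $L$. By Lemma \ref{char3localstandardform1} there is a generator $z$ of $L/K$ in local standard form at $\mathfrak{p}$; let $X^3+\alpha X+\alpha^2$ be its minimal polynomial, so that either $v_\mathfrak{p}(\alpha)\ge 0$, or $v_\mathfrak{p}(\alpha)<0$ with $(v_\mathfrak{p}(\alpha),3)=1$. I will rule out the case $v_\mathfrak{p}(\alpha)\ge 0$. If $v_\mathfrak{p}(\alpha)=0$, then the discriminant $-4\alpha^3$ of $X^3+\alpha X+\alpha^2$ has $v_\mathfrak{p}(-4\alpha^3)=0$, so it is a $\mathfrak{p}$-unit; by Lemma \ref{splittingpolychar3partdeux} the reduction $X^3+\overline\alpha X+\overline\alpha^2$ is separable, hence Kummer's theorem applies and $\mathfrak{p}$ is unramified in $L$, a contradiction. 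If $v_\mathfrak{p}(\alpha)=m>0$, then the Newton polygon of $X^3+\alpha X+\alpha^2$ with respect to $v_\mathfrak{p}$ has vertices $(0,2m)$, $(1,m)$, $(3,0)$ (the middle point lies strictly below the segment joining the outer two), so it has two distinct slopes $-m$ and $-\tfrac m2$; the polynomial therefore factors over the completion $\widehat K_\mathfrak{p}$ as a linear factor times a quadratic factor, so $\mathfrak{p}$ has at least two places of $L$ above it and is not fully ramified, again a contradiction. Hence $v_\mathfrak{p}(\alpha)<0$ and $(v_\mathfrak{p}(\alpha),3)=1$, and by Theorem \ref{char3extensionsthesame} we may write $\alpha=\frac{(ja^2+(w^3+aw))^2}{a^3}$, which is exactly the asserted condition.

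The routine ingredients are the valuation bookkeeping in ``$\Leftarrow$'' and the discriminant computation. The main obstacle I anticipate is the case $v_\mathfrak{p}(\alpha)>0$ in ``$\Rightarrow$'': here the reduced polynomial is $X^3$, so Kummer's theorem is not directly available, and one must instead run the Newton polygon argument (or, equivalently, compute $v_\mathfrak{P}(z)$ and $v_\mathfrak{P}(z^2+\alpha)=-\alpha^2/z$ and observe that $1,z,z^2$ all have $\mathfrak{P}$-valuation in $e\mathbb{Z}$, forcing $e=1$) to see that such a place cannot be fully ramified; some care is also needed precisely because ``local standard form'' in Lemma \ref{char3localstandardform1} still permits $v_\mathfrak{p}(\alpha)>0$.
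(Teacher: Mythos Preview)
Your proposal is correct, but it takes a genuinely different route from the paper. The paper does \emph{not} argue directly on $L/K$; instead it passes to the Galois closure $L(b)/K(b)$ with $b^2=-a$, where $y/b$ is an Artin--Schreier generator with minimal polynomial $X^3-X+b$. Since $[K(b):K]$ is coprime to $3$, full ramification of $\mathfrak{p}$ in $L$ is equivalent to full ramification of $\mathfrak{p}_b$ in the Artin--Schreier extension $L(b)/K(b)$, and the paper then simply quotes \cite[Proposition 3.7.8]{Sti}: $\mathfrak{p}_b$ is fully ramified iff the Artin--Schreier parameter has negative valuation coprime to $3$, and unramified iff one can arrange nonnegative valuation. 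The translation between $v_{\mathfrak{p}_b}(b)$ and $v_\mathfrak{p}(\alpha)$ is immediate from $b^2=-\alpha$, and Lemma \ref{char3localstandardform1} closes the argument. Your approach instead handles ``$\Leftarrow$'' by a direct valuation balance on $z^3=-\alpha z-\alpha^2$, and ``$\Rightarrow$'' by Kummer's theorem when $v_\mathfrak{p}(\alpha)=0$ together with a Newton polygon factorisation over $\widehat{K}_\mathfrak{p}$ when $v_\mathfrak{p}(\alpha)>0$. This is more self-contained (no auxiliary quadratic extension, no appeal to Artin--Schreier ramification theory) and arguably more elementary, at the cost of invoking the Newton polygon; the paper's method has the advantage of parallelling the strategy of Theorem \ref{ramification} (pass to a closure where the extension becomes of standard Kummer/Artin--Schreier type) and of reusing results already cited. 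One small remark: your parenthetical alternative in the $v_\mathfrak{p}(\alpha)>0$ case (``compute $v_\mathfrak{P}(z)$ and $v_\mathfrak{P}(z^2+\alpha)=-\alpha^2/z$ \dots'') is garbled as written and would need cleaning up, but the Newton polygon argument you actually give is sound.
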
 
\begin{proof} 
Let $\mathfrak{p}$ be a place of $K$, and denote by $\mathfrak{P}$ a place of $L$ above $\mathfrak{p}$. When $L/F$ is Galois, this theorem is simply the usual Artin-Schreier theory (see \cite[Proposition 3.7.8]{Sti}). Otherwise, since the discriminant of the polynomial $X^3+a X+a^2$ is equal to $\Delta=-4a^3=-a^3$, by \cite[Theorem 2.3]{Con}, we know that the Galois closure of $L/F$ is equal to $L(\Delta )= L(b)$, where $b^2 = -a$. Let $\mathfrak{p}_b$ a place of $K(b)$ above $\mathfrak{p}$. The extension $L(b)/K(b)$ is an Artin-Schreier extension with Artin-Schreier generator $y/b$ possessing minimal polynomial $X^3-X+b$. 
As $L(b)/K(b)$ is Galois, if $\mathfrak{p}_b$ is ramified in $L(b)$, then it must be fully ramified. Furthermore, as the degree $K(b)/K$ is equal to $2$, which is coprime with $3$, and the index of ramification is multiplicative in towers, it follows that the place $\mathfrak{p}$ is fully ramified in $L$ if, and only if, $\mathfrak{p}_b$ is fully ramified in $L(b)$. 
By \cite[Proposition 3.7.8]{Sti}, \begin{enumerate} \item $\mathfrak{p}_b$ is fully ramified in $L(b)$ if, and only if, there is an Artin-Schreier generator $z$ such that $z^3-z -c$ with $v_{\mathfrak{p}_b}(c) < 0$ and $(v_{\mathfrak{p}_b}(c),3)=1$, and \item $\mathfrak{p}_b$ is unramified in $L(b)$ if, and only if, there is an Artin-Schreier generator $z$ such that $z^3-z -c$ with $v_{\mathfrak{p}_b}(c) \geq 0$. \end{enumerate}
Suppose that there is a generator $w$ such that $w^3 +a_1 w +a_1 ^2 =0$, $v_\mathfrak{p} (a_1) <0 $ and $( v_\mathfrak{p} (a_1), 3)=1$. Then over $K(b_1)$, where $b_1^2 =-a_1$, we have an Artin-Schreier generator $z$ of $L(b_1)$ such that  $z^3-z +b_1$. Moreover, $$v_{\mathfrak{p}_{b_1}}(b_1)= \frac{ v_{\mathfrak{p}_{b_1}}(a_1)}{2}= \frac{e(\mathfrak{p}_{b_1} |\mathfrak{p}) v_{\mathfrak{p}}(a_1)}{2},$$ 
where $e(\mathfrak{p}_{b_1} |\mathfrak{p})$ is the index of ramification of $\mathfrak{p}_{b_1}$ over $K(b_1)$, whence $e(\mathfrak{p}_{b_1} |\mathfrak{p})=1$ or $2$. 
As a consequence, $$(v_{\mathfrak{p}_{b_1}}(b_1), 3)=( v_\mathfrak{p} (a_1), 3)=1,$$ and $\mathfrak{p}_{b_1}$ is fully ramified in $L(b_1)$, so that $\mathfrak{p}$ too must be fully ramified in $L$. 

Suppose that there exists a generator $w$ such that $w^3 +a_1 w +a_1 ^2 =0$, $v_\mathfrak{p} (a_1) \geq 0$. Then over $K(b_1)$, where $b_1^2 =-a_1$, we have a generator $z$ of $L(b_1)$ such that  $z^3-z +b_1$ and $$v_{\mathfrak{p}_{b_1}}(b_1)= \frac{e(\mathfrak{p}_{b_1} |\mathfrak{p}) v_{\mathfrak{p}}(a_1)}{2}\geq 0.$$ 
Thus $\mathfrak{p}_b$ is unramified in $L(b)$, so that $\mathfrak{p}$ cannot be fully ramified in $L$, since the ramification index is multiplicative in towers. The theorem then follows by Lemma \ref{char3localstandardform1}.
\end{proof} 
We finish the section describing the splitting of places in characteristic $3$. 
\begin{theorem} \label{splittingchar3}
Suppose that $p = 3$. Let $L/K$ be a separable cubic extension and $y$ a primitive element with minimal polynomial $X^3 + aX + a^2$. Let $\mathfrak{p}$ be a place of $K$. One can find $x\in K$ such that $\mathfrak{p}$ is a finite place over $\mathbb{F}_q(x)$ and the integral closure of $\mathbb{F}_q[x]$ in $L$ is a Dedekind Domain (see Remark \ref{rkp}). For this place $\mathfrak{p}$ and choice of $x$, we have:
\begin{enumerate}
\item $\mathfrak{p}\mathcal{O}_{L,x} = \mathfrak{P}^3$ where $\mathfrak{P}$ is a place of $L$ above $\mathfrak{p}$ if, and only if, $w \in K$, $v_\mathfrak{p}(\alpha ) < 0$ and $(v_\mathfrak{p}(\alpha ),3) = 1$ with $$\alpha = \frac{(ja^2 + (w^3 + a w) )^2}{a^3}.$$ 
The signature of $\mathfrak{p}$ is $(3,1)$. 
\end{enumerate}
Otherwise, by Lemma \ref{char3localstandardform1}, there is generator $z$ such that $z^3 +c z +c ^2 =0$ such that $v_\mathfrak{p}(c ) \geq 0$. In the following, we choose $z$ to be such a generator and $b\in \overline{K}$ such that $b^2 =-c$. We let $\emph{Tr}$ denote the trace map from $k(\mathfrak{p})$ into the prime field $\mathbb{F}_p$ (\S 2).
\begin{enumerate}
\item[(2)] If $v_\mathfrak{p}(c ) = 0$. Then:
\begin{enumerate}
\item $\mathfrak{p}$ is inert if, and only if, $-c$ is a square modulo $\mathfrak{p}$ and $\text{\emph{Tr}}(b) \notequiv 0 \mod \mathfrak{p}$. Moreover, $f(\mathfrak{P}|\mathfrak{p})=3$ for the place $\mathfrak{P}$ of $L$ above $\mathfrak{p}$. The signature of $\mathfrak{p}$ is $(1,3)$.
\item $\mathfrak{p}$ is completely split if, and only if, $-c$ is a square modulo $\mathfrak{p}$ and $\text{\emph{Tr}}(b) \equiv 0 \mod \mathfrak{p}$. 
The signature of $\mathfrak{p}$ is $(1,1; 1,1; 1,1)$. 
\item $\mathfrak{p}\mathcal{O}_{L,x} = \mathfrak{P}_1 \mathfrak{P}_2$ where $\mathfrak{P}_i$, $i=1,2$ are places of $L$ above $\mathfrak{p}$ if, and only if, $-c$ is not a square modulo $\mathfrak{p}$.  In this case, up to relabelling, the place $\mathfrak{P}_1$ satisfies $f(\mathfrak{P}_1| \mathfrak{p})=1$ and is inert in $L(b)$, whereas $f(\mathfrak{P}_2 | \mathfrak{p})=2$ and $\mathfrak{P_2}$ splits in $L(b)$. The signature of $\mathfrak{p}$ is $(1,1; 1,2)$. 
\end{enumerate}
\item[(3)] If $v_\mathfrak{p}(c ) >0$. Then:
\begin{enumerate}
\item $\mathfrak{p}$ is completely split if, and only if, $2 \;|\; v_\mathfrak{p} ( c)$ and $-s^2c$ is square modulo $\mathfrak{p}$ for some $s\in K$ such that $v_\mathfrak{p} ( -s^2 c) =0$. 
The signature of $\mathfrak{p}$ is $(1,1; 1,1; 1,1)$. 
\item $\mathfrak{p}\mathcal{O}_{L,x} = \mathfrak{P}_1 \mathfrak{P}_2$ where $\mathfrak{P}_i$, $i=1,2$ if, and only if, $2 \;|\; v_\mathfrak{p} ( c)$ and $-s^2c$ is not square modulo $\mathfrak{p}$ for some $s\in K$ such that $v_\mathfrak{p} ( s^2 c) =0$. In this case, up to relabelling, the place $\mathfrak{P}_1$ satisfies $f(\mathfrak{P}_1| \mathfrak{p})=1$ and is inert in $L(b)$, whereas $f(\mathfrak{P}_2 | \mathfrak{p})=2$ and $\mathfrak{P_2}$ splits in $L(b)$. The signature of $\mathfrak{p}$ is $(1,1; 1,2)$. 
\item $\mathfrak{p}\mathcal{O}_{L,x} = \mathfrak{P}_1^2 \mathfrak{P}_2$ where $\mathfrak{P}_i$, $i=1,2$ are places of $L$ above $\mathfrak{p}$ if, and only if, $(v_\mathfrak{p}( c) , 2)=1$ and $c$ is not a square modulo $\mathfrak{p}$. Moreover $f(\mathfrak{P}_i |\mathfrak{p})=1$, for $i=1,2$, $\mathfrak{P}_1$ is splits in $L(b)$ and $\mathfrak{P}_2$ is ramified in $L(b)$. The signature of $\mathfrak{p}$ is $(2,1; 1,1)$. 
\end{enumerate}
\end{enumerate}
\end{theorem}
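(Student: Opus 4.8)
The plan is to treat part~(1) as a citation and to reduce parts~(2) and~(3) to the Galois-closure tower that is already constructed in the proof of Theorem~\ref{char3localstandardform}. Part~(1) is nothing more than Theorem~\ref{char3localstandardform}: it characterises the fully ramified $\mathfrak{p}$ as exactly those for which some generator $\alpha = (ja^2 + (w^3+aw))^2/a^3$ has $v_\mathfrak{p}(\alpha)<0$ with $(v_\mathfrak{p}(\alpha),3)=1$, in which case $e(\mathfrak{P}|\mathfrak{p})=3$, so by the fundamental equality there is a unique place above $\mathfrak{p}$ with $f(\mathfrak{P}|\mathfrak{p})=1$, giving signature $(3,1)$. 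For the remaining cases I would invoke Lemma~\ref{char3localstandardform1} to choose a generator $z$ in local standard form at $\mathfrak{p}$, so that $z^3+cz+c^2=0$ with $v_\mathfrak{p}(c)\ge 0$; by Theorem~\ref{char3localstandardform} such $\mathfrak{p}$ is not fully ramified in $L$. I then fix $x$ as in Remark~\ref{rkp} (so Kummer's theorem \cite[Theorem 3.3.7]{Sti} applies at $\mathfrak{p}$ and $\mathfrak{p}\mathcal{O}_{L,x}$ is well defined) and work in the tower $L(b)/K(b)/K$ with $b^2=-c$: here $[K(b):K]=2$, the extension $L(b)/K(b)$ is the Artin--Schreier extension generated by $z/b$ with $(z/b)^3-(z/b)=-b$ (as in the proof of Theorem~\ref{char3localstandardform}), and $[L(b):L]=2$, so $\gcd(2,3)=1$ lets me transport ramification and splitting data freely between the layers.

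For part~(2), where $v_\mathfrak{p}(c)=0$, one has $\overline c\ne 0$ in $k(\mathfrak{p})$, and the reduced polynomial $X^3+\overline c X+\overline c^{\,2}$ has no repeated root (Lemma~\ref{splittingpolychar3partdeux} shows the repeated-root cases force the coefficient to vanish). Kummer's theorem then reads off the splitting of $\mathfrak{p}$ directly from the factorisation type, and Lemma~\ref{splittingpolychar3partdeux} identifies that type: an irreducible cubic ($\mathfrak{p}$ inert) when $-\overline c$ is a nonzero square and $\Tr(\overline b)\ne 0$ with $\overline b^2=-\overline c$; a linear factor times an irreducible quadratic ($\mathfrak{p}\mathcal{O}_{L,x}=\mathfrak{P}_1\mathfrak{P}_2$) when $-c$ is a non-square; three distinct linear factors ($\mathfrak{p}$ completely split) when $-\overline c$ is a nonzero square and $\Tr(\overline b)=0$. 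The auxiliary claims about behaviour in $L(b)$ I would obtain from the tower: $K(b)/K$ is unramified at $\mathfrak{p}$, with $\mathfrak{p}$ split there iff $-c$ is a square mod $\mathfrak{p}$; $L(b)/K(b)$ is unramified above $\mathfrak{p}$ (since $v_{\mathfrak{p}_b}(b)=0$), and a place $\mathfrak{p}_b$ of $K(b)$ splits completely or stays inert according as $\Tr(\overline{-b})=0$ or not \cite[Proposition 3.7.8]{Sti}; and when $-c$ is a non-square, $\mathfrak{p}_b$ has residue field $k(\mathfrak{p})(\overline b)$ of degree $2$, so $\Tr_{k(\mathfrak{p}_b)/k(\mathfrak{p})}(\overline b)=\overline b-\overline b=0$ and $\mathfrak{p}_b$ is forced to split completely in $L(b)$. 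Pushing this down to $L$ using that $\Gal(L(b)/L)$ has order $2$ — together with integrality of residue degrees and the fact that an order-$2$ group acting on a set of odd size has a fixed point — then yields (2a), (2b), (2c), including whether $\mathfrak{P}_1,\mathfrak{P}_2$ are inert or split in $L(b)$.

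For part~(3), where $v_\mathfrak{p}(c)>0$, one has $\overline c=0$, the reduction is $X^3$, and Kummer's theorem is inconclusive, so the tower is indispensable. Every place $\mathfrak{p}_b$ of $K(b)$ over $\mathfrak{p}$ satisfies $v_{\mathfrak{p}_b}(b)=\tfrac12 e(\mathfrak{p}_b|\mathfrak{p})\,v_\mathfrak{p}(c)>0$, so $L(b)/K(b)$ is unramified at $\mathfrak{p}_b$ and, since $\overline{-b}=0$ has trace $0$, $\mathfrak{p}_b$ splits completely in $L(b)$. Hence the splitting type of $\mathfrak{p}$ in $L(b)$ is governed entirely by the quadratic $K(b)=K(\sqrt{-c})$: choosing $s\in K$ with $v_\mathfrak{p}(-s^2c)=0$ when $2\mid v_\mathfrak{p}(c)$, the place $\mathfrak{p}$ splits, is inert, or ramifies in $K(b)$ according as $-s^2c$ is a nonzero square mod $\mathfrak{p}$, a nonzero non-square, or $v_\mathfrak{p}(c)$ is odd. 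In the split case there are six places of $L(b)$ over $\mathfrak{p}$, all with $e=f=1$ over $K$, which forces $\mathfrak{p}$ to split completely in $L$ (case (3a)). In the inert and ramified cases there are three such places $\mathfrak{Q}_1,\mathfrak{Q}_2,\mathfrak{Q}_3$; the order-$2$ group $\Gal(L(b)/L)$ fixes one of them and swaps the other two, and tracking $e,f$ through $L(b)/L$ and $L(b)/K(b)$, using $e(\mathfrak{P}|\mathfrak{p})\in\{1,3\}$ for the cubic $L/K$, gives $\mathfrak{p}\mathcal{O}_{L,x}=\mathfrak{P}_1\mathfrak{P}_2$ with $f(\mathfrak{P}_1|\mathfrak{p})=1$, $f(\mathfrak{P}_2|\mathfrak{p})=2$ in case (3b) and $\mathfrak{p}\mathcal{O}_{L,x}=\mathfrak{P}_1^2\mathfrak{P}_2$ with $f(\mathfrak{P}_i|\mathfrak{p})=1$ in case (3c); the possibility that $\mathfrak{p}$ be inert or split completely in $L$ is excluded because, $L(b)/K(b)$ being unramified at $\mathfrak{p}_b$ with $e(\mathfrak{p}_b|\mathfrak{p})\le 2$, the resulting degrees and ramification indices cannot fit. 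In all cases the remaining inertia degrees follow from the fundamental equality.

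The hard part is exactly this last step: the descent from the Galois closure $L(b)$ to the non-Galois field $L$ in the cases where $K(b)/K$ is inert or ramified at $\mathfrak{p}$. There, the splitting type of $\mathfrak{p}$ in $L(b)$ by itself is compatible with more than one splitting type in $L$, and one must combine the fixed-point argument for the order-$2$ group $\Gal(L(b)/L)$ with integrality of residue degrees and with the fact (from Theorem~\ref{char3localstandardform}) that $\mathfrak{p}$ is not fully ramified in $L$ to single out the correct one. Everything else reduces either to Theorem~\ref{char3localstandardform} or, through Kummer's theorem, to the finite-field factorisation recorded in Lemma~\ref{splittingpolychar3partdeux}.
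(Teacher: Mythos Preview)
Your overall strategy coincides with the paper's: cite Theorem~\ref{char3localstandardform} for (1), use Kummer's theorem together with Lemma~\ref{splittingpolychar3partdeux} for (2), and for (3) work in the tower $L(b)/K(b)/K$ after observing that $\mathfrak{p}_b$ splits completely in the Artin--Schreier extension $L(b)/K(b)$ (the paper does this via the factorisation $X^3-X\equiv X(X-1)(X+1)\bmod\mathfrak{p}_b$ and Kummer, you via $\Tr(\overline{-b})=0$; both are fine). The paper then controls the descent to $L$ not by a fixed-point count but by invoking \cite[p.~55]{Neu}: a place is completely split in $L$ if and only if it is completely split in the Galois closure $L(b)$, i.e.\ if and only if it is completely split in $K(b)$ (since $\mathfrak{p}_b$ already splits completely in $L(b)$).

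Your descent argument, as written, has a genuine gap. First, the assertion ``$e(\mathfrak{P}|\mathfrak{p})\in\{1,3\}$ for the cubic $L/K$'' is simply false for non-Galois cubics; case (3c) itself has $e(\mathfrak{P}_1|\mathfrak{p})=2$. Second, and more importantly, the parity argument (an order-$2$ group acting on a $3$-element set has a fixed point) only yields an \emph{odd} number of fixed points, hence $1$ or $3$. The possibility ``all three fixed'' is exactly the case that $\mathfrak{p}$ is completely split in $L$, and nothing in your bookkeeping of $e$ and $f$ excludes it: in (3b) one would get three places of $L$ with $e=f=1$, each inert in $L(b)$, and in (3c) three places with $e=f=1$, each ramified in $L(b)$; both patterns are numerically consistent with the three places of $L(b)$ you started from. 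What rules this out is precisely the Neukirch-type statement the paper cites: if $\mathfrak{p}$ were completely split in $L$ it would be completely split in $L(b)$, hence in $K(b)$, contradicting the hypothesis of (3b) or (3c). Alternatively you can finish your own argument by noting that the decomposition groups at the three $\mathfrak{Q}_i$ are the three (pairwise distinct, non-normal) order-$2$ subgroups of $\Gal(L(b)/K)\cong S_3$, so $\Gal(L(b)/L)$ equals exactly one of them, giving exactly one fixed $\mathfrak{Q}_i$. Either way, you need to add this step; without it the exclusion of the completely split case in (3b) and (3c) is unjustified.
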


\begin{proof}
Let $\mathfrak{p}$ be a place of $K$. 
\begin{enumerate}
\item This is simply Theorem \ref{char3localstandardform}.
\end{enumerate} 
If $\mathfrak{p}$ does not satisfy the conditions in case $(1)$,  then it is not fully ramified, and by Lemma \ref{char3localstandardform1}, there is a generator $z$ such that $z^3 +c z +c ^2 =0$ and $v_\mathfrak{p}(c) \geq 0$. In the following, we choose $z$ to be such a generator and $b\in \overline{K}$ such that $b^2 =-c$. 
\begin{enumerate} 
\item[2.]  If $v_\mathfrak{p}(c) = 0$, then $\overline{c}:= c \mod \mathfrak{p} \neq 0$ in $k(\mathfrak{p})$.  By Lemma \ref{splittingpolychar3partdeux}, we have that $f(X)= X^3 + cX + c^2$ splits as follows modulo $\mathfrak{p}$:
\begin{enumerate}
\item $f(X)$ is irreducible if, and only if, $-c$ is a non-zero square in $k(\mathfrak{p})$ and $\text{Tr}(b) \neq 0$
\item $f(X) = (X-\alpha)Q(X)$ where $\alpha \in k(\mathfrak{p})$ and $Q(X)$ is an irreducible quadratic polynomial if, and only if, $-c$ is not a square in $ \in k(\mathfrak{p})$. 
\item $f(X) = (X-\alpha)(X - \beta)(X - \gamma)$ if, and only if,  $-c$ is a non-zero square in $k(\mathfrak{p})$ and $\text{Tr}(b ) =0$.
\end{enumerate}
According to Kummer's theorem \cite[Theorem 3.3.7]{Sti}, \begin{enumerate} \item $f(X)$ is irreducible modulo $\mathfrak{p}$ if, and only if, $\mathfrak{p}$ is inert; \item $f(X) = (X-\alpha)Q(X)$ modulo $\mathfrak{p}$ where $\alpha \in \mathbb{F}_{3^m}$ and $Q(X)$ is an irreducible quadratic polynomial if, and only if, $\mathfrak{p}\mathcal{O}_{L,x} = \mathfrak{P}_1 \mathfrak{P}_2$ where $\mathfrak{P}_i$, $i=1,2$ are places of $L$ above $\mathfrak{p}$; and \item $f(X) = (X-\alpha)(X - \beta)(X - \gamma)$ modulo $\mathfrak{p}$ with $\alpha , \beta, \gamma \in k(\mathfrak{p})$ all distinct  if, and only if, $\mathfrak{p}$ is completely split. \end{enumerate} 
Together, these equivalences imply the splitting of $\mathfrak{p}$ in this case.
\item[3.] If $v_\mathfrak{p}(c) > 0$, then $L(b)/K(b)$ is an Artin-Schreier extension by \cite[Theorem 2.3]{Con}, and there is an Artin Schreier generator $w=\frac{z}{b}$ such that $w^3 - w + b=0$ and $v_{\mathfrak{p}_b} ( b)>0$, where $\mathfrak{p}_b$ is a place of $K(d)$ above $\mathfrak{p}$. Thus $b \equiv 0 \mod \mathfrak{p}_b$, and the polynomial $$X^3 -X +b \equiv X^3 - X \mod \mathfrak{p}_b$$ factors as $X(X-1)(X+1)$ modulo $\mathfrak{p}_b$. By Kummer's theorem (\cite[Theorem 3.3.7]{Sti}), we then have that $\mathfrak{p}_b$ is completely split in $L(b)$. 

As $\mathfrak{p}_b$ is completely split in $L(b )$, we have that $\mathfrak{p}$ cannot be inert in $L$. Indeed, if $\mathfrak{p}$ were inert in $L$, then there are at most two places above $\mathfrak{p}$ in $L(b)$, in contradiction with the proven fact that $\mathfrak{p}_b$ is completely split in $L(b)$. 

By \cite[p.55]{Neu}, $\mathfrak{p}$ splits completely in $L$ if, and only if, $\mathfrak{p}$ is completely split in $K(b)$ and $\mathfrak{p}_b$ is completely split in $L(b)$.

Also, since by the previous argument $\mathfrak{p}$ cannot be inert in $L$, we have that either $$\mathfrak{p}\mathcal{O}_{L,x} = \mathfrak{P}_1 \mathfrak{P}_2\qquad\text{or}\qquad\mathfrak{p}\mathcal{O}_{L,x} = \mathfrak{P}_1 \mathfrak{P}_2^2,$$ where $\mathfrak{P}_i$, $i=1,2$ are places of $L$ above $\mathfrak{p}$. Let $\mathfrak{P}_b$ be a place of $L(b)$ above $\mathfrak{p}$. When $\mathfrak{p}$ is inert in $K(b)$, the index of ramification of $e( \mathfrak{P}_b|  \mathfrak{p}) $ of $\mathfrak{p}$ over $L(b)$ is equal to $1$, since $\mathfrak{p}_b$ is completely split in $L(b )$, whence $\mathfrak{p}\mathcal{O}_{L,x} = \mathfrak{P}_1 \mathfrak{P}_2$. When $\mathfrak{p}$ is ramified in $K(b)$, then the index of ramification at any place above $\mathfrak{p}$ in $L(b)$ is divisible by $2$, since $L(b)/K$ is Galois by \cite[Theorem 2.3]{Con}, whence $\mathfrak{p}\mathcal{O}_{L,x} = \mathfrak{P}_1 \mathfrak{P}_2^2$. 

By studying the decomposition of a polynomial of the form $X^2 -d$ modulo $\mathfrak{p}$ over a finite field of characteristic not equal to $2$, using again Kummer's theorem \cite[Theorem 3.3.7]{Sti} and general Kummer Theory \cite[Proposition 3.7.3]{Sti}, we therefore find that: 
\begin{enumerate} 
\item $\mathfrak{p}$ is ramified in $K(b)$ if, and only if, $(v_\mathfrak{p}( c) , 2)=1$,
\item $\mathfrak{p}$ is completely split in $K(b)$ if, and only if, $2 \;|\; v_\mathfrak{p} ( c)$ and $-s^2c$ is a square modulo $\mathfrak{p}$ for some $s\in K$ such that $v_\mathfrak{p} ( s^2 c) =0$, and
 \item $\mathfrak{p}$ is inert in $K(b)$ if, and only if, $2 \;|\; v_\mathfrak{p} ( c)$ and $-s^2c$ is not square modulo $\mathfrak{p}$ for some $s\in K$ such that $v_\mathfrak{p} ( s^2 c) =0$. \end{enumerate} 
\end{enumerate} 
And the Theorem follows.

The inertia degrees are obtained as a consequence to the fundamental equality \cite[Proposition]{Sti}. The splitting of $\mathfrak{P}_i$ in $L(r)$ is an easy consequence of the study of the possible splitting of the place $\mathfrak{p}$ in the tower $L(b)/K(b)/K$, where $L(b)/K(b)$ is Galois, by \cite[Theorem 2.3]{Con}.
\end{proof}

\subsection{Riemann-Hurwitz formulae}
Using the extension data, it is possible to give the Riemann-Hurwitz theorem for each of our forms in Corollary \ref{classification}. These depend only on information from a single parameter.
\subsubsection{$X^3 -a$, $a\in K$, $p \neq 3$} 
\begin{lemma} 
 \label{diffexpqneq1mod3}
Let $p \neq 3$. Let $L/K$ be a purely cubic extension and $y$ a primitive element of $L$ with minimal polynomial $f(X) = X^3-a$. Let $\mathfrak{p}$ be a place of $K$ and $\mathfrak{P}$ a place of $L$ over $\mathfrak{p}$. Then the following are true:
\begin{enumerate} 
\item $d(\mathfrak{P}|\mathfrak{p}) =0$ if, and only if, $e(\mathfrak{P}|\mathfrak{p})=1$. 
\item  $d(\mathfrak{P}|\mathfrak{p}) =2$, otherwise. That is, by Theorem \ref{whatisthisidontknow}, $e(\mathfrak{P}|\mathfrak{p})=3$, which by Theorem \ref{RPC} is equivalent to $(v_{\mathfrak{p}}(a), 3)=1$.
\end{enumerate} 
\end{lemma}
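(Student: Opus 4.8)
The plan is to derive both assertions from Dedekind's different theorem (see \cite[Theorem 3.5.1]{Sti}) once we observe that every ramified place in $L/K$ is \emph{tamely} ramified. Recall first that the different exponent $d(\mathfrak{P}|\mathfrak{p})$ vanishes if, and only if, $\mathfrak{P}|\mathfrak{p}$ is unramified, i.e. $e(\mathfrak{P}|\mathfrak{p}) = 1$; this is precisely Dedekind's different theorem, and it yields assertion $(1)$ with no further work.

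For assertion $(2)$, suppose $e(\mathfrak{P}|\mathfrak{p}) \neq 1$. Since $e(\mathfrak{P}|\mathfrak{p})$ divides $[L:K] = 3$, which is prime, we must have $e(\mathfrak{P}|\mathfrak{p}) = 3$; in other words $\mathfrak{p}$ is fully ramified in $L$, which by Theorem \ref{RPC} is equivalent to $(v_\mathfrak{p}(a), 3) = 1$ (consistently with Theorem \ref{whatisthisidontknow}, under which a place that is not fully ramified has $e = 1$). Because $p \neq 3$, we have $p \nmid e(\mathfrak{P}|\mathfrak{p})$, so $\mathfrak{P}|\mathfrak{p}$ is tamely ramified, and Dedekind's different theorem in the tame case gives $d(\mathfrak{P}|\mathfrak{p}) = e(\mathfrak{P}|\mathfrak{p}) - 1 = 2$.

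I do not expect a genuine obstacle here: the only substantive points are that $e(\mathfrak{P}|\mathfrak{p}) \in \{1,3\}$ by primality of the degree, and that $e = 3$ is coprime to $p$ since $p \neq 3$, forcing tameness. If a self-contained argument is preferred over invoking Dedekind's theorem, one can instead choose, via Lemma \ref{purelocalstandardform} and (if necessary) a further scaling — e.g. replacing a purely cubic generator $z$ with $z^3 = c$ and $v_\mathfrak{p}(c) = 2$ by $z^2/\pi_\mathfrak{p}$ for a prime element $\pi_\mathfrak{p}$ of $\mathfrak{p}$, which has cube $c^2/\pi_\mathfrak{p}^3$ of $\mathfrak{p}$-valuation $1$ — a purely cubic generator $z$ with $v_\mathfrak{p}(c) = 1$. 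Then $X^3 - c$ is Eisenstein at $\mathfrak{p}$, so $\mathcal{O}_{\mathfrak{P}} = \mathcal{O}_{\mathfrak{p}}[z]$ with $z$ a local uniformiser at $\mathfrak{P}$, and the different exponent equals $v_\mathfrak{P}(3z^2) = 2\,v_\mathfrak{P}(z) = 2$, using $v_\mathfrak{P}(3) = 0$. Either route completes the proof.
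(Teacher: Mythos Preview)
Your approach is essentially the same as the paper's: both invoke the standard tame-ramification formula $d(\mathfrak{P}|\mathfrak{p}) = e(\mathfrak{P}|\mathfrak{p}) - 1$ (you cite Dedekind's different theorem from \cite{Sti}; the paper cites the equivalent \cite[Theorem 5.6.3]{Vil}), together with the fact that only $e \in \{1,3\}$ occurs here.

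One point to correct: the assertion that $e(\mathfrak{P}|\mathfrak{p})$ \emph{divides} $[L:K]$ is not valid in general for non-Galois extensions --- in an arbitrary cubic extension the signature $(2,1;1,1)$ is perfectly possible (and indeed appears elsewhere in this paper, e.g.\ Theorem \ref{splittingextqneq1mod3}(1)(d)). What rules out $e=2$ for \emph{purely} cubic extensions is precisely the combination of Theorems \ref{RPC} and \ref{whatisthisidontknow}, which you do cite parenthetically; just make that the primary justification rather than an afterthought. Your alternative Eisenstein computation is a clean self-contained argument and works as stated.
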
 
\begin{proof}
By Theorem \ref{whatisthisidontknow}, either $e(\mathfrak{P}|\mathfrak{p})=1$ or $e(\mathfrak{P}|\mathfrak{p})=2$.
 \begin{enumerate} 
\item  As the constant field $\mathbb{F}_q$ of $K$ is perfect, all residue field extensions in $L/K$ are automatically separable. The result then follows from \cite[Theorem 5.6.3]{Vil}.
\item If $e(\mathfrak{P}|\mathfrak{p})=3$, then as $p \nmid 3$, it follows again from [Theorem 5.6.3, Ibid.] that $d(\mathfrak{P}|\mathfrak{p}) = e(\mathfrak{P}|\mathfrak{p}) - 1 =   2$.
\end{enumerate}
\end{proof} 
We thus find the Riemann-Hurwitz formula as follows for purely cubic extensions when the characteristic is not equal to 3, which resembles that of Kummer extensions, but no assumption is made that the extension is Galois.
\begin{theoreme}[Riemann-Hurwitz I]  \label{RHPC}  Let $p \neq 3$. Let $L/K$ be a purely cubic  geometric extension, and $y$ a primitive element of $L$ with minimal polynomial $f(X) = X^3-a$. Then the genus $g_L$ of $L$ is given according to the formula $$g_L =  3g_K - 2 +  \sum_{\substack{  ( v_\mathfrak{p}(a),3) = 1}} d_{K}(\mathfrak{p}).$$
\end{theoreme}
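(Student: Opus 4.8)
The plan is to feed the ramification data already obtained in Theorems \ref{RPC} and \ref{whatisthisidontknow} into the Hurwitz genus formula. Since $p\neq 3$ and $a\neq 0$ (otherwise $X^3-a$ would be reducible), the polynomial $X^3-a$ is separable, so $L/K$ is a separable cubic extension; and because $L/K$ is assumed geometric, $\mathbb{F}_q$ remains the full constant field of $L$. Hence the Hurwitz genus formula \cite{Vil} applies in its standard form,
$$2g_L-2=[L:K]\,(2g_K-2)+\deg\mathfrak{D}_{L/K}=3(2g_K-2)+\deg\mathfrak{D}_{L/K},$$
where $\mathfrak{D}_{L/K}$ denotes the different of $L/K$. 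Solving for $g_L$ gives $g_L=3g_K-2+\tfrac12\deg\mathfrak{D}_{L/K}$, so the entire statement reduces to the identity $\deg\mathfrak{D}_{L/K}=2\sum_{(v_\mathfrak{p}(a),3)=1}d_K(\mathfrak{p})$.

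Next I compute the different place by place. It is supported precisely on the places of $L$ ramified over $K$. By Theorem \ref{RPC} a place $\mathfrak{p}$ of $K$ ramifies in $L$ if and only if it is fully ramified, which is equivalent to $(v_\mathfrak{p}(a),3)=1$; in that case Theorem \ref{whatisthisidontknow} together with the fundamental equality gives a unique place $\mathfrak{P}$ of $L$ above $\mathfrak{p}$, with $e(\mathfrak{P}|\mathfrak{p})=3$ and $f(\mathfrak{P}|\mathfrak{p})=1$. Since $p\nmid e(\mathfrak{P}|\mathfrak{p})$ and all residue extensions are separable ($\mathbb{F}_q$ being perfect), the ramification is tame and $d(\mathfrak{P}|\mathfrak{p})=e(\mathfrak{P}|\mathfrak{p})-1=2$; this is exactly Lemma \ref{diffexpqneq1mod3}. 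Consequently the contribution of such a $\mathfrak{p}$ to $\deg\mathfrak{D}_{L/K}$ is $d(\mathfrak{P}|\mathfrak{p})\cdot d_L(\mathfrak{P})=2\,f(\mathfrak{P}|\mathfrak{p})\,d_K(\mathfrak{p})=2\,d_K(\mathfrak{p})$, whereas unramified places contribute nothing, again by Lemma \ref{diffexpqneq1mod3}.

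Summing over all places $\mathfrak{p}$ of $K$ with $(v_\mathfrak{p}(a),3)=1$ — a finite set, since $a\in K^{\times}$ has only finitely many zeros and poles — yields $\deg\mathfrak{D}_{L/K}=2\sum_{(v_\mathfrak{p}(a),3)=1}d_K(\mathfrak{p})$, and substituting into $g_L=3g_K-2+\tfrac12\deg\mathfrak{D}_{L/K}$ gives the asserted formula. I do not expect a genuine obstacle here: the argument is essentially bookkeeping, and the only points demanding care are invoking the geometric hypothesis so that the Hurwitz formula may be used without a constant-field correction term, and passing correctly from the local different exponent $d(\mathfrak{P}|\mathfrak{p})$ to its global degree contribution $d(\mathfrak{P}|\mathfrak{p})\,d_L(\mathfrak{P})$ — which is painless here only because every ramified place is totally ramified with residue degree one.
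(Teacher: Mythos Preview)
Your proof is correct and follows essentially the same route as the paper: invoke Lemma \ref{diffexpqneq1mod3} for the different exponents, feed them into the Hurwitz formula \cite[Theorem 9.4.2]{Vil}, and use the fundamental identity to convert $d_L(\mathfrak{P})$ into $d_K(\mathfrak{p})$. The paper's proof is a one-line citation of exactly these ingredients; you have simply unpacked the computation, including the observation that only totally ramified places occur (so partial ramification with $e=2$ never arises for $X^3-a$), which the paper leaves implicit in its reference to Theorem \ref{whatisthisidontknow}.
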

\begin{proof}
This follows from Lemma \ref{diffexpqneq1mod3}, \cite[Theorem 9.4.2]{Vil}, and the fundamental identity $\sum e_i f_i = [L:K] = 3$. 
\end{proof} 
\subsubsection{$X^3 -3X -a$, $a\in K$, $p\neq 3$} 
\begin{lemma} 
 \label{diffexpqneq1mod3}
 Let $p \neq 3$. Let $L/K$ be an impurely cubic extension and $y$ a primitive element of $L$ with minimal polynomial $f(X) = X^3-3X-a$. Let $\mathfrak{p}$ be a place of $K$ and $\mathfrak{P}$ a place of $L$ over $\mathfrak{p}$. Let $\Delta=-27(a^2-4)$ be the discriminant of $f(X)$ and $r \in \overline{K}$ a root of the quadratic resolvent $R(X)= X^2 +3aX + ( -27 + 9 a^2)$ of $f(X)$. Then the following are true:
\begin{enumerate} 
\item $d(\mathfrak{P}|\mathfrak{p}) =0$ if, and only if, $e(\mathfrak{P}|\mathfrak{p})=1$. 
\item If $e(\mathfrak{P}|\mathfrak{p})=3$, which by Theorem \ref{ramification} is equivalent to $v_\mathfrak{p}(a) < 0$ and $(v_{\mathfrak{p}}(a), 3)=1$, then $d(\mathfrak{P}|\mathfrak{p}) =2$. 
\item If $e(\mathfrak{P}|\mathfrak{p})=2$, 
\begin{enumerate} 
\item If $p \neq 2$, by Corollary \ref{splittingextqneq1mod31}, this occurs precisely when $\Delta$ is not a square in $K$, $a\equiv \pm 2 \mod \mathfrak{p}$,  $( v_{\mathfrak{p}}(\Delta ), 2)=1$, and $2 \;| \;v_{\mathfrak{P}}(\Delta )$. In this case, $d(\mathfrak{P}|\mathfrak{p})=1$.
\item If $p = 2$, by Corollary \ref{splittingextqneq1mod32}, this occurs when $r\notin K$, $a\equiv 0 \mod \mathfrak{p}$, there is $w_\mathfrak{p} \in K$ such that $v_{\mathfrak{p}}(\left( \frac{1}{a^2}+1 -w_\mathfrak{p}^2+w_\mathfrak{p} \right) , 2)=1\quad\text{and}\quad v_{\mathfrak{p}}\left( \frac{1}{a^2}+1 -w_\mathfrak{p}^2+w_\mathfrak{p} \right)<0$. Also, in this case, there exists $\eta_\mathfrak{P} \in L$ such that $v_{\mathfrak{P}}\left( \frac{1}{a^2}+1 -\eta_\mathfrak{P} ^2+\eta_\mathfrak{P} \right)\geq 0,$ and we have for this $\mathfrak{P}$ that $$d(\mathfrak{P}|\mathfrak{p})=-v_{\mathfrak{p}}\left( \frac{1}{a^2} +1 -w_\mathfrak{p}^2+w_\mathfrak{p} \right)+1 .$$
\end{enumerate} 
\end{enumerate} 
\end{lemma}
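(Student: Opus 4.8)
The plan is to deduce every assertion from the standard formula \cite[Theorem 5.6.3]{Vil} for the different exponent, the only genuine work being the wildly ramified situation in item (3)(b).

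First I would dispatch the easy cases. Since the constant field $\mathbb{F}_q$ is perfect, every residue field extension occurring in $L/K$ is separable, so \cite[Theorem 5.6.3]{Vil} applies: $d(\mathfrak{P}|\mathfrak{p})=0$ if and only if $\mathfrak{P}|\mathfrak{p}$ is unramified, and $d(\mathfrak{P}|\mathfrak{p})=e(\mathfrak{P}|\mathfrak{p})-1$ whenever $p\nmid e(\mathfrak{P}|\mathfrak{p})$. This gives item (1) at once, and by the fundamental equality $e(\mathfrak{P}|\mathfrak{p})\in\{1,2,3\}$. If $e(\mathfrak{P}|\mathfrak{p})=3$ then, as $p\neq 3$, the ramification is tame and $d(\mathfrak{P}|\mathfrak{p})=2$; Theorem \ref{ramification} identifies this case as $v_\mathfrak{p}(a)<0$ and $(v_\mathfrak{p}(a),3)=1$, which is item (2). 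If $e(\mathfrak{P}|\mathfrak{p})=2$ and $p\neq 2$ then $p\nmid 2$, so again the ramification is tame and $d(\mathfrak{P}|\mathfrak{p})=1$; Corollary \ref{splittingextqneq1mod31}(1)(d) describes exactly when this occurs (the clause $2\mid v_\mathfrak{P}(\Delta)$ being automatic from $v_\mathfrak{P}(\Delta)=2\,v_\mathfrak{p}(\Delta)$), which is item (3)(a). Only item (3)(b) remains.

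For (3)(b) I would pass to the tower $K\subset K(r)\subset L(r)\supset L$, where $r$ is a root of the quadratic resolvent $R(X)=X^2+3aX+(-27+9a^2)$. Here $L(r)/K(r)$ is Galois of degree $3$ by \cite[Theorem 2.3]{Con}, and, as recorded in the proof of Corollary \ref{splittingextqneq1mod32}, $K(r)/K$ is the quadratic Artin--Schreier extension generated by an element $w$ with $w^2-w=\frac{1}{a^2}+1$. By Corollary \ref{splittingextqneq1mod32}(1)(d) the decomposition at $\mathfrak{p}$ is $\mathfrak{p}\mathcal{O}_{L,x}=\mathfrak{P}_1^2\mathfrak{P}_2$ with $\mathfrak{P}=\mathfrak{P}_1$; in particular $\mathfrak{p}$ is (totally) ramified in $K(r)/K$ and $\mathfrak{P}$ is split — hence unramified — in $L(r)/L$, and it is this last fact that supplies the element $\eta_\mathfrak{P}\in L$ with $v_\mathfrak{P}\bigl(\frac{1}{a^2}+1-\eta_\mathfrak{P}^2+\eta_\mathfrak{P}\bigr)\geq 0$. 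Now fix a place $\mathfrak{Q}$ of $L(r)$ above $\mathfrak{P}$ and set $\mathfrak{p}_r=\mathfrak{Q}\cap K(r)$. From $e(\mathfrak{Q}|\mathfrak{p})=e(\mathfrak{P}|\mathfrak{p})=2=e(\mathfrak{p}_r|\mathfrak{p})$ and multiplicativity of the ramification index we get $e(\mathfrak{Q}|\mathfrak{P})=e(\mathfrak{Q}|\mathfrak{p}_r)=1$, so $\mathfrak{Q}$ is unramified over $\mathfrak{P}$ and over $\mathfrak{p}_r$ and the corresponding different exponents vanish. Transitivity of the different then gives $d(\mathfrak{Q}|\mathfrak{p})=e(\mathfrak{Q}|\mathfrak{P})\,d(\mathfrak{P}|\mathfrak{p})+d(\mathfrak{Q}|\mathfrak{P})=d(\mathfrak{P}|\mathfrak{p})$ and, along the other side of the tower, $d(\mathfrak{Q}|\mathfrak{p})=e(\mathfrak{Q}|\mathfrak{p}_r)\,d(\mathfrak{p}_r|\mathfrak{p})+d(\mathfrak{Q}|\mathfrak{p}_r)=d(\mathfrak{p}_r|\mathfrak{p})$, hence $d(\mathfrak{P}|\mathfrak{p})=d(\mathfrak{p}_r|\mathfrak{p})$.

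Finally I would compute $d(\mathfrak{p}_r|\mathfrak{p})$ by Artin--Schreier theory. Replacing $w$ by $w-w_\mathfrak{p}$, where $w_\mathfrak{p}$ is the element provided by the hypothesis, presents $K(r)/K$ as generated by $z$ with $z^2-z=\frac{1}{a^2}+1-w_\mathfrak{p}^2+w_\mathfrak{p}$; by hypothesis $v_\mathfrak{p}\bigl(\frac{1}{a^2}+1-w_\mathfrak{p}^2+w_\mathfrak{p}\bigr)$ is negative and prime to $2$, so this is a reduced Artin--Schreier representative at $\mathfrak{p}$, and \cite[Proposition 3.7.8]{Sti} yields $d(\mathfrak{p}_r|\mathfrak{p})=(2-1)\bigl(-v_\mathfrak{p}\bigl(\frac{1}{a^2}+1-w_\mathfrak{p}^2+w_\mathfrak{p}\bigr)+1\bigr)=-v_\mathfrak{p}\bigl(\frac{1}{a^2}+1-w_\mathfrak{p}^2+w_\mathfrak{p}\bigr)+1$, which is the claimed value of $d(\mathfrak{P}|\mathfrak{p})$. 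The hard part is exactly this wild case: one has to locate $\mathfrak{P}$ correctly inside the degree-$6$ tower and verify that the two added steps $L\subset L(r)$ and $K(r)\subset L(r)$ are unramified at the place above $\mathfrak{p}$, so that the two chain-rule computations collapse onto $d(\mathfrak{p}_r|\mathfrak{p})$; once the Artin--Schreier class of $K(r)/K$ is in the reduced form $\frac{1}{a^2}+1-w_\mathfrak{p}^2+w_\mathfrak{p}$ recorded in the statement, the conductor formula closes the argument.
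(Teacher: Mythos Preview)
Your proof is correct and follows essentially the same route as the paper: parts (1), (2), and (3)(a) are handled by the tame different formula from \cite[Theorem 5.6.3]{Vil}, and for (3)(b) both you and the paper pass to the tower $K\subset K(r)\subset L(r)\supset L$, identify $K(r)/K$ as the Artin--Schreier extension for $\frac{1}{a^2}+1$, use Corollary \ref{splittingextqneq1mod32} to read off the ramification indices in the four legs, and then apply transitivity of the different together with \cite[Proposition 3.7.8]{Sti} to conclude $d(\mathfrak{P}|\mathfrak{p})=d(\mathfrak{p}_r|\mathfrak{p})=-v_\mathfrak{p}\bigl(\frac{1}{a^2}+1-w_\mathfrak{p}^2+w_\mathfrak{p}\bigr)+1$. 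The only cosmetic difference is that you explicitly justify the clause $2\mid v_\mathfrak{P}(\Delta)$ in (3)(a), which the paper leaves implicit.
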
 
\begin{proof}
Let $\mathfrak{p}$ be a place of $K$, $\mathfrak{P}_r$ a place of $L(r)$ above $\mathfrak{p}$,  $\mathfrak{P}=\mathfrak{P}_r\cap L$, and $\mathfrak{p}_r=\mathfrak{P}_r\cap K(r)$.
 \begin{enumerate} 
\item  As the constant field $\mathbb{F}_q$ of $K$ is perfect, all residue field extensions in $L/F$ are automatically separable. The result then follows from \cite[Theorem 5.6.3]{Vil}.
\item If $e(\mathfrak{P}|\mathfrak{p})=3$, then as $p \nmid 3$, it follows again from [Theorem 5.6.3, Ibid.] that $d(\mathfrak{P}|\mathfrak{p}) = e(\mathfrak{P}|\mathfrak{p}) - 1 =   2$.
\item When $e(\mathfrak{P}|\mathfrak{p})=2$, 
\begin{enumerate} 
\item  if $p\neq 2$, then by [Theorem 5.6.3, Ibid.], $d(\mathfrak{P}|\mathfrak{p}) = e(\mathfrak{P}|\mathfrak{p}) - 1 =   1$.
\item if $p= 2$, then we work on the tower $L(r)/K(r)/ K$. 

If $e(\mathfrak{P}|\mathfrak{p})=2$, then $e(\mathfrak{p}_r|\mathfrak{p})=2$, $e(\mathfrak{P}_r|\mathfrak{p}_r)=1$ and $e(\mathfrak{P}_r|\mathfrak{P})=1$, by Theorem \ref{splittingextqneq1mod32}. As $p=2$, the extension $K(r)/K$ is Artin-Schreier and is generated by an element $\alpha$ such that $\alpha^2 - \alpha = \frac{1}{a^2}+1$. By Artin-Schreier theory (see \cite[Theorem 3.7.8]{Sti}), as $ e(\mathfrak{p}_r|\mathfrak{p})=2$, there exists an element $w_\mathfrak{p} \in K$ such that $$(v_{\mathfrak{p}}\left( \frac{1}{a^2}+1 -w_\mathfrak{p}^2+w_\mathfrak{p} \right) , 2)=1\qquad\text{and}\qquad v_{\mathfrak{p}}\left( \frac{1}{a^2}+1 -w_\mathfrak{p}^2+w_\mathfrak{p} \right)<0.$$ 
In addition, since $e(\mathfrak{P}_r|\mathfrak{P})=1$, there exists $\eta_\mathfrak{P} \in L$ such that $$v_{\mathfrak{P}}\left( \frac{1}{a^2}+1 -\eta_\mathfrak{P} ^2+\eta_\mathfrak{P} \right)\geq 0.$$ By Artin-Schreier theory (see \cite[Theorem 3.7.8]{Sti}), we obtain
 $$d(\mathfrak{p}_r | \mathfrak{p})=  -v_{\mathfrak{p}}\left( \frac{1}{a^2}+1 -w_\mathfrak{p}^2+w_\mathfrak{p} \right)+1.$$ By \cite[Theorem 5.7.15]{Vil}, we then find by equating differential exponents in the towers $L(r)/K(r)/K$ and $L(r)/L/K$ that
 $$ d( \mathfrak{P}_r| \mathfrak{p})= d( \mathfrak{P}_r| \mathfrak{P})+ e( \mathfrak{P}_r| \mathfrak{P}) d( \mathfrak{P}| \mathfrak{p})= d( \mathfrak{P}_r| \mathfrak{p}_r)+ e( \mathfrak{P}_r| \mathfrak{p}_r) d( \mathfrak{p}_r| \mathfrak{p}).$$
This implies that
 $$  d( \mathfrak{P}| \mathfrak{p})=  d( \mathfrak{p}_r| \mathfrak{p})= -v_{\mathfrak{p}}\left( \frac{1}{a^2}+1 -w_\mathfrak{p}^2+w_\mathfrak{p} \right)+1,$$
as $e(\mathfrak{P}_r|\mathfrak{P})=e(\mathfrak{P}_r|\mathfrak{p}_r)=1$ implies $d( \mathfrak{P}_r| \mathfrak{P})=d(\mathfrak{P}_r|\mathfrak{p}_r)=0$.
\end{enumerate} 
\end{enumerate} 
\end{proof}
We are now able to state and prove the Riemann-Hurwitz formula for this cubic form. 
\begin{theoreme}[Riemann-Hurwitz II] \label{RH} Let $p \neq 3$. Let $L/K$ be a cubic  geometric extension and $y$ a primitive element of $L$ with minimal polynomial $f(X) = X^3-3X-a$. Let $\Delta=-27(a^2-4)$ be the discriminant of $f(X)$ and $r$ a root of the quadratic resolvent $R(X)= X^2 +3aX + ( -27 + 9 a^2)$ of the cubic polynomial $X^3-3X-a$ in $\overline{K}$.
Then the genus $g_L$ of $L$ is given according to the formula
\begin{enumerate} 
\item If $p \neq 2$, then $$g_L =  3g_K - 2 +\frac{1}{2}\sum_{\mathfrak{p}\in \mathcal{S}} d_{K}(\mathfrak{p})+  \sum_{\substack{ v_\mathfrak{p}(a)<0\\ ( v_\mathfrak{p}(a),3) = 1}} d_{K}(\mathfrak{p}).$$
where $\mathcal{S}$ is the set of places of $K$ such that both $a\equiv \pm 2 \mod \mathfrak{p}$ and $  v_{\mathfrak{p}}(\Delta, 2)=1$. Moreover, the set $\mathcal{S}$ is empty when $\Delta$ is a square in $K$.
\item If $p = 2$, then $$g_L =  3g_K - 2 +  \frac{1}{2} \sum_{\mathfrak{p}\in \mathcal{S}} [ -v_{\mathfrak{p}}\left( \frac{1}{a^2}+1 -w_\mathfrak{p}^2+w_\mathfrak{p} \right)+1] d_{K}(\mathfrak{p})+  \sum_{\substack{ v_\mathfrak{p}(a)<0\\  ( v_\mathfrak{p}(a),3) = 1}} d_{K}(\mathfrak{p}),$$
where $\mathcal{S}$ is the set of places of $K$ such that both $a \equiv 0 \mod \mathfrak{p}$ and there exists $w_\mathfrak{p} \in K$ such that $v_{\mathfrak{p}}\left( \frac{1}{a^2}+1 -w_\mathfrak{p}^2+w_\mathfrak{p} \right)<0$ and $(v_{\mathfrak{p}}\left( \frac{1}{a^2}+1 -w_\mathfrak{p}^2+w_\mathfrak{p} \right) , 2)=1$. Moreover, the set $\mathcal{S}$ is empty when $r \in K$.
\end{enumerate}
\end{theoreme}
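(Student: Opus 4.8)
The plan is to run the Hurwitz genus formula \cite[Theorem 9.4.2]{Vil}, which for the degree-$3$ extension $L/K$ reads $2g_L-2=3(2g_K-2)+\deg\mathfrak{D}_{L/K}$, so that $g_L=3g_K-2+\frac{1}{2}\deg\mathfrak{D}_{L/K}$, where $\mathfrak{D}_{L/K}$ is the different of $L/K$. Writing $\deg\mathfrak{D}_{L/K}=\sum_{\mathfrak{p}}\big(\sum_{\mathfrak{P}|\mathfrak{p}}d(\mathfrak{P}|\mathfrak{p})f(\mathfrak{P}|\mathfrak{p})\big)d_K(\mathfrak{p})$, the whole computation reduces to evaluating the inner sum at each place $\mathfrak{p}$ of $K$, which is exactly the content of Lemma \ref{diffexpqneq1mod3}. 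Since $[L:K]=3$ is prime, at any $\mathfrak{p}$ the signature is one of exactly three types: $\mathfrak{p}$ is unramified; $\mathfrak{p}$ is fully ramified, with a single $\mathfrak{P}|\mathfrak{p}$ having $e=3$, $f=1$; or $\mathfrak{p}$ has signature $(2,1;1,1)$, with one place $\mathfrak{P}_1$ having $e=2$, $f=1$ and one place $\mathfrak{P}_2$ having $e=1$, $f=1$. This trichotomy makes Lemma \ref{diffexpqneq1mod3} exhaustive, and only finitely many $\mathfrak{p}$ are of the second or third type.

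Next I would treat the three cases. An unramified $\mathfrak{p}$ contributes nothing, since all $d(\mathfrak{P}|\mathfrak{p})=0$ by Lemma \ref{diffexpqneq1mod3}(1). A fully ramified $\mathfrak{p}$ — by Theorem \ref{ramification} exactly those with $v_\mathfrak{p}(a)<0$ and $(v_\mathfrak{p}(a),3)=1$ — has a unique $\mathfrak{P}|\mathfrak{p}$, tame since $p\neq3$, so $d(\mathfrak{P}|\mathfrak{p})=2$, $f(\mathfrak{P}|\mathfrak{p})=1$, and the contribution to $\deg\mathfrak{D}_{L/K}$ is $2\,d_K(\mathfrak{p})$, i.e. $d_K(\mathfrak{p})$ to $g_L$. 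For a $\mathfrak{p}$ with signature $(2,1;1,1)$ only $\mathfrak{P}_1$ is ramified, so the inner sum is $d(\mathfrak{P}_1|\mathfrak{p})$: when $p\neq2$ this is tame and equals $1$ by Lemma \ref{diffexpqneq1mod3}(3)(a), and by Corollary \ref{splittingextqneq1mod31} this case is precisely $\mathfrak{p}\in\mathcal{S}$; when $p=2$ it equals $-v_\mathfrak{p}\!\big(\frac{1}{a^2}+1-w_\mathfrak{p}^2+w_\mathfrak{p}\big)+1$ by Lemma \ref{diffexpqneq1mod3}(3)(b), and by Corollary \ref{splittingextqneq1mod32} this case is precisely $\mathfrak{p}\in\mathcal{S}$, with $w_\mathfrak{p}$ the element furnished there. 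Summing all contributions and dividing by $2$ yields the two displayed formulae.

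It then remains to justify that $\mathcal{S}=\emptyset$ in the Galois case. The discriminant of the quadratic resolvent $R(X)=X^2+3aX+(-27+9a^2)$ is $-27(a^2-4)=\Delta$, so $r\in K$ if and only if $\Delta$ is a square in $K$, and by \cite[Theorem 2.3]{Con} this is equivalent to $L/K$ being Galois. When $L/K$ is Galois all ramification indices above a given $\mathfrak{p}$ coincide, and as $3$ is prime they can only be $1$ or $3$; in particular the signature $(2,1;1,1)$ never occurs, so $\mathcal{S}=\emptyset$. This gives the final sentence of case $(1)$ (``$\mathcal{S}$ empty when $\Delta$ is a square in $K$'') and of case $(2)$ (``$\mathcal{S}$ empty when $r\in K$'').

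The one genuinely delicate point is the wild case $p=2$: one must know that the differential exponent of the partially ramified place is governed by the Artin--Schreier conductor of $K(r)/K$ — equivalently, that $-v_\mathfrak{p}\!\big(\frac{1}{a^2}+1-w_\mathfrak{p}^2+w_\mathfrak{p}\big)+1$ is independent of the permissible choice of $w_\mathfrak{p}$ and transfers correctly between the towers $L(r)/K(r)/K$ and $L(r)/L/K$. This is precisely Lemma \ref{diffexpqneq1mod3}(3)(b), which I would invoke as a black box; the rest is bookkeeping with the Hurwitz formula, the fundamental identity $\sum e_if_i=3$, and the signature data of Theorem \ref{ramification} and Corollaries \ref{splittingextqneq1mod31}, \ref{splittingextqneq1mod32}.
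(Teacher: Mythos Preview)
Your proposal is correct and follows essentially the same route as the paper: invoke the Hurwitz formula \cite[Theorem 9.4.2]{Vil}, use the fundamental identity $\sum e_if_i=3$ to reduce to the three possible ramification types, and read off the differential exponents from Lemma \ref{diffexpqneq1mod3} together with the signature descriptions in Corollaries \ref{splittingextqneq1mod31} and \ref{splittingextqneq1mod32}. Your argument is in fact more complete than the paper's terse proof, since you explicitly justify the final clauses (``$\mathcal{S}$ empty when $\Delta$ is a square'' for $p\neq 2$, ``$\mathcal{S}$ empty when $r\in K$'' for $p=2$) via the Galois case forcing uniform ramification indices; one small caveat is that your sentence ``$r\in K$ if and only if $\Delta$ is a square in $K$'' is only valid for $p\neq 2$, but your actual argument for case $(2)$ uses only $r\in K\Leftrightarrow L/K$ Galois, which holds regardless.
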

\begin{proof} \begin{enumerate} \item By \cite[Theorem 9.4.2]{Vil}, the term associated with a place $\mathfrak{P}$ of $L$ in the different $\mathfrak{D}_{L/F}$ contributes $\frac{1}{2} d_L(\mathfrak{P})^{d(\mathfrak{P}|\mathfrak{p})}$ to the genus of $L$, where $\mathfrak{p}$ is the place of $K$ below $\mathfrak{P}$, $d_L(\mathfrak{P})$ is the degree of the place $\mathfrak{P}$, and $d(\mathfrak{P}|\mathfrak{p})$ is the differential exponent of $\mathfrak{P}|\mathfrak{p}$. By the fundamental identity $\sum_i e_i f_i = [L:K] = 3$ for ramification indices $e_i$ and inertia degrees $f_i$ of all places of $L$ above $\mathfrak{p}$, we always have that $f_i=1$ whenever $\mathfrak{p}$ ramifies in $L$ (fully or partially). Thus from Lemma  \ref{diffexpqneq1mod3}, it follows that $d(\mathfrak{P}|\mathfrak{p}) = 2$ if $\mathfrak{p}$ is fully ramified, whereas $d(\mathfrak{P}|\mathfrak{p}) = 1$ if $\mathfrak{p}$ is partially ramified. The result then follows by reading off [Theorem 9.4.2, Ibid.] and using the conditions of Lemma \ref{diffexpqneq1mod3}. \item This follows in a manner similar to part (1) of this theorem, via Lemma \ref{diffexpqneq1mod3} for $p=2$. \end{enumerate}
\end{proof}
We obtain directly the following corollary when the extension $L/K$ is Galois.
\begin{corollaire} \label{RHGalois} Let $p \neq 3$. Let $L/K$ be a Galois cubic  geometric extension and $y$ a primitive element of $L$ with minimal polynomial $f(X) = X^3-3X-a$. Then the genus $g_L$ of $L$ is given according to the formula $$g_L =  3g_K - 2 +  \sum_{\substack{v_\mathfrak{p}(a)<0\\  ( v_\mathfrak{p}(a),3) = 1}} d_{K}(\mathfrak{p}).$$
\end{corollaire}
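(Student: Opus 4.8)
The plan is to deduce this corollary directly from the Riemann--Hurwitz formula of Theorem~\ref{RH}; the only work is to check that, once $L/K$ is Galois, the exceptional sum over the set $\mathcal{S}$ of partially ramified places in that theorem disappears. First I would observe that, since $L/K$ has prime degree $3$ and is Galois, the decomposition group acts transitively on the places of $L$ above any place $\mathfrak{p}$ of $K$, so all ramification indices above $\mathfrak{p}$ coincide and likewise all inertia degrees; together with $\sum_i e_i f_i = 3$ this forces the signature of $\mathfrak{p}$ to be one of $(1,1;1,1;1,1)$, $(1,3)$, or $(3,1)$. In particular no place of $K$ is partially ramified in $L$, so the only ramified places are the fully ramified ones, which by Theorem~\ref{ramification} are precisely those $\mathfrak{p}$ with $v_\mathfrak{p}(a)<0$ and $(v_\mathfrak{p}(a),3)=1$. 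Equivalently, one can phrase this through the remarks closing the two parts of Theorem~\ref{RH}: being Galois forces the quadratic resolvent $R(X)=X^2+3aX+(-27+9a^2)$ to have a root in $K$ --- and hence, when $p\neq 2$, the discriminant $\Delta=-27(a^2-4)$ to be a square in $K$ --- by the classical criterion recalled in \cite[Theorem~2.3]{Con}, so that $\mathcal{S}=\emptyset$ whether $p\neq 2$ or $p=2$.

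Substituting $\mathcal{S}=\emptyset$ into the formula of Theorem~\ref{RH} collapses it, in either characteristic, to
$$g_L = 3g_K - 2 + \sum_{\substack{v_\mathfrak{p}(a)<0\\ (v_\mathfrak{p}(a),3)=1}} d_K(\mathfrak{p}),$$
which is the claim. A self-contained route is also available: since the constant field $\mathbb{F}_q$ is perfect every residue extension is separable, and since $p\nmid 3$ each fully ramified place is tamely ramified with differential exponent $e-1=2$ and with $f=1$, so the different has degree $\sum 2\,d_K(\mathfrak{p})$ over the fully ramified places of Theorem~\ref{ramification}; Riemann--Hurwitz \cite[Theorem~9.4.2]{Vil} then gives $2g_L-2 = 3(2g_K-2) + 2\sum d_K(\mathfrak{p})$, and dividing by $2$ yields the same formula.

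I do not expect a genuine obstacle here: the content is entirely carried by Theorems~\ref{RH} and~\ref{ramification}, and the single point requiring attention --- that the Galois hypothesis rules out partial ramification --- is the standard fact that an extension of prime degree $3$ has no intermediate splitting behaviour once it is Galois, equivalently that its cubic resolvent acquires a root over the base field.
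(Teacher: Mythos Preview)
Your proposal is correct and follows essentially the same approach as the paper: the corollary is obtained directly from Theorem~\ref{RH} by noting that the set $\mathcal{S}$ is empty when $L/K$ is Galois, exactly as the closing remarks of that theorem already record (via $\Delta$ a square when $p\neq 2$, resp.\ $r\in K$ when $p=2$). Your additional observations---the transitivity argument ruling out partial ramification in a Galois extension of prime degree, and the self-contained tame Riemann--Hurwitz computation---are fine but not needed for the paper's one-line deduction.
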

\subsubsection{$X^3 +aX +a^2$, $a\in K$, $p=3$} 

\begin{lemma}\label{char3diffexp}
Suppose that $p = 3$. Let $L/K$ be a separable cubic extension and $y$ a primitive element with minimal polynomial $X^3 + aX +  a^2$. Let $\mathfrak{p}$ be a place of $K$ and $\mathfrak{P}$ a place of $L$ above $\mathfrak{p}$. 
\begin{enumerate}
\item  $d(\mathfrak{P}|\mathfrak{p}) =0$ if, and only if, $e(\mathfrak{P}|\mathfrak{p})=1$. 

\item when $e(\mathfrak{P}|\mathfrak{p})=3$, by Theorem \ref{char3localstandardform}, there is $w_\mathfrak{p} \in K$ such that $v_\mathfrak{p}(\alpha_\mathfrak{p}  ) < 0$ and $(v_\mathfrak{p}(\alpha_\mathfrak{p}  ),3)=1$ with $$\alpha_\mathfrak{p}  = \frac{(ja^2 + (w_\mathfrak{p} ^3 + a w_\mathfrak{p} ) )^2}{a^3}.$$ Then $d ( \mathfrak{P}| \mathfrak{p}) = -v_{\mathfrak{p}}(\alpha_\mathfrak{p} )+2$.
\item $d(\mathfrak{P}|\mathfrak{p}) = 1$ whenever $e(\mathfrak{P}|\mathfrak{p})=2$. Moreover, by Lemma \ref{char3localstandardform1}, when $e(\mathfrak{P}|\mathfrak{p})=2$, there is generator $z_\mathfrak{p} $ such that $z_\mathfrak{p} ^3 +c_\mathfrak{p}  z_\mathfrak{p}  +c _\mathfrak{p} ^2 =0$ and $v_\mathfrak{p}(c_\mathfrak{p}  ) \geq 0$, $(v_\mathfrak{p} (c_\mathfrak{p}),2)=1$ and $2|v_\mathfrak{P} (c_\mathfrak{p})$. 
\end{enumerate}
\end{lemma}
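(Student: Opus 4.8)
The plan is to split into three cases according to the value of $e(\mathfrak{P}|\mathfrak{p})$, the only genuinely hard one being $e(\mathfrak{P}|\mathfrak{p})=3$, which is wild since $p=3$. First I would dispose of (1) and (3). Because the constant field $\mathbb{F}_q$ is perfect, every residue extension $k(\mathfrak{P})/k(\mathfrak{p})$ is separable, so by \cite[Theorem 5.6.3]{Vil} one has $d(\mathfrak{P}|\mathfrak{p})=0$ exactly when $\mathfrak{P}|\mathfrak{p}$ is unramified, i.e. $e(\mathfrak{P}|\mathfrak{p})=1$; this gives (1). If $e(\mathfrak{P}|\mathfrak{p})=2$ then, since $p=3\nmid 2$, the ramification is tame and $d(\mathfrak{P}|\mathfrak{p})=e(\mathfrak{P}|\mathfrak{p})-1=1$ by the same theorem. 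The description of $z_\mathfrak{p}$ in (3) then follows by bookkeeping: a place with $e(\mathfrak{P}|\mathfrak{p})=2$ is not fully ramified, so by Theorem \ref{char3localstandardform} the local standard form furnished by Lemma \ref{char3localstandardform1} must be the one with $v_\mathfrak{p}(c_\mathfrak{p})\geq 0$; Theorem \ref{splittingchar3} then forces $v_\mathfrak{p}(c_\mathfrak{p})>0$ with $(v_\mathfrak{p}(c_\mathfrak{p}),2)=1$, and $2\mid v_\mathfrak{P}(c_\mathfrak{p})$ because $e(\mathfrak{P}|\mathfrak{p})=2$.

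For (2), assume $e(\mathfrak{P}|\mathfrak{p})=3$. By Theorem \ref{char3localstandardform} I would fix a generator $z$ of $L$ with minimal polynomial $X^3+\alpha X+\alpha^2$, where $\alpha=\alpha_\mathfrak{p}$ satisfies $v_\mathfrak{p}(\alpha)<0$ and $(v_\mathfrak{p}(\alpha),3)=1$. Set $b\in\overline{K}$ with $b^2=-\alpha$; by \cite[Theorem 2.3]{Con} the Galois closure of $L/K$ is $L(b)$, and $L(b)/K(b)$ is an Artin--Schreier extension with generator $w=z/b$ satisfying $w^3-w+b=0$. Fix a place $\mathfrak{P}_b$ of $L(b)$ above $\mathfrak{P}$ and put $\mathfrak{p}_b=\mathfrak{P}_b\cap K(b)$. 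As computed in the proof of Theorem \ref{char3localstandardform}, $v_{\mathfrak{p}_b}(b)=\tfrac12\,e(\mathfrak{p}_b|\mathfrak{p})\,v_\mathfrak{p}(\alpha)$ and $(v_{\mathfrak{p}_b}(b),3)=1$, so $-b$ is already in Artin--Schreier standard form at $\mathfrak{p}_b$; by \cite[Proposition 3.7.8]{Sti}, $\mathfrak{p}_b$ is fully ramified in $L(b)$ and
$$d(\mathfrak{P}_b|\mathfrak{p}_b)=2\,(-v_{\mathfrak{p}_b}(b)+1)=-e(\mathfrak{p}_b|\mathfrak{p})\,v_\mathfrak{p}(\alpha)+2 .$$

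Next I would extract $d(\mathfrak{P}|\mathfrak{p})$ by equating the two expansions of $d(\mathfrak{P}_b|\mathfrak{p})$ coming from the towers $L(b)/L/K$ and $L(b)/K(b)/K$ via transitivity of the different \cite[Theorem 5.7.15]{Vil}:
$$d(\mathfrak{P}_b|\mathfrak{P})+e(\mathfrak{P}_b|\mathfrak{P})\,d(\mathfrak{P}|\mathfrak{p})=d(\mathfrak{P}_b|\mathfrak{p})=d(\mathfrak{P}_b|\mathfrak{p}_b)+e(\mathfrak{P}_b|\mathfrak{p}_b)\,d(\mathfrak{p}_b|\mathfrak{p}).$$
Comparing ramification indices through $\mathfrak{p}$ gives $e(\mathfrak{P}_b|\mathfrak{p}_b)=3$ and $e(\mathfrak{P}_b|\mathfrak{P})=e(\mathfrak{p}_b|\mathfrak{p})=:e\in\{1,2\}$; since both $\mathfrak{P}_b|\mathfrak{P}$ and $\mathfrak{p}_b|\mathfrak{p}$ are quadratic and hence tame, $d(\mathfrak{P}_b|\mathfrak{P})=d(\mathfrak{p}_b|\mathfrak{p})=e-1$. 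Substituting the displayed value of $d(\mathfrak{P}_b|\mathfrak{p}_b)$ and solving, the terms involving $e$ cancel and one obtains $d(\mathfrak{P}|\mathfrak{p})=-v_\mathfrak{p}(\alpha_\mathfrak{p})+2$, uniformly in the two cases $e=1$ ($v_\mathfrak{p}(\alpha)$ even) and $e=2$ ($v_\mathfrak{p}(\alpha)$ odd).

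The hard part is the wild case (2): one must track precisely how the Artin--Schreier pole order $-v_{\mathfrak{p}_b}(b)$ depends on the parity of $v_\mathfrak{p}(\alpha_\mathfrak{p})$ through the quadratic twist $K(b)/K$, verify that it is coprime to $3$ so that the Artin--Schreier conductor--discriminant formula applies verbatim, and then combine it correctly with the tame contributions of $K(b)/K$ and $L(b)/L$ in the different-tower identity so that the parity-dependent terms cancel and leave the single clean value $-v_\mathfrak{p}(\alpha_\mathfrak{p})+2$. Everything else is routine invocation of \cite[Theorem 5.6.3]{Vil} and the classification already established in Theorems \ref{char3localstandardform} and \ref{splittingchar3}.
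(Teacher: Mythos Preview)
Your proposal is correct and follows essentially the same approach as the paper: parts (1) and (3) are handled by the tame formula in \cite[Theorem 5.6.3]{Vil} together with the classification in Theorem \ref{splittingchar3}, and part (2) is obtained by passing to the Galois closure $L(b)/K(b)$, applying the Artin--Schreier conductor formula from \cite[Proposition 3.7.8]{Sti}, and descending via transitivity of the different \cite[Theorem 5.7.15]{Vil}. The only cosmetic difference is that the paper splits part (2) into the two subcases $e(\mathfrak{p}_b|\mathfrak{p})=1$ and $e(\mathfrak{p}_b|\mathfrak{p})=2$ and computes separately, whereas you carry the parameter $e\in\{1,2\}$ through a single computation and observe that the $e$-dependent terms cancel; your version is slightly more streamlined but the underlying argument is identical.
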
 
\begin{proof}
Let $b \in \overline{K}$ such that $b^2 = -a$, $\mathfrak{p}$ be a place of $K$, $\mathfrak{P}_b$ be a place of $L(b)$ above $\mathfrak{p}$, $\mathfrak{p}_b=\mathfrak{P}_b\cap K(b)$, $\mathfrak{P}= \mathfrak{P}_b\cap L$.
\begin{enumerate}
\item This is an immediate consequence of \cite[Theorem 5.6.3]{Vil}.
\item Suppose that $\mathfrak{p}$ is ramified in $L$, whence $\mathfrak{p}_b$ is ramified in $L(b)$. 
Moreover, by Theorem \ref{char3localstandardform}, there exists $w_\mathfrak{p} \in K$ such that $v_\mathfrak{p} (\alpha_\mathfrak{p}) <0$ and $(v_\mathfrak{p} (\alpha_\mathfrak{p}),3)=1$, 
where 
$$\alpha_\mathfrak{p} =  \frac{(ja^2 + (w_\mathfrak{p}^3 + a w_\mathfrak{p}) )^2}{a^3},$$
and furthermore, there exists a generator $z_\mathfrak{p}$ of $L$ such that $z_\mathfrak{p}^3 + \alpha_\mathfrak{p} z_\mathfrak{p} + \alpha_\mathfrak{p}^2=0$. Again by [Theorem 5.6.3, Ibid.], the differential exponent $d(\mathfrak{p}_b | \mathfrak{p})=d(\mathfrak{P}_b | \mathfrak{P})$ of $\mathfrak{p}$ over $K(b)$ (resp. $\mathfrak{P}$ over $L(b)$) is equal to
\begin{enumerate} 
\item $1$ if $\mathfrak{p}$ is ramified in $K(b)$, whence $e(\mathfrak{p}_b | \mathfrak{p})=e(\mathfrak{P}_b | \mathfrak{P})=2$, and 
\item $0$ if $\mathfrak{p}$ is unramified in $K(b)$, whence $e(\mathfrak{p}_b | \mathfrak{p})=e(\mathfrak{P}_b | \mathfrak{P})=1$.
\end{enumerate}
 By \cite[Theorem 2.3]{Con}, $L(b)/K(b)$ is Galois and $-\alpha_\mathfrak{p}$ is a square in $K(b)$. We write $-\alpha_\mathfrak{p} = \beta_\mathfrak{p}^2$. Moreover, $w_\mathfrak{p}= \frac{z_\mathfrak{p}}{\beta_\mathfrak{p}}$ and $w_\mathfrak{p}^3 - w_\mathfrak{p}- \beta_\mathfrak{p}=0$. Moreover, 
 $$v_{\mathfrak{p}_b}(\beta_\mathfrak{p})= \frac{ v_{\mathfrak{p}_b}(\alpha_\mathfrak{p})}{2}=\frac{ e(\mathfrak{p}_b | \mathfrak{p})v_{\mathfrak{p}}(\alpha_\mathfrak{p})}{2} $$
 with $e(\mathfrak{p}_b | \mathfrak{p})=2$ or $1$, depending on whether $\mathfrak{p}$ is ramified or not in $K(b)$. Also, $v_{\mathfrak{p}_b}(\beta_\mathfrak{p})=v_{\mathfrak{p}}(\alpha_\mathfrak{p})$ when $\mathfrak{p}$ is ramified in $K(b)$, whereas $v_{\mathfrak{p}_b}(\beta_\mathfrak{p})= \frac{ v_{\mathfrak{p}}(\alpha_\mathfrak{p})}{2} $ when $\mathfrak{p}$ is unramified in $K(b)$ (note that in this case $2|v_{\mathfrak{p}}(\alpha_\mathfrak{p})$).  Thus $v_{\mathfrak{p}_b}(\beta_\mathfrak{p})<0$ and $(v_{\mathfrak{p}_b}(\beta_\mathfrak{p}),3)=1$ and by \cite[Theorem 3.7.8]{Sti}, we also have that the differential exponent $d(\mathfrak{P}_b | \mathfrak{p}_b)$ of $\mathfrak{p}_b$ in $L(b)$ satisfies 
 $$d(\mathfrak{P}_b | \mathfrak{p}_b)=2 (-v_{\mathfrak{p}}(\beta_\mathfrak{p})+1).$$ 
By \cite[Theorem 5.7.15]{Vil}, the differential exponent of $\mathfrak{p}$ in $L(b)$ satisfies
 $$d(\mathfrak{P}_b | \mathfrak{p})= d(\mathfrak{P}_b | \mathfrak{p}_b) + e(\mathfrak{P}_b | \mathfrak{p}_b) d(\mathfrak{p}_b | \mathfrak{p})= d(\mathfrak{P}_b | \mathfrak{P}) + e(\mathfrak{P}_b | \mathfrak{P}) d(\mathfrak{P} | \mathfrak{p}).$$
 Thus,
 \begin{enumerate} 
 \item if $\mathfrak{p}$ is ramified in $K(b)=K(\beta_\mathfrak{p})$, that is, $(v_\mathfrak{p}(\alpha_\mathfrak{p}),2)=1$ by \cite[Proposition 3.7.3]{Sti}, then $ 2 (-v_{\mathfrak{p}}(\alpha_\mathfrak{p})+1) + 3= 1 +  2d(\mathfrak{P} | \mathfrak{p})$ and 
 $$d(\mathfrak{P} | \mathfrak{p})= -v_{\mathfrak{p}}(\alpha_\mathfrak{p})+2,$$ 
 whereas
  \item if $\mathfrak{p}$ is unramified in $K(b)$, that is, $2|v_\mathfrak{p}(\alpha_\mathfrak{p})$ again by \cite[Proposition 3.7.3]{Sti}, then also
  $$   d(\mathfrak{P} | \mathfrak{p})= 2 \left( -\frac{v_{\mathfrak{p}}(\alpha_\mathfrak{p})}{2}+1\right)=  -v_{\mathfrak{p}}(\alpha_\mathfrak{p})+2.$$
 \end{enumerate}
 \item This is immediate from Theorem \ref{splittingchar3} and \cite[Theorem 5.6.3]{Vil}, via application of the same method as in  Lemma \ref{diffexpqneq1mod3} $(3)$.
\end{enumerate} 
\end{proof} 
Finally, we use this to conclude the Riemann-Hurwitz formula for cubic extensions in characteristic 3.
\begin{theorem}[Riemann-Hurwitz III] \label{char3RH}
Suppose that $p = 3$. Let $L/K$ be a separable cubic extension and $y$ a primitive element with minimal polynomial $X^3 + aX + a^2$. Then the genus $g_L$ of $L$ is given according to the formula $$g_L =  3g_K - 2 + \frac{1}{2} \sum_{\mathfrak{p} \in S}\left(-v_{\mathfrak{p}}(\alpha_\mathfrak{p} )+2 \right) d_{K}(\mathfrak{p}) + \frac{1}{2} \sum_{\mathfrak{p} \in T} d_{K}(\mathfrak{p}),$$
where \begin{enumerate} \item $S$ is the set of places of $K$ for which there exists $w_\mathfrak{p}  \in K$ such that $v_\mathfrak{p}(\alpha_\mathfrak{p}  ) < 0$, $(v_\mathfrak{p}(\alpha_\mathfrak{p}  ),3)=1$ with $$\alpha_\mathfrak{p}  = \frac{(ja^2 + (w_\mathfrak{p} ^3 + a w_\mathfrak{p} ) )^2}{a^3},$$ 
and 
\item $T$ is the set of places of $K$ for which there is generator $z_\mathfrak{p} $ such that $z_\mathfrak{p} ^3 +c_\mathfrak{p}  z_\mathfrak{p}  +c_\mathfrak{p}  ^2 =0$,  $v_\mathfrak{p}(c_\mathfrak{p}  ) \geq 0$ and $(v_\mathfrak{p} (c_\mathfrak{p}), 2)=1$. \end{enumerate}
\end{theorem}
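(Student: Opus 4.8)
The plan is to read the formula off the Hurwitz genus formula, using the differential exponents computed in Lemma~\ref{char3diffexp} and the splitting types recorded in Theorem~\ref{splittingchar3}. Since $L/K$ is separable and geometric (the standing hypothesis of \S3), \cite[Theorem 9.4.2]{Vil} gives $2g_L - 2 = [L:K](2g_K - 2) + \deg\mathfrak{D}_{L/K}$; with $[L:K] = 3$ this rearranges to $g_L = 3g_K - 2 + \tfrac{1}{2}\deg\mathfrak{D}_{L/K}$. It therefore remains only to evaluate the degree of the different, which I would write as
$$\deg\mathfrak{D}_{L/K} = \sum_{\mathfrak{p}}\ \sum_{\mathfrak{P}|\mathfrak{p}} d(\mathfrak{P}|\mathfrak{p})\, f(\mathfrak{P}|\mathfrak{p})\, d_K(\mathfrak{p}),$$
the outer sum running over the places $\mathfrak{p}$ of $K$, and to identify the nonzero terms.

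By Lemma~\ref{char3diffexp}(1), only the ramified places of $K$ contribute. Fix such a place $\mathfrak{p}$; by the fundamental equality $\sum e_i f_i = 3$, either $\mathfrak{p}$ is fully ramified (a single place $\mathfrak{P}$ above it, with $e(\mathfrak{P}|\mathfrak{p}) = 3$ and $f(\mathfrak{P}|\mathfrak{p}) = 1$), or it is partially ramified, in which case Theorem~\ref{splittingchar3} forces $\mathfrak{p}\mathcal{O}_{L,x} = \mathfrak{P}_1^2\mathfrak{P}_2$ with $f(\mathfrak{P}_1|\mathfrak{p}) = f(\mathfrak{P}_2|\mathfrak{p}) = 1$. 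In the fully ramified case, $\mathfrak{p}$ is — by Lemma~\ref{char3localstandardform1} together with Theorem~\ref{char3localstandardform} — precisely a member of the set $S$, and Lemma~\ref{char3diffexp}(2) gives $d(\mathfrak{P}|\mathfrak{p}) = -v_\mathfrak{p}(\alpha_\mathfrak{p}) + 2$, so this place contributes $\bigl(-v_\mathfrak{p}(\alpha_\mathfrak{p}) + 2\bigr)\, d_K(\mathfrak{p})$ to $\deg\mathfrak{D}_{L/K}$. In the partially ramified case the ramification of $\mathfrak{P}_1$ is tame (as $p = 3 \nmid 2$), so $d(\mathfrak{P}_1|\mathfrak{p}) = 1$ and $d(\mathfrak{P}_2|\mathfrak{p}) = 0$ by Lemma~\ref{char3diffexp}; by Lemma~\ref{char3localstandardform1} and Theorem~\ref{splittingchar3}, such a $\mathfrak{p}$ is precisely a member of $T$, contributing $d_K(\mathfrak{p})$.

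Summing over all ramified places (a finite set, since the different has finite support) yields
$$\deg\mathfrak{D}_{L/K} = \sum_{\mathfrak{p}\in S}\bigl(-v_\mathfrak{p}(\alpha_\mathfrak{p}) + 2\bigr)\, d_K(\mathfrak{p}) + \sum_{\mathfrak{p}\in T} d_K(\mathfrak{p}),$$
and substituting this into $g_L = 3g_K - 2 + \tfrac{1}{2}\deg\mathfrak{D}_{L/K}$ gives the stated identity. I do not expect a serious mathematical obstacle: the work is bookkeeping, and the one point that genuinely needs care is verifying that the descriptions of $S$ and $T$ in the statement coincide with ``fully ramified'' and ``partially ramified'' respectively, and that $S$ and $T$ are disjoint. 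This is exactly the combined content of Lemma~\ref{char3localstandardform1} (existence of a local standard form at every place), Theorem~\ref{char3localstandardform} (the fully ramified places are exactly those admitting an $\alpha_\mathfrak{p}$ with $v_\mathfrak{p}(\alpha_\mathfrak{p}) < 0$ and $(v_\mathfrak{p}(\alpha_\mathfrak{p}),3) = 1$), and Theorem~\ref{splittingchar3} (the remaining ramified places are exactly those of type $\mathfrak{P}_1^2\mathfrak{P}_2$, characterised by a generator $z_\mathfrak{p}$ with $v_\mathfrak{p}(c_\mathfrak{p}) \ge 0$ and $(v_\mathfrak{p}(c_\mathfrak{p}),2) = 1$), so disjointness is automatic.
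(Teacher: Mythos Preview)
Your proposal is correct and follows essentially the same approach as the paper: the paper's proof is the one-line ``this follows from Lemma~\ref{char3diffexp}, \cite[Theorem 9.4.2]{Vil}, and the fundamental identity $\sum e_i f_i = [L:K] = 3$,'' and you have simply unpacked that citation into its constituent bookkeeping. Your additional care in verifying that $S$ and $T$ coincide with the fully and partially ramified loci (via Theorem~\ref{char3localstandardform} and Theorem~\ref{splittingchar3}) is exactly what the paper leaves to the reader.
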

\begin{proof} This follows from Lemma \ref{char3diffexp}, \cite[Theorem 9.4.2]{Vil}, and the fundamental identity $\sum e_i f_i = [L:K] = 3$. 
\end{proof}

\bibliographystyle{plain}
\raggedright
\bibliography{references}
\vspace{.5cm}

\end{document}